\newtheorem{theorem}{Theorem}[subsection]
\newtheorem{proposition}[theorem]{Proposition}
\newtheorem{corollary}[theorem]{Corollary}
\newtheorem{lemma}[theorem]{Lemma}
\theoremstyle{definition}
\newtheorem{definition}[theorem]{Definition}
\newtheorem{example}[theorem]{Example}
\newtheorem{notation}[theorem]{Notation}
\theoremstyle{remark}
\newtheorem{remark}[theorem]{Remark}
\newcommand{\Aut}{\mathop{\mathrm{Aut}}\nolimits}
\newcommand{\coker}{\mathop{\mathrm{coker}}\nolimits}
\newcommand{\Ext}{\mathop{\mathrm{Ext}}\nolimits}
\newcommand{\Fn}{\mathop{\widehat{F}_n^{(\ell)}}\nolimits}
\newcommand{\Gal}{\mathop{\mathrm{Gal}}\nolimits}
\newcommand{\SL}{\mathop{\mathrm{SL}}\nolimits}
\newcommand{\proab}{\mathop{\mathsf{pro}\mathchar`- \mathsf{Ab}}\nolimits}
\newcommand{\proc}{\mathop{\mathsf{pro}\mathchar`- \mathcal{C}}\nolimits}
\newcommand{\progrp}{\mathop{\mathsf{pro}\mathchar`- \mathsf{Grp}}\nolimits}
\newcommand{\proh}{\mathop{\mathsf{pro}\mathchar`- \mathsf{H}}\nolimits}
\newcommand{\prohp}{\mathop{\mathsf{pro}\mathchar`- \mathsf{H}_{\ast}}\nolimits}
\newcommand{\prochp}{\mathop{\mathsf{pro}\mathchar`- \mathcal{C}\mathsf{H}_{\ast}}\nolimits}
\newcommand{\prosets}{\mathop{\mathsf{pro}\mathchar`- \mathsf{Sets}}\nolimits}
\newcommand{\gr}{\mathop{\mathrm{gr}}\nolimits}
\newcommand{\Hom}{\mathop{\mathrm{Hom}}\nolimits}
\renewcommand{\Im}{\mathop{\mathrm{Im}}\nolimits}
\newcommand{\Ker}{\mathop{\mathrm{Ker}}\nolimits}
\newcommand{\lpbracket}{\mathop{(\!(}\nolimits}
\newcommand{\Ob}{\mathop{\mathrm{Ob}}\nolimits}
\newcommand{\Path}{\mathop{\mathrm{Path}}\nolimits}
\newcommand{\Piet}{\mathop{\pi_1^{\mathrm{\acute{e}t}}}\nolimits}
\newcommand{\rpbracket}{\mathop{)\!)}\nolimits}
\newcommand{\sign}{\mathop{\mathrm{sign}}\nolimits}
\newcommand{\sgn}{\mathop{\mathrm{sgn}}\nolimits}
\newcommand{\Spec}{\mathop{\mathrm{Spec}}\nolimits}
\newcommand{\Id}{\mathop{\mathrm{Id}}\nolimits}
\newcommand{\etalchar}[1]{$^{#1}$}
\def\stackbelow#1#2{\underset{\displaystyle\overset{\displaystyle\shortparallel}{#2}}{#1}}
\newcommand{\colim@}[2]{%
  \vtop{\m@th\ialign{##\cr
    \hfil$#1\operator@font colim$\hfil\cr
    \noalign{\nointerlineskip\kern1.5\ex@}#2\cr
    \noalign{\nointerlineskip\kern-\ex@}\cr}}%
}
\newcommand{\colim}{%
  \mathop{\mathpalette\colim@{\rightarrowfill@\scriptscriptstyle}}\nmlimits@
}
\renewcommand{\varprojlim}{%
  \mathop{\mathpalette\varlim@{\leftarrowfill@\scriptscriptstyle}}\nmlimits@
}
\renewcommand{\varinjlim}{%
  \mathop{\mathpalette\varlim@{\rightarrowfill@\scriptscriptstyle}}\nmlimits@
}
\tikzset{
    set arrow inside/.code={\pgfqkeys{/tikz/arrow inside}{#1}},
    set arrow inside={end/.initial=>, opt/.initial=},
    /pgf/decoration/Mark/.style={
        mark/.expanded=at position #1 with
        {
            \noexpand\arrow[\pgfkeysvalueof{/tikz/arrow inside/opt}]{\pgfkeysvalueof{/tikz/arrow inside/end}}
        }
    },
    arrow inside/.style 2 args={
        set arrow inside={#1},
        postaction={
            decorate,decoration={
                markings,Mark/.list={#2}
            }
        }
    },
}
\newcommand{\subjclass}[2][2020]{%
  \let\@oldtitle\@title%
  \gdef\@title{\@oldtitle\footnotetext{#1 \emph{Mathematics Subject Classification.} #2}}%
}
\newcommand{\keywords}[1]{%
  \let\@@oldtitle\@title%
  \gdef\@title{\@@oldtitle\footnotetext{\emph{Key words and phrases:} #1.}}%
}
\newcommand{\dates}[1]{%
  \let\@@@oldtitle\@title%
  \gdef\@title{\@@@oldtitle\footnotetext{\emph{Date:} #1.}}%
}
\title{Arithmetic Orr invariants of absolute Galois groups}
\subjclass{11F80, 14F35, 20E18, 20F34, 20F36, 55S30,  57K10.}{}
\keywords{arithmetic topology, Galois action, Orr invariant, \'{e}tale fundamental group, Ihara theory, knot theory, Milnor invariant, Massey product}
\author{Hisatoshi Kodani and Yuji Terashima}
\date{}
\newcommand{\Addresses}{{
  \bigskip
  \footnotesize

  H.~Kodani, \textsc{Advanced Institute for Materials Research, Tohoku University, 2-1-1 Katahira, Aoba-ku, Sendai, 980-8577, Japan}\par\nopagebreak
  \textit{E-mail address}: \texttt{hisatoshi.kodani@tohoku.ac.jp}

  \medskip

  Y.~Terashima, \textsc{Graduate School of Science, Tohoku University, Aramaki-aza-Aoba 6-3, Aoba-ku, Sendai, 980-8578, Japan}\par\nopagebreak
  \textit{E-mail address}: \texttt{yujiterashima@tohoku.ac.jp}

}}
\begin{document} 
 \maketitle

\begin{abstract}
	Based on the analogies between mapping class groups and absolute Galois groups, we introduce an arithmetic  pro-$\ell$ analogue of Orr invariants for a Galois element associated with Galois action on \'etale fundamental groups of punctured projective lines. At the same time, we also introduce pro-$\ell$ Orr space as an arithmetic analogue of Orr space whose third homotopy group is a target group of Orr invariant. We then determine its rank as $\mathbb{Z}_{\ell}$-module following Igusa-Orr's computation. Moreover, we investigate its relation with Ellenberg's obstruction to $\pi_1$-sections associated with lower central series filtration in the context of Grothendieck's section conjecture.%
\end{abstract}

\tableofcontents

\section{Introduction} \label{sec:1}

In low dimensional topology, Milnor invariants are important invariants to distinguish string links up to link homotopy (\cite{HaLi}). Milnor invariants are known as finite type (Vassiliev) invariants of string links. Habegger and Masbaum \cite{HaMa} give an explicit formula relating the first non-vanishing Milnor invariants and Kontsevich invariants of string links.

Orr introduced an invariant $\theta_k$ for a based link in  \cite{O1}. The invariant takes values in the third homotopy group $\pi_3(K_k)$ of certain space $K_k$, we call it the  Orr space, obtained from an Eilenberg-MacLane space of a $k$-th nilpotent quotient of the free group. He showed that his invariants are defined for based link whose Milnor invariants of length $\leq k$ vanish, and the invariant $\theta_k$ contains all information of Milnor invariants of lengths $k+1$  to $2k-1$. 

In this paper, we introduce and investigate an arithmetic pro-$\ell$ analogue of Orr invariants $\theta_k^{(\ell)}$ and Orr space $K_k^{(\ell)}$ for a based Galois element of an absolute Galois group based on analogies between mapping class groups and absolute Galois groups as in Kodani-Morishita-Terashima \cite{KMT}. In fact, we treat a Galois element as if it were a mapping class of some pro-space and construct ``mapping torus'' and perform ``surgery'' as usual arguments in topology. Our main tool for performing it is homotopy theory of pro-spaces and their profinite completion which are introduced by Artin-Mazur \cite{AM} in their study of \'etale homotopy theory and after them further developed by Friedlander \cite{F} and Sullivan \cite{Su}. In \cite{KMT}, Ihara's work (\cite{I1}, \cite{I2}) is reinterpreted from the viewpoint of arithmetic topology. More concretely,  Jacobi sums and Soul\'e characters are shown to have expressions as linear combinations of arithmetic $\ell$-adic Milnor invariants of  Galois elements. Hence, our arithmetic Orr invariants may be thought of as a refinement of Jacobi sums and Soul\'e characters.

Second, we compute the rank of the third homotopy and homology groups of the arithmetic Orr space using a pro-$\ell$ analogue of methods in Igusa-Orr \cite{IO}. These computations are closely related to the dimension of vector space of $H$-colored tree Jacobi diagrams as shown by Massuyeau \cite{Ma} in his study of a relationship between total Johnson map and LMO homomorphism.

Finally,  we relate arithmetic Orr invariants to refinements of
Ellenberg's obstructions introduced in the context of Grothendieck's
section conjecture (\cite{E}, see also \cite{W1}, \cite{W2} and \cite{W3}).
In fact, we introduce a double indexed version of Ellenberg's obstructions with
a double indexed nilpotent tower not only  via the nilpotent quotient by Ellenberg but also via the Johnson filtration, and
show that the vanishing of the obstructions is equivalent to the vanishing of a 1-cocycle obtained from arithmetic Orr invariants (Theorem 9.2.5). In addition, we also give an explicit vanishing condition of arithmetic Orr invariants in terms of $\ell$-adic Milnor invariants for the case of the projective line $\mathbb{P}^1 \setminus \{0, 1, \infty\}$ minus three points in lower degree case (Theorem 9.3.2). For this, we apply a technique developed in Habegger-Masbaum \cite{HaMa} which relates tree Jacobi diagrams and Milnor invariants.  As a byproduct, we give a simple diagrammatic proof of Ihara's computation of vanishing of $\ell$-adic Milnor invariants with repetitive indices (Proposition 9.3.1).

\subsection*{Outline}
Here is an outline of this article. In Section 2, for the convenience of readers, we review background material, such as pro-objects, profinite completion of homotopy types, Galois actions on \'etale fundamental groups, pro-$\ell$ Magnus embeddings, and $\ell$-adic Milnor invariants. In Section 3, we define an arithmetic analogue of Orr space $K_k^{(\ell)}$ and Orr invariants $\theta_k^{(\ell)}$ for a based Galois element. In addition, we also consider the image $\tau_k^{(\ell)}$ of the invariants under Hurewicz homomorphism. In Section 4, we recall Igusa-Orr computation \cite{IO} of low dimensional homology groups for free nilpotent Lie algebra and see how it works in our situation. In Section 5, we compute low dimensional homology groups for torsion-free nilpotent pro-$\ell$ groups by considering a pro-$\ell$ version of  Igusa-Orr computation. In section 6, we compute the rank of the third homotopy group of a pro-$\ell$ (complete) Orr space $\widehat{K}_k^{(\ell)}$. In Section 7, we recall Massey products and their basic properties. In Section 8, we study basic (algebraic) properties of pro-$\ell$ Orr invariants $\theta_k^{(\ell)}$ and $\tau_k^{(\ell)}$. In Section 9, as an application of what we have done, we study pro-$\ell$ Orr invariants in the context of Grothendieck's section conjecture. In particular, we investigate a relation between Ellenberg obstructions and our arithmetic Orr invariants. Moreover, we give an explicit condition of vanishing of pro-$\ell$ Orr invariants in terms of $\ell$-adic Milnor invariants via tree Jacobi diagrammatic technique developed in \cite{HaMa} for lower degrees. Besides, we also give a simple diagrammatic proof of Ihara's computation of $\ell$-adic Milnor invariants with repetitive indices.

\subsection*{Notation}

We denote by $\mathbb{Z}$, $\mathbb{Q}$ and $\mathbb{C}$ the ring of (rational) integers, the field of rational numbers, and the field of complex numbers respectively. Throughout this paper, the symbol $\ell$ stands for a fixed (rational) prime number in $\mathbb{Z}$. We denote by $\mathbb{Z}_{\ell}$, $\mathbb{Q}_{\ell}$ and $\mathbb{F}_{\ell}$ the ring of $\ell$-adic integers, the field of $\ell$-adic numbers, and the finite field of order $\ell$ respectively. 

For subgroups $A$, $B$ of a (topological) group $G$, we denote by $[A, B]$ the closed subgroup of $G$ (topologically) generated by commutators $[a, b] := aba^{-1}b^{-1}$ for all $a \in A$ and $b \in B$. For a (topological) group $G$, we denote by $\Gamma_k(G)$ the $k$-th lower central subgroup recursively defined as $\Gamma_1(G):=G$ and $\Gamma_{k+1}(G):= [\Gamma_k(G), G]$ $(k \geq 1)$.

For a (topological) group $G$ and a positive integer $n$, the symbol $K(G, n)$ denotes an Eilenberg-MacLane space of type $(G, n)$. By abuse of notation, for a pro-group $G$, an Eilenberg-Maclane pro-space is denoted by the same symbol $K(G,1)$.

For a group $G$, we denote by $\widehat{G}$ or $G \ \widehat{\ }$ the profinite completion of $G$. Similarly, we denote by $\widehat{G}^{(\ell)}$ or ${G \ \widehat{\ }}^{(\ell)}$ the pro-$\ell$ completion of $G$. For the homotopy group $\pi_i(X)$ of a (pro-)space $X$, we sometimes denote its profinite completion by $\widehat{\pi}_i(X)$ and its pro-$\ell$ completion by $\widehat{\pi}_i^{(\ell)}(X)$.

For a number field $K$ and an algebraic closure $\overline{K}$ of $K$, we denoted by $\Gal(\overline{K}/K)$ or $G_K$ the absolute Galois group of $K$.

For topological spaces $X$ and $Y$, $X \sim Y$ means that $X$ and $Y$ are weak homotopy equivalent.

We denote by $\mathsf{Sets}, \mathsf{Grp}, \mathsf{Ab}$ the category of sets, groups, and Abelian groups respectively. For a category $\mathsf{C}$, we often write  $ X \in \mathsf{C}$ to mean that $X$ is an object of $\mathsf{C}$.

The symbol $\widehat{\otimes}$ stands for the complete tensor product or graded tensor product depending on context when there is no possibility of confusion.

\section{Background material} \label{sec:2}

For the convenience of readers, this section reviews background material such as pro-objects, profinite completion of homotopy types, Galois action on \'etale fundamental groups, pro-$\ell$ Magnus embeddings, and $\ell$-adic Milnor invariants of based Galois elements.
\subsection{Pro-objects}

In Section 2.1 to 2.3, we recall the basics of the idea of profinite completion of homotopy type of CW complex founded by Artin and Mazur in \cite{AM}. For details see \cite{AM} and \cite{Su}. In particular, \cite{Su} contains several concrete helpful examples.

To begin with, we recall the notion of pro-object in some category $\mathsf{C}$. A category $\mathsf{I}$ is said to be {\it cofiltering} if it satisfies the following properties (i) and (ii):
\begin{enumerate}[label=(\roman*)]
\item for any pair of objects $i,j \in \Ob(\mathsf{I})$,	there is an object $k \in \Ob(\mathsf{I})$ and morphisms with 
\begin{center}
	\begin{tikzcd}[row sep=tiny]
 						& i  \\
k \arrow[ur] \arrow[dr] & \\
& j
\end{tikzcd}
\end{center}	

\item for any pair of morphisms $f,g : i \rightarrow j$, their is a morphism $h: k \rightarrow i$ such that two composites are equal $f\circ h = g \circ h : k \rightarrow j$.
\end{enumerate}

\begin{remark}
By reversing arrows in the definition above, we get a definition of  {\it filtering} category.
\end{remark}

Let $\mathsf{C}$ be a category and $\mathsf{I}$ be a cofiltering index category. A {\it pro-object} in the category $\mathsf{C}$ is a covariant functor
\begin{equation} \label{eq:2.1.1}
	X : \mathsf{I} \rightarrow \mathsf{C}.
\end{equation}
For each index $i \in \mathsf{I}$, we denote its value $X(i)$ by $X_i$. By identifying a pro-object $X$ and its values, we often use the notation
\begin{equation} \label{eq:2.1.2}
	X = \{ X_i \}_{i \in \mathsf{I}}
\end{equation}

Since the index category $\mathsf{I}$ is cofiltering, one can regard a pro-object $\{X_i \}_{i \in I}$ as an inverse system of objects of $\mathsf{C}$.

Now we recall the definition of the category $\mathsf{pro}\mathchar`- \mathsf{C}$ of pro-objects valued in $\mathsf{C}$. The objects of $\mathsf{pro}\mathchar`- \mathcal{C}$ are pro-objects $X=\{X_i \}_{i \in \mathsf{I}}$ and the set of morphisms $\Hom(X, Y)$ is given by
\begin{equation} \label{eq:2.1.3}
	\Hom(X, Y) := \varprojlim_{j \in \mathsf{J}} \colim_{i \in \mathsf{I}} \Hom(X_i, Y_j).
\end{equation}
for any pro-objects $X=\{X_i\}_{i \in \mathsf{I}}$ and $Y=\{Y_j\}_{j \in \mathsf{J}}$.

\begin{remark}
We note that in \cite{AM} they define a pro-object as a contravariant functor on a filtering index category.
\end{remark}

\subsection{Pro-objects in the category of groups and homotopy category}
This section recalls pro-objects in the category of groups and that of the homotopy category to fix notations.

Let $\mathsf{Grp}$ the category of groups whose objects are groups and morphisms are homomorphism. Then, $\progrp$ is the category whose objects are pro-objects $G= \{G_i\}_{i \in I}$ of groups and morphisms are given as $\Hom(G,H)=\varprojlim \colim \Hom(G_i, H_j)$ for pro-objects $G=\{G_i\}_{i \in I}$ and $H=\{H_j\}_{j \in J}$. We call the category $\progrp$ the {\it category of pro-groups} and its objects {\it pro-groups}. One may regard the category $\progrp$ is obtained  from the category $\mathsf{Grp}$ by formally adding inverse systems. Note that any group $G \in \Ob(\mathsf{Grp})$ can be thought of as an object of $\progrp$ by considering constant inverse system $\{G\}_{i \in I}$. Hence, we have a functor 
\begin{equation} \label{eq:2.2.1}
	\mathsf{Grp} \rightarrow \progrp
\end{equation}

Next, we consider the homotopy category. Let $\mathsf{H}$ be the category whose objects are connected CW complexes and morphisms are given homotopy classes of continuous maps between them denoted by $[X, Y]$ for $X, Y \in \Ob(\mathsf{H})$. Similarly, we denote by $\mathsf{H}_{\ast}$ the category of pointed connected CW complexes. In this article,  we refer to $\mathsf{H}$ and $\mathsf{H}_{\ast}$ as {\it homotopy category} and {\it pointed homotopy category} respectively. Similar as the category of groups, we define pro-category $\proh$ and $\prohp$ by obvious manner and we have a functor
\begin{equation} \label{eq:2.2.2}
	\mathsf{H} \rightarrow \proh
\end{equation}
and
\begin{equation} \label{eq:2.2.3}
	\mathsf{H}_{\ast} \rightarrow \proh_{\ast}.
\end{equation}
In this article, we also refer to (pointed) CW complexes as {\it (pointed) spaces}. 

For pointed pro-spaces, we can consider their homotopy groups, homology groups and cohomology groups similar to the usual pointed spaces case as follows: For each non-negative integer $k \geq 0$, the homotopy group functor $\pi_k$ is defined as 
\begin{equation} \label{eq:2.2.4}
	\pi_k : \prohp \rightarrow \progrp, \quad X=\{X_i\}_{i \in I} \mapsto \pi_k(X):=\{\pi_k(X_i)\}_{i \in I},
\end{equation}
the homology group functor $H_k$ is defined as
\begin{equation}\label{eq:2.2.5}
	H_k : \prohp \rightarrow \proab, \quad X=\{X_i\}_{i \in I} \mapsto H_k(X) := H_k(X; \mathbb{Z}) := \{H_k(X_i; \mathbb{Z})\}_{i \in I},
\end{equation}
and the cohomology group functor $H^k$ is defined as
\begin{equation} \label{eq:2.2.6}
	H^k : \prohp \rightarrow \proab, \quad X=\{X_i\}_{i \in I} \mapsto H^k(X; \mathbb{Z}) := \{H^k(X_i; \mathbb{Z})\}_{i \in I}.
\end{equation}
Here, note that $\{H^k(X_i; \mathbb{Z})\}_{i \in I} = \colim_i H^k(X_i; \mathbb{Z})$.

We define homology and cohomology groups with coefficients in pro-module twisted by fundamental pro-group as follows: For a pro-space $X = \{X_i \}_{i \in I} \in \prohp$, take an Abelian group $A$ with representation $\tau :  \pi_1(X) \rightarrow  \Aut(A)$. Then, we define twisted homology group $H_k(X; A)$ and $H^k(X; A)$ as 
\begin{equation}
	H_k(X;A) = \{ H_k(X_i; A_{\phi})\}_{(i,\phi)}
\end{equation}
and
\begin{equation}
	H^k(X;A) = \colim_{(i, \phi)} H^k(X_i;  A_{\phi})
\end{equation}
where the index category consists of pairs $(i, \phi)$ such that $\phi : \pi_1(X_i) \rightarrow \Aut(A)$ is a homomorphism representing $\tau$.

\begin{remark}
The above definitions of homotopy groups and (co)homology groups are the same as in \cite[\S 2]{AM}	
\end{remark}

Next, we also define twisted homology group with coefficients in a pro-$\llbracket \widehat{\mathbb{Z}} \pi_1(X) \rrbracket$-module   $B$ as follows: For a pro space $X = \{X_i\}_{i \in \mathsf{I}}$ and a pro-module $B = \{B_i\}_{i \in \mathsf{I}}$ where each $B_i$ is a pro-$\llbracket \widehat{\mathbb{Z}} \pi_1(X_i) \rrbracket$-module. Then, we set
\begin{equation}
	H_n(X; B) = \{ H_n(X_i; B_i) \}_{i \in \mathsf{I}}
\end{equation}

Note that this definition corresponds to that of the profinite group homology group with coefficients in $B$ (cf. Lemma 2.2).

\subsection{Completion in the homotopy category}

With notations in Section 2.1 and 2.2, let us recall the notion of pro-$\mathcal{C}$ completion in the homotopy category. 

Let $\mathcal{C}$ be the class of finite groups, i.e., $\mathcal{C}$ is the full subcategory of $\mathsf{Grp}$ whose objects are finite groups (see \cite[Section 2.1]{RZ}). \footnote{In this article we restrict ourselves to the class of finite groups, although  the original article (\cite[\S 3]{AM})  treats more general class of groups.} Denote by $\proc$ the category of pro-objects in the class $\mathcal{C}$, called {\it pro-$\mathcal{C}$ groups}.  Then, in our situation, the pro-$\mathcal{C}$ completion of a group is restated as follows: Since each pro-object in $\proc$ can be viewed as pro-objects in $\progrp$, so we have an inclusion functor 
\begin{equation} \label{eq:2.3.1}
	\proc \rightarrow \progrp
\end{equation}
and it has a left adjoint, called the {\it pro-$\mathcal{C}$ completion}
\begin{equation}  \label{eq:2.3.2}
	\widehat{\ }\  : \progrp \rightarrow \proc
\end{equation}
By composing it with $\mathsf{Grp} \rightarrow \progrp$, we have a functor
\begin{equation}  \label{eq:2.3.3}
	\mathsf{Grp} \rightarrow \progrp \overset{\widehat{}}{\rightarrow} \proc
\end{equation}
and this functor is exactly the pro-$\mathcal{C}$ completion of a group in usual sense. The pro-$\mathcal{C}$ completion is equivalent to the following.

\begin{itemize}
\item 	for $G \in \Ob(\mathsf{Grp})$, the canonical map $G \rightarrow \widehat{G}$ is universal with respect to maps into pro-$\mathcal{C}$ groups,
\item for $G \in \Ob(\mathsf{Grp})$, the functor $\Hom(G, \cdot) : \mathcal{C} \rightarrow \mathsf{Sets}$ restricted to the category $\mathcal{C}$ is  pro-representable (see \cite[Appendix \S 2]{AM}) in $\mathcal{C}$. In particular, the pro-$\mathcal{C}$ completion $\widehat{G}$ represents the functor $\Hom(G, \cdot)$.
\end{itemize}

Similar to the group case, Artin and Mazur conceived the notion of pro-$\mathcal{C}$ completion of pointed CW complexes as follows. For a class of groups $\mathcal{C}$, we denote by $\mathcal{C}\mathsf{H}_{\ast}$ the full subcategory of $\mathsf{H}_{\ast}$ whose objects are pointed connected CW complexes with homotopy groups all in $\mathcal{C}$-groups. Then, we have the following analogue of the group completion for pointed spaces.
\begin{theorem}[{\cite[Theorem (3.4)]{AM}}]  \label{thm:2.3.1}
	The inclusion functor
	\begin{equation}  \label{eq:2.3.4}
		\prochp \rightarrow \prohp 
	\end{equation}
	has a left adjoint
	\begin{equation}  \label{eq:2.3.5}
		\widehat{} \  : \prohp \rightarrow \prochp.
	\end{equation}
Equivalently, we have the followings.
\begin{itemize}
	\item For $X \in \prohp$, there exists an object $\widehat{X} \in \prochp$, called {\it pro-$\mathcal{C}$ completion} of $\widehat{X}$ with a map $X \rightarrow \widehat{X}$ which is universal with respect to maps into objects of $\mathcal{C}\mathsf{H}_{\ast}$. 
	\item For $X \in \prohp$, the functor $[X, \cdot] : \mathcal{C}\mathsf{H} \rightarrow \mathsf{Sets}$ restricted to the category $\mathcal{C}\mathsf{H}$  is pro-representable in $\mathcal{C}\mathsf{H}$. In particular, the pro-$\mathcal{C}$ completion $\widehat{X}$ represents the functor $[X, \cdot]$.
\end{itemize}
\end{theorem}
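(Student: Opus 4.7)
The plan is to construct, for every $X \in \prohp$, a pro-object $\widehat{X} \in \prochp$ that pro-represents the functor $[X, \cdot]$ restricted to $\mathcal{C}\mathsf{H}_{\ast}$. Once such a pro-representation is available, the adjunction identity $\Hom_{\prochp}(\widehat{X}, Y) \cong \Hom_{\prohp}(X, Y)$ for $Y \in \prochp$ follows formally from the definition of morphisms in pro-categories as $\varprojlim \colim$ of hom-sets. In particular, the two bullet-point characterizations will be manifestly equivalent reformulations of the same adjunction.

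First I would reduce to the case of a single pointed connected CW complex $X \in \mathsf{H}_{\ast}$. Given such an $X$, consider the category $J_X$ whose objects are homotopy classes of maps $f \colon X \to Y$ with $Y \in \mathcal{C}\mathsf{H}_{\ast}$, and whose morphisms $(f \colon X \to Y) \to (f' \colon X \to Y')$ are homotopy classes of maps $Y \to Y'$ making the triangle commute up to homotopy. The forgetful functor $J_X \to \mathcal{C}\mathsf{H}_{\ast}$ sending $f$ to $Y$ is the candidate for $\widehat{X}$, provided we can verify that $J_X$ is cofiltering and that it admits a small cofinal subcategory so as to yield an honest pro-object.

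The main obstacle is the cofilteredness of $J_X$. For property (i), given two maps $f_i \colon X \to Y_i$ with $Y_i \in \mathcal{C}\mathsf{H}_{\ast}$, one takes the basepoint component of $Y_1 \times Y_2$: since $\mathcal{C}$ (finite groups) is closed under finite products, this again lies in $\mathcal{C}\mathsf{H}_{\ast}$, and the diagonal map $(f_1,f_2) \colon X \to Y_1 \times Y_2$ supplies the common object. For property (ii), given two parallel morphisms $g, g' \colon (f \colon X \to Y) \to (f' \colon X \to Y')$, one must exhibit a refinement $(h \colon X \to Z)$ with a map $Z \to Y$ equalizing $g$ and $g'$. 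The classical construction uses the mapping cylinder or double mapping cylinder to form a homotopy coequalizer and then applies a Postnikov truncation at each level to force the homotopy groups to lie in $\mathcal{C}$; the key closure properties needed are that $\mathcal{C}$ is closed under subgroups, quotients, and extensions, which for finite groups is automatic. One must then verify that the composite $X \to Y \to (\text{truncated coequalizer})$ extends naturally, and that the resulting object sits in $\mathcal{C}\mathsf{H}_{\ast}$.

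Finally, I would extend to a general pro-space $X = \{X_i\}_{i \in I}$ by assembling the completions $\widehat{X_i}$ into a pro-object indexed by the Grothendieck-style category of pairs $(i, f_i \colon X_i \to Y)$ with $Y \in \mathcal{C}\mathsf{H}_{\ast}$, which remains cofiltering by the single-space argument applied componentwise combined with the cofilteredness of $I$. The adjunction is then checked by the chain
\begin{equation*}
\Hom_{\prohp}(X, Y) \;=\; \varprojlim_j \colim_i [X_i, Y_j] \;=\; \varprojlim_j \colim_i \colim_{(f_i \colon X_i \to Z) \in J_{X_i}} [Z, Y_j] \;=\; \Hom_{\prochp}(\widehat{X}, Y),
\end{equation*}
where the middle equality uses that every homotopy class $X_i \to Y_j$ with $Y_j \in \mathcal{C}\mathsf{H}_{\ast}$ is cofinally represented in $J_{X_i}$. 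Pro-representability of $[X, \cdot] \colon \mathcal{C}\mathsf{H} \to \mathsf{Sets}$ is then the construction itself, and the universal map $X \to \widehat{X}$ is the unit of the established adjunction.
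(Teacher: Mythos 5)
First, note that the paper itself gives no proof of this statement: it is quoted verbatim from Artin--Mazur \cite{AM}, Theorem (3.4), so your proposal can only be compared with that source. Your overall architecture does match it: index the completion by the comma category $J_X$ of maps $X \to Y$ with $Y \in \mathcal{C}\mathsf{H}_{\ast}$, prove $J_X$ is cofiltering and admits a small cofinal subcategory, and deduce the adjunction formally from the $\varprojlim\colim$ description of morphisms in pro-categories. The product argument for condition (i) and the formal chain of identifications at the end are fine.

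The gap is in condition (ii), the only genuinely hard point, where your construction has the wrong variance. Given parallel morphisms $u, v \colon (Y,f) \to (Y',f')$ in $J_X$ you must produce a map $w \colon Z \to Y$ \emph{into} $Y$, under $X$, with $uw \simeq vw$; this is a homotopy \emph{limit} problem, solved by the homotopy equalizer (double mapping \emph{path} space) $E = Y \times^{h}_{Y' \times Y'} Y'$, which comes with $w \colon E \to Y$, a canonical homotopy $uw \simeq vw$, and a lift of $f$ obtained from the homotopy $uf \simeq f' \simeq vf$. A mapping cylinder, double mapping cylinder, or homotopy coequalizer is a pushout: it yields a space receiving a map \emph{from} $Y'$ and equalizes $u,v$ only by postcomposition, which does nothing for cofilteredness of $J_X$. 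Moreover, ``Postnikov truncation to force the homotopy groups to lie in $\mathcal{C}$'' cannot work: truncation kills high-degree homotopy groups but does not make an infinite group finite. The correct mechanism, and the reason the hypotheses on $\mathcal{C}$ matter, is that the fibration $\Omega Y' \to E \to Y$ gives a long exact sequence exhibiting each $\pi_n$ of the relevant component of $E$ as an extension of a subgroup of $\pi_n(Y)$ by a quotient of $\pi_{n+1}(Y')$, so closure of the class of finite groups under subgroups, quotients and extensions places $E$ in $\mathcal{C}\mathsf{H}_{\ast}$ with no truncation at all. Finally, the smallness (solution-set) condition needed for $\widehat{X}$ to be an honest pro-object is acknowledged but never addressed; in Artin--Mazur it is settled using Postnikov systems with homotopy groups in $\mathcal{C}$, which is where Postnikov towers actually enter the argument.
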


In terms of Theorem \ref{thm:2.3.1}, we get a functor
\begin{equation}
	\mathsf{H}_{\ast}  \rightarrow \prohp \overset{\widehat{}}{\rightarrow} \prochp  
\end{equation}
which gives the pro-$\mathcal{C}$ completion of (pointed) spaces.

\begin{notation}
For two pro-spaces $X$ and $Y$, we say that $X$ and $Y$ are weak (homotopy) equivalent and denoted by $X \sim Y$ if they are $\natural$-isomorphic in \cite[Definition (4.2)]{AM}.	
\end{notation}

\begin{remark}
It is known that for two pro-spaces $X$ and $Y$, $X \sim Y$ if and only if $\pi_k(X) \simeq \pi_k(Y)$ for any $k$ (\cite[Corollary (4.4)]{AM}).
\end{remark}

\begin{example}[{\cite[p. 72]{Su}, \cite[Example 6.12]{AM}, and \cite[I.2.6]{Ser2}}]  \label{ex:2.3.1}
	Let $\mathcal{C}$ be the class of finite groups. 
	\begin{enumerate}[label=(\roman*)]  		\item The profinite completion of the circle $S^1$ is given as
		\begin{equation}  \label{eq:2.3.6}
			\widehat{S}^1 \sim  \widehat{K(\mathbb{Z}, 1)}  \sim K(\widehat{\mathbb{Z}}, 1).
		\end{equation}
		\item The profinite completion of the infinite dimensional complex projective space $\mathbb{CP}^{\infty} \sim K(\mathbb{Z}, 2)$ is given as
		\begin{equation}  \label{eq:2.3.7}
			\widehat{\mathbb{CP}^{\infty}}  \sim K(\widehat{\mathbb{Z}}, 2) \sim \widehat{K(\mathbb{Q} / \mathbb{Z}, 1)}
		\end{equation}
		\item  More generally, for finitely generated Abelian group $G$, we have
		\begin{equation}  \label{eq:2.3.8}
			\widehat{K(G, n)} \sim K(\widehat{G}, n) \sim K(G \otimes_{\mathbb{Z}} \widehat{\mathbb{Z}}, n).
		\end{equation}
		
		\item The profinite completion of an Eilenberg-MacLane space $K(\SL_2(\mathbb{Z}), 1)$ is given by
		\begin{equation}  \label{eq:2.3.9}
			K(\SL_2(\mathbb{Z}), 1) \  \widehat{}\  \sim K(\widehat{\SL_2(\mathbb{Z})}, 1),
		\end{equation}
		but for an integer $n \geq 3$, the profinite completion of an Eilenberg-MacLane space  $K(\SL_n(\mathbb{Z}), 1)$ is {\it not} weak equivalent to $K(\widehat{\SL_n(\mathbb{Z})}, 1)$;
		\begin{equation}  \label{eq:2.3.10}
			K(\SL_n(\mathbb{Z}), 1) \  \widehat{}\  \not\sim K(\widehat{\SL_n(\mathbb{Z})}, 1)
		\end{equation}
	\end{enumerate}
	\end{example}

	\begin{remark}
	As we see in Example \ref{ex:2.3.1} (iv), for a group $G$, the profinite completion of  Eilenberg-MacLane space $K(G,1)$ is not weak equivalent to $K(\widehat{G}, 1)$ in general. The weak equivalence $\widehat{K(G,1)} \sim K(\widehat{G}, 1)$ holds if and only if $G$ is ``$\mathcal{C}$-good'' in the sense of Serre (\cite[I.2.6]{Ser2}, see also \cite[Corollary (6.6)]{AM}). Here, for a pro-group $G$, $G$ is said to be {\it ``$\mathcal{C}$-good'' in the sense of Serre} if for every twisted abelian $\widehat{G}$-module $A \in \mathcal{C}$, there are isomorphisms
	\begin{equation}
		H^q(\widehat{G}; A) \simeq H^q(G; A) \quad \text{for any $q \geq 0$}.
	\end{equation}

	Finite groups, finitely generated Abelian groups, finitely generated free groups, and (pure) braid groups are examples of ``$\mathcal{C}$-good'' groups ([loc.cit.]).	
	\end{remark}

At the end of this section, we give a pro-space analogue of well-known isomorphism between the twisted (co)homology groups of $K(G,1)$ and the twisted (co)homology groups of $G$. In the following sections, we heavily use these isomorphisms.

\begin{lemma}  \label{lem:2.3.3}
Let $G$ be a profinite group and $K(G,1)$ be an Eilenberg-Maclane pro-space of type $(G,1)$. Then, the following statements hold.
\\
(1) Let $A$ be a discrete $G$-module. Then, for $n \geq 0$ we have
\begin{equation} \label{eq:2.3.13}
	H^n(K(G, 1); A) \simeq H^n(G; A)
\end{equation}
(2) Let $B$ be a profinite right $\llbracket \widehat{\mathbb{Z}} G \rrbracket$-module. Then, for each $n \geq 0$  we have
\begin{equation} \label{eq:2.3.14}
	H_n(K(G, 1); B) \simeq H_n(G; B)
\end{equation}
\end{lemma}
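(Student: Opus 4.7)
My plan is to reduce both statements to the classical Eilenberg--MacLane theorem for finite groups and then pass to (co)limits. Write $G = \varprojlim_{i \in I} G_i$ as a cofiltered inverse limit of its finite quotients $G_i = G/U_i$ by open normal subgroups $U_i$. This presents $K(G,1)$ in $\prochp$ by the pro-system $\{K(G_i,1)\}_{i \in I}$: each $K(G_i,1)$ lies in $\mathcal{C}\mathsf{H}_\ast$ since its only nontrivial homotopy group is the finite group $G_i$, and the inverse system satisfies the universal property of Theorem \ref{thm:2.3.1}. At each finite level, the classical identifications $H^n(K(G_i,1); M) \cong H^n(G_i; M)$ and $H_n(K(G_i,1); N) \cong H_n(G_i; N)$ hold for any (finite) $G_i$-module $M$ or $N$.

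For part (1), the continuity of the action $\tau : G \to \Aut(A)$ realizes $A$ as a filtered union of finite $G$-submodules, each of which is acted on through some finite quotient $G_i$ via a homomorphism $\phi : G_i \to \Aut(A)$. The definition of twisted cohomology for pro-spaces recalled above then gives
\[
H^n(K(G,1); A) \;=\; \varinjlim_{(i,\phi)} H^n(K(G_i,1); A_\phi) \;\cong\; \varinjlim_{(i,\phi)} H^n(G_i; A_\phi) \;=\; H^n(G; A),
\]
where the last equality is the standard presentation of continuous cohomology of a profinite group with discrete coefficients. For part (2), write the profinite $\llbracket\widehat{\mathbb{Z}} G\rrbracket$-module $B$ as an inverse system $\{B_i\}$ of finite $G_i$-modules along a cofinal subsystem; termwise application of the classical isomorphism then yields
\[
H_n(K(G,1); B) \;=\; \{H_n(K(G_i,1); B_i)\}_i \;\cong\; \{H_n(G_i; B_i)\}_i \;=\; H_n(G; B)
\]
as pro-objects in $\proab$.

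The delicate point, and the only real obstacle I expect, is the bookkeeping of indexing categories: on the topological side the (co)limits are taken over pairs $(i,\phi)$ tracking the representation of $\pi_1$, while on the group-theoretic side one (co)limits over open normal subgroups of $G$ (respectively, over cofinal systems of finite quotient modules) acting compatibly on $A$ or $B$. Checking cofinality between these indexing systems, and compatibility of the transition maps induced by the classical Eilenberg--MacLane isomorphisms, is routine once the definitions are unpacked but requires careful attention. Beyond this, the result is a direct consequence of finite-level Eilenberg--MacLane together with the exactness properties of cofiltered (co)limits.
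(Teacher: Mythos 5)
Your argument for part (1) is essentially the paper's own proof: present $K(G,1)$ by the system $\{K(G/U,1)\}_U$ over open normal subgroups, apply the classical Eilenberg--MacLane isomorphism at each finite level, and identify the colimit with continuous cohomology (the paper cites \cite[Proposition 6.5]{RZ} for this last step). The only divergence is in part (2), where the paper simply deduces homology from cohomology by Pontryagin duality, whereas you reprove it directly by a termwise application of the finite-level isomorphism to the inverse system $\{B_i\}$; both routes are valid, and yours costs only the extra bookkeeping you already flag, while the paper's duality argument avoids it entirely.
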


\begin{proof}
(2) follows from (1) by Pontryagin duality, so it suffices to show  (1).
Since $K(G,1)$ is unique up to weak equivalence (\cite[Corollary 4.14]{AM}), we may assume that $K(G,1)$ is given by $\{K(G/U,1)\}_{U \in \mathcal{U}}$ where $\mathcal{U}$ is the set of all open normal subgroup of $G$, and hence $A=\{A^U\}_{U \in \mathcal{U}}$ where $A^U$ is a $G/U$-module. Then, one gets
\begin{eqnarray*}
	H^n(K(G,1); A) & = &\colim_U H^n(K(G/U, 1); A^U) \\
	&\simeq & \colim_U H^n(G/U; A^U)\\
	&\simeq & H^n(G; A).
\end{eqnarray*}
Here, in the second isomorphism we use the fact $H^n(K(G/U,1); A^U)\simeq H^n(G/U; A^U)$ and in the final isomorphism is \cite[Proposition 6.5]{RZ}. 
\end{proof}

\subsection{\'Etale fundamental group}
In Section 2.4 to 2.7, to fix our notations, we recall basics of  \'etale fundamental groups, rational tangential points, Galois action on the \'etale fundamental groups in our situation following \cite{W3}. For more details, see \cite{SGA1}, \cite{D}, \cite{Na} and \cite{Wo}.

Let $K$ be a number field, i.e., a finite field extension of the rational number field $\mathbb{Q}$. Fix an embedding $K \hookrightarrow \mathbb{C}$. Let $\overline{K}$ be a fixed algebraic closure in $\mathbb{C}$. Let $n$ be a fixed integer with $n \geq 2$. Take $n$ distinct points $\{a_1, \ldots, a_n\}$ in $K$. Let us consider $X=\mathbb{P}^1_K \setminus \{\infty, a_1, \ldots, a_n \} \rightarrow \Spec(K)$ the projective line minus $\{\infty, a_1, \ldots, a_n\}$ defined over $K$. Then, by taking a geometric point $b$ of $X$ (that is,  a morphism $b : \Spec(\Omega) \rightarrow X$ where $\Omega$ is an algebraically closed field containing $K$), we obtain a functor called {\it fiber functor}
\begin{equation}
	F_{b} : \mathsf{Et}_X \rightarrow \mathsf{Sets}
\end{equation}
which takes a finite \'etale cover $Y \overset{\pi}{\rightarrow} X$ to $\pi^{-1}(b)$. Here $\pi^{-1}(b)$ is the set of geometric points of $Y$ over $b$, that is, the set of  morphisms $b_Y : \Spec(\Omega) \rightarrow Y$ such that $\pi \circ b_Y = b$.  Note that a fiber functor plays the role of the set of fiber over a chosen base point as in the usual covering space theory. 

In the above situation, a path between two base points are described in terms of natural transformations between functors. More precisely, given two geometric points $b_1, b_2$ of $X$, we denote by $\Path(b_1, b_2)$ the set of invertible natural transformation on the corresponding fiber functors $F_{b_1} \rightarrow F_{b_2}$. Note that $\Path(b_1, b_2)$ can be though of as a pro-object of finite sets $\{F_{b_1}(Y) \rightarrow F_{b_2}(Y)\}_{Y \in \mathsf{Et}_X}$ in $\prosets$ and so $\Path(b_1, b_2)$ is equipped with profinite topology. Note that the products of paths $\gamma_1 \in \Path(b_1, b_2)$ and $\gamma_2 \in \Path(b_2, b_3)$ are given by $\gamma_2 \cdot \gamma_1$. When $b_1 = b_2=b$, set
\begin{equation}
	\Piet(X, b) := \Path(b,b)=\Aut(F_b)
\end{equation}
and call it {\it \'etale fundamental group} of $X$ based at $b$. Note that $\Piet(X, b)$ is naturally equipped with profinite topology, i.e., $\Piet(X,b)$ is a profinite group.

\subsection{Galois action on \'etale fundamental groups}

Keep the notation as in Section 2.4.  Let  $X_{\overline{K}}:= X \times_{\Spec (K)} \Spec( \overline{K})$ the base change of $X$ to $\overline{K}$. Note that  each element $g$ of $G_K=\Gal(\overline{K}/K)$ acts on $X_{\overline{K}}$ via the functor $-\otimes_g \overline{K}$ given by the pullback of $g : \Spec(\overline{K}) \rightarrow \Spec(\overline{K})$.

A $K$-rational point $b: \Spec (K) \rightarrow X$ induces a geometric point $\bar{b} : \Spec (\overline{K}) \rightarrow X_{\overline{K}}$ of the base change $X_{\overline{K}}$. Therefore, the section  $\bar{b} : \Spec(\overline{K}) \rightarrow X_{\overline{K}}$ of the structure morphism induced by the rational point $\bar{b}$ gives rise to the following commutative diagram involving $g \in G_K$ action:

\begin{center}
\begin{tikzcd}
\Spec (\overline{K}) \ar[r, "g"] \ar[d, "\bar{b}"'] & \Spec (\overline{K}) \ar[d, "\bar{b}"] \\
X_{\overline{K}} \ar[r, "g"'] & X_{\overline{K}}
\end{tikzcd}
\end{center}

From the commutative diagram, we obtain the induced natural isomorphism $(-\otimes_g \overline{K})^{\ast}F_{\bar{b}} \rightarrow F_{\bar{b}}$ between fiber functors. Thus, we finally get a $G_K$-action on the set of profinite paths between two such geometric points.

\subsection{Rational tangential points}

Next, let us recall the notion of rational tangential points. For more details, see \cite[\S 15]{D} and \cite{Na}.

Let $\overline{X}$ be the smooth compactification of $X$. That is, $\overline{X}=\mathbb{P}^1_{K}$.  Take a rational point $x : \Spec(K) \rightarrow \overline{X}$ of $\overline{X}$. Then, the complete local ring of $\overline{X}$ at the image of $x$ is isomorphic to the ring of formal power series $K[[\epsilon]]$ in  $\epsilon$ over $K$. Such an isomorphism induces the map from the function field of $X$ into the field of Laurent power series $K(\!(\epsilon)\!)$ in $\epsilon$ over $K$.  This  gives rise to a map,  called a {\it rational tangential point},

\begin{equation}
	\vec{b} : \Spec (K(\!(\epsilon)\!)) \rightarrow X.
\end{equation}

As in the case of rational points, such a rational tangential point $\Spec(K(\!(\epsilon)\!)) \rightarrow X$ produces a map $\Spec(\overline{K}(\!(\epsilon)\!)) \rightarrow X_{\overline{K}}$  which factors through the generic point of $X$.

Now, let us consider the field of Puiseux power series $\cup_{n \in \mathbb{Z}_{>0}} \overline{K}(\!(\epsilon^{1/n})\!)$ in the symbol ``$\epsilon^{1/n}$'' with $(\epsilon^{1/mn})^m = \epsilon^{1/n}$ for $m, n \in \mathbb{Z}_{>0}$. Note that the field of Puiseux power series is algebraically closed since the field $\overline{K}$ is an algebraically closed field of characteristic 0 (\cite[Chap. IV \S 2 Proposition 8]{Ser1}). Then, through the canonical embedding $K(\!(\epsilon)\!) \rightarrow \cup_{n \in \mathbb{Z}_{>0}} \overline{K}(\!(\epsilon^{1/n})\!)$, one obtains a geometric point
\begin{equation}
	\vec{b}_{\Omega} : \Spec \Omega  \rightarrow X_{\overline{K}}
\end{equation}
where we set $\Spec \Omega := \Spec (\cup_{n \in \mathbb{Z}_{>0}} \overline{K}(\!(\epsilon^{1/n})\!))$. By the same manner as Section 2.5, the geometric point $\vec{b}_{\Omega}$ gives rise to  $G_K$-action between fiber functors $F_{\vec{b}_{\Omega}}$ where $G_K$-action on $\Spec(\Omega)$ is induced by $G_K$-action on coefficients of Puisueux series. In this way, we get $G_K$-action on profinite paths between two geometric points induced by rational tangential points. In the following, by a tangential base point, we mean a geometric point obtained by a rational tangential point.

\subsection{Galois action on $\Piet(\mathbb{P}^1_K \setminus \{\infty, a_1, \ldots, a_n \})$}

Finally, we recall a Galois action on the  \'etale fundamental groups of $\mathbb{P}^1_K \setminus \{\infty, a_1, \ldots, a_n \}$. Let $b_0, b_1, \ldots, b_n$ be tangential points based at $a_0:= \infty, a_1, \ldots, a_n$ respectively.

To begin with, we recall the notion of geometric generators of $\pi_1(X(\mathbb{C}), v)$ for a base point or a tangential point $v$ following \cite[\S2]{Wo}.

First, we consider the case of $v \in X(\mathbb{C})$. As in the Figure 1, we take a path $\gamma_i \in \Path(v, b_i)$ from $v$ to $b_i$ for $0 \leq i \leq n$ so that any two paths do not intersect and no path self-intersect. For each puncture $a_i$ $(0 \leq i \leq n)$, we take a small loop $l_i \in \Path(b_i, b_i)$ in the opposite clockwise direction. Then, we  define the loop based at $v$ by
\begin{equation} \label{eq:-2.0.0}
	x_i := \gamma_i^{-1} l_i \gamma_i \quad (0 \leq i \leq n)
\end{equation}
Moreover, we may assume that the indices are chosen so that, if we take a small loop in the opposite clockwise direction starting from $\gamma_1$, then we meet successively $\gamma_2,\ldots, \gamma_n, \gamma_0$. Then, $x_1, \ldots, x_{n}, x_0$ forms a generator of $\pi_1(X(\mathbb{C}, v)$ subject to the relation $x_{0} x_n \cdots x_1 = 1$. In particular, $\pi_1(X(\mathbb{C}), v)$ is isomorphic to free group $F_n$ generated by $x_1, \ldots, x_n$.
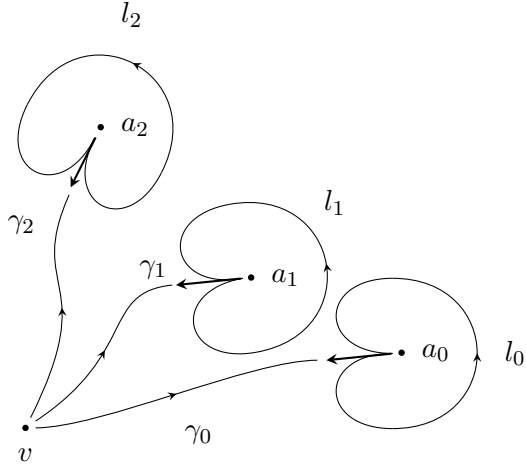
\begin{figure}[h] \label{fig:2.7.1}
\begin{center}
\begin{tikzpicture}
	\node[label=below:$v$] (A) at (0,0) {};
    \node[label=right:$a_0$] (B) at (5,1){};
    \node[label=right:$a_1$] (C) at (3,2){};
    \node[label=right:$a_2$] (D) at (1,4){};
	\begin{scope}[xshift=4.88cm, yshift=0.99cm]
		\draw (0,0) .. controls (-1,0) and (-1,1) .. (0,1);
		\draw (0,-1) .. controls (1.5,-1) and (1.5,1) .. (0,1) [arrow inside={end=stealth,opt={scale=1}}{0.5}];
		\draw (0,-1) .. controls (-1, -1) and (-1,0) .. (0,0);
	\end{scope}
	\begin{scope}[xshift=2.89cm, yshift=1.99cm,rotate=10]
		\draw (0,0) .. controls (-1,0) and (-1,1) .. (0,1);
		\draw (0,-1) .. controls (1.5,-1) and (1.5,1) .. (0,1) [arrow inside={end=stealth,opt={scale=1}}{0.5}];
		\draw (0,-1) .. controls (-1, -1) and (-1,0) .. (0,0);
	\end{scope}
	\begin{scope}[xshift=0.917cm, yshift=3.85cm, rotate=63]
		\draw (0,0) .. controls (-1,0) and (-1,1) .. (0,1);
		\draw (0,-1) .. controls (1.5,-1) and (1.5,1) .. (0,1) [arrow inside={end=stealth,opt={scale=1}}{0.5}];
		\draw (0,-1) .. controls (-1, -1) and (-1,0) .. (0,0);
	\end{scope}
	\draw (A) .. controls (1,0) and (3,0.9) .. (3.88,0.9) [arrow inside={end=stealth,opt={scale=1}}{0.5}];;
	\draw (A) .. controls (1.5, 1) and (1, 1.8) .. (1.95,1.9) [arrow inside={end=stealth,opt={scale=1}}{0.5}];
	\draw (A) .. controls (1, 2) and (0, 1.8) .. (0.58, 3.1) [arrow inside={end=stealth,opt={scale=1}}{0.5}];
    \draw [fill=black] (A) circle (1pt);
    \draw [fill=black] (B) circle (1pt);
    \draw [fill=black] (C) circle (1pt);
    \draw [fill=black] (D) circle (1pt);
    \node[label=right:$l_0$] at (6.1,1) {};
    \node[label=right:$l_1$] at (3.7,3) {};
    \node[label=right:$l_2$] at (1,5.5) {};
    \node[label=below:$\gamma_0$] at (2.3,0.3) {};
    \node[label=below:$\gamma_1$] at (1.7,2.5) {};
     \node[label=left:$\gamma_2$] at (0.4,2.7) {};
     \draw[->, >=stealth, thick] (B) -- (4,0.9);
     \draw[->, >=stealth, thick] (C) -- (2,1.9);
     \draw[->, >=stealth, thick] (D) -- (0.6,3.2);
\end{tikzpicture}
\end{center}
\caption{Geometric generators associated to a base point $v$}
\end{figure}

Similarly, we consider the case of tangential base point $v$. Without loss of generality, we may assume that $v$ is based at the point $a_0$. Let $v' \in X(\mathbb{C})$ be a point near $a_0$ in the direction $v$. Take a path $\gamma \in \Path(v, v')$ from $v$ to $v'$. Then, we take paths $\gamma_i' \in \Path(v', b_i)$ $(1 \leq i \leq n)$ with the same condition as the former case. Similarly, we may assume that if we take a small loop around $v'$ starting $\gamma$ then we meet successively $\gamma_1',\ldots, \gamma_n'$. We set $\gamma_i \in \Path(b_i, b_i)$ by $\gamma_i:= \gamma_i' \cdot \gamma$ for $i \geq 1$ and $\gamma_0 \in \Path(b_0, b_0)$ as the trivial path from $b_0$ to itself. Then, we define loops based at $v$, $x_0, x_1, \ldots, x_n$, by obvious manner. Finally, we get generator of $\pi_1(X(\mathbb{C}), v)$. We call a generating set chosen in such a way {\it geometric generator} of $X$. 

Let $\tau$ be a geometric generator of $X$, then we have an isomorphism
\begin{equation}
	\tau : F_n \overset{\sim}{\rightarrow} \pi_1(X(\mathbb{C}), v).
\end{equation}

Then, by \cite[XII. Corollarie 5.2]{SGA1}, we have $\Piet(X_{\overline{K}}, v) \simeq \widehat{\pi}_1(X(\mathbb{C}), v)$. By taking maximal pro-$\ell$ quotient of them, $\tau$ induces the isomorphism  
\begin{equation}
	\tau : \Fn \overset{\sim}{\rightarrow} \widehat{\pi}^{(\ell)}_1(X(\mathbb{C}), v) \simeq  \Piet(X_{\overline{K}}, v)^{(\ell)}.
\end{equation}

Since each choice of geometric generator determines such isomorphism $\tau$, we identify a geometric generator and associated isomorphism $\tau : \Fn \overset{\sim}{\rightarrow} \Piet(X_{\overline{K}}, v)^{(\ell)}$. Note that $\tau$ gives a parametrization of $\Piet(X_{\overline{K}}, v)^{(\ell)}$  by (non-commutative) coordinate system $x_1,\ldots, x_n$ of $\Fn$.
\begin{figure}[h]
\begin{center}
\begin{tikzpicture}
	\node[label=below:$v'$] (A) at (0,0) {};
    \node[label=left:$a_0$] (B) at (-2,0){};
    \node[label=right:$a_1$] (C) at (4,0.5){};
    \node[label=right:$a_2$] (D) at (2,3){};
    \draw [fill=black] (A) circle (1pt);
    \draw [fill=black] (B) circle (1pt);
    \draw [fill=black] (C) circle (1pt);
    \draw [fill=black] (D) circle (1pt);
    \draw (A) .. controls (1,0) and (3, 0.2) .. (2.84, 0.2) [arrow inside={end=stealth,opt={scale=1}}{0.5}];
    \draw (A) .. controls (1,0.4) and (1, 2) .. (1.35,2.355) [arrow inside={end=stealth,opt={scale=1}}{0.5}] ;
    \draw (B) .. controls (-0.5, -0.2) and (-0.5, -0.1) .. (A) [arrow inside={end=stealth,opt={scale=1}}{0.5}];
    
    \begin{scope}[xshift=-1.9cm, yshift=-0.02cm, rotate=180]
		\draw (0,0) .. controls (-1,0) and (-1,1) .. (0,1);
		\draw (0,-1) .. controls (1.5,-1) and (1.5,1) .. (0,1) [arrow inside={end=stealth,opt={scale=1}}{0.5}];
		\draw (0,-1) .. controls (-1, -1) and (-1,0) .. (0,0);
	\end{scope}
	\begin{scope}[xshift=3.88cm, yshift=0.46cm, rotate=10]
		\draw (0,0) .. controls (-1,0) and (-1,1) .. (0,1);
		\draw (0,-1) .. controls (1.5,-1) and (1.5,1) .. (0,1) [arrow inside={end=stealth,opt={scale=1}}{0.5}];
		\draw (0,-1) .. controls (-1, -1) and (-1,0) .. (0,0);
	\end{scope}
	\begin{scope}[xshift=1.864cm, yshift=2.864cm, rotate=43]
		\draw (0,0) .. controls (-1,0) and (-1,1) .. (0,1);
		\draw (0,-1) .. controls (1.5,-1) and (1.5,1) .. (0,1) [arrow inside={end=stealth,opt={scale=1}}{0.5}];
		\draw (0,-1) .. controls (-1, -1) and (-1,0) .. (0,0);
	\end{scope}
	\draw[->,>=stealth, thick] (B) -- (-0.7,0);
	\draw[->,>=stealth,thick] (C) -- (2.9,0.2);
	\draw[->,>=stealth, thick] (D) -- (1.4,2.4);
	\node[label=right:$l_1$] at (5,0.9) {};
    \node[label=right:$l_0$] at (-2.4,1.2) {};
     \node[label=right:$l_2$] at (0.5,4) {};
    \node[label=below:$\gamma_1$] at (2,0.2) {};
    \node[label=below:$\gamma_2$] at (1.4,2) {};
     \node[label=left:$\gamma$] at (-0.5,-0.4) {};
      \node[label=above:$v$] at (-0.6,0.1) {};
\end{tikzpicture}	
\end{center}
\caption{Geometric generators associated to a tangential base point $v$}
\end{figure}

Then, let us compute the $G_K$-action on generator $x_1,\ldots, x_n$ for a fixed geometric generator $\tau$ with respect to a tangential base point $b_{\infty}:=b_{0}$  at $a_0 = \infty$. By definition of $x_i$ and $G_K$-action on profinite paths, for any $\sigma \in G_K$ and $1 \leq i \leq n$, we have
\begin{align*}
	\sigma(x_i) & = (\sigma(\gamma_i))^{-1} \sigma(l_i) \sigma(\gamma_i)\\
	& = (\gamma_i^{-1} (\sigma(\gamma_i)))^{-1} (\gamma_i^{-1} \sigma(l_i) \gamma_i) (\gamma_i^{-1} (\sigma(\gamma_i))) \\
	& = y_i(\sigma)^{-1} (\gamma_i^{-1} \sigma(l_i) \gamma_i) y_i(\sigma)
\end{align*}
where we set $y_i(\sigma):= \gamma_i^{-1} \cdot (\sigma(\gamma_i))$. By \cite[XIII. Corollarie 2.12]{SGA1}, we know that $l_i$ generates the inertia group at $a_i$. Therefore, $\sigma(l_i) = l_i^{\chi(\sigma)}$ where $\chi : G_K \rightarrow \mathbb{Z}_{\ell}^{\times}$ is a continuous homomorphism. Moreover, it turns out that this $\chi$ is actually the $\ell$-cyclotomic character $\chi : G_K \rightarrow \mathbb{Z}_{\ell}^{\times}$ (cf.\cite[Observation 3.8]{W3}). Thus, we conclude that the $G_K$-action on $\Fn \overset{\tau}{\simeq} \pi_1(X_{\overline{K}}, b_{\infty})^{(\ell)}$ is given by the form
\begin{equation}
	\sigma(x_i) = y_i(\sigma)^{-1} x_i^{\chi(\sigma)} y_i(\sigma), \quad (1 \leq i \leq n)
\end{equation}
and
\begin{equation} \label{eq:2.8.-1}
	\sigma(x_0) = x_0^{\chi(\sigma)}
\end{equation}
where $y_i(\sigma) = \gamma_i^{-1} \cdot (\sigma(\gamma_i)) \in \Fn$, and $\chi : G_K \rightarrow \mathbb{Z}_{\ell}^{\times}$ is the $\ell$-cyclotomic character defined as $\sigma(\zeta_{\ell^k}) = \zeta_{\ell^k}^{\chi(\sigma)}$ for all $k \geq 1$. Here, $(\zeta_{\ell^k})_{k \geq 1}$ is a fixed system of primitive $\ell^k$-th roots of unity $\zeta_{\ell^k}$ in $\overline{K}$.

\subsection{Magnus embedding}

Here, we recall the notion of Magnus embedding of pro-$\ell$ free groups. For more details, see \cite[Section 6.3]{Mo2}, \cite{MKS} and \cite[Section 5.9]{RZ}.
 
 Let $\mathbb{Z}_{\ell} \langle \langle X_1,\ldots, X_n \rangle \rangle$ be the ring of non-commutative power series, sometimes called a {\it Magnus algebra},  in indeterminates $X_1, \ldots, X_n$ with coefficients in $\mathbb{Z}_{\ell}$. Its multiplicative group of units is denoted by $\mathbb{Z}_{\ell} \langle \langle X_1,\ldots, X_n \rangle \rangle^{\times}$. Let $\llbracket \mathbb{Z}_{\ell} \Fn \rrbracket$ be the complete group algebra of $\Fn$ over $\mathbb{Z}_{\ell}$. We denote by $\epsilon : \llbracket \mathbb{Z}_{\ell} \Fn \rrbracket \rightarrow \mathbb{Z}_{\ell}$ the augmentation map with augmentation ideal  $(\!(I\Fn)\!):=(\!( I_{\mathbb{Z}_{\ell}} \Fn)\!):=\Ker(\epsilon)$.

Then, the {\it pro-$\ell$ Magnus embedding}
\begin{equation} \label{eq:2.8.2}
	\Theta: \Fn \rightarrow \mathbb{Z}_{\ell} \langle \langle X_1, \ldots, X_n \rangle \rangle^{\times} 
\end{equation}
is defined by $x_i \mapsto 1 +X_i$ and $x_i^{-1} \mapsto 1 - X_i + X_i^2 - \cdots$ for $1 \leq i \leq n$. It is known that $\Theta$ actually  gives an injective (continuous) homomorphism. In addition, $\Theta$ extends to an isomorphism of (topological) $\mathbb{Z}_{\ell}$-algebra
\begin{equation} \label{eq:2.8.3}
	\Theta : \llbracket \mathbb{Z}_{\ell} \Fn \rrbracket \overset{\sim}{\rightarrow} \mathbb{Z}_{\ell} \langle \langle X_1, \ldots, X_n \rangle \rangle.
\end{equation}

Next, we recall pro-$\ell$ Magnus coefficients. For $\alpha \in \llbracket \mathbb{Z}_{\ell} \Fn \rrbracket$, the image $\Theta(\alpha)$, called {\it pro-$\ell$ Magnus expansion} of $\alpha$, is given by
\begin{equation} \label{eq:2.8.4}
	\Theta(\alpha) = \epsilon(\alpha) + \sum_{k=1}^{\infty} \sum_{\substack{I=(i_1 \cdots i_k) \\ 1 \leq i_1, \ldots, i_k \leq n}} \mu(I; \alpha) X_{i_1}\cdots X_{i_k}
\end{equation}
where $I=(i_1\cdots i_k)$ denotes multi-index of length $k$ with $1 \leq i_1, \ldots, i_k \leq n$. For a multi-index $I$, the coefficient $\mu(I;\alpha) \in \mathbb{Z}_{\ell}$ is called the {\it pro-$\ell$ Magnus coefficient} of $\alpha$ with respect to $I$.

Note that the pro-$\ell$ Magnus coefficient may be thought of as a map from $\Fn$ to $\mathbb{Z}_{\ell}$ as follows: For a multi-index $I=(i_1 \cdots i_k)$, the map
\begin{equation} \label{eq:2.8.5}
	\mu(I; -) : \Fn \rightarrow \mathbb{Z}_{\ell}
\end{equation}
sends each element $x$ in $\Fn$ to  the coefficient $\mu(I; x)$ of $X_{i_1} \cdots X_{i_k}$ in $\Theta(x)$.

\begin{remark} \label{rem:2.8.6}
(1) For an integer $k \geq 1$, let $\mathbb{Z}_{\ell}\langle \langle X_1, \ldots, X_n \rangle \rangle_{\deg \geq k}$ denote the subring of $\mathbb{Z}_{\ell}\langle \langle X_1, \ldots, X_n \rangle \rangle$ consisting of elements with degree $\geq k$. Then, for $k \geq 1$, $\Theta$ gives identification 
\begin{equation}
	(\!(I \Fn)\!)^k \overset{\sim}{\longrightarrow} \mathbb{Z}_{\ell}\langle \langle X_1, \ldots, X_n \rangle \rangle_{\deg \geq k}.
\end{equation}
(2) For $k \geq 1$, $x \in \Gamma_k(\Fn)$ if and only if the pro-$\ell$ Magnus coefficients $\mu(I; x)$ vanish for all multi-indices $I$ of length $< k$.
\end{remark}

\subsection{$\ell$-adic Milnor invariant}

In \cite{KMT}, we introduced the notion of $\ell$-adic Milnor invariants for Galois element following the analogies between absolute Galois groups and pure braid groups. This section recalls these invariants in our situation.

As in section 2.7, we consider the representation
\begin{equation}
	G_K \rightarrow \Aut(\Piet(X_{\overline{K}}, b_{\infty})^{(\ell)}) \overset{\tau}{\simeq} \Aut(\Fn)
\end{equation} 
which is completely determined by $n$-tuple of pro-$\ell$ words $y_1(\sigma), \ldots, y_n(\sigma)$. As in \cite[Lemma 3.2.1]{KMT}, we can choose $y_i(\sigma)$ such that the coefficient of $[x_i]$ in $[y_i(\sigma)] \in H_1(\Fn; \mathbb{Z}_{\ell})$ is zero ($1 \leq i \leq n)$. In the following, we assume that $y_1(\sigma), \ldots, y_n(\sigma)$ satisfy such condition.

Then, for multi-index $(i_1\cdots i_k i)$, the {\it $\ell$-adic Milnor $\mu$-invariant} $\mu(\sigma; (i_1 \cdots i_ki))$ is defined as
\begin{equation}
	\mu(\sigma; (i_1 \cdots i_ki)):= \mu((i_1 \cdots i_k); y_i(\sigma))
\end{equation}
the pro-$\ell$ Magnus coefficient of $y_i(\sigma)$ with respect to the multi-index $(i_1\cdots i_k i)$.

\begin{remark}
(1) Our definition is different from that of $\cite{KMT}$ where $\ell$-adic Milnor invariants are defined as Magnus coefficients for $y_i(\sigma)^{-1}$ $(1 \leq i \leq n)$ in our notation.
(2) The $\ell$-adic Milnor $\mu$-invariants are considered as invariants for triple $(\sigma, \tau, \{a_1, \ldots, a_n \})$ where  $\sigma$ is an element  in $G_K$, $\tau$ is a geometric generator associated with a tangential base point at $\infty$, $\{a_1, \ldots, a_n \}$ are $K$-rational points in $X$. (3) In \cite{HiMo}, Hirano-Morishita study mod $\ell$ version of $\ell$-adic Milnor invariants. (4) Our $\ell$-adic Milnor invariants are also computable from $\ell$-adic iterated integral by Wojtkowiak (\cite{Wo}).
\end{remark}

\section{Pro-$\ell$ Orr invariants for absolute Galois groups} \label{sec:3}

In \cite{O1}, K. Orr gives homotopy invariants, called {\it Orr invariants}, for a based link in $S^3$ . The invariants take values in the third homotopy group of a space $K_k$, we call {\it Orr space} following \cite{C}, constructed from a free nilpotent group of nilpotency class $k$. The Orr invariant for a based link has all information of Milnor invariants of length $l$ with $k \leq l \leq 2k-1$. In this section, we define a pro-$\ell$ analogue of Orr invariants for some automorphism groups of the pro-$\ell$ free group of rank $n$.

\subsection{Profinite group $G$-action on $\Fn$ and Johnson filtration}

Here, we consider the profinite group $G$-action on $\Fn$ which is a model of the Galois action on \'etale fundamental group of the punctured projective line as in 2.7. In addition, we recall the notion of Johnson filtration of $G$.

Let $G$ be a profinite group which acts on $\Fn$ continuously by\begin{equation}
	g(x_i) = y_i(g)^{-1} x_i^{\chi(g)} y_i(g) \quad (1 \leq i \leq n)
\end{equation}
with
\begin{equation}
	g(x_0) = x_0^{\chi(g)}
\end{equation}
where $\chi: G \rightarrow \mathbb{Z}_{\ell}^{\times}$ be a (continuous) homomorphism, $y_i(g)$ is an element  in $\Fn$ for $1 \leq i \leq n$ and $x_0:=(x_n \cdots x_1)^{-1}$. Then, we can choose $y_i(g)$ uniquely so that the coefficient of $[x_i]$ is 0 in its abelianization $[y_i(g)] \in \Fn/\Gamma_2(\Fn)$ (cf.\cite[Lemma 3.2.1]{KMT}).

Note that by considering the action of the symmetric group of degree $n+1$ on indices of generators $x_1, \ldots, x_n, x_{0}$, we can freely choose an index $i$ such that $G$ acts on $x_i$ by $x_i^{\chi(g)}$. For simplicity, throughout this section, we fix the above choice of generators and $G$-action on them.

Next, we recall the notion of Johnson filtration of $G$ associated with lower central series of $\Fn$. For a non-negative integer $k \geq 0$, we denote by $G[k]$ the subgroup of $G$ consisting of elements that act on $\Fn/\Gamma_{k+1}(\Fn)$ trivially and call it $k$-th Johnson subgroup of $G$. Then, we have the following descending filtration, called the {\it Johnson filtration}, of $G$
\begin{equation}
	G = G[0] \supset G[1] \supset G[2] \supset \cdots \supset G[k] \supset \cdots.
\end{equation}

\begin{remark}
	(1) $G[1]=\ker(\chi) \subset G$. (2) Let $k \geq 1$ be an integer. For $g \in G$, $g \in G[k]$ if and only if $y_i(g) \in \Gamma_k(\Fn)$ for any $1 \leq i \leq n$.
\end{remark}

\subsection{The space $E_{g}$ associated with $G[1]$-action on $\Fn$}

In this section, we define a space $E_{g}$ associated with a profinite group action on $\Fn$ as a pro-space realisation of ``pro-$\ell$ link group'' of $g \in G[1]$ defined in \cite[Section 3.3]{KMT}.

Let $D_n$ denote the $n$-punctured 2-disc. Take a base point $b \in \partial D_n$ and fix once and for all. We denote by $\widehat{D}_{n}^{(\ell)}$ the pro-$\ell$ completion of $D_n$. Note that $\widehat{D}_{n}^{(\ell)}$ is an Eilenberg-MacLane space $K(\Fn, 1)$ of type $(\Fn, 1)$ since $D_n=K(F_n, 1)$ and $F_n$ is ``$\ell$-good'' in the sense of Serre. We note that, up to weak  equivalence, there is no difference between $n$-punctured disk and $n+1$-punctured 2-dimensional sphere. 

To identify $\pi_1(\widehat{D}_{n}^{(\ell)}, b)$ with $\Fn$, we prepare the notion of basing as in \cite{O1}. 

\begin{definition}[Basing]
Let $\mathcal{T}$ denote a set of isomorphisms $\tau : \Fn \rightarrow \pi_1(\widehat{D}_{n}^{(\ell)}, b)$ satisfy the following property:  For any $\tau, \tau' \in \mathcal{T}$ there is the unique isomorphism $ \psi :  \Fn \rightarrow \Fn$ such that
\begin{equation}
	\psi(x_i) = w_i^{-1} x_{i} w_i \quad (1 \leq i \leq n), \quad \psi(x_n\cdots x_1) = x_n\cdots x_1,
\end{equation}
and 
\begin{equation}
	 \tau \circ \psi = \tau'
\end{equation}
for some $w_1, \ldots, w_n \in \Fn$. We call an element $\tau \in \mathcal{T}$ a {\it basing}.
\end{definition}

\begin{example}
Geometric generators associated with a tangential base point at $\infty$ in Section 2.7 are examples of basing.	
\end{example}

By choosing a basing $\tau$, we identify $\pi_1(\widehat{D}_{n}^{(\ell)}, b)$ with $\Fn$, and so $\widehat{D}_{n}^{(\ell)}$ with $K(\Fn, 1)$.  Note that, for any group $G$, the group of homotopy classes of homotopy equivalences of $K(G, 1)$ is canonically identified with automorphism group of $G$. Thus, corresponding to the $G[1]$-action on $\Fn$, there is the  homotopy equivalence $\varphi_g : K(\Fn, 1) \rightarrow K(\Fn ,1)$ for any $g \in G[1]$. In particular, we obtain the homotopy equivalence
\begin{equation}
	\varphi_g : \widehat{D}_{n}^{(\ell)} \rightarrow \widehat{D}_{n}^{(\ell)}.
\end{equation} 

More concretely, $\varphi_g$ is given as an inverse limit $\{(\varphi_g)_N\}_{N \in \mathcal{N}}$ corresponding to $\widehat{D}_{n}^{(\ell)}=\{(\widehat{D}_{n}^{(\ell)})_N\}_{N \in \mathcal{N}}$. Here, $(\varphi_g)_N : (\widehat{D}_{n}^{(\ell)})_N \rightarrow (\widehat{D}_{n}^{(\ell)})_N$ is a class of homotopy equivalence and $\mathcal{N}$ denotes the set of open normal subgroups of $\Fn$. Then, for each $N \in \mathcal{N}$, we  consider the mapping torus associated to $(\varphi_{g})_N$. The resulting pro-space of mapping tori is denoted by $M_{g}$. Note that $\pi_1(M_{g})$ is ``good'' in the sense of Serre by homotopy exact sequence of fiber space and \cite[(d) page 16]{Ser2}. Thus, its pro-$\ell$ completion denoted by $\widehat{M}_{g}^{(\ell)}$ is an Eilenberg-MacLane space of type $(\pi, 1)$. 

Similarly, we glue $D^2 \times \widehat{S}^1{}^{(\ell)}$ with $M_{g}$ as follows. There is an inclusion $\iota : S^1 \times \widehat{S}^1{}^{(\ell)}  \hookrightarrow D^2 \times \widehat{S}^1{}^{(\ell)}$ induced by the canonical inclusion $S^1 \rightarrow \partial D^2$ and the identity map of $\widehat{S}^1{}^{(\ell)}$. Let $\eta : \widehat{S}^1{}^{(\ell)} \rightarrow \widehat{D}_{n}^{(\ell)}$ be the continuous map induced from the inclusion $S^1 \hookrightarrow \partial D_n$. By using $\eta$, we consider a continuous map $\phi : [0,1] \times \widehat{S}^1{}^{(\ell)} \rightarrow \widehat{D}_{n}^{(\ell)}  \times [0,1]$ which sends $(x,y)$ to $(\eta(y), x)$. Composition of $\phi$ and the canonical projection $\widehat{D}_{n}^{(\ell)}  \times [0,1] \rightarrow M_{g}$ induces a continuous map $\phi: S^1 \times \widehat{S}^1{}^{(\ell)} \rightarrow M_{g}$ denoted by the same $\phi$ by abuse of notation. Then, we define $E_{g}$ as the (homotopy) pushout of $\phi: S^1 \times \widehat{S}^1{}^{(\ell)}  \rightarrow M_{g}$ and  $\iota : S^1 \times \widehat{S}^1{}^{(\ell)} \hookrightarrow  D^2 \times \widehat{S}^1{}^{(\ell)}$:

\begin{center}
		\begin{tikzcd}
		 S^1 \times \widehat{S}^1{}^{(\ell)} \arrow[d, "\phi"] \arrow[r, "\iota"] 
		& D^2 \times \widehat{S}^1{}^{(\ell)} \arrow[d]
  \\
		M_{g}  \arrow[r]
		& M_{g} \cup_{\phi} (D^2 \times \widehat{S}^1{}^{(\ell)}) =:E_{g}
		\end{tikzcd}
	\end{center} 
Here, note that the pro-spaces $E_g$ that appear in the above commutative diagram is homotopy equivalent to the pro-space obtained as the result of the (homotopy) pushout.	
	
By construction, we see that the fundamental group of  $E_{g}$ has the following presentation
\begin{equation} \label{eq:4.1.9}
		\pi_1(E_{g}) \overset{\tau}{\simeq} \langle x_1, \ldots, x_n \ | \ [x_1, y_1(g)] = \cdots [x_n, y_n(g)]=1 \rangle
\end{equation}
In particular, for each integer $k \geq 1$, the $k$-th nilpotent quotient of $\pi_1(E_{g})$ has the following presentation 
\begin{equation}
	\pi_1(E_{g})/ \Gamma_{k}(\pi_1(E_{g})) \overset{\tau}{\simeq} \langle x_1, \ldots, x_n \ | \ [x_1, y_1(g)] = \cdots [x_n, y_n(g)]=1, \Gamma_{k}(\Fn) \rangle
\end{equation}

Denote by $E$ the set of (homotopy equivalence classes of) $E_{g}$ for any $g \in G[1]$ constructed as above. To make its dependency on the choice of the basing $\tau$ explicit, we also denote it by $(E_g, \tau)$. Note that in this expression $(E_g, \tau)$, $g$ should be understood as an element in $\Aut(\Fn)$ acting on generators of $\Fn$ determined by basing $\tau$.  Then, we define a product of two (homotopy equivalence classes of) pro-spaces $(E_{g_1}, \tau_1), (E_{g_2}, \tau_2) \in E$ for $g_1, g_2 \in G[1] \subset \Aut(\Fn)$. First observe that we may define a product of $(E_{g_1}, \tau_1), (E_{g_2}, \tau_2)$ as $(E_{g_1 g_2}, \tau_1)$ when $\tau_2 = \tau_1$ by gluing $\tau_2$ with $\tau_1 \circ g_1$ as in the case of braid groups. For general case, we set
\begin{equation}
	(E_{g_1}, \tau_1) \circ (E_{g_2}, \tau_2) := (E_{g_1}, \tau_1) \circ_{\tau_1 \circ g_1 \sim \tau_2} (E_{g_2}, \tau_2) := (E_{g_1 \cdot({}^{\psi_{12}}g_2) }, \tau_1).
\end{equation}
Here, we glue two basing $\tau_1 \circ g_1$ and $\tau_2$ in terms of the automorphism $\psi_{12}$ of $\Fn$ such that $\tau_2 =  \tau_1 \circ \psi_{12}$ and we set ${}^{\psi_{12}}g_2:=\psi_{12} \circ  g_2 \circ \psi_{12}^{-1}$. We first show the associativity of the product $\circ$.

\begin{lemma}
The above product $\circ$ is associative.	
\end{lemma}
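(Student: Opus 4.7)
My plan is to verify associativity by directly expanding both $(\,(E_{g_1},\tau_1)\circ(E_{g_2},\tau_2)\,)\circ(E_{g_3},\tau_3)$ and $(E_{g_1},\tau_1)\circ(\,(E_{g_2},\tau_2)\circ(E_{g_3},\tau_3)\,)$ according to the defining formula, and then checking that the two resulting triples coincide. The central fact to exploit is a cocycle-type relation among the comparison automorphisms $\psi_{ij}\in\Aut(\Fn)$ defined by $\tau_j=\tau_i\circ\psi_{ij}$: namely $\psi_{12}\circ\psi_{23}=\psi_{13}$, which is forced simply by composing the two identities $\tau_2=\tau_1\circ\psi_{12}$ and $\tau_3=\tau_2\circ\psi_{23}$ and using uniqueness of $\psi_{13}$ from $\tau_3=\tau_1\circ\psi_{13}$.

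Next I would use the fact that inner-conjugation by a fixed automorphism is a group homomorphism of $\Aut(\Fn)$, i.e. ${}^{\psi}(gh)=({}^{\psi}g)({}^{\psi}h)$ and ${}^{\psi\circ\psi'}g={}^{\psi}({}^{\psi'}g)$. Applying these identities to the right-hand bracketing, one expands
\begin{equation*}
(E_{g_1},\tau_1)\circ(E_{g_2\cdot({}^{\psi_{23}}g_3)},\tau_2)=(E_{g_1\cdot{}^{\psi_{12}}(g_2\cdot({}^{\psi_{23}}g_3))},\tau_1)=(E_{g_1\cdot({}^{\psi_{12}}g_2)\cdot({}^{\psi_{12}\psi_{23}}g_3)},\tau_1),
\end{equation*}
while the left-hand bracketing expands to
\begin{equation*}
(E_{g_1\cdot({}^{\psi_{12}}g_2)},\tau_1)\circ(E_{g_3},\tau_3)=(E_{g_1\cdot({}^{\psi_{12}}g_2)\cdot({}^{\psi_{13}}g_3)},\tau_1).
\end{equation*}
The cocycle identity $\psi_{12}\circ\psi_{23}=\psi_{13}$ then collapses the two expressions.

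I expect no real obstacle, since the argument is essentially a bookkeeping check. The only mild subtlety is justifying that the product is well-defined at the level of homotopy equivalence classes of pro-spaces, which should follow from the construction of $E_g$ as a (homotopy) pushout depending only on the isomorphism class of the $\Aut(\Fn)$-action picked out by $(g,\tau)$, together with the fact that the basings $\tau, \tau'$ differ by an automorphism $\psi$ of the prescribed meridian-conjugation form (so relations $[x_i,y_i(g)]=1$ transport correctly under $\psi$). Once this is in place, the chain of equalities above gives associativity.
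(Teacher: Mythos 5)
Your proposal is correct and follows essentially the same route as the paper's proof: expand both bracketings, use that ${}^{\psi}(-)$ is a homomorphism and that ${}^{\psi_{12}}\circ{}^{\psi_{23}}={}^{\psi_{12}\psi_{23}}$, and collapse via the cocycle identity $\psi_{12}\circ\psi_{23}=\psi_{13}$, which follows from the uniqueness clause in the definition of basing. The extra remark on well-definedness at the level of homotopy classes is a harmless addition not present in (but consistent with) the paper's argument.
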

\begin{proof}
By direct computation, we get
\begin{equation} \label{eq:0.0.0}
	((E_{g_1}, \tau_1) \circ (E_{g_2}, \tau_2)) \circ (E_{g_3}, \tau_3) = (E_{g_1\cdot ({}^{\psi_{12}}g_2) \cdot ({}^{\psi_{13}}g_3)}, \tau_1)
\end{equation}
and 
\begin{equation} \label{eq:0.0.1}
	(E_{g_1}, \tau_1) \circ ((E_{g_2}, \tau_2) \circ (E_{g_3}, \tau_3)) = (E_{g_1 \cdot {}^{\psi_{12}}(g_2 \cdot {}^{\psi_{23}}g_3)}, \tau_1).
\end{equation}
By definition of basing, one sees that $ \psi_{12} \cdot \psi_{23} =\psi_{13}$. In terms of this equation, we get $\eqref{eq:0.0.0}=\eqref{eq:0.0.1}$. Thus, the product $\circ$ is associative.
\end{proof}

If we vary basing, the above product $\circ$ only satisfy associativity. However, if we fix a basing $\tau$, the above product $\circ$ endow the set $E(\tau)$ consisting of  $(E_g, \tau)$ for all $g \in G$ the structure of group as follows.
\begin{lemma}
	Let $\tau$ be a fixed basing. The above $\circ$ gives a group structure on $E(\tau)$. In particular, the assignment
	\begin{equation}
		G[1] \rightarrow E(\tau), \quad g \mapsto (E_{g}, \tau)
	\end{equation}
	is a group homomorphism.
\end{lemma}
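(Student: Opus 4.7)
The plan is to leverage the fact that when a basing $\tau$ is fixed throughout, the gluing automorphism $\psi_{12}$ appearing in the general product formula is forced to equal $\Id$. Indeed, by the defining clause of the set of basings $\mathcal{T}$, the automorphism $\psi_{12}$ is the \emph{unique} element of $\Aut(\Fn)$ of the prescribed form satisfying $\tau_1 \circ \psi_{12} = \tau_2$. Setting $\tau_1 = \tau_2 = \tau$ makes $\Id$ an obvious solution, hence the unique one. Therefore ${}^{\psi_{12}} g_2 = g_2$, and the general product formula collapses to
\[
(E_{g_1}, \tau) \circ (E_{g_2}, \tau) = (E_{g_1 g_2}, \tau).
\]

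Once this reduction is established, the group axioms on $E(\tau)$ transport directly from those of $G[1]$. Closure is immediate, and associativity follows from the previous lemma (or equivalently from associativity in $G[1]$ via the displayed formula). For the identity element I would take $(E_e, \tau)$ with $e \in G[1]$ the neutral element; since $y_i(e) = 1$ for every $i$, the self-map $\varphi_e$ is homotopic to the identity, so $M_e \sim \widehat{D}_n^{(\ell)} \times \widehat{S}^1{}^{(\ell)}$, and the formula yields $(E_e, \tau) \circ (E_g, \tau) = (E_g, \tau) = (E_g, \tau) \circ (E_e, \tau)$. Inverses are supplied by $(E_g, \tau)^{-1} = (E_{g^{-1}}, \tau)$, and the homomorphism assertion in the ``in particular'' clause is immediate from the displayed identity.

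The main step to verify is the uniqueness of $\psi_{12}$ in the definition of basing, on which the whole reduction hinges; all remaining steps are then a formal transport of the group structure of $G[1]$. A secondary care is that the product on $E(\tau)$ is defined on homotopy-equivalence classes of pro-spaces, so one should briefly confirm that the displayed equality holds at the level of pro-homotopy types, which is clear because both sides are constructed from the single automorphism $g_1 g_2 \in G[1] \subset \Aut(\Fn)$ via the same mapping-torus-plus-solid-torus recipe.
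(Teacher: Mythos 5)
Your proposal is correct and follows essentially the same route as the paper: once the product formula collapses to $(E_{g_1},\tau)\circ(E_{g_2},\tau)=(E_{g_1 g_2},\tau)$ for a fixed basing (which the paper takes from the remark preceding the general definition, and which you justify via the uniqueness of $\psi_{12}$ forcing $\psi_{12}=\Id$), the group axioms transport formally from $G[1]$ exactly as in the paper's verification of the unit $(E_e,\tau)$ and inverse $(E_{g^{-1}},\tau)$, with associativity supplied by the previous lemma.
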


\begin{proof}
	For the unit element $e \in G[1]$ and for any $g \in G[1]$, we have
	\begin{equation}
		(E_{g}, \tau) \circ (E_{e}, \tau) = (E_{ge}, \tau) = (E_{g}, \tau) = (E_{eg}, \tau) = (E_{e},\tau) \circ (E_{g}, \tau).
	\end{equation}
	Hence, $(E_{e}, \tau)$ is the unit in $E(\tau)$.
	For any $g \in G[1]$, we have
	\begin{equation}
		(E_{g}, \tau) \circ (E_{g^{-1}},\tau) = (E_{e}, \tau) = (E_{g^{-1}}, \tau) \circ (E_{g}, \tau)
	\end{equation}
	so $(E_{g^{-1}}, \tau)$ is the inverse element of $(E_{g}, \tau)$. We know that $\circ$ is associative. Thus, $(E(\tau), \circ)$ forms a group.
\end{proof}

\begin{remark}
	The above definition of the product of $E(\tau)$ corresponds to the product structure of the braid groups (with canonical basing) if we consider that $G[1]$ is a profinite analogue of the pure braid group as in \cite{KMT}.
\end{remark}

We complete this section by computing the homology group of $E_{g}$ with coefficients in $\mathbb{Z}_{\ell}$.

\begin{proposition}
	The third homology group $H_3(E_g; \mathbb{Z}_{\ell})$ is isomorphic to $\mathbb{Z}_{\ell}$ and the higher homology group $H_i(E_g;\mathbb{Z}_{\ell})=0$ for $i > 3$.
\end{proposition}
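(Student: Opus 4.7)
The plan is to run the Mayer--Vietoris sequence on the pushout square defining $E_g$, working with $\mathbb{Z}_{\ell}$-coefficients throughout. Write $T := S^1 \times \widehat{S}^1{}^{(\ell)}$ and $ST := D^2 \times \widehat{S}^1{}^{(\ell)}$. By the K\"unneth formula, $T$ has the homology of a torus, so $H_0(T) = H_2(T) = \mathbb{Z}_{\ell}$, $H_1(T) = \mathbb{Z}_{\ell}^{2}$, and $H_i(T) = 0$ for $i \geq 3$, while $ST \simeq \widehat{S}^1{}^{(\ell)}$ gives $H_0(ST) = H_1(ST) = \mathbb{Z}_{\ell}$ and $H_i(ST) = 0$ for $i \geq 2$. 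For $M_g$ I would apply the Wang-type long exact sequence of the pro-$\ell$ mapping torus fibration $\widehat{D}_{n}^{(\ell)} \to M_g \to \widehat{S}^1{}^{(\ell)}$ with monodromy $\varphi_g$. By Lemma 2.3.3 and the $\ell$-goodness of $F_n$, $H_*(\widehat{D}_{n}^{(\ell)}; \mathbb{Z}_{\ell}) = H_*(\Fn; \mathbb{Z}_{\ell})$ is concentrated in degrees $0$ and $1$ with ranks $1$ and $n$ respectively, since $\Fn$ has cohomological dimension one. Because $g \in G[1]$ acts trivially on the abelianization $\Fn/\Gamma_2(\Fn)$, the maps $(\varphi_g)_* - 1$ vanish on $H_0$ and $H_1$, and the Wang sequence collapses to give
\[
H_0(M_g) = \mathbb{Z}_{\ell}, \quad H_1(M_g) = \mathbb{Z}_{\ell}^{n+1}, \quad H_2(M_g) = \mathbb{Z}_{\ell}^{n}, \quad H_i(M_g) = 0 \ \ (i \geq 3).
\]

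For every $i > 3$, every term appearing in the Mayer--Vietoris sequence vanishes, so $H_i(E_g; \mathbb{Z}_{\ell}) = 0$ as claimed. For $i = 3$ the sequence shortens to
\[
0 \longrightarrow H_3(E_g; \mathbb{Z}_{\ell}) \longrightarrow H_2(T; \mathbb{Z}_{\ell}) \xrightarrow{(\phi_*,\, -\iota_*)} H_2(M_g; \mathbb{Z}_{\ell}) \oplus H_2(ST; \mathbb{Z}_{\ell}),
\]
and since $H_2(ST) = 0$, the proposition reduces to showing that $\phi_* : H_2(T) \cong \mathbb{Z}_{\ell} \to H_2(M_g)$ is the zero map; the kernel of the above map will then be all of $\mathbb{Z}_{\ell}$, giving $H_3(E_g; \mathbb{Z}_{\ell}) \cong \mathbb{Z}_{\ell}$.

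The main obstacle is justifying this vanishing in the pro-$\ell$ setting. Geometrically, $\phi$ carries the fundamental class $[T]$ to the class of the $2$-torus $\partial D \times (\text{base circle})$ inside $M_g$, which is the boundary torus of the compact pro-$\ell$ ``$3$-manifold'' $M_g$ and therefore bounds it as a $3$-chain, so $\phi_*([T]) = 0$. An equivalent and cleaner formulation is that $E_g$ is the pro-$\ell$ Dehn filling of $M_g$ along this boundary torus, hence a closed $3$-dimensional pro-$\ell$ Poincar\'e complex: at each finite level $N \triangleleft \Fn$, the space $E_{g,N}$ is a genuine closed $3$-manifold with $H_3(E_{g,N}; \mathbb{Z}_{\ell}) = \mathbb{Z}_{\ell}$ by classical Poincar\'e duality, and the resulting pro-system stabilizes in the limit. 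Either argument forces $\phi_* = 0$ and concludes the proof.
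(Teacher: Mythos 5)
Your overall strategy --- Mayer--Vietoris on the defining pushout, the computation of $H_*(M_g;\mathbb{Z}_{\ell})$, and the reduction of everything to the single claim that $\phi_* : H_2(S^1\times\widehat{S}^1{}^{(\ell)};\mathbb{Z}_{\ell})\to H_2(M_g;\mathbb{Z}_{\ell})$ vanishes --- matches the paper's proof, and your Wang-sequence computation of $H_*(M_g)$ is a legitimate substitute for the paper's combination of Mayer--Vietoris, the presentation of $\pi_1(M_g)$ and Hopf's formula. The problem is the last step, which is exactly where all the content sits. Your claim that $E_{g,N}$ is ``a genuine closed $3$-manifold'' at each finite level is false: the levels of the pro-$\ell$ completion $\widehat{D}_{n}^{(\ell)}$ are, up to weak equivalence, Eilenberg--MacLane spaces $K(F_n/N,1)$ for finite $\ell$-groups $F_n/N$, which are infinite-dimensional; the mapping tori and their Dehn fillings at finite level are therefore nothing like closed $3$-manifolds, and classical Poincar\'e duality is not available. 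Likewise, ``the boundary torus bounds $M_g$ as a $3$-chain'' is not an argument here --- it is precisely a restatement of the claim that the fundamental class of $S^1\times\widehat{S}^1{}^{(\ell)}$ lies in the image of the connecting map $H_3(M_g, S^1\times\widehat{S}^1{}^{(\ell)};\mathbb{Z}_{\ell})\to H_2(S^1\times\widehat{S}^1{}^{(\ell)};\mathbb{Z}_{\ell})$, which is what needs to be proved.

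The paper closes this gap by an explicit relative-homology computation: using $H_i(\Fn;\mathbb{Z}_{\ell})=0$ for $i\ge 2$ and the inclusion $\vee_{i=1}^n\widehat{S}^1{}^{(\ell)}\to\widehat{D}_{n}^{(\ell)}\subset M_g$ determined by the basing, it identifies $H_3(M_g, S^1\times\widehat{S}^1{}^{(\ell)};\mathbb{Z}_{\ell})$ with $H_3(D^2\times S^1, S^1\times S^1;\mathbb{Z}_{\ell})\simeq\mathbb{Z}_{\ell}$ and observes that the connecting homomorphism onto $H_2(S^1\times\widehat{S}^1{}^{(\ell)};\mathbb{Z}_{\ell})\simeq\mathbb{Z}_{\ell}$ is an isomorphism; exactness of the sequence of the pair then forces $H_2(S^1\times\widehat{S}^1{}^{(\ell)};\mathbb{Z}_{\ell})\to H_2(M_g;\mathbb{Z}_{\ell})$ to be zero. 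You need some substitute for this computation --- for instance a direct Hopf-formula argument that the class corresponding to the relation $[x_0,t]=1$ in $\pi_1(M_g)$ dies in $H_2(\pi_1(M_g);\mathbb{Z}_{\ell})$ --- before the identification $H_3(E_g;\mathbb{Z}_{\ell})\simeq\mathbb{Z}_{\ell}$ is justified.
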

\begin{proof}
To begin with, we compute the homology groups of $M_g$. By considering Mayer-Vietoris sequence for each index $N$
\begin{equation*}
\cdots \rightarrow H_{\ast}((\widehat{D}_n^{(\ell)})_N; \mathbb{Z}_{\ell})  \rightarrow H_{\ast}((\widehat{D}_n^{(\ell)})_N; \mathbb{Z}_{\ell}) \oplus H_{\ast}((\widehat{D}_n^{(\ell)})_N; \mathbb{Z}_{\ell}) \rightarrow H_{\ast}((M_g)_N; \mathbb{Z}_{\ell}) \rightarrow \cdots	
\end{equation*}
and taking inverse limit $\varprojlim$, we see that $H_k(M_g; \mathbb{Z}_{\ell})=0$ for $k \geq 3$ since $H_k(\widehat{D}_n^{(\ell)}; \mathbb{Z}_{\ell})=H_k(\Fn;\mathbb{Z}_{\ell})=0$ for $k \geq 2$. Here, note that $\varprojlim$ is an exact functor on the category of compact $\mathbb{Z}_{\ell}$-modules. Since $M_g$ is $K(\pi_1(M_g),1)$ space, we know that $H_1(M_g; \mathbb{Z}_{\ell})\simeq \mathbb{Z}_{\ell}^{\oplus n+1}$ and $H_2(M_g; \mathbb{Z}_{\ell})\simeq \mathbb{Z}_{\ell}^{\oplus n}$ by group presentation of $\pi_1(M_g)$ and Hopf isomorphism. 

Next, we show that $H_3(M_g, S^1 \times \widehat{S}^1{}^{(\ell)}; \mathbb{Z}_{\ell}) \simeq \mathbb{Z}_{\ell}$. Note that $H_i(\Fn; \mathbb{Z}_{\ell})=0$ for $i \geq 2$. Thus, $H_3(M_g, S^1 \times \widehat{S}^1{}^{(\ell)}; \mathbb{Z}_{\ell}) \simeq H_3(M_g, S^1 \times \widehat{S}^1{}^{(\ell)} \cup \vee_{i=1}^n  \widehat{S}^1{}^{(\ell)}; \mathbb{Z}_{\ell}) \simeq H_3(D^2 \times S^1, S^1 \times S^1; \mathbb{Z}_{\ell})\simeq \mathbb{Z}_{\ell}$. Here, we use the canonical inclusion $\vee_{i=1}^n \widehat{S}^1{}^{(\ell)} \rightarrow \widehat{D}_n^{(\ell)} \subset M_g$ given by a chosen basing. Therefore, we see that $H_3(M_g, S^1 \times \widehat{S}^1{}^{(\ell)}; \mathbb{Z}_{\ell}) \simeq H_2(S^1 \times \widehat{S}^1{}^{(\ell)}; \mathbb{Z}_{\ell}) \simeq \mathbb{Z}_{\ell}$.

 We turn to computation of  $H_i(E_g)$. For $i \geq 3$, consider Mayer-Vietoris sequence for each index $N$ of pro-spaces
 \begin{equation*}
 	\cdots \rightarrow H_{\ast}((S^1 \times \widehat{S}^1{}^{(\ell)})_N; \mathbb{Z}_{\ell}) \rightarrow H_{\ast}((M_g)_N; \mathbb{Z}_{\ell}) \oplus H_{\ast}((D^2 \times \widehat{S}^1{}^{(\ell)})_N; \mathbb{Z}_{\ell}) \rightarrow H_{\ast}((E_g)_N; \mathbb{Z}_{\ell}) \rightarrow \cdots
 \end{equation*}
and take inverse limit $\varprojlim$. As mentioned above, note that $\varprojlim$ is an exact functor in this case. Since $H_2(S^1 \times \widehat{S}^1{}^{(\ell)}; \mathbb{Z}_{\ell}) \simeq \mathbb{Z}_{\ell} \rightarrow H_2(M_g; \mathbb{Z}_{\ell})$ has zero image from above computation, we have $H_3(E_g; \mathbb{Z}_{\ell})\simeq \mathbb{Z}_{\ell}$. Since $H_i(\Fn;\mathbb{Z}_{\ell})=0$ for $i \geq 2$ and $H_i(M_g; \mathbb{Z}_{\ell})=0$ for $i \geq 3$, we have $H_i(E_g; \mathbb{Z}_{\ell})=0$ for $i > 3$.
\end{proof}

\subsection{Pro-$\ell$ Orr space}

In this section, we define a pro-$\ell$ analogue of Orr space $K_k$ which is constructed from a finite rank free nilpotent group of nilpotency class $k$.

Let $\Fn$ be the pro-$\ell$ free group of rank $n$ freely (topologically) generated by $x_1, \ldots, x_n$ and $\Gamma_k(\Fn)$ be the $k$-th lower central subgroup of $\Fn$. We set \begin{equation} \label{eq:4.2.1}
 	K^{(\ell)}(k):= K(\Fn/\Gamma_k(\Fn), 1)
 \end{equation}
 an Eilenberg-MacLane pro-space of type $(\Fn/\Gamma_k(\Fn), 1)$. Let us denote by $\widehat{S}^1{}^{(\ell)}$ the pro-$\ell$ completion of $S^1$. Note that  $\widehat{S}^1{}^{(\ell)} \sim \widehat{K(\mathbb{Z}, 1)}^{(\ell)} \sim K(\mathbb{Z}_{\ell}, 1)$ an Eilenberg-MacLane pro-space of type $(\mathbb{Z}_{\ell}, 1)$ (cf. Example \ref{ex:2.3.1} (i)). 

Then, noting that the natural surjection $\Fn \rightarrow \Fn / \Gamma_{k}(\Fn)$ induces the inclusion $\vee \widehat{S}^1{}^{(\ell)} \rightarrow K^{(\ell)}(k)$, we give the following definition.

\begin{definition}[pro-$\ell$ Orr space] \label{def:orr}
	Let $K^{(\ell)}_k$ be the mapping cone associated to the inclusion $\vee \widehat{S}^1{}^{(\ell)} \rightarrow K^{(\ell)}(k)$. Then, we call $K^{(\ell)}_k$ the {\it pro-$\ell$ Orr space}. Let $\widehat{K}_k^{(\ell)}$ denote the pro-$\ell$ completion of $K_k^{(\ell)}$ and we call it the {\it pro-$\ell$ complete Orr space}.
	\end{definition}

Then, by definition, we immediately see the following.
\begin{lemma} \label{lem:4.2.1} 	 The pro-$\ell$ Orr space $K_k^{(\ell)}$ is simply connected pro-space. In particular, $\pi_1(K_k^{(\ell)})=0$.\\
\end{lemma}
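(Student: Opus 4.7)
The plan is to apply van Kampen's theorem (in its pro-space incarnation, i.e., level-wise) to the mapping cone decomposition. By construction, $K^{(\ell)}(k) = K(\Fn/\Gamma_k(\Fn), 1)$ has fundamental pro-group $\Fn/\Gamma_k(\Fn)$, and the wedge $\vee \widehat{S}^1{}^{(\ell)}$ has fundamental pro-group isomorphic to $\Fn$ by the computation of $\pi_1$ of a wedge of circles together with pro-$\ell$ completion. The attaching map $\iota \colon \vee \widehat{S}^1{}^{(\ell)} \to K^{(\ell)}(k)$ was set up precisely so that the induced homomorphism $\iota_{\ast} \colon \Fn \to \Fn/\Gamma_k(\Fn)$ is the canonical surjection.

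Next, I would decompose $K_k^{(\ell)} = K^{(\ell)}(k) \cup_{\iota} C(\vee \widehat{S}^1{}^{(\ell)})$ into an open neighborhood of $K^{(\ell)}(k)$ and the (contractible) cone, whose intersection deformation retracts onto $\vee \widehat{S}^1{}^{(\ell)}$. Applying van Kampen to each level $\alpha$ of the underlying inverse system and using that the cone is simply connected yields
\begin{equation*}
\pi_1\bigl((K_k^{(\ell)})_{\alpha}\bigr) \;\simeq\; \pi_1\bigl((K^{(\ell)}(k))_{\alpha}\bigr) \big/ \bigl\langle\!\bigl\langle \iota_{\ast} \pi_1\bigl((\vee \widehat{S}^1{}^{(\ell)})_{\alpha}\bigr) \bigr\rangle\!\bigr\rangle.
\end{equation*}
Passing to the pro-group $\pi_1(K_k^{(\ell)}) = \{\pi_1((K_k^{(\ell)})_{\alpha})\}_{\alpha}$ (as defined in \eqref{eq:2.2.4}), the surjectivity of $\iota_{\ast}$ forces each quotient to be trivial in a compatible way, giving $\pi_1(K_k^{(\ell)}) = 0$ as a pro-group.

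The only potentially delicate point is ensuring that the level-wise van Kampen argument assembles correctly into a pro-group statement; this is routine because the mapping cone commutes with the inverse limit presentation used to realize $K^{(\ell)}(k)$ and $\vee \widehat{S}^1{}^{(\ell)}$, and because triviality of a pro-group is a level-wise condition after passing to a cofinal subsystem. Hence $K_k^{(\ell)}$ is simply connected as a pro-space, and in particular $\pi_1(K_k^{(\ell)}) = 0$, completing the proof.
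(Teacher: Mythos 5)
Your proof is correct and is exactly the argument the paper has in mind: the paper offers no written proof, simply asserting the lemma follows ``by definition,'' and what it means is precisely your van Kampen computation --- coning off $\vee \widehat{S}^1{}^{(\ell)}$ kills the normal closure of the image of $\iota_{\ast}$, which is all of $\Fn/\Gamma_k(\Fn)$ since $\iota_{\ast}$ hits the generators $x_1,\ldots,x_n$. Your level-wise assembly into a pro-group statement is the right way to make this rigorous in the pro-space setting.
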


In terms of result of Section 6.1, we get the following proposition.
\begin{proposition} \label{lem:4.2.2}
	Let $K_k^{(\ell)}$ be a pro-$\ell$ Orr space and $\widehat{K}_k^{(\ell)}$ be a pro-$\ell$ complete Orr space. Then, following statements hold:
	\begin{enumerate}[label=$(\arabic{enumi})$]
		\item $\pi_1(K_k^{(\ell)})=\pi_1(\widehat{K}_k^{(\ell)})=0$,
		\item $\pi_2(K_k^{(\ell)}) \simeq  \pi_2(\widehat{K}_k^{(\ell)}) \simeq \mathbb{Z}_{\ell}^{\oplus  N_k}$,
		\item $\widehat{\pi}^{(\ell)}_3(K_k^{(\ell)}) \simeq \pi_3(\widehat{K}_k^{(\ell)})$.
	 \end{enumerate}
	 Here, $N_k$ is an integer defined in \eqref{eq:5.1.4}.
\end{proposition}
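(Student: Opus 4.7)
The plan is to combine the van Kampen / Hurewicz formalism for the mapping-cone description of $K_k^{(\ell)}$ with the pro-$\ell$ Igusa--Orr computation of $H_2$ of the free nilpotent pro-$\ell$ group $\Fn/\Gamma_k(\Fn)$, carried out in Section~5, together with the Artin--Mazur behaviour of pro-$\ell$ completion on simply connected pro-spaces.

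Part~(1) is essentially Lemma~\ref{lem:4.2.1}: the attaching map $\vee\widehat{S}^1{}^{(\ell)} \to K^{(\ell)}(k)$ of Definition~\ref{def:orr} induces on $\pi_1$ the quotient $\Fn\twoheadrightarrow \Fn/\Gamma_k(\Fn)$, so the mapping-cone version of van Kampen kills every generator and forces $\pi_1(K_k^{(\ell)})=0$. Pro-$\ell$ completion is a left adjoint and hence preserves this pushout presentation and simple connectivity, giving $\pi_1(\widehat{K}_k^{(\ell)})=0$ as well.

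For part~(2), Hurewicz in the simply connected setting gives $\pi_2(K_k^{(\ell)}) \simeq H_2(K_k^{(\ell)};\mathbb{Z}_\ell)$. The long exact sequence in $\mathbb{Z}_\ell$-homology of the cofibre sequence $\vee\widehat{S}^1{}^{(\ell)} \to K^{(\ell)}(k) \to K_k^{(\ell)}$ reads
\begin{equation*}
0 \to H_2(K^{(\ell)}(k);\mathbb{Z}_\ell) \to H_2(K_k^{(\ell)};\mathbb{Z}_\ell) \to H_1(\vee\widehat{S}^1{}^{(\ell)};\mathbb{Z}_\ell) \to H_1(K^{(\ell)}(k);\mathbb{Z}_\ell) \to 0,
\end{equation*}
and the rightmost map is the abelianisation $\mathbb{Z}_\ell^{\oplus n} \to (\Fn/\Gamma_k(\Fn))^{\mathrm{ab}} \simeq \mathbb{Z}_\ell^{\oplus n}$, which is an isomorphism in the chosen basis. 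Using Lemma~\ref{lem:2.3.3} to rewrite $H_2(K^{(\ell)}(k);\mathbb{Z}_\ell)$ as the group homology $H_2(\Fn/\Gamma_k(\Fn);\mathbb{Z}_\ell)$ then reduces the computation to the pro-$\ell$ Igusa--Orr calculation of Section~5, which identifies it with $\mathbb{Z}_\ell^{\oplus N_k}$.

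The comparison $\pi_2(K_k^{(\ell)})\simeq\pi_2(\widehat{K}_k^{(\ell)})$ and part~(3) both follow from the Artin--Mazur principle that, for a simply connected pro-space $X$ of suitable finite type, $\pi_n(\widehat{X}^{(\ell)}) \simeq \widehat{\pi_n(X)}^{(\ell)}$. In degree~2 this leaves $\mathbb{Z}_\ell^{\oplus N_k}$ unchanged since it is already pro-$\ell$; in degree~3 it is precisely the assertion of~(3). The only genuinely nontrivial input is the $H_2$-calculation of the free nilpotent pro-$\ell$ group, which is what Section~5 is designed to provide; everything else here is formal bookkeeping between Hurewicz, the cofibre long exact sequence, and pro-$\ell$ completion.
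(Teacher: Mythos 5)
Your proposal is correct, and parts (1) and (3) run essentially parallel to the paper's proof: (1) is immediate from Lemma \ref{lem:4.2.1} plus the fact that completion preserves (the triviality of) $\pi_1$, and (3) is exactly the Artin--Mazur comparison theorem for simply connected pro-spaces, which the paper invokes as \cite[Theorem (6.7)]{AM} after checking that $\pi_2(K_k^{(\ell)})\simeq\mathbb{Z}_{\ell}^{\oplus N_k}$ is ``good'' in the sense of Serre --- your phrase ``of suitable finite type'' is gesturing at precisely that hypothesis, and it is worth making it explicit since the degree-$3$ comparison genuinely needs the goodness of $\pi_2$, not just simple connectivity. The real divergence is in part (2). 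The paper computes $\pi_2(K_k^{(\ell)})$ by constructing the pro-$\ell$ torus bundle $\xi_k^{(\ell)}\rightarrow K_k^{(\ell)}$ of Section 6.1 and comparing its homotopy exact sequence with that of the universal circle bundle, obtaining $\pi_2(K_k^{(\ell)})\simeq\pi_1(\widehat{T}_k^{(\ell)})\simeq\mathbb{Z}_{\ell}^{\oplus N_k}$ (this is why the statement is prefaced by ``In terms of the result of Section 6.1''). You instead use the pro-space Hurewicz theorem and the cofibre long exact sequence of $\vee\widehat{S}^1{}^{(\ell)}\rightarrow K^{(\ell)}(k)\rightarrow K_k^{(\ell)}$, reducing to $H_2(\Fn/\Gamma_k(\Fn);\mathbb{Z}_{\ell})\simeq\Gamma_k(\Fn)/\Gamma_{k+1}(\Fn)\simeq\mathbb{Z}_{\ell}^{\oplus N_k}$ from Section 5 (or just the Hopf formula). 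This is more elementary --- it avoids the bundle $\xi_k^{(\ell)}$ entirely and is the same mechanism the paper itself uses for $H_3(K_k^{(\ell)};\mathbb{Z}_{\ell})$ in Lemma \ref{lem:4.4.2} --- at the cost of trading one forward reference (to Section 6.1) for another (to Section 5). Two small points of hygiene: the Hurewicz isomorphism produces integral homology of the pro-space, which you then silently identify with $\mathbb{Z}_{\ell}$-homology; this is harmless here because every level of these pro-spaces has homology an $\ell$-group, but it deserves a sentence. And the identification of the rightmost map in your exact sequence with the isomorphism $\mathbb{Z}_{\ell}^{\oplus n}\rightarrow(\Fn/\Gamma_k(\Fn))^{\mathrm{ab}}$ uses $k\geq 2$, which is implicit throughout but should be stated.
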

\begin{proof}
(1) is a consequence of Lemma \ref{lem:4.2.1}. \\
(2) By the proof of Lemma \ref{eq:7.1.8}, we know that $\pi_2(K_k^{(\ell)}) \simeq \mathbb{Z}_{\ell}^{\oplus  N_k}$, so $\widehat{\pi}^{(\ell)}_2(K_k^{(\ell)}) \simeq \mathbb{Z}_{\ell}^{\oplus N_k} \simeq \pi_2(K_k^{(\ell)})$. Since $K_k^{(\ell)}$ is simply connected by Lemma \ref{lem:4.2.1}, we get $\widehat{\pi}_2^{(\ell)}(K_k^{(\ell)}) \simeq \pi_2(\widehat{K}_k^{(\ell)})$ by  \cite[Corollary (6.2)]{AM}. Thus, there is a desired isomorphism. \\
(3) Since $K_k^{(\ell)}$ is simply connected and $\pi_2(K_k^{(\ell)})$ is ``good'' in the sense of Serre, we get $\widehat{\pi}^{(\ell)}_3(K_k^{(\ell)}) \simeq \pi_3(\widehat{K}_k^{(\ell)})$ by applying \cite[Theorem (6.7)]{AM}.
\end{proof}

\subsection{Construction of pro-$\ell$ Orr invariants}

Now, we are in the position to define a pro-$\ell$ Orr invariant for $g \in G[k]$. From now on, we take an element of $k$-th Johnson subgroup $G[k]$ for some integer $k \geq 1$.  Then, $y_1(g), \ldots, y_n(g)$ lie on  $\Gamma_k(\Fn)$. For each $1 \leq i \leq n$, we denote by $\widehat{L}_i^{(\ell)}(g)$ a closed path in $\widehat{D}_{n}^{(\ell)}$  which represents the pro-$\ell$ word $y_i(g)$ in $\pi_1(\widehat{D}_{n}^{(\ell)}, b) \overset{\tau}{\simeq} \Fn$. Since $y_i(g) \in \Gamma_k(\Fn)$ for all $i$, there is a composition map
\begin{equation} \label{eq:4.3.1}
	\bigsqcup_{i=1}^n \widehat{L}^{(\ell)}_i(g) \rightarrow E_{g} \rightarrow K(\pi_1(E_{g})/\Gamma_k(\pi_1(E_{g}), 1)
\end{equation}
whose image is nullhomotopic. By basing $\tau$, we have an induced isomorphism
\begin{equation} \label{eq:4.3.2}
	\tau : \Fn / \Gamma_k(\Fn) \overset{\sim}{\longrightarrow} \pi_1(E_{g}) / \Gamma_k(\pi_1(E_{g})).
\end{equation}
Thus, we get a map
\begin{equation} \label{eq:4.3.3}
	\bigvee^n \widehat{S}^1{}^{(\ell)} \overset{\tau}{\rightarrow} K(\pi_1(E_{g}) / \Gamma_k(\pi_1(E_{g}), 1) \simeq K^{(\ell)}(k).
\end{equation}

By homotopy extension theorem, we have a following commutative diagram.
\begin{center}
		\begin{tikzcd}
		 \bigsqcup_{i=1}^n (\widehat{L}^{(\ell)}_i(g) \times \widehat{S}^1{}^{(\ell)}) \arrow[d] \arrow[r, "proj"] 
		& \bigvee^n \widehat{S}^1{}^{(\ell)} \arrow[d]
  \\
		E_{g}  \arrow[r]
		& K^{(\ell)}(k)
		\end{tikzcd}
	\end{center} 
where the vertical map is an inclusion corresponding to relations $[x_i, y_i(g)]$ $(1 \leq i \leq n)$ of $\pi_1(E_{g})$ and the $proj$ sends each $\widehat{S}^1{}^{(\ell)}$ of $\widehat{L}_i(g) \times \widehat{S}^1{}^{(\ell)}$ to the $i$-th summand of $\bigvee^n \widehat{S}^1{}^{(\ell)}$. 

\begin{lemma} \label{lem:4.3.6}
Consider  the composition 
\begin{equation}
	E_{g} \rightarrow K^{(\ell)}(k) \rightarrow K_k^{(\ell)}.
\end{equation}
Then, the above map induces a canonical map
\begin{equation} \label{eq:4.3.4}
	\rho^{(\ell)} : S^3 \longrightarrow K_k^{(\ell)}(\rightarrow \widehat{K}_k^{(\ell)}).
\end{equation}
\end{lemma}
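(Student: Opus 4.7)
The plan is to show that the composite $f : E_g \to K^{(\ell)}(k) \to K_k^{(\ell)}$ can be deformed so as to send the $2$-skeleton of $E_g$ to the basepoint, whence it factors through $E_g/(E_g)^{(2)}$, which will be identified with $S^3$.

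First, I would equip $E_g$ with a (pro-)CW structure having one $0$-cell, $n$ one-cells for the generators $x_i$, $n$ two-cells for the commutator relations $[x_i,y_i(g)]$, and a single $3$-cell. The $0$-, $1$-, $2$-cell counts are read off from the presentation of $\pi_1(E_g)$ as $\langle x_1,\ldots,x_n \mid [x_i,y_i(g)] \rangle$; the single $3$-cell together with the absence of higher cells comes from the homology computation in the preceding proposition, giving $H_3(E_g;\mathbb{Z}_{\ell})\simeq\mathbb{Z}_{\ell}$ and $H_i(E_g;\mathbb{Z}_{\ell})=0$ for $i\geq 4$. Under this model the quotient $E_g/(E_g)^{(2)}$ consists of a single $3$-cell collapsed onto a point, hence is weakly equivalent to $S^3$, and the task reduces to showing the $2$-skeleton $W := (E_g)^{(2)}$ maps nullhomotopically into $K_k^{(\ell)}$.

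To show $f|_W$ is null, I would proceed in two stages. The $1$-skeleton $\bigvee^n \widehat{S}^1{}^{(\ell)}$ maps via the basing $\tau$ into $K^{(\ell)}(k)$, and the mapping-cone structure of $K_k^{(\ell)}$ provides a canonical nullhomotopy after composing with the quotient $K^{(\ell)}(k)\to K_k^{(\ell)}$; using the homotopy extension property (HEP) for the CW pair $(W,\bigvee^n\widehat{S}^1{}^{(\ell)})$ I first deform $f$ so that the $1$-skeleton is sent exactly to the basepoint. After this deformation each $2$-cell of $W$ is contained in a corresponding torus $\widehat{L}_i(g)\times\widehat{S}^1{}^{(\ell)}$, and the commutative square preceding the lemma shows the restriction of $f$ to that torus factors as $\widehat{L}_i(g)\times\widehat{S}^1{}^{(\ell)}\to\bigvee^n\widehat{S}^1{}^{(\ell)}\to K_k^{(\ell)}$, hence is null. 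The subtle point is to upgrade nullity of the torus to nullity of the quotient $2$-sphere obtained by collapsing its $1$-skeleton; invoking the Puppe sequence of $\bigvee^2 \widehat{S}^1{}^{(\ell)}\hookrightarrow\widehat{L}_i(g)\times\widehat{S}^1{}^{(\ell)}\twoheadrightarrow\widehat{S}^2{}^{(\ell)}$, the connecting map $\widehat{S}^2{}^{(\ell)}\to\Sigma(\bigvee^2\widehat{S}^1{}^{(\ell)})$ equals the suspension of the attaching commutator $[a,b]\in\pi_1(S^1\vee S^1)$, and this suspension vanishes because $\pi_2$ is abelian and hence kills commutators from $\pi_1$. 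Exactness of the Puppe sequence then forces $\pi_2(K_k^{(\ell)})\hookrightarrow[\widehat{L}_i(g)\times\widehat{S}^1{}^{(\ell)},K_k^{(\ell)}]$ to be injective, so each $2$-cell represents the zero element of $\pi_2(K_k^{(\ell)})$. A further HEP deformation on the pair $(E_g,W)$ now sends all of $W$ to the basepoint on the nose, and $f$ descends to the desired $\rho^{(\ell)}:S^3\simeq E_g/W\to K_k^{(\ell)}$; postcomposition with the completion $K_k^{(\ell)}\to\widehat{K}_k^{(\ell)}$ produces the pro-$\ell$ complete version.

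The main obstacle will be organising the argument so that it runs coherently in the pro-$\ell$ category: the CW decomposition, the cone nullhomotopy, and the Puppe sequence all exist levelwise in the inverse systems defining $E_g$ and $K_k^{(\ell)}$, but verifying that the two HEP-deformations and the final descent to $S^3$ assemble into a morphism of pro-spaces is the bookkeeping step requiring the most care. The Puppe step itself is the cleanest, being a purely formal consequence of the suspension homomorphism killing commutators in $\pi_1$.
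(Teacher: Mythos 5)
There is a genuine gap, and it sits at the very first step. You deduce from $H_3(E_g;\mathbb{Z}_{\ell})\simeq\mathbb{Z}_{\ell}$ and $H_i(E_g;\mathbb{Z}_{\ell})=0$ for $i>3$ that $E_g$ carries a (pro-)CW structure with one $0$-cell, $n$ one-cells, $n$ two-cells and a \emph{single top-dimensional $3$-cell}, so that $E_g/(E_g)^{(2)}\simeq S^3$. Homology computations do not produce cells, and here no such cell structure exists even levelwise: $\widehat{D}_n^{(\ell)}$ is the inverse system $\{K(F_n/N,1)\}_N$ over finite $\ell$-quotients, and each level of $M_g$ and $E_g$ is built from Eilenberg--MacLane spaces of finite $\ell$-groups, which are infinite-dimensional; the vanishing of $H_i(E_g;\mathbb{Z}_{\ell})$ for $i>3$ is a statement about the pro-object (after $\varprojlim$), not about any level, and in particular there is no honest $3$-cell whose collapse yields the genuine sphere $S^3$ appearing in \eqref{eq:4.3.4}. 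Consequently the proposed factorization $E_g\to E_g/(E_g)^{(2)}\simeq S^3\to K_k^{(\ell)}$ cannot be set up, and the lemma does not assert such a factorization: already in the classical picture (where $E_g$ is the complement of a closed braid) one does not obtain $S^3$ by collapsing a skeleton of the complement; one obtains it by gluing solid tori back in. Your construction runs in the wrong direction.

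The paper's proof goes the other way. It writes $S^3$ as the genus-one splitting $(D^2\times S^1)\cup_{\phi}(D^2\times S^1)$ and maps this pushout square compatibly into the pushout square $M_g\cup_{\phi}(D^2\times\widehat{S}^1{}^{(\ell)})=E_g$ via the completion $S^1\to\widehat{S}^1{}^{(\ell)}$; the square of solid tori does not map to the square defining $E_g$ on the nose (there is no map $D^2\times S^1\to M_g$), but after composing with $E_g\to K_k^{(\ell)}$ the required extensions exist, because the $t$-direction dies in $\pi_1(E_g)$ and the meridional wedge $\vee\widehat{S}^1{}^{(\ell)}$ is coned off in $K_k^{(\ell)}$. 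The two sound ingredients in your write-up --- that the tori $\widehat{L}^{(\ell)}_i(g)\times\widehat{S}^1{}^{(\ell)}$ map nullhomotopically to $K_k^{(\ell)}$ since $y_i(g)\in\Gamma_k(\Fn)$ kills the longitudes in $K^{(\ell)}(k)$ and the cone kills the meridians, and that $\pi_2(X)\to[T^2,X]$ is injective because the suspension of the commutator attaching map vanishes --- are exactly the inputs needed to extend a map off such tori, but they must be used to extend the composite over the solid tori of the Heegaard splitting (nullhomotopic maps on a cofibered subspace extend over any supercomplex), not to collapse a skeleton of $E_g$. If you rebuild the argument around the pushout cube, your torus observations slot in as the justification that the cube commutes up to homotopy after mapping to $K_k^{(\ell)}$.
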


\begin{proof}
By considering the completion map $S^1 \rightarrow \widehat{S}^1{}^{(\ell)}$, we obtain the following commutative diagram of homotopy classes of continuous maps inside of the pro-$\ell$ Orr space $K_k^{(\ell)}$:

\begin{center}
\begin{tikzcd}[row sep=scriptsize, column sep=scriptsize]
&  S^1 \times S^1  \arrow[dr] \arrow[rr] \arrow[dd] & & D^2 \times S^1 \arrow[dr] \arrow[dd] \\
 & & D^2 \times S^1 \arrow[rr, crossing over]& & (D^2 \times S^1) \cup_{\phi} (D^2 \times S^1) \simeq  S^3 \arrow[dd] \\
& S^1 \times \widehat{S}^1{}^{(\ell)} \arrow[dr] \arrow[rr] & & D^2 \times \widehat{S}^1{}^{(\ell)} \arrow[dr] \\
 & & M_{g}  \arrow[from=uu, crossing over] \arrow[rr]&  & E_{g}
\end{tikzcd}	
\end{center}
Here, the bottom commutative diagram is the fiber coproduct diagram defining $E_{g}$ and the top commutative diagram is the corresponding fiber coproduct diagram which gives $S^3$. Note that $\phi$ and $\iota$ induces corresponding maps in the top diagram. In fact,  $\phi$ and $\iota$ are defined by the combination of the identity map and induced map from $S^1 \rightarrow \partial D^2$, so it induces the map in the top diagram.  Also note that the above diagram is valid if it is  considered under the map $E_{g} \rightarrow K_k^{(\ell)}$. Thus, by homotopy,  we get a desired canonical map $S^3 \rightarrow K_k^{(\ell)} (\rightarrow \widehat{K}_k^{(\ell)})$.
\end{proof}

Then, we set
\begin{equation} \label{eq:4.3.5}
	\theta_k^{(\ell)}(g, \tau) := [\rho^{(\ell)}] \in \widehat{\pi}^{(\ell)}_3(K_k^{(\ell)}) \simeq \pi_3(K_k^{(\ell)})\otimes_{\mathbb{Z}} \mathbb{Z}_{\ell} \simeq \pi_3(\widehat{K}_k^{(\ell)})
\end{equation}

Since, up to homotopy equivalence, there is no difference in choice of $E_{g}$, the homotopy class $\theta_k^{(\ell)}(g, \tau)$ is well-defined. This leads to the following definition.
\begin{definition}[Pro-$\ell$ Orr invariant] \label{def:oinv}
	Let $k \geq 1$ be a fixed integer. For a pair of $g \in G[k]$ and  basing $\tau \in \mathcal{T}$, the element  $\theta_k^{(\ell)}(g, \tau)\in \pi_3(\widehat{K}_k^{(\ell)})$ is called the {\it pro-$\ell$ Orr invariant} for $(g, \tau)$. In particular, when $G=G_K$ and $\tau$ is a geometric generator associated with a tangential base point at $\infty$ as in Section 2.7, we call $\theta_k^{(\ell)}(\sigma, \tau)$ the {\it pro-$\ell$ Orr invariants for a based Galois element} $(\sigma, \tau)$.
\end{definition}

\begin{remark}
(1) The pro-$\ell$ Orr invariants for a based Galois element also depends on the choices of the $K$-rational points $\{a_1, \ldots, a_n\}$ of $X$.\\
(2) Note that similarly, we can define profinite and pro-$\Sigma$ analogues of Orr invariants for based Galois elements. Here, $\Sigma$ is a set of (rational) prime numbers.
\end{remark}

\begin{theorem}
The map 
\begin{equation}
	\theta_k^{(\ell)} : G[k] \times \mathcal{T} \rightarrow \pi_3(\widehat{K}_k^{(\ell)})
\end{equation}	
is additive under the product of $(E_g, \tau)$ as the following manner:
\begin{equation}
	\theta_k^{(\ell)}((g_1, \tau_1) \circ (g_2, \tau_2)) = \theta_k^{(\ell)}(g_1, \tau_1) + \theta_k^{(\ell)}(g_2, \tau_2)
\end{equation}
for $(g_1, \tau_1)$ and $(g_2, \tau_2)$ in $G[k] \times \mathcal{T}$. Here, $\theta_k^{(\ell)}((g_1, \tau_1) \circ (g_2, \tau_2))$ means the pro-$\ell$ Orr invariant obtained from $(E_{g_1}, \tau_1) \circ (E_{g_2}, \tau_2)$.
\end{theorem}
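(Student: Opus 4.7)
The strategy is a cut-and-paste argument: realize the space $E_{g_1 \cdot ({}^{\psi_{12}} g_2)}$ (with basing $\tau_1$) computing the left-hand side as a geometric pushout of $E_{g_1}$ (with basing $\tau_1$) and $E_{g_2}$ (with basing $\tau_2$), glued along a ``separator'' derived from $\widehat{D}_n^{(\ell)}$; then push this decomposition through the 3-sphere construction of Lemma \ref{lem:4.3.6} to obtain a connected-sum decomposition of the Orr 3-sphere and conclude using the standard fact that a connected sum of maps into a simply connected space realizes addition in $\pi_3$.

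Since the self-equivalence of $\widehat{D}_n^{(\ell)} = K(\Fn, 1)$ associated with $g_1 \cdot ({}^{\psi_{12}} g_2)$ factors as $\varphi_{g_1} \circ \varphi_{{}^{\psi_{12}} g_2}$, the mapping torus $M_{g_1 \cdot ({}^{\psi_{12}} g_2)}$ is weakly equivalent to the quotient of $\widehat{D}_n^{(\ell)} \times [0, 2]$ by $(x, 2) \sim (\varphi_{g_1}(\varphi_{{}^{\psi_{12}} g_2}(x)), 0)$; cutting at $t=1$ exhibits it as $M_{g_1}$ joined to $M_{{}^{\psi_{12}} g_2} \sim M_{g_2}$ along a separator fiber $F := \widehat{D}_n^{(\ell)}$, where the identification $M_{{}^{\psi_{12}} g_2} \sim M_{g_2}$ is implemented by the basing change $\psi_{12}$ so that the resulting $E_{g_2}$-piece carries basing $\tau_2$. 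The solid torus $D^2 \times \widehat{S}^1{}^{(\ell)}$ attached in the construction of $E$ is then cut along a fiber disk $D^2 \times \{\mathrm{pt}\}$ and distributed between the two halves, exhibiting $E_{g_1 \cdot ({}^{\psi_{12}} g_2)}$ up to weak equivalence as a pushout of $E_{g_1}$ and $E_{g_2}$ along the capped surface $\Sigma$ obtained from $F$ by filling its outer boundary with $D^2$. The meridian loops generating $\pi_1(\Sigma)$ all become null-homotopic in $\widehat{K}_k^{(\ell)}$ by the mapping-cone construction of Definition \ref{def:orr}, so $\Sigma$ behaves as a separating 2-sphere in $\widehat{K}_k^{(\ell)}$.

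Carrying this decomposition through the diagram in Lemma \ref{lem:4.3.6} yields a factorization of $\rho^{(\ell)}_{g_1 \cdot ({}^{\psi_{12}} g_2)} : S^3 \to \widehat{K}_k^{(\ell)}$ as a connected sum of $\rho^{(\ell)}_{g_1}$ (associated with $\tau_1$) and $\rho^{(\ell)}_{g_2}$ (associated with $\tau_2$). Since $\widehat{K}_k^{(\ell)}$ is simply connected by Proposition \ref{lem:4.2.2}(1), the class of a connected-sum map $S^3 \# S^3 \to \widehat{K}_k^{(\ell)}$ equals the sum in $\pi_3$ of the classes of its two summands, which is the desired identity.

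\textbf{Main obstacle.} The principal technical difficulty is to make the cut-and-paste precise at the pro-space level: both the mapping torus and the solid-torus pushout should be interpreted as inverse limits of their finite-level counterparts, so one must produce compatible decompositions across the inverse system indexed by open normal subgroups of $\Fn$, and carefully track the basing identifications $\tau_2 = \tau_1 \circ \psi_{12}$ through each level so that the two halves of the decomposition receive the correct basings. Conceptually everything is parallel to the classical argument for string links, but the transition to pro-spaces introduces enough bookkeeping to warrant care.
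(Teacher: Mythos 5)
Your overall strategy (a geometric cut-and-paste in the spirit of Orr's original proof for links) is not the route the paper takes — the paper explicitly replaces the geometric argument by an algebraic one via amalgamated free products and the folding map $K^{(\ell)}(k)\vee K^{(\ell)}(k)\to K^{(\ell)}(k)$ — but the more serious issue is that your key decomposition is false as stated. Cutting the mapping torus $M_{g_1\cdot({}^{\psi_{12}}g_2)}$ at the fiber $t=1$ produces two cylinders $\widehat{D}_n^{(\ell)}\times[0,1]$ and $\widehat{D}_n^{(\ell)}\times[1,2]$ glued along \emph{two} copies of the fiber (at $t=1$, and at $t=0$ identified with $t=2$); it does not exhibit $M_{g_1\cdot({}^{\psi_{12}}g_2)}$ as $M_{g_1}\cup_{F} M_{{}^{\psi_{12}}g_2}$. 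Indeed the latter would have fundamental group $(\Fn\rtimes_{g_1}\mathbb{Z}_{\ell})\ast_{\Fn}(\Fn\rtimes_{{}^{\psi_{12}}g_2}\mathbb{Z}_{\ell})$, which surjects onto a free group of rank $2$, whereas $\pi_1(M_{g_1\cdot({}^{\psi_{12}}g_2)})\cong\Fn\rtimes\mathbb{Z}_{\ell}$ surjects only onto $\mathbb{Z}_{\ell}$; so no such pushout of mapping tori exists. The correct structure is a graph-of-spaces splitting over a circle with two edges (HNN-type, with a single stable letter), and converting it into the amalgam $\pi_1(E_{g_1})\ast_{\pi_1(\vee^{n}\widehat{S}^1{}^{(\ell)})}\pi_1(E_{g_2})\cong\pi_1((E_{g_1},\tau_1)\circ(E_{g_2},\tau_2))$ holds only modulo $\Gamma_k$ — which is exactly why the paper works at the level of $k$-th nilpotent quotients and factors the defining diagram through $K(\pi_1(E_{g_1}),1)\vee K(\pi_1(E_{g_2}),1)\to K^{(\ell)}(k)\vee K^{(\ell)}(k)\to K^{(\ell)}(k)$ rather than attempting a space-level connected-sum decomposition of $E_{g_1\cdot({}^{\psi_{12}}g_2)}$.

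A second omission: even granting some decomposition, the second summand naturally carries the basing $\tau_1\circ g_1$ (glued to $\tau_2$ via $\psi_{12}$), not $\tau_2$ itself, and the equality $\theta_k^{(\ell)}({}^{\psi_{12}}g_2,\tau_1\circ g_1)=\theta_k^{(\ell)}(g_2,\tau_2)$ is not a formality: it uses that $g_1$ lies in $G[k]$ (not merely $G[1]$), so that $g_1$ acts trivially on $\Fn/\Gamma_{k+1}(\Fn)$ and therefore does not alter the words $y_i({}^{\psi_{12}}g_2)$ modulo the terms that $\theta_k^{(\ell)}$ detects. This is the entire second half of the paper's proof, and your proposal passes over it silently. (Your final step — pinching the capped separator because it is a $1$-type mapping to the simply connected $K_k^{(\ell)}$ — would be unobjectionable if the decomposition existed.)
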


\begin{proof}
In \cite[Theorem 8]{O1}, Orr gives geometric proof of additivity of his invariants under the connected sum of based links. Here, we give algebraic proof via amalgamated free products of fundamental groups and property of Johnson subgroup $G[k]$.

To begin with, recall that the $\theta_k^{(\ell)}$ invariant of $(E_{g_1}, \tau_1) \circ (E_{g_2}, \tau_2)$ is induced from the following commutative diagram:
\begin{center}
		\begin{tikzcd}
		 \bigsqcup_{i=1}^n (\widehat{L}^{(\ell)}_i(g_1 \cdot ({}^{\psi_{12}}g_2)) \times \widehat{S}^1{}^{(\ell)}) \arrow[d] \arrow[r, "proj"] 
		& \bigvee^n \widehat{S}^1{}^{(\ell)} \arrow[d]
  \\
		(E_{g_1}, \tau_1) \circ (E_{g_2}, \tau_2)  \arrow[r]
		& K^{(\ell)}(k)	
	\end{tikzcd}
\end{center}
where $\psi_{12} \in \Aut(\Fn)$ is an automorphism such that $\tau_2 = \tau_1 \circ \psi_{12}$. Note that, modulo $k$-th lower central subgroups, we have an isomorphism
\begin{equation}
	\pi_1((E_{g_1}, \tau_1)) \ast_{\pi_1(\vee^n \widehat{S}^1{}^{(\ell)})} \pi_1(E_{g_2}, \tau_2) \simeq \pi_1((E_{g_1}, \tau_1) \circ (E_{g_2}, \tau_2)).
\end{equation}
Here, the amalgamated product is induced by homomorphisms, for $j=1,2$,
\begin{equation}
	I_j : \pi_1(\vee^n \widehat{S}^1{}^{(\ell)}) \rightarrow \pi_1((E_{g_j}, \tau_j))
\end{equation}
so that $I_1 = \tau_1 \circ g_1$ and $I_2 = \tau_2$. Also note that
\begin{equation}
	K^{(\ell)}(k) \simeq K(\pi_1((E_{g_1}, \tau_1) \circ (E_{g_2}, \tau_2))/\Gamma_k(\pi_1((E_{g_1}, \tau_1) \circ (E_{g_2}, \tau_2))), 1).
\end{equation}
Since $K_k^{(\ell)}$ is roughly a quotient space $K^{(\ell)}(k)/\vee^n \widehat{S}^1{}^{(\ell)}$ and $\vee^n \widehat{S}^1{}^{(\ell)}$ shrink to one point in $K_k^{(\ell)}$, the above commutative diagram factor through
\begin{equation}
	K(\pi_1(E_{g_1}) \ast \pi_1(E_{g_2}), 1) = K(\pi_1(E_{g_1}), 1) \vee K(\pi_1(E_{g_2}), 1) \rightarrow K^{(\ell)}(k) \vee K^{(\ell)}(k) \rightarrow K^{(\ell)}(k) 
\end{equation}
where the last map is folding map. Therefore, corresponding to this factorisation, the above commutative diagram also factorise into
\begin{center}
		\begin{tikzcd}
		 \bigsqcup_{i=1}^n (\widehat{L}^{(\ell)}_i(g_1) \times \widehat{S}^1{}^{(\ell)}) \vee \bigsqcup_{i=1}^n (\widehat{L}^{(\ell)}_i({}^{\psi_{12}}g_2) \times \widehat{S}^1{}^{(\ell)})\arrow[d] \arrow[r, "proj"] 
		& (\bigvee^n \widehat{S}^1{}^{(\ell)}) \vee (\bigvee^n \widehat{S}^1{}^{(\ell)}) \arrow[d]
  \\
		(E_{g_1}, \tau_1) \vee  (E_{{}^{\psi_{12}}g_2}, \tau_1 \circ g_1)  \arrow[r]
		& K^{(\ell)}(k) \vee K^{(\ell)}(k) \rightarrow K^{(\ell)}(k).	
	\end{tikzcd}
\end{center} 
Thus, we conclude that 
\begin{equation}
	\theta_k^{(\ell)}((g_1, \tau_1) \circ  (g_2, \tau_2)) = \theta_k^{(\ell)}(g_1, \tau_1) + \theta_k^{(\ell)}({}^{\psi_{12}}g_2, \tau_1 \circ g_1)).
\end{equation}
To complete the proof, we need to show that 
\begin{equation}
	\theta_k^{(\ell)}({}^{\psi_{12}}g_2, \tau_1 \circ g_1)) = \theta_k^{(\ell)}(g_2, \tau_2).
\end{equation}
Since $\pi_3(K_k^{(\ell)})$ is an abelian group, as automorphism group of $\Fn$, $G$ factor through its maximal abelian quotient. It suffices to consider the part of $\pi_1(E_{g})/[\Gamma_{k+1}(\pi_1(E_{g})), \Gamma_{k+1}(\pi_1(E_{g}))]$ in our definition of $\theta_k^{(\ell)}$, Since $G[k]$ acts on $\Fn/\Gamma_{k+1}(\Fn)$ trivially. Then, $g_1$ action does not affect on  the words of $y_1({}^{\psi_{12}}g_2), \ldots, y_n({}^{\psi_{12}}g_2)$ in $\pi_1(E_{{}^{\psi_{12}}g_2})/[\Gamma_{k+1}(\pi_1(E_{{}^{\psi_{12}}g_2})), \Gamma_{k+1}(\pi_1(E_{{}^{\psi_{12}}g_2}))])$ since $g_1 \in G[k]$. This means that $\theta_k^{(\ell)}({}^{\psi_{12}}g_2, \tau_1 \circ g_1) = \theta_k^{(\ell)}({}^{\psi_{12}}g_2, \tau_1)$. Finally, we note that
\begin{equation}
	\theta_k^{(\ell)}({}^{\psi_{12}}g_2, \tau_1) = \theta_k^{(\ell)}(g_2, \tau_2).
\end{equation}
Therefore, we have shown the desired additivity.
\end{proof}

By applying the proof of the above theorem, we get the following corollary.
\begin{corollary}
	Let $\tau$ be a fixed basing. Then, the map
	\begin{equation}
		\theta_k^{(\ell)}(-, \tau) : G[k] \rightarrow \pi_3(\widehat{K}_k^{(\ell)})
	\end{equation}
	is additive under the product of $(E_g, \tau)$ as the following manner:
	\begin{equation}
		\theta_k^{(\ell)}(g_1 g_2, \tau) = \theta_k^{(\ell)}(g_1, \tau) + \theta_k^{(\ell)}(g_2, \tau)
	\end{equation}
	for $g_1, g_2 \in G[k]$.
\end{corollary}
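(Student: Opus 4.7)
The plan is to derive the corollary by specializing the preceding theorem to the situation where the two basings coincide, and then unpacking what the product $\circ$ becomes in that case.

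First, I would take $\tau_1 = \tau_2 = \tau$ in the theorem, so that the additivity
\[
\theta_k^{(\ell)}\bigl((g_1, \tau) \circ (g_2, \tau)\bigr) \;=\; \theta_k^{(\ell)}(g_1, \tau) \;+\; \theta_k^{(\ell)}(g_2, \tau)
\]
holds on the nose. The only thing left to do is to identify the left-hand side with $\theta_k^{(\ell)}(g_1 g_2, \tau)$. Recall that in the definition of the product, the automorphism $\psi_{12} \in \Aut(\Fn)$ is characterized by $\tau_2 = \tau_1 \circ \psi_{12}$. With $\tau_1 = \tau_2 = \tau$, the unique such automorphism is $\psi_{12} = \Id$, and therefore ${}^{\psi_{12}}g_2 = g_2$. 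Substituting into the definition
\[
(E_{g_1}, \tau_1) \circ (E_{g_2}, \tau_2) \;=\; (E_{g_1 \cdot ({}^{\psi_{12}} g_2)}, \tau_1)
\]
yields $(E_{g_1}, \tau) \circ (E_{g_2}, \tau) = (E_{g_1 g_2}, \tau)$, so by definition of the Orr invariant the left-hand side above equals $\theta_k^{(\ell)}(g_1 g_2, \tau)$.

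Combining the two displays gives the asserted formula. There is essentially no obstacle here: the corollary is a direct specialization of the theorem, and the only minor point to verify is that the gluing automorphism trivializes when both basings are equal, which is immediate from the uniqueness statement in the definition of basing. Alternatively, one could redo the diagrammatic argument used to prove the theorem (amalgamated free product, factorization through the wedge $K^{(\ell)}(k) \vee K^{(\ell)}(k)$, folding map), but with $\tau_1 = \tau_2$ fixed throughout the step that identifies $\theta_k^{(\ell)}({}^{\psi_{12}} g_2, \tau_1 \circ g_1)$ with $\theta_k^{(\ell)}(g_2, \tau_2)$ becomes trivial, leading to the same conclusion.
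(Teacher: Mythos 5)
Your proof is correct and follows essentially the same route as the paper, which simply invokes (the proof of) the preceding theorem; the paper even records the identity $(E_{g_1},\tau)\circ(E_{g_2},\tau)=(E_{g_1g_2},\tau)$ explicitly when it first defines the product. Your observation that $\psi_{12}=\Id$ by the uniqueness clause in the definition of basing is exactly the point that makes the specialization immediate.
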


\subsection{Image of $\theta_k^{(\ell)}(g,\tau)$ under the Hurewicz homomorphism}

This section considers the image of pro-$\ell$ Orr invariant $\theta_k^{(\ell)}(g,\tau)$ under the Hurewicz homomorphism. Note that, as in usual classical algebraic topology, the pro-space analogue of the Hurewicz theorem is established by Artin-Mazur (\cite[Corollary (4.5)]{AM}).

Recall that the Hurewicz homomorphism  $h_{\ast} : \pi_3(K_k^{(\ell)}) \rightarrow H_3(K_k^{(\ell)})$ sends $\rho \in  \pi_3(K_k^{(\ell)})$ to $\rho_{\ast}([S^3]) \in H_3(K_k^{(\ell)})$ for the fundamental class $[S^3] \in H_3(S^3;\mathbb{Z})$.

\begin{lemma} \label{lem:4.4.2}
Let $h_{\ast} : \pi_3(K_k^{(\ell)}) \rightarrow H_3(K_k^{(\ell)})$ be the Hurewicz homomorphism. Then, the followings hold:
\begin{enumerate}[label=$(\arabic{enumi})$]
	\item $h_{\ast}$ is surjective homomorphism.
	\item $H_3(K_k^{(\ell)};\mathbb{Z}_{\ell}) \simeq H_3(\Fn/\Gamma_k(\Fn); \mathbb{Z}_{\ell})$.
\end{enumerate}
\end{lemma}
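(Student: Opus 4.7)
The plan is to prove (2) first by a direct comparison via the defining cofiber sequence of $K_k^{(\ell)}$, and then leverage (2) in a Postnikov-tower argument with the Serre spectral sequence to get (1).

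For (2), I would use the mapping-cone description of $K_k^{(\ell)}$ from Definition \ref{def:orr}. The cofibration $\iota: \vee^n \widehat{S}^{1(\ell)} \hookrightarrow K^{(\ell)}(k)$ yields a homological long exact sequence containing
\[
H_3(\vee^n \widehat{S}^{1(\ell)}; \mathbb{Z}_\ell) \to H_3(K^{(\ell)}(k); \mathbb{Z}_\ell) \to H_3(K_k^{(\ell)}; \mathbb{Z}_\ell) \to H_2(\vee^n \widehat{S}^{1(\ell)}; \mathbb{Z}_\ell).
\]
Since $\widehat{S}^{1(\ell)} \sim K(\mathbb{Z}_\ell, 1)$ by Example \ref{ex:2.3.1}(i), its homology vanishes in degrees $\geq 2$, so both outer terms are zero and the middle map is an isomorphism. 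Lemma \ref{lem:2.3.3} identifies $H_3(K^{(\ell)}(k); \mathbb{Z}_\ell) = H_3(K(\Fn/\Gamma_k(\Fn),1); \mathbb{Z}_\ell) \simeq H_3(\Fn/\Gamma_k(\Fn); \mathbb{Z}_\ell)$, giving (2).

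For (1), set $X := K_k^{(\ell)}$. By Proposition \ref{lem:4.2.2}, $X$ is simply connected with $\pi_2(X) \simeq \mathbb{Z}_\ell^{\oplus N_k}$. Forming the first non-trivial Postnikov section in the pro-category (Artin--Mazur) produces a fiber sequence
\[
F \longrightarrow X \longrightarrow K(\pi_2(X), 2),
\]
where $F$ is 2-connected and $\pi_3(F) \xrightarrow{\sim} \pi_3(X)$. By naturality of Hurewicz and the classical Hurewicz theorem for the 2-connected $F$, the map $h_*: \pi_3(X) \to H_3(X; \mathbb{Z}_\ell)$ factors as the composition
\[
\pi_3(X) \overset{\sim}{\longleftarrow} \pi_3(F) \xrightarrow{\ \sim \ } H_3(F; \mathbb{Z}_\ell) \xrightarrow{\ i_*\ } H_3(X; \mathbb{Z}_\ell),
\]
so it suffices to show the fiber-inclusion $i_*$ is surjective. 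For this I would run the Serre spectral sequence $E^2_{p,q} = H_p(K(\pi_2(X), 2); H_q(F; \mathbb{Z}_\ell)) \Rightarrow H_{p+q}(X; \mathbb{Z}_\ell)$ in total degree 3. The terms $E^2_{1,2}$ and $E^2_{2,1}$ vanish since $K(\pi_2(X),2)$ is simply connected and $F$ is 2-connected. The term $E^2_{3,0} = H_3(K(\mathbb{Z}_\ell^{\oplus N_k}, 2); \mathbb{Z}_\ell)$ vanishes: $K(\mathbb{Z}_\ell, 2) \sim \widehat{\mathbb{CP}^\infty}^{(\ell)}$ by Example \ref{ex:2.3.1}(ii), whose integral homology sits in even degrees only, and Künneth for the product $K(\mathbb{Z}_\ell^{\oplus N_k}, 2) \sim \prod^{N_k} K(\mathbb{Z}_\ell, 2)$ then kills $H_3$. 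Consequently $H_3(X; \mathbb{Z}_\ell) \simeq E^\infty_{0,3}$ is a quotient of $E^2_{0,3} = H_3(F; \mathbb{Z}_\ell)$, proving $i_*$ is surjective and hence that $h_*$ is surjective.

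The main obstacle is checking that the Postnikov/Serre-spectral-sequence machinery applies cleanly in the pro-$\ell$ setting: one must verify the fiber sequence, the Künneth-type vanishing at $E^2_{3,0}$, and the naturality of Hurewicz all pass through the Artin--Mazur pro-category formalism. Once these transfer properly (the framework of Section 2 is designed exactly for this), both statements follow.
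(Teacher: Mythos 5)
Your proof of (2) is essentially the paper's: the authors use the homology long exact sequence of the pair $(K(\Fn/\Gamma_k(\Fn),1), \vee\widehat{S}^1{}^{(\ell)})$ together with the vanishing of $H_i(\vee\widehat{S}^1{}^{(\ell)};\mathbb{Z}_{\ell})$ for $i\geq 2$, which is the same computation as your cofiber-sequence argument (they additionally pass from integral pro-homology to $\mathbb{Z}_{\ell}$-coefficients by the universal coefficient theorem, since the Hurewicz map in the statement is integral). For (1), however, you take a genuinely different and much heavier route. The paper simply invokes the classical fact that for a path-connected, simply connected space the Hurewicz map $\pi_3\rightarrow H_3$ is surjective (the degree-$3$ part of Whitehead's exact sequence), transported to pro-spaces by re-running the proof of Artin--Mazur's Corollary (4.5). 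Your Postnikov-tower and Serre-spectral-sequence argument is in effect a from-scratch proof of that classical fact: the vanishing of $E^2_{1,2}$, $E^2_{2,1}$ and of $H_3(K(A,2))$ is precisely why $\pi_3\rightarrow H_3$ is onto for simply connected spaces. Your outline is correct, but it obliges you to justify Postnikov sections, the Serre spectral sequence, and a K\"unneth isomorphism in the Artin--Mazur pro-category, none of which the paper develops; the citation route sidesteps all of that overhead. Two smaller remarks: since $H_3(K(A,2);\mathbb{Z})=0$ for an arbitrary abelian group $A$, you do not actually need the identification $\pi_2(K_k^{(\ell)})\simeq\mathbb{Z}_{\ell}^{\oplus N_k}$ (whose proof in the paper forward-references Section 6), only that $\pi_2$ is abelian; and what each approach buys is clear --- yours is self-contained modulo the pro-categorical foundations, while the paper's is short but leans entirely on the transferred classical theorem.
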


\begin{proof}
	(1) Recall that for any path-connected 1-connected space $X$, the Hurewicz homomorphism gives a surjection $\pi_3(X) \rightarrow H_3(X)$. Since $K_k^{(\ell)}$ is simply connected, the assertion follows by applying the proof of \cite[Corollary (4.5)]{AM}.\\
	(2) Note that $\mathrm{Tor}_1^{\mathbb{Z}}(-, \mathbb{Z}_{\ell})=0$ since $\mathbb{Z}_{\ell}$ is flat $\mathbb{Z}$-module. Hence, by applying universal coefficient theorem for each index of pro-spaces, we get $H_3(K_k^{(\ell)})\otimes_{\mathbb{Z}} \mathbb{Z}_{\ell} \simeq H_3(K_k^{(\ell)}; \mathbb{Z}_{\ell})$. By homology exact sequence for pair $(K(\Fn/\Gamma_{k}(\Fn),1), \vee \widehat{S}^1{}^{(\ell)})$, we conclude that $H_3(K_k^{(\ell)}; \mathbb{Z}_{\ell}) \simeq H_3(\Fn/\Gamma_k(\Fn); \mathbb{Z}_{\ell})$ since $H_i(\Fn; \mathbb{Z}_{\ell})=0$ for any $i \geq 2$.
\end{proof}

By Lemma \ref{lem:4.4.2}, we denote  the Hurewicz homomorphism as $h_{\ast}:=h_{\ast} \otimes \Id : \pi_3(K_k^{(\ell)})\otimes \mathbb{Z}_{\ell}=\pi_3(\widehat{K}_k^{(\ell)}) \rightarrow H_3(\Fn/\Gamma_k(\Fn);\mathbb{Z}_{\ell})$.

\begin{definition} \label{def:4.4.3}
Let $h_{\ast} : \pi_3(\widehat{K}_k^{(\ell)}) \rightarrow H_3(\Fn/\Gamma_k(\Fn);\mathbb{Z}_{\ell})$ be the Hurewicz homomorphism. Then, we set 
\begin{equation}
	\tau_k^{(\ell)}(g, \tau):=h_{\ast}(\theta_k^{(\ell)}(g,\tau)) \in H_3(\Fn/\Gamma_{k}(\Fn); \mathbb{Z}_{\ell})
\end{equation}	
\end{definition}

\begin{proposition} \label{prop:4.4.4}
Let $[E_{g}]$ denote a topological generator of $H_3(E_{g}; \mathbb{Z}_{\ell})\simeq \mathbb{Z}_{\ell}$. Then, the image of $[E_{g}]$ under the composition homomorphisms
\begin{equation}
	H_3(E_{g}; \mathbb{Z}_{\ell}) \rightarrow H_3(K(\pi_1(E_{g})/\Gamma_k(\pi_1(E_{g})), 1);\mathbb{Z}_{\ell}) \overset{\sim}{\rightarrow} H_3(\Fn/\Gamma_k(\Fn); \mathbb{Z}_{\ell})
\end{equation}
coincides with $\tau_k^{(\ell)}(g,\tau)$.
\end{proposition}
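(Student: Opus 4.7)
The plan is to unpack the construction of $\theta_k^{(\ell)}(g,\tau)$ and reduce the claim to a Mayer--Vietoris comparison. By Definition~\ref{def:4.4.3} and the construction of $\rho^{(\ell)}$ in Lemma~\ref{lem:4.3.6},
\[
\tau_k^{(\ell)}(g,\tau) \;=\; h_*(\theta_k^{(\ell)}(g,\tau)) \;=\; \rho^{(\ell)}_*([S^3]) \;\in\; H_3(K_k^{(\ell)};\mathbb{Z}_{\ell}),
\]
and via Lemma~\ref{lem:4.4.2}(2) we identify $H_3(K_k^{(\ell)};\mathbb{Z}_{\ell}) \simeq H_3(\Fn/\Gamma_k(\Fn);\mathbb{Z}_{\ell})$. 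Writing $\iota : E_g \to K^{(\ell)}(k) \to K_k^{(\ell)}$ for the composition appearing in the construction of the Orr invariant, it therefore suffices to show that $\rho^{(\ell)}_*[S^3] = \iota_*[E_g]$ in $H_3(K_k^{(\ell)};\mathbb{Z}_{\ell})$.

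Next, I identify both classes via Mayer--Vietoris boundary homomorphisms. Applying Mayer--Vietoris with $\mathbb{Z}_{\ell}$-coefficients to the pushout defining $E_g$ and using the vanishing computations from the proof of the preceding proposition (namely $H_i(\Fn;\mathbb{Z}_\ell)=0$ for $i\ge 2$ and $H_3(M_g;\mathbb{Z}_\ell)=0$), the connecting map
\[
\partial_E : H_3(E_g;\mathbb{Z}_{\ell}) \stackrel{\sim}{\longrightarrow} H_2(S^1\times \widehat{S}^1{}^{(\ell)};\mathbb{Z}_{\ell}) \simeq \mathbb{Z}_{\ell}
\]
is an isomorphism sending $[E_g]$ to the K\"unneth generator. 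Similarly, the standard Heegaard decomposition $S^3 \simeq (D^2\times S^1)\cup_{S^1\times S^1}(D^2\times S^1)$ yields an isomorphism $\partial_S : H_3(S^3;\mathbb{Z}_{\ell}) \stackrel{\sim}{\longrightarrow} H_2(S^1\times S^1;\mathbb{Z}_{\ell})$ carrying $[S^3]$ to the toral fundamental class.

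Finally, the cube in Lemma~\ref{lem:4.3.6} relates these two pushouts via the pro-$\ell$ completion $S^1 \to \widehat{S}^1{}^{(\ell)}$ and commutes after composition with $E_g \to K_k^{(\ell)}$. Combining Example~\ref{ex:2.3.1}(i) with the K\"unneth formula, the induced map $H_2(S^1\times S^1;\mathbb{Z}_{\ell}) \to H_2(S^1\times \widehat{S}^1{}^{(\ell)};\mathbb{Z}_{\ell})$ is an isomorphism that carries toral generator to toral generator. Naturality of the Mayer--Vietoris connecting homomorphism, applied inside $K_k^{(\ell)}$, then forces $\rho^{(\ell)}_*[S^3]$ and $\iota_*[E_g]$ to coincide; the identification $H_3(K_k^{(\ell)};\mathbb{Z}_{\ell}) \simeq H_3(\Fn/\Gamma_k(\Fn);\mathbb{Z}_{\ell})$ from Lemma~\ref{lem:4.4.2}(2) then delivers the stated equality.

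The main obstacle is making the naturality argument precise in the pro-space setting. The cube of Lemma~\ref{lem:4.3.6} does not commute outright (there is no genuine map $D^2\times S^1 \to M_g$), only after projection to $K_k^{(\ell)}$. One must therefore work levelwise with inverse systems of finite CW complexes, invoke exactness of $\varprojlim$ on compact $\mathbb{Z}_\ell$-modules as in the proof of the preceding proposition, and verify that the boundary homomorphisms assemble compatibly; once that is in place, tracking the K\"unneth generator through the levelwise sequences closes the argument.
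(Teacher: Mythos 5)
Your proposal follows essentially the same route as the paper's own proof: both reduce the claim to the factorization of $E_{g} \rightarrow K_k^{(\ell)}$ through $S^3$ provided by Lemma~\ref{lem:4.3.6}, the paper simply asserting the resulting triangle on $H_3$ and reading off $\tau_k^{(\ell)}(g,\tau)$ as the image of $[S^3]$ via the Hurewicz description. Your Mayer--Vietoris identification of $[E_{g}]$ and $[S^3]$ with the fundamental classes of the respective gluing tori supplies exactly the generator-matching step that the paper leaves implicit, and the care you flag about the cube commuting only after projection to $K_k^{(\ell)}$ is a refinement of, not a departure from, the paper's argument.
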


\begin{proof}
	By Lemma \ref{lem:4.3.6}, the composition maps factor through $H_3(S^3; \mathbb{Z}_{\ell})$:
	\begin{center}
	\begin{tikzcd}
H_3(E_{g}; \mathbb{Z}_{\ell}) \arrow[d] \arrow[r] & H_3(K_k^{(\ell)}; \mathbb{Z}_{\ell}) \simeq H_3(\Fn/\Gamma_k(\Fn); \mathbb{Z}_{\ell}) \\
H_3(S^3; \mathbb{Z}_{\ell}) \arrow[ur]& 
\end{tikzcd}
\end{center}
By definition of Hurewicz homomorphism,  map $H_3(S^3; \mathbb{Z}_{\ell}) \rightarrow H_3(\Fn/\Gamma_k(\Fn); \mathbb{Z}_{\ell})$ is exactly given by $\tau_k^{(\ell)}(g, \tau)$. This completes the proof.
\end{proof}

\section{Low dimensional homology groups for free-nilpotent Lie algebras over $\mathbb{Z}_{\ell}$} \label{sec:4}

In this section, we recall the rank of homology groups of free-nilpotent Lie algebras with dimension  $\leq 3$. At the level of Lie algebra, there is no difference in computation of Igusa-Orr (\cite{IO}) between over $\mathbb{Z}$ and over $\mathbb{Z}_{\ell}$.

\subsection{Associated graded free Lie algebra by lower central series filtration of pro-$\ell$ free groups}

Let $n$ be a fixed positive integer and $\Fn$ be the free pro-$\ell$ group of rank $n$ (topologically) generated by letters $x_1, \ldots, x_n$. For each positive integer $k$, we denote by $\Gamma_k(\Fn)$ the $k$-th lower central subgroup of $\Fn$ which is recursively defined as
\begin{equation} \label{eq:5.1.1}
	\Gamma_1(\Fn):= \Fn, \quad \Gamma_k(\Fn) := [\Gamma_{k-1}(\Fn), \Fn] \quad (k \geq 2).
\end{equation}
Here, we note that each commutator subgroup $\Gamma_k(\Fn)$ is closed in $\Fn$, since $\Fn$ is finitely generated pro-$\ell$ group (cf. \cite[Proposition 1.1.9]{DDMS}).

The lower central series of $ \Fn$ induces a filtration on $\Fn$ called {\it lower central filtration}
\begin{equation} \label{eq:5.1.2}
	\Fn = \Gamma_1(\Fn) \supset \Gamma_2(\Fn) \supset \cdots \supset \Gamma_k(\Fn) \supset \cdots 
\end{equation}
and we have the associated graded Lie algebra over $\mathbb{Z}_{\ell}$
\begin{equation} \label{eq:5.1.3}
	\mathcal{L} := \bigoplus_{m \geq 1} \gr_{m}(\Fn)
\end{equation}
where $\gr_k(\Fn) := \Gamma_k(\Fn) / \Gamma_{k+1}(\Fn)$ for $k \geq 1$ with Lie bracket induced by commutator of $\Fn$. Then, $\gr_k(\Fn)$ is a free $\mathbb{Z}_{\ell}$-module of  rank $\mathbb{Z}_{\ell}^{\oplus N_k}$ since we have an isomorphism $\gr_k(\Fn) \simeq \mathcal{L}_k$ given by the pro-$\ell$ Magnus homomorphism $\Theta$ which sends $x_i \mapsto 1 + X_i$ and $x_i^{-1} \mapsto 1 - X_i + X_i^2 - X_i^3 + \cdots$,  $(1 \leq i \leq n)$ (cf. Section 2.8). Here,  $\mathcal{L}_k$ is degree $k$ part of free Lie algebra  generated by $X_1, \ldots, X_n$ over $\mathbb{Z}_{\ell}$ and  Witt's formula state that $N_k$ is explicitly given as
\begin{equation}\label{eq:5.1.4}
N_k := \frac{1}{k} \sum_{d | k} \phi(d)(n^{k/d})
\end{equation}
	where $\phi(d)$ is the M\"obius function evaluated at $d$ (\cite[Theorem 5.11]{MKS}). 
	Note that $\mathcal{L}$ obviously depends on the rank $n$ of $\Fn$ (and also on $\ell$) but we suppress it for simplicity of notations.
	
	We set
	\begin{equation} \label{eq:5.1.5}
		\mathcal{L}_{\geq k} := \bigoplus_{m  \geq k} \gr_{m}(\Fn)
	\end{equation}
	then $\mathcal{L}_{\geq k}$ becomes an ideal of the Lie algebra $\mathcal{L}$. Then, its quotient Lie algebra
	\begin{equation} \label{eq:5.1.6}
		\mathcal{L}_{\leq  k -1}  := \mathcal{L} / \mathcal{L}_{\geq k}   =  \bigoplus_{m \geq 1}^{k-1} \gr_{m}(\Fn)
	\end{equation}
	is a free object in the category of nilpotent Lie algebra of nilpotency class $\leq k-1$ with exponents in $\mathbb{Z}_{\ell}$.
	
	For simplicity, under the isomorphism $\Theta : 1 + X_i \leftrightarrow x_i$ we write as
		\begin{equation} \label{eq:5.1.7}
			\mathcal{L}_k = \gr_k(\Fn)
		\end{equation}

\subsection{Koszul complex of free nilpotent Lie algebras and its low dimensional homology groups}

	Let $(\Lambda_{\ast}(\mathcal{L} / \mathcal{L}_{\geq k}), \partial_{\ast})$ be the {\it Koszul complex} of the free nilpotent Lie algebra $\mathcal{L}/\mathcal{L}_{\geq k}$ (with trivial coefficients in $\mathbb{Z}_{\ell}$), that is, for each $m \in \mathbb{Z}$, $\Lambda_m(\mathcal{L} / \mathcal{L}_{\geq k})$ is given by
	\begin{equation} \label{eq:5.2.1}
		\Lambda_m(\mathcal{L} / \mathcal{L}_{\geq k}) = \begin{cases}
 \mathrm{span}_{\mathbb{Z}_{\ell}}\{ g_1 \wedge \cdots \wedge g_m \mid g_i \in \mathcal{L}/\mathcal{L}_{\geq k} \} & (m \geq 1)\\
 \mathbb{Z}_{\ell}	& (m=0) \\
 0 & (\text{otherwise}),
 \end{cases}
\end{equation}
where $\wedge$ denotes the wedge product, and the differential $\partial_m : \Lambda_m(\mathcal{L} / \mathcal{L}_{\geq k}) \rightarrow \Lambda_{m-1}(\mathcal{L} / \mathcal{L}_{\geq k})$ is given by
\begin{equation}\label{eq:5.2.2}
	\partial_m(g_1 \wedge \cdots \wedge g_m) = \sum_{i <j} (-1)^{i+j+1}[g_i, g_j] \wedge g_1 \wedge \cdots \hat{g}_i \cdots \hat{g}_j \cdots \wedge g_m. 
\end{equation}
Here, $\hat{g}_i$ and $\hat{g}_j$ mean that we remove the terms $g_i$ and $g_j$ from the monomials.
	
Next, we recall the notion of  weight of elements of $\mathcal{L}_k$. For $g \in \mathcal{L}_k$, we define the {\it weight} of $g$ as the integer $k$.  We denote it by $wt(g) = k$. For homogeneous element of $m$-th exterior algebra $\Lambda_m(\mathcal{L} / \mathcal{L}_{\geq k})$
	\begin{equation} \label{eq:5.2.3}
		g_1 \wedge \cdots \wedge g_m \in \Lambda_m(\mathcal{L} / \mathcal{L}_{\geq k}),
	\end{equation}
	we define its weight by sum of weights of each component: 
	\begin{equation} \label{eq:5.2.4}
		wt(g_1 \wedge \cdots \wedge g_m) := \sum_{i=1}^m wt(g_i)
	\end{equation}
		
	Let $\Lambda^{(w)}_m(\mathcal{L}/ \mathcal{L}_{ \geq k})$ be the submodule of $\Lambda_m(\mathcal{L}/ \mathcal{L}_{ \geq k})$ consisting of weight $w$ elements. Then, we may see that the boundary operator $\partial_m$ preserve weights, so $\Lambda^{(w)}_{\ast}(\mathcal{L}/ \mathcal{L}_{\geq k}) \subset \Lambda_{\ast}(\mathcal{L} /\mathcal{L}_{\geq k})$ forms a subcomplex. Thus, we have a decomposition 
	\begin{equation} \label{eq:5.2.5}
		\Lambda_{\ast}(\mathcal{L} / \mathcal{L}_{\geq k}) \simeq \bigoplus_{w} \Lambda^{(w)}_{\ast}(\mathcal{L} / \mathcal{L}_{\geq k})
	\end{equation}
	as a chain complex and we have
	\begin{equation} \label{eq:5.2.6}
		H_{\ast}(\mathcal{L} / \mathcal{L}_{\geq k}; \mathbb{Z}_{\ell}) = H_{\ast}(\Lambda_{\ast}(\mathcal{L} / \mathcal{L}_{\geq k})) \simeq H_{\ast}(\bigoplus_{w} \Lambda^{(w)}_{\ast}(\mathcal{L} / \mathcal{L}_{\geq k})) = \bigoplus_{w} H_{\ast}^{(w)} (\Lambda_{\ast}(\mathcal{L} / \mathcal{L}_{\geq k}))
	\end{equation}
	where we set $H_{\ast}^{(w)} (\Lambda_{\ast}(\mathcal{L} / \mathcal{L}_{\geq k}) := H_{\ast}(\Lambda^{(w)}_{\ast}(\mathcal{L} / \mathcal{L}_{\geq k}))$.
	
	Note that the reduction morphism of Lie algebra $\phi : \mathcal{L} / \mathcal{L}_{\geq k+1} \rightarrow \mathcal{L} / \mathcal{L}_{\geq k}$ preserves weights. Therefore, it induces weight preserving morphism of chain complex and homology groups
	\begin{equation} \label{eq:5.2.7}
		\phi_{\ast} : H_{\ast}^{(w)}(\mathcal{L} / \mathcal{L}_{\geq k+1}; \mathbb{Z}_{\ell}) \rightarrow H_{\ast}^{(w)} (\mathcal{L} / \mathcal{L}_{\geq k}; \mathbb{Z}_{\ell}).
	\end{equation}
	
	Then, for the first homology group, the reduction homomorphism induces isomorphism for $k\geq 2$,
	\begin{equation} \label{eq:5.5.1}
		H_1(\mathcal{L}; \mathbb{Z}_{\ell})  \overset{\sim}{\rightarrow} H_1(\mathcal{L}/\mathcal{L}_{\geq k}; \mathbb{Z}_{\ell}),
	\end{equation}
	 and they are concentrated at weight one. Noting that $H_1(\mathcal{L}; \mathbb{Z}_{\ell}) \simeq \mathbb{Z}_{\ell}^{\oplus n}$, we get 
\begin{equation} \label{eq:5.5.2}
	H_1(\mathcal{L}/\mathcal{L}_{\geq k}; \mathbb{Z}_{\ell}) \simeq \mathbb{Z}_{\ell}^{\oplus n}
\end{equation}

Next, by considering the Hochschild-Serre spectral sequence for
\begin{equation} \label{eq:5.3.12}
			0 \longrightarrow \mathcal{L}_{\geq k} \longrightarrow \mathcal{L} \longrightarrow \mathcal{L}/\mathcal{L}_{\geq k} \longrightarrow 0,
\end{equation}
we get a weight-preserving five term exact sequence (see, for example \cite[Corollary A.27]{RZ})
		\begin{equation} \label{eq:5.3.13}
			H_2(\mathcal{L}; \mathbb{Z}_{\ell}) \rightarrow H_2(\mathcal{L} / \mathcal{L}_{\geq k}; \mathbb{Z}_{\ell}) \overset{d_{2,0}^2}{\rightarrow} H_1(\mathcal{L}_{\geq k}; \mathbb{Z}_{\ell})_{\mathcal{L}}\left(=\frac{\mathcal{L}_{\geq k}}{[\mathcal{L}, \mathcal{L}_{\geq k}]}\right) \rightarrow H_1(\mathcal{L}; \mathbb{Z}_{\ell}) \rightarrow H_1(\mathcal{L}/\mathcal{L}_{\geq k}; \mathbb{Z}_{\ell}) \rightarrow 0.
\end{equation}
Since $H_2(\mathcal{L};\mathbb{Z}_{\ell})=0$ and \eqref{eq:5.5.1}, we get
\begin{equation} \label{eq:5.5.3}
	H_2(\mathcal{L}/\mathcal{L}_{\geq k}; \mathbb{Z}_{\ell}) \simeq \mathcal{L}_k \simeq \mathbb{Z}_{\ell}^{\oplus N_k}
\end{equation}
and conclude that $H_2(\mathcal{L}/\mathcal{L}_{\geq k})$ is concentrated at weight $k$ part.

Finally, we recall Igusa-Orr computation of  third homology group $H_3(\mathcal{L}/\mathcal{L}_{\geq k};\mathbb{Z}_{\ell})$ in our situation. They consider the weighted version of Hochschild-Serre spectral sequence $E=\{E_{p,q}^r\}$ associated with the central extension
\begin{equation} \label{eq:5.5.4}
	0 \rightarrow \mathcal{L}_k \rightarrow \mathcal{L}/\mathcal{L}_{\geq k+1} \rightarrow \mathcal{L}/\mathcal{L}_{\geq k} \rightarrow 0
\end{equation}
with 
\begin{equation} \label{eq:5.3.3}
			E_{p,q}^2 \simeq H_p^{(w - qk)}(\mathcal{L} / \mathcal{L}_{\geq k}; \mathbb{Z}_{\ell}) \otimes_{\mathbb{Z}_{\ell}} \Lambda_q(\mathcal{L}_k)
		\end{equation}
		and 
		\begin{equation} \label{eq:5.3.4}
			E_{p,q}^r \Rightarrow H_{p+q}^{(w)}(\mathcal{L} / \mathcal{L}_{\geq k+1}); \mathbb{Z}_{\ell}).
		\end{equation}	
	Then, further computation gives the following theorem.
	
	\begin{theorem}[{\cite[Theorem 5.9]{IO}}] \label{thm:5.4.2}
		For each non-negative integer $k$, the following assertions hold:
		\begin{enumerate}[label=$(\arabic{enumi})$]
			\item \begin{equation} \label{eq:5.4.12}
				H_3^{(w)}(\mathcal{L} / \mathcal{L}_{\geq k}; \mathbb{Z}_{\ell}) \simeq \begin{cases}
 \mathbb{Z}_{\ell}^{\oplus(nN_{w - 1} - N_{w})} & (k+1 \leq w \leq 2k -1), \\
 	0 & (\text{otherwise}),
 \end{cases}
			\end{equation}
	\item The homomorphism $\delta_{k+1,k} : H_3^{(w)}(\mathcal{L} / \mathcal{L}_{ \geq k+1}; \mathbb{Z}_{\ell}) \rightarrow H_3^{(w)}(\mathcal{L} / \mathcal{L}_{\geq k}; \mathbb{Z}_{\ell})$ induced from the canonical map $\mathcal{L}/\mathcal{L}_{\geq k +1} \rightarrow \mathcal{L} / \mathcal{L}_{\geq k}$ is an isomorphism for $w \notin \{k+1, 2k, 2k+1\}$ and the zero homomorphism otherwise.
		\end{enumerate}
	In particular, by combining (1) and (2), we have
	\begin{equation} \label{eq:5.4.13}
		H_3(\mathcal{L} / \mathcal{L}_{\geq k}; \mathbb{Z}_{\ell}) \simeq \bigoplus_{w=k+1}^{2k-1} H_3^{(w)}(\mathcal{L} / \mathcal{L}_{\geq k}; \mathbb{Z}_{\ell}) \simeq \bigoplus_{i=k}^{2k-2} \mathbb{Z}_{\ell}^{\oplus (nN_{i} - N_{i+1})}.
	\end{equation}
	\end{theorem}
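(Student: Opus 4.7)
The plan is to prove both parts simultaneously by induction on $k$, running the weighted Hochschild--Serre spectral sequence \eqref{eq:5.3.3}--\eqref{eq:5.3.4} attached to the central extension
\[
0 \longrightarrow \mathcal{L}_k \longrightarrow \mathcal{L}/\mathcal{L}_{\geq k+1} \longrightarrow \mathcal{L}/\mathcal{L}_{\geq k} \longrightarrow 0.
\]
The base case $k=2$ is a direct Koszul computation: $\mathcal{L}/\mathcal{L}_{\geq 2} = \mathcal{L}_1 \cong \mathbb{Z}_\ell^{\oplus n}$ is abelian of pure weight $1$, so $H_3^{(w)}(\mathcal{L}/\mathcal{L}_{\geq 2}) = \Lambda_3\mathcal{L}_1$ is concentrated at $w=3$ with $\mathbb{Z}_\ell$-rank $\binom{n}{3} = nN_2 - N_3$ by Witt's formula, matching \eqref{eq:5.4.12}.

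For the inductive step, combine the induction hypothesis with the weight concentration of $H_1$ and $H_2$ in \eqref{eq:5.5.1}, \eqref{eq:5.5.2}, \eqref{eq:5.5.3}: at any fixed total weight $w$, the only $E^2_{p,q}$-terms with $p+q=3$ that can be nonzero are
\[
E^2_{3,0} = H_3^{(w)}(\mathcal{L}/\mathcal{L}_{\geq k})\ (w \in [k{+}1,2k{-}1]), \quad E^2_{2,1} \cong \mathcal{L}_k \otimes \mathcal{L}_k\ (w = 2k),
\]
\[
E^2_{1,2} \cong \mathcal{L}_1 \otimes \Lambda_2\mathcal{L}_k\ (w = 2k{+}1), \quad E^2_{0,3} \cong \Lambda_3\mathcal{L}_k\ (w = 3k).
\]
For any weight $w \in [k{+}2, 2k{-}1]$ in the intersection of the support ranges at levels $k$ and $k{+}1$, only $E^2_{3,0}$ contributes; all incoming and outgoing $d^r$-differentials vanish on weight grounds, the edge map realises $\delta_{k+1,k}$ as an isomorphism, and the rank $nN_{w-1} - N_w$ is transported from level $k$ to level $k{+}1$. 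This already proves (1) and (2) on the overlap, and it also identifies the weights outside $[k{+}2, 2k{+}1]$ as not contributing to $H_3$ at level $k{+}1$.

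The remaining work is at the three boundary weights $w \in \{k{+}1, 2k, 2k{+}1\}$. At $w = k{+}1$, the only contributions to $H_3^{(k+1)}$ and $H_2^{(k+1)}$ of $\mathcal{L}/\mathcal{L}_{\geq k+1}$ come from $E^2_{3,0}$ and $E^2_{1,1} = \mathcal{L}_1 \otimes \mathcal{L}_k$ respectively; using $H_2^{(k+1)}(\mathcal{L}/\mathcal{L}_{\geq k+1}) \cong \mathcal{L}_{k+1}$ (from \eqref{eq:5.5.3} at level $k{+}1$) and identifying the transgression $d^2 \colon E^2_{3,0} \to E^2_{1,1}$ with the bracket map $\mathcal{L}_1 \otimes \mathcal{L}_k \to \mathcal{L}_{k+1}$ (the standard cap-product-with-extension-class description), the rank equality $\mathrm{rk}\,E^2_{3,0} = nN_k - N_{k+1} = \mathrm{rk}\,\mathrm{im}(d^2)$ forces $d^2$ to be injective, so $H_3^{(k+1)}(\mathcal{L}/\mathcal{L}_{\geq k+1}) = 0$ and (2) at $w = k{+}1$ is vacuously satisfied since the source is zero. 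The analyses at $w = 2k$ and $w = 2k{+}1$ are structurally parallel: one identifies the Jacobi-type $d^2$'s emanating from $E^2_{2,1}$ and $E^2_{1,2}$ and uses the known $H_1, H_2$ of $\mathcal{L}/\mathcal{L}_{\geq k+1}$ to pin down cokernels, producing the ranks $nN_{2k-1} - N_{2k}$ and $nN_{2k} - N_{2k+1}$; the vanishing clauses in (2) are again automatic because one of the two groups in each $\delta_{k+1,k}$ is zero by (1).

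The main obstacle is the explicit identification of the $d^2$- (and possibly $d^3$-) differentials with bracket/Jacobi maps on the free Lie algebra $\mathcal{L}$, together with the bookkeeping at $w = 2k, 2k{+}1$ where an incoming $d^2$ from $E^2_{4,0} = H_4(\mathcal{L}/\mathcal{L}_{\geq k})$ could a priori interfere; this is the combinatorial heart of Igusa--Orr's Theorem 5.9. Because every module in sight is $\mathbb{Z}_\ell$-free and the spectral sequence and Koszul complex are $\mathbb{Z}_\ell$-linear and coefficient-natural, the rank computations of \cite{IO} transfer verbatim, so the $\mathbb{Z}_\ell$-version presents no new arithmetic difficulty beyond verifying flatness and the weight-support statements for $H_1$ and $H_2$ already established above.
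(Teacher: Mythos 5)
First, a caveat: the paper does not actually prove this statement --- it is quoted from \cite[Theorem 5.9]{IO}, and all Section 4 supplies is the weighted Hochschild--Serre spectral sequence \eqref{eq:5.3.3}--\eqref{eq:5.3.4} together with the remark that the integral computation of \cite{IO} carries over to $\mathbb{Z}_{\ell}$ unchanged. Your skeleton --- induction on $k$ through that spectral sequence --- is the right one, and you correctly and completely handle the base case, the interior weights $w \in [k+2, 2k-1]$ (where only $E^2_{3,0}$ survives and the edge map gives the isomorphism in (2)), and the weight $w = k+1$ (where injectivity of $d^2 \colon E^2_{3,0} \to E^2_{1,1}$ is forced by $H_2^{(k+1)}(\mathcal{L}/\mathcal{L}_{\geq k+1}) \simeq \mathcal{L}_{k+1}$ and freeness over $\mathbb{Z}_{\ell}$). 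The derivation of the vanishing clauses of (2) from (1) is also fine.

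The genuine gap is at the boundary weights $w = 2k$ and $w = 2k+1$, which is precisely where the theorem's content lives. Your claim that ``the known $H_1, H_2$ of $\mathcal{L}/\mathcal{L}_{\geq k+1}$ pin down the cokernels'' fails at $w = 2k$: there $H_3^{(2k)}(\mathcal{L}/\mathcal{L}_{\geq k+1}) = E^{\infty}_{2,1}$ is the quotient of $\ker\bigl(d^2 \colon \mathcal{L}_k \otimes \mathcal{L}_k \to \Lambda_2\mathcal{L}_k\bigr) \simeq \mathrm{Sym}^2\mathcal{L}_k$ by the image of $d^2 \colon E^2_{4,0} = H_4^{(2k)}(\mathcal{L}/\mathcal{L}_{\geq k}) \to E^2_{2,1}$. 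Knowing $H_2^{(2k)}(\mathcal{L}/\mathcal{L}_{\geq k+1}) = 0$ only forces the \emph{outgoing} $d^2$ to be surjective; it says nothing about the incoming one, and the incoming one is genuinely nonzero: for $n = k = 3$ one has $\mathrm{rank}\,\mathrm{Sym}^2\mathcal{L}_3 = \binom{9}{2} = 36$ while $nN_5 - N_6 = 144 - 116 = 28$, so an $8$-dimensional piece must be killed by the image of $H_4^{(6)}(\mathcal{L}/\mathcal{L}_{\geq 3})$. Controlling that image --- and, at $w = 2k+1$, proving injectivity of $d^2 \colon E^2_{3,1} = H_3^{(k+1)}(\mathcal{L}/\mathcal{L}_{\geq k}) \otimes \mathcal{L}_k \to \mathcal{L}_1 \otimes \Lambda_2\mathcal{L}_k$ and ruling out a $d^3$ from $E^3_{4,0}$ --- requires the explicit Lie-theoretic identification of the differentials and partial information about $H_4$; this is the bulk of Igusa--Orr's argument, and your proposal defers it to the reference rather than supplying it. Your closing observation that the $\mathbb{Z}$-computation transfers to $\mathbb{Z}_{\ell}$ by flatness is correct and is, in effect, all the paper itself asserts; but as a standalone proof your text is incomplete exactly at the step that produces the ranks $nN_{2k-1} - N_{2k}$ and $nN_{2k} - N_{2k+1}$.
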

	
	\begin{remark}
	(1) In \cite{Ma}, Massuyeau relates $H_3(\mathcal{L} / \mathcal{L}_{\geq k}; \mathbb{Q})$ and a direct sum of the vector space $\mathcal{T}_d(H_{\mathbb{Q}})$ of tree, connected $H_{\mathbb{Q}}=H_1(F_n; \mathbb{Q})$-colored Jacobi diagrams graded by internal degree $d \geq 1$ subject to AS, IHX and multilinearity relations (for details, see [loc.cit.]). His computation is also applicable in our situation when we consider $H_3(\mathcal{L} / \mathcal{L}_{\geq k}; \mathbb{Q}_{\ell})$ and $H_{\mathbb{Q}_{\ell}}=H_1(\Fn;\mathbb{Q}_{\ell})$-colored Jacobi diagrams. More precisely, we have an isomorphism
	\begin{equation}
		\Phi : \bigoplus_{d=k-1}^{2k-3} \mathcal{T}_d(H_{\mathbb{Q}_{\ell}}) \overset{\sim}{\rightarrow} H_3(\mathcal{L}/\mathcal{L}_{\geq k}; \mathbb{Q}_{\ell}).
	\end{equation}
	(2) However, as mentioned in \cite[Remark 1.7]{Ma}, when considering $H_3(\mathcal{L} / \mathcal{L}_{\geq k}; \mathbb{Z}_{\ell})$ and $H_{\mathbb{Z}_{\ell}}=H_1(\Fn;\mathbb{Z}_{\ell})$-colored Jacobi diagrams, we still have a map
	\begin{equation}
		\Phi : \bigoplus_{d=k-1}^{2k-3} \mathcal{T}_d(H_{\mathbb{Z}_{\ell}}) \rightarrow H_3(\mathcal{L}/\mathcal{L}_{\geq k}; \mathbb{Z}_{\ell}),
	\end{equation}
	but, in general, $\Phi$ is not a bijection between two such spaces.		\end{remark}

\section{Low dimensional homology groups for torsion-free nilpotent pro-$\ell$ groups} \label{sec:5}

In this section, we compute the low dimensional homology groups for torsion-free nilpotent pro-$\ell$ groups with coefficients in $\mathbb{Z}_{\ell}$ by considering pro-$\ell$ group analogue of Igusa-Orr computation \cite[\S 6]{IO}. Namely, we compute the rank of homology groups of free nilpotent pro-$\ell$ groups through isomorphisms to that of its associated Lie algebras.

\subsection{Presentation of a torsion-free nilpotent pro-$\ell$ group in terms of basis $\mathcal{B}$}

Let $G$ be a finitely generated nilpotent pro-$\ell$ group. We denote by $\Gamma_k(G)$ the $k$-th lower central subgroup of $G$ which is recursively defined as
\begin{equation} \label{eq:6.1.1}
	\Gamma_1(G):= G, \quad \Gamma_k(G) := [\Gamma_{k-1}(G), G] \quad (k \geq 2).
\end{equation}
Here, we note that each commutator subgroup $\Gamma_k(G)$ is closed in $G$, since $G$ is finitely generated pro-$\ell$ group (cf. \cite[Proposition 1.1.9]{DDMS})

We assume that, for all $k \geq 1$, the associated abelian groups $\Gamma_k(G) / \Gamma_{k+1}(G)$ are torsion free and finitely generated pro-$\ell$ abelian groups. Set
\begin{equation}\label{eq:6.1.2}
	\mathrm{gr}(G) = \bigoplus_{k \geq 1} \Gamma_k(G) / \Gamma_{k+1}(G)
\end{equation}
the associated graded Lie algebra whose Lie bracket is given by commutator of $G$. In this case, note that for each $k \geq 1$, we can choose a (topological) basis $\mathcal{B}_k$ for the free abelian pro-$\ell$ group $\Gamma_{k}(G) / \Gamma_{k+1}(G)$ in terms of basis of the free Lie algebra $\mathcal{L}_k$ of degree $k$ as follows: It is known that there several choices of (ordered) basis sets such as, {\it Hall basis} in \cite[Theorem 5.8]{MKS} and {\it Lyndon basis} in \cite{CFL}. We fix $\mathcal{B}_k(\mathcal{L})$ as such a basis for $\mathcal{L}_k$. Then, the pro-$\ell$ Magnus embedding $\Theta$ induces isomorphism
\begin{equation}
	\Theta : \Gamma_{k}(G) / \Gamma_{k+1}(G) \overset{\sim}{\longrightarrow} \mathcal{L}_k
\end{equation}
as in \eqref{eq:5.1.7}, since $\Gamma_{k}(G) / \Gamma_{k+1}(G)$ is torsion free $\mathbb{Z}_{\ell}$-module. Therefore, we define the corresponding basis set $\mathcal{B}_k$ for $\Gamma_{k}(G) / \Gamma_{k+1}(G)$ by
\begin{equation}
	\mathcal{B}_k = \{ \Theta^{-1}(e) \in \Gamma_{k}(G) / \Gamma_{k+1}(G) \ |\  e \in \mathcal{B}_k(\mathcal{L}) \}.
\end{equation}
Here, note that $\mathcal{B}_k$ inherits the ordering of $\mathcal{B}_k(\mathcal{L})$.

Let $\mathcal{B}$ be the union $\bigcup_{k \geq 1}\mathcal{B}_k$ of such basis set for degree $k$ elements. We endow $\mathcal{B}$ with total ordering such that for any $e_j \in \mathcal{B}_j$ and $e_k \in \mathcal{B}_k$ we have $e_j < e_k$ if $j < k$. 

Then, we define the notion of weight for $\mathcal{B}_k$ as in the case of $\mathcal{L}_k$.
\begin{definition}[weight] \label{def:6.1.3}
	Let $k$ be a positive integer. For $a \in \mathcal{B}_k \sqcup \mathcal{B}_k^{-1}$, we define its weight $wt(a)$ as $k$, i.e., $wt(a)=k$.
\end{definition}

\begin{definition}[reduced word] \label{eq:6.1.4}
	For each $a < b \in \mathcal{B}$, define a reduced word $\mathfrak{c}(a,b)$ in the letters of $\mathcal{B}$ such that
	\begin{enumerate}[label=$(\arabic{enumi})$]
		\item $\mathfrak{c}(a,b) = b^{-1}a^{-1} b a$ as an element of $G$,
		\item  $\mathfrak{c}(a,b)$  has weight $wt(\mathfrak{c}(a, b)) \geq wt(a) + wt(b)$,
		\item $\mathfrak{c}(a,b) = e $ if $b^{-1}a^{-1} b a$ is the identity element in $G$.
	\end{enumerate}
	In addition, we set $\mathfrak{c}(b,a):= \mathfrak{c}(a,b)^{-1}$ as a word in the letters of $\mathcal{B}$.
\end{definition}

\begin{lemma}  \label{lem:6.1.5}
Notations being as above, the nilpotent pro-$\ell$ group $G$ has the following pro-$\ell$ presentation in terms of the chosen basis $\mathcal{B}$ with total ordering as above:
\begin{equation} \label{eq:6.1.6}
	G \simeq \langle \mathcal{B} \mid ab(\mathfrak{c}(a,b))a^{-1}b^{-1} \  \text{for all} \ a < b \in \mathcal{B} \rangle =: \mathcal{H}.
\end{equation}
In particular, each element of $G$ can be uniquely written as a finite product of basis elements with exponents in $\mathbb{Z}_{\ell}$ according to the total ordering of $\mathcal{B}$.
\end{lemma}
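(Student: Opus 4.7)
The plan is to recognise the statement as the pro-$\ell$ version of the classical Hall/Mal'cev basis theorem for finitely generated torsion-free nilpotent groups, and to produce an explicit isomorphism $\varphi : \mathcal{H} \to G$. First I define $\varphi$ on generators by sending each $b \in \mathcal{B}$ to the chosen element of $G$; this is continuous and well-defined because the relations $ab\,\mathfrak{c}(a,b)\,a^{-1}b^{-1} = 1$ hold in $G$ by the very definition of $\mathfrak{c}(a,b)$ (Definition 6.1.4). The goal is then to show $\varphi$ is bijective, which simultaneously gives the presentation (6.1.6) and the normal-form statement.

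For surjectivity I will argue by induction along the lower central filtration of $G$. Because $G$ is a finitely generated nilpotent pro-$\ell$ group, there is a finite $c$ with $\Gamma_{c+1}(G) = 1$ and each $\Gamma_k(G)/\Gamma_{k+1}(G)$ is a free $\mathbb{Z}_\ell$-module of finite rank with topological $\mathbb{Z}_\ell$-basis $\mathcal{B}_k$ (by construction via the Magnus isomorphism $\Theta$). Hence $\mathcal{B}_1 \cup \cdots \cup \mathcal{B}_k$ and $\Gamma_{k+1}(G)$ together topologically generate $G$; iterating up to $k = c$ shows $\mathcal{B}$ topologically generates $G$, so $\varphi$ is a surjection of pro-$\ell$ groups.

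For injectivity, and for the normal-form statement, I will implement the collection process in $\mathcal{H}$. The defining relation can be rewritten as $ba = ab\,\mathfrak{c}(a,b)$ for $a < b$, so any adjacent out-of-order pair can be transposed at the cost of appending a word $\mathfrak{c}(a,b)$ whose weight is $\geq wt(a) + wt(b)$. Since $G$ (and hence any quotient we will consider) has finite nilpotency class $c$, collecting modulo $\Gamma_{k+1}(G)$ at successive $k = 1, 2, \ldots, c$ terminates in an ordered monomial $\prod_{b\in\mathcal{B}} b^{m_b}$. To promote $m_b \in \mathbb{Z}$ to $m_b \in \mathbb{Z}_\ell$, I will apply the collection process inside each finite quotient $G/U$ (with $U$ an open normal subgroup), obtain compatible finite exponents $m_b(U) \in \mathbb{Z}/\ell^{n(U)}$, and take the inverse limit; compatibility follows because the relations and the ordering descend to every $G/U$. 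Finally, to see that distinct normal forms give distinct elements of $G$, I let $x = \prod_{b} b^{\lambda_b} \neq 1$ and set $k_0 = \min\{wt(b) : \lambda_b \neq 0\}$; the image of $x$ in $\Gamma_{k_0}(G)/\Gamma_{k_0+1}(G)$ is the $\mathbb{Z}_\ell$-linear combination $\sum_{wt(b) = k_0} \lambda_b \cdot [b]$, which is non-zero because $\{[b] : b \in \mathcal{B}_{k_0}\}$ is a $\mathbb{Z}_\ell$-basis, hence $x \neq 1$.

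The main obstacle I expect is the pro-$\ell$ bookkeeping in the collection process: in the discrete setting (Hall, Mal'cev) one runs the collection algorithm word by word, but here I need the procedure to converge in the profinite topology and to produce a coherent $\mathbb{Z}_\ell$-valued exponent tuple rather than a growing sequence of integer exponents. The cleanest way to handle this is to pass through the finite discrete quotients $G/U$ and appeal to the classical result there, then reassemble via $\varprojlim$; the critical input making this reassembly work is the assumption that each $\Gamma_k(G)/\Gamma_{k+1}(G)$ is a \emph{free} (hence $\ell$-torsion-free) finitely generated pro-$\ell$ abelian group, so that the Magnus identification with $\mathcal{L}_k$ and the unique decomposition along $\mathcal{B}_k$ are preserved under the inverse limit.
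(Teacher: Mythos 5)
Your overall strategy is sound and belongs to the same family as the paper's argument: both are versions of the Hall--Mal'cev collection process, and your well-definedness, surjectivity, and ``distinct normal forms separate in $\Gamma_{k_0}(G)/\Gamma_{k_0+1}(G)$'' steps are all correct. (The paper, following Igusa--Orr, organizes the collection differently: it compares $\mathcal{H}/\mathcal{H}^{(k,\ell)}$ with $G/G^{(k,\ell)}$ along the $\ell$-lower central series, proves injectivity level by level via a double induction on $(-m(w), n(m(w)))$ that eliminates cancelling pairs of minimal-weight letters, and then passes to $\varprojlim_k$; you instead aim for a full normal form in $\mathcal{H}$ and then use the graded pieces of $G$ to separate forms.)

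The genuine gap is in your mechanism for producing normal forms for arbitrary elements of $\mathcal{H}$, i.e.\ for elements that are not finite words in $\mathcal{B}\sqcup\mathcal{B}^{-1}$. You announce that collection happens ``in $\mathcal{H}$'', but the actual procedure you describe runs collection ``inside each finite quotient $G/U$'' and assembles ``compatible'' exponents by $\varprojlim$. This fails on two counts. First, normal forms in $G/U$ (even if they existed uniquely) would only yield the normal-form statement for $G$; they say nothing about elements of $\mathcal{H}$, so they cannot feed into your injectivity argument --- which needs every $x\in\mathcal{H}$ with $\varphi(x)=1$ to be expressible as an ordered monomial \emph{in $\mathcal{H}$}. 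Second, the asserted compatibility is unjustified: in an arbitrary finite quotient $G/U$ the images of $\mathcal{B}$ need not be independent (e.g.\ $U$ may contain some basis elements), so the exponents $m_b(U)$ are not well defined and there is no coherent system to take a limit of. The repair is either to imitate the paper --- work in the quotients $\mathcal{H}/\mathcal{H}^{(k,\ell)}$ of $\mathcal{H}$ itself, where the double induction applies verbatim --- or to keep your route but replace the ``compatible exponents'' step by a compactness argument: the monomial map $\mathbb{Z}_{\ell}^{\mathcal{B}}\to\mathcal{H}$, $(\lambda_b)\mapsto\prod_b b^{\lambda_b}$, is continuous from a compact space, hence has closed image; finite-word collection in $\mathcal{H}$ (which terminates because $wt(\mathfrak{c}(a,b))\ge wt(a)+wt(b)$ and weights are bounded by the nilpotency class, forcing $\mathfrak{c}(a,b)=e$ eventually) shows the image contains the dense subgroup of finite words, so the image is all of $\mathcal{H}$. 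With that replacement your proof closes.
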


\begin{proof}
Let $F^{(\ell)}(\mathcal{B})$ be the free pro-$\ell$ group on the set $\mathcal{B}$ and put
\begin{equation}  \label{eq:6.1.7}
	R:=\overline{\langle \langle ab(\mathfrak{c}(a,b))a^{-1}b^{-1}  \mid  \ a < b \in \mathcal{B} \rangle \rangle}
\end{equation}
the closure of the minimal normal subgroup of $F(\mathcal{B})$ containing  $ab(\mathfrak{c}(a,b))a^{-1}b^{-1}$ $(a < b \in \mathcal{B})$. Here, $F(\mathcal{B})$ denotes the free group generated by $\mathcal{B}$.

By definition of presentation of pro-$\ell$ groups (see \cite[Appendix C]{RZ}), we have to show that there is a short exact sequence of pro-$\ell$ groups
\begin{equation} \label{eq:6.1.8}
	1 \rightarrow R \rightarrow F^{(\ell)}(\mathcal{B}) \rightarrow G \rightarrow 1.
\end{equation}
First, notice that we have the obvious continuous homomorphism $\phi : F^{(\ell)}(\mathcal{B}) \rightarrow G$ which sends a letter of $\mathcal{B}$ to the word it represents. Since each lower central subgroup $\Gamma_k(G)$ is automatically closed, any element of $G$ can be written as combination of a letter of $\mathcal{B}$ as in the usual abstract group theory. Hence, $\phi$ is epimorphism and induces an continuous epimorphism
\begin{equation}  \label{eq:6.1.9}
	\bar{\phi} : \mathcal{H}:=F^{(\ell)}(\mathcal{B})/R \longrightarrow G.
\end{equation}
To complete the proof, we have to show that $\bar{\phi}$ is injective, that is, any word $w$ in $\mathcal{B} \sqcup \mathcal{B}^{-1}$ which is trivial in $G$ is trivial in $\mathcal{H}$. For this, we show that for each $k \geq 1$ there is the unique isomorphism
\begin{equation}  \label{eq:6.1.10}
	\mathcal{H}/ \mathcal{H}^{(k,\ell)}=F(\mathcal{B})/ R_k F^{(k,\ell)}(\mathcal{B}) \simeq G / G^{(k,\ell)} 
\end{equation}
where $\mathcal{H}^{(k,l)}, G^{(k,\ell)}$, and  $F^{(k,\ell)}(\mathcal{B})$ are $k$-th $\ell$-lower central subgroup of $\mathcal{H}, G, F(\mathcal{B})$ respectively, and $R_k = R/F^{(k,\ell)}(\mathcal{B})$. Here, for a group $H$, $\ell$-lower central series $H^{(k, \ell)}$ is inductively defined as follows: $H^{(1, \ell)}:=H$ and $H^{(k+1,\ell)}:= (H^{(k, \ell)})^{\ell}[H^{(k, \ell)}, H]$ $(k \geq 1)$.

Then, we have an epimorphism $\bar{\phi}_k : \mathcal{H}/ \mathcal{H}^{(k,\ell)} \rightarrow G/G^{(k, \ell)}$. To show that it is injective, we use the same argument as in \cite[Lemma 6.1]{IO}, namely, by double induction on the ordered pair $(- m(w), n(m(w)))$. Here, $m(w)$ is the minimal weight of any letter in a word $w$ in $\mathcal{B} \sqcup \mathcal{B}^{-1}$ and $n(m(w))$ is the number of letters in the word $w$ of weight $m(w)$. 

Take  a word $w$ in $\mathcal{B} \sqcup \mathcal{B}^{-1}$ which is trivial in $G/G^{(k, \ell)}$. Note that the minimal weight $m(w)$ is well-defined when considered as element in $F(\mathcal{B}) / F^{(k,\ell)}(\mathcal{B})$. Assume $m(w)=k$. Since $G$ is nilpotent and $\Gamma_k(G) / \Gamma_{k+1}(G)$ is finitely generated, the basis set $\mathcal{B}$ is finite set, and so the set $\{m(w) \mid w \in \mathcal{B} \}$ is bounded. Hence, to proceed induction step, it is enough to increase the weight of $w$ by exchanging letters in $w$ of weight $m(w)$ with those of higher weight.

Since $w$ is trivial in $\Gamma_k(G)/\Gamma_{k+1}(G)G^{(k,\ell)}$, a word with cancelling pair of letters of weight $m(w)$ represents $w$ in $\mathcal{H}/ \mathcal{H}^{(k,\ell)} $ denoted by the same $w$ by abuse of notation.

We have two cases to consider: When $w\equiv g_1 b^{-1} g_2 b g_3 \in \mathcal{H}/ \mathcal{H}^{(k,\ell)}$ for some words $g_1, g_2, g_3 \in  \mathcal{H}$ and a letter $b$ of weight $m(w)$, in terms of the relations $b^{-1}ab = a \mathfrak{c}(a,b)$ and $b^{-1} a^{-1} b = \mathfrak{c}(a, b) a^{-1}$, one can eliminate $b$ and $b^{-1}$. When $w\equiv g_1 b g_2 b^{-1} g_3 \in \mathcal{H}/ \mathcal{H}^{(k,\ell)}$ for some words $g_1, g_2, g_3 \in  \mathcal{H}$ and a letter $b$ of weight $m(w)$, then $w \equiv bb^{-1} g_1 b g_2 b^{-1} g_3 b b^{-1}=b g_1' g_2 g_3' b^{-1}$ where $g_1' = b^{-1} g_1 b$ and $g_2' = b^{-1} g_2 b$. Note that by the first case, the letter $b$ can be eliminated in $g_1'$ and $g_3'$. Hence by double induction on $(-m(w), l(m(w)))$ we conclude that $g_1' g_2 g_3'$ is trivial, and so is $w$. 

Therefore, the homomorphism $\bar{\phi}_k : \mathcal{H}/ \mathcal{H}^{(k,\ell)} \rightarrow  G / G^{(k,\ell)} $ is an isomorphism. Note that $\ell$-lower central series is cofinal in inverse system of pro-$\ell$ groups. Passing to the inverse limit $\varprojlim_{k}$, the assertion follows.
\end{proof}

\subsection{Free chain resolution of $\mathbb{Z}_{\ell}$ generated by basis $\mathcal{B}$}

To relate group homology of $G$ with Lie algebra homology of its associated Lie algebra, we construct a chain complex of $G$ which is isomorphic to the Koszul complex in terms of basis set $\mathcal{B}$.

Let $\llbracket \mathbb{Z}_{\ell} G \rrbracket$ be the completed group algebra of $G$ over $\mathbb{Z}_{\ell}$ (see \cite[Section 5.3]{RZ}). For $n \geq 1$, we set $C_n(G)$ the pro-$\ell$ left $\llbracket \mathbb{Z}_{\ell} G \rrbracket$ module generated by terms expressed as the form
\begin{equation}  \label{eq:6.2.1}
	\langle b_1, b_2, \ldots, b_n \rangle 
\end{equation}

where $b_i \in \mathcal{B}$ and subject to the relation:
\begin{equation*}\label{con:1} 
	\langle b_{\sigma(1)}, \ldots, b_{\sigma(n)}  \rangle = (-1)^{\sign(\sigma)}\langle b_1, \ldots, b_n \rangle \quad \text{for}\ \sigma \in S_n.
\end{equation*}

Here, $S_n$ denotes the $n$-th symmetric group. For $n=0$, we set $C_0(G)$ as free pro-$\ell$ left  $\llbracket \mathbb{Z}_{\ell} G \rrbracket$ module generated by $\langle \ \rangle$, thus $C_0(G) \simeq \llbracket \mathbb{Z}_{\ell} G \rrbracket$. For $\langle b_1, \ldots, b_n \rangle \in C_n(G)$, we define its weight by $wt(\langle b_1, \ldots, b_n \rangle):= \sum_{i=1}^n wt(b_i)$ and for $\langle \ \rangle \in C_0(G)$ its weight is defined by $wt(\langle \ \rangle):=0$.

Then, we see that $C_n(G)$ is, in fact, the free pro-$\ell$ left $\llbracket \mathbb{Z}_{\ell} G \rrbracket$ module with basis given by the expressions
\begin{equation*}
	\langle b_1, b_2, \ldots, b_n \rangle\quad \text{with}\quad b_1 < b_2 < \cdots < b_n
\end{equation*}
canonically determined by the ordering of $\mathcal{B}$. Following \cite[\S 6]{IO}, we will use the convention that the first entry $b_1$ in $\langle b_1, b_2, \ldots, b_n \rangle$ is allowed to be a word in $\mathcal{B} \sqcup \mathcal{B}^{-1}$ with the additional relations:
\begin{enumerate}[label=$(\arabic{enumi})$]
	\item $\langle  b^{-1}, b_2,\ldots, b_n  \rangle = -b^{-1} \langle b, b_2, \ldots, b_n \rangle,  \quad b \in \mathcal{B}$\label{con:2}
	\item $\langle  ab, b_2,\ldots, b_n  \rangle =  \langle a, b_2, \ldots, b_n \rangle + a \langle b, b_2, \ldots, b_n \rangle, \quad$ $a$ and $b$ are words in $\mathcal{B} \sqcup \mathcal{B}^{-1}$\label{con:3}
\end{enumerate}

When $a=b^{-1}$, (2) implies (1) and hence these relations are consistent.

Consider the chain complex of free profinite left $\llbracket \mathbb{Z}_{\ell}G \rrbracket$-modules
\begin{equation} \label{eq:6.2.2}
	C_2(G) \overset{\partial_2}{\rightarrow} C_1(G) \overset{\partial_1}{\rightarrow} C_0(G) \overset{\epsilon}{\rightarrow} \mathbb{Z}_{\ell} \rightarrow 0
\end{equation}
whose differential operators are defined for each basis as
\begin{eqnarray*}
	&&\partial_1 (\langle b \rangle) = b - 1, \\
	&&\partial_2 (\langle a,b \rangle) = (a - 1)\langle b \rangle - (b -1)\langle a \rangle + ab\langle \mathfrak{c}(a,b)\rangle,\\
	&&\epsilon(g)=1, \quad (g \in G).
\end{eqnarray*}
One may check that the above sequence actually forms chain complex, that is, $\partial_{k+1} \circ \partial_k =0$ by direct computation.

Our next task is to extend the above chain complex \eqref{eq:6.2.2} to a chain resolution
\begin{equation}
	\cdots \overset{\partial_{n+1}}{\rightarrow} C_{n+1}(G) \overset{\partial_n}{\rightarrow}C_n(G) \overset{\partial_n}{\rightarrow} \cdots \overset{\partial_2}{\rightarrow} C_1(G) \overset{\partial_1}{\rightarrow} C_0(G) \overset{\epsilon}{\rightarrow} \mathbb{Z}_{\ell} \rightarrow 0
\end{equation}
whose differentials are similar to that of Koszul complex of free nilpotent Lie algebras in \eqref{eq:5.2.2}. For this, we first prove Pseudoweight Lemma (Lemma \ref{lem:6.2.11}), and then, in terms of this pseudoweight lemma we obtain a chain resolution with desired property (Theorem \ref{thm:6.2.13}). 

To begin with, we prepare some notations which plays important role in  Pseudoweight Lemma (Lemma \ref{lem:6.2.11}). Let $z \in \mathcal{B}$ be a maximal weight element. Set $\overline{G}= G/\langle\langle z \rangle\rangle$ where $\langle\langle z \rangle\rangle$ denotes the closed normal subgroup of $G$ topologically generated by $z$. We suppose $wt(z)=k$. We define a (continuous) homomorphism $q : C_n(G) \rightarrow C_n(\overline{G})$ by
\begin{equation}\label{eq:6.2.3}
	q(\langle b_1, \ldots, b_n \rangle) = \begin{cases}
 	\langle b_1, \ldots, b_n \rangle & (\text{if} \ b_i \neq z\ \text{for all} \ i),\\
 	0 & (\text{otherwise}).
 \end{cases}
\end{equation}

Since the kernel of the canonical surjection $\llbracket \mathbb{Z}_{\ell} G \rrbracket \rightarrow \llbracket \mathbb{Z}_{\ell} \overline{G} \rrbracket$ is generated by $z-1$ as pro-$\ell$ $\llbracket \mathbb{Z}_{\ell} G \rrbracket$-module, the kernel $\Ker(q)$ is generated by the terms $\langle b_1, \ldots, b_{n-1}, z \rangle$ and $(z-1)\langle b_1, \ldots, b_n \rangle$ with $b_i \neq z$ for $i=1, \ldots, n$. Note that the (continuous) homomorphism $q : C_n(G) \rightarrow C_n(\overline{G})$ is a chain homomorphism for $n \leq 2$.

\begin{lemma} \label{eq:6.2.4}
Let $z$ be a maximal weight element of $\mathcal{B}$. Then, $z-1$ is not a zero divisor of $\llbracket \mathbb{Z}_{\ell}G \rrbracket$
\end{lemma}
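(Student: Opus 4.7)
The plan is to exploit the fact that a maximal-weight basis element is central in $G$ and generates a copy of $\mathbb{Z}_{\ell}$, and then to upgrade the normal form of Lemma \ref{lem:6.1.5} to a topological freeness statement for the completed group algebra over the resulting central power-series ring.

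First I would note that if $\wt(z)=k$ is the maximum weight occurring in $\mathcal{B}$, then $\Gamma_{k+1}(G)=1$, so $z\in\Gamma_{k}(G)$ lies in the center of $G$. Since $\Gamma_k(G)=\Gamma_k(G)/\Gamma_{k+1}(G)$ is by hypothesis a torsion-free finitely generated abelian pro-$\ell$ group and $z$ belongs to a $\mathbb{Z}_{\ell}$-basis of it, the closed subgroup $Z:=\overline{\langle z\rangle}$ is isomorphic to $\mathbb{Z}_{\ell}$; consequently $\llbracket\mathbb{Z}_{\ell}Z\rrbracket\simeq\mathbb{Z}_{\ell}\llbracket T\rrbracket$ with $T=z-1$, and $T$ is a nonzerodivisor in this power-series ring.

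Next I would exhibit $\llbracket\mathbb{Z}_{\ell}G\rrbracket$ as a topologically free module over $\llbracket\mathbb{Z}_{\ell}Z\rrbracket$. Because $z$ is the maximal element of the totally ordered basis $\mathcal{B}$, the normal-form presentation of Lemma \ref{lem:6.1.5} writes every $g\in G$ uniquely as $g=g_{0}z^{n}$ with $g_{0}$ a normal-form word in $\mathcal{B}\setminus\{z\}$ and $n\in\mathbb{Z}_{\ell}$. For each open normal subgroup $N$ of $G$ this descends to a coset decomposition that makes the finite group ring $\mathbb{Z}_{\ell}[G/N]$ a free $\mathbb{Z}_{\ell}[ZN/N]$-module with basis the images of such normal-form representatives. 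Taking the inverse limit over $N$, and using that $Z\cap N$ is cofinal in the open subgroups of $Z\simeq\mathbb{Z}_{\ell}$, yields a continuous $\llbracket\mathbb{Z}_{\ell}Z\rrbracket$-linear isomorphism
\[
\llbracket\mathbb{Z}_{\ell}G\rrbracket\;\simeq\;\llbracket\mathbb{Z}_{\ell}\overline{G}\rrbracket\,\widehat{\otimes}_{\mathbb{Z}_{\ell}}\,\llbracket\mathbb{Z}_{\ell}Z\rrbracket,
\]
which is topologically free over $\llbracket\mathbb{Z}_{\ell}Z\rrbracket$.

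Finally, multiplication by $T=z-1$ on $\llbracket\mathbb{Z}_{\ell}G\rrbracket$ is induced from multiplication by $T$ on the second tensor factor, and this remains injective after completed tensoring with the torsion-free $\mathbb{Z}_{\ell}$-module $\llbracket\mathbb{Z}_{\ell}\overline{G}\rrbracket$; hence $z-1$ is not a zero divisor in $\llbracket\mathbb{Z}_{\ell}G\rrbracket$. The main obstacle is the passage from the set-theoretic factorization $G=G_{0}\cdot Z$ to the completed-module isomorphism displayed above: one must verify that the chosen coset representatives behave coherently across a cofinal tower of open normal subgroups and that freeness survives the inverse limit of compact $\mathbb{Z}_{\ell}$-modules. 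This is routine because $G$ is finitely generated pro-$\ell$ and $Z\simeq\mathbb{Z}_{\ell}$ is a closed central subgroup, so the standard fact that finite group rings are free over subgroup rings extends to the limit.
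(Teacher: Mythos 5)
Your proof is correct, but it takes a genuinely different route from the paper's. The paper argues directly inside $\llbracket \mathbb{Z}_{\ell}G \rrbracket = \varprojlim_m [\mathbb{Z}_{\ell}G]/J^m$ with $J=(IG)+\ell[\mathbb{Z}_{\ell}G]$: it uses the unique normal form of Lemma \ref{lem:6.1.5} to split each coherent sequence $\lambda=(\lambda_m)$ into a ``$z$-power part'' plus terms not involving $z$, and reads off $(z-1)\lambda=0\Rightarrow\lambda=0$ from that decomposition. You instead observe that a maximal-weight $z$ is central with $\overline{\langle z\rangle}\simeq\mathbb{Z}_{\ell}$, identify $\llbracket\mathbb{Z}_{\ell}\overline{\langle z\rangle}\rrbracket$ with the Iwasawa algebra $\mathbb{Z}_{\ell}\llbracket T\rrbracket$, $T=z-1$, and invoke the (standard, e.g.\ in \cite{RZ}) topological freeness of $\llbracket\mathbb{Z}_{\ell}G\rrbracket$ over the completed algebra of a closed subgroup to conclude that multiplication by the nonzerodivisor $T$ stays injective on a free profinite module. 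Your approach is more structural and actually yields more: every nonzero element of $\mathbb{Z}_{\ell}\llbracket T\rrbracket$ is a nonzerodivisor in $\llbracket\mathbb{Z}_{\ell}G\rrbracket$, not just $z-1$, and the argument does not depend on the delicate bookkeeping of which basis letters ``appear'' in a coherent sequence. The paper's proof is more elementary and self-contained, needing only the normal form it has already established. Two small points of hygiene in your write-up: the final step should appeal to topological freeness over $\llbracket\mathbb{Z}_{\ell}Z\rrbracket$ (an inverse limit of finite free $\mathbb{Z}_{\ell}\llbracket T\rrbracket$-modules, on each of which $T$ acts injectively, and $\varprojlim$ is exact on compact modules) rather than to $\mathbb{Z}_{\ell}$-torsion-freeness of $\llbracket\mathbb{Z}_{\ell}\overline{G}\rrbracket$, which by itself is not quite the relevant hypothesis; and the displayed isomorphism should be read only as one of $\llbracket\mathbb{Z}_{\ell}Z\rrbracket$-modules, since it is not an algebra isomorphism. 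Neither issue affects correctness.
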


\begin{proof}
Let $J := (IG) + \ell [\mathbb{Z}_{\ell}G]$ be an ideal of (abstract) group algebra $[\mathbb{Z}_{\ell}G]$. Here, $(IG)$ is the augmentation ideal of $[\mathbb{Z}_{\ell}G]$. Recall that the family of ideals $\{J^m\}_{m\geq 1}$ is cofinal with the inverse system of $\mathbb{Z}_{\ell}$-algebra $\llbracket \mathbb{Z}_{\ell}G \rrbracket$ (cf. \cite[Lemma 7.1]{DDMS}), and so  we have $\llbracket \mathbb{Z}_{\ell}G \rrbracket= \varprojlim_{m} ([\mathbb{Z}_{\ell}G]/J^m)$. Thus, an element $\lambda \in \llbracket \mathbb{Z}_{\ell}G \rrbracket$ can be regraded as the coherent sequence $\lambda = (\lambda_m)$ with $\lambda_m \in [\mathbb{Z}_{\ell}G]/J^m$. For each $m$, $\lambda_m$ is uniquely decomposed into a finite sum $\sum_g \alpha_g \cdot g \in [\mathbb{Z}_{\ell}G]/J^m$. Note that each element of $G$ can be written uniquely as a product of the elements of $\mathcal{B}$ with exponents in $\mathbb{Z}_{\ell}$. Therefore, for large $m$, there exist $n_m \in \mathbb{Z} / \ell^r\mathbb{Z}$ and $a_m \in \mathbb{Z} / \ell^s\mathbb{Z}$ for some $r, s$ uniquely such that $\lambda_m = n_m\cdot z^{a_m} + \sum_{b \neq z} \alpha_b \cdot b$. By considering for all $m$, we see that there exist unique $n \in \mathbb{Z}_{\ell}$ and $a \in \mathbb{Z}_{\ell}$ such that $\lambda = n\cdot z^a + (\text{terms not containing}\  z)
$. Hence, if $(z-1)\lambda = 0$ then $\lambda = 0$. This completes the proof.
\end{proof}

\begin{definition} \label{def:6.2.5}
	For $l \geq 1$, we set $\lpbracket I_lG \rpbracket:= \ker(\llbracket \mathbb{Z}_{\ell}G \rrbracket \rightarrow \llbracket \mathbb{Z}_{\ell}G/\Gamma_l(G) \rrbracket)$ and for $l=0$ $\lpbracket I_0G\rpbracket:= \llbracket \mathbb{Z}_{\ell}G \rrbracket$. An element $x \in C_n(G)$ is said to be of {\it pseudoweight} $\geq h$ 
	if 
	\begin{equation} \label{eq:6.2.6}
		x \in \sum_{j=0}^h \lpbracket I_jG\rpbracket\cdot C_n^{(h-j)}(G)
	\end{equation}
where $C_n^{(w)}$ denotes the submodule of $C_n(G)$ generated by $\beta = \langle b_1,\ldots, b_n \rangle \in \mathcal{B}^{n}$ with weight $wt(\beta) = w$. For $x \in C_n(G)$ with pseudoweight $\geq h$, we denote it by $\widetilde{wt}(x) \geq h$.
\end{definition}

\begin{definition} \label{def:6.2.7}
	For $\beta = \langle b_1, \ldots, b_n \rangle \in C_n(G)$, we define its {\it molecule} $M_{\beta}$ by
	\begin{eqnarray*}
		&&M_{\beta} = \sum_{i=1}^n (-1)^{i+1}(b_i -1) \langle b_1, \ldots, \hat{b}_i, \ldots, b_n \rangle\\
		&&+ \sum_{i<j} (-1)^{i+j+1} b_i b_j \langle \mathfrak{c}(b_i, b_j), b_1, \ldots, \hat{b}_i, \ldots, \hat{b}_j, \ldots, b_n \rangle
	\end{eqnarray*}
\end{definition}

\begin{definition}[pseudoweight condition]  \label{def:6.2.8}
let $(C_{\ast}(G), \partial_{\ast})$ be a $\llbracket \mathbb{Z}_{\ell}G \rrbracket$ partial free resolution of $\mathbb{Z}_{\ell}$ up to  degree $\leq n+1$:
\begin{equation} \label{eq:6.2.9}
	C_{n+1}(G) \overset{\partial_{n+1}}{\rightarrow} C_{n}(G) \overset{\partial_{n}}{\rightarrow} \cdots \overset{\partial_2}{\rightarrow} C_1(G) \overset{\partial_1}{\rightarrow} C_0(G) \overset{\epsilon}{\rightarrow} \mathbb{Z}_{\ell} \rightarrow 0.
\end{equation}
The partial free resolution $(C_{\ast}(G), \partial_{\ast})$ is said to satisfy the {\it pseudoweight condition in dimension $n$} if following condition holds: Any $n$-cycle $c \in \ker(\partial_n) \subset C_n(G)$ has pseudoweight $\widetilde{wt}(c) \geq h$ if and only if $c$ is boundary of some element with weight $\geq h$, i.e., there is an element $d \in C_{n+1}(G)$ such that $wt(d) \geq h$ and $\partial_{n+1}(d)=c$.
\end{definition}

\begin{lemma} \label{lem:6.2.10}
	Suppose that a $\llbracket \mathbb{Z}_{\ell}G \rrbracket$ partial free resolution  $(C_{\ast}(G), \partial_{\ast})$ of $\mathbb{Z}_{\ell}$ up to degree $\leq n$ are given, and,  for each generator $\beta$ of  $C_n(G)$, its differential satisfies the following condition:
	\begin{equation} \label{eq:6.2.11}
		\partial_n(\beta) = M_{\beta} + (\text{terms of weight}\ > wt(\beta)).
	\end{equation}
	Then, for any generator $\alpha$ of $C_{n+1}(G)$ with weight $wt(\alpha) = h$, the differential $\partial_n(M_{\alpha})$ of the molecule $M_{\alpha}$ of $\alpha$ has pseudoweight $>h$, i.e., $\widetilde{wt}(\partial_n(M_{\alpha})) > h$.
\end{lemma}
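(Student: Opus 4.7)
The approach is a direct expansion of $\partial_n(M_\alpha)$ using the given relation $\partial_n(\beta)=M_\beta+R_\beta$ (with $\wt(R_\beta)>\wt(\beta)$), followed by careful bookkeeping of pseudoweights. The eventual claim will be that every summand produced by the expansion either lies in pseudoweight $>h$ for elementary reasons, or combines with others into a "Koszul-type" expression whose vanishing modulo pseudoweight $>h$ is exactly $(\partial^K)^2=0$ in the Chevalley--Eilenberg complex of the associated graded Lie algebra $\gr(G)$.

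Writing $\alpha=\langle a_1,\dots,a_{n+1}\rangle$ so that $h=\sum_i\wt(a_i)$, substituting the hypothesis into Definition~\ref{def:6.2.7} of $M_\alpha$ yields
\begin{align*}
\partial_n(M_\alpha) &= \sum_i (-1)^{i+1}(a_i-1)\,M_{\langle a_1,\dots,\hat a_i,\dots,a_{n+1}\rangle}\\
&\quad+\sum_{i<j}(-1)^{i+j+1}a_ia_j\,M_{\langle \mathfrak{c}(a_i,a_j),a_1,\dots,\hat a_i,\dots,\hat a_j,\dots,a_{n+1}\rangle}\\
&\quad+\bigl(\text{terms containing some }R_\beta\bigr).
\end{align*}
The first step of the plan is to dispose of the $R_\beta$-correction line. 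Since $a_i\in\Gamma_{\wt(a_i)}(G)$ one has $(a_i-1)\in\lpbracket I_{\wt(a_i)}G\rpbracket$, and condition~(2) of Definition~\ref{eq:6.1.4} gives $\mathfrak{c}(a_i,a_j)\in\Gamma_{\wt(a_i)+\wt(a_j)}(G)$. Combined with $\wt(R_\beta)>\wt(\beta)$, these inclusions show that every correction term already has pseudoweight $>h$ in the sense of Definition~\ref{def:6.2.5}.

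The second (and main) step is to show the leading Koszul-type sum also has pseudoweight $>h$. The plan is to reduce modulo pseudoweight $>h$ under the correspondences
\begin{equation*}
(a_i-1)\bmod\lpbracket I_{\wt(a_i)+1}G\rpbracket\longleftrightarrow\bar a_i\in\gr_{\wt(a_i)}(G),\qquad \mathfrak{c}(a_i,a_j)\bmod\Gamma_{\wt(a_i)+\wt(a_j)+1}(G)\longleftrightarrow[\bar a_i,\bar a_j],
\end{equation*}
identifying the leading piece of $\partial_n(M_\alpha)$ with $(\partial^K)^2$ applied to the lift of $\alpha$ in the Chevalley--Eilenberg complex of $\gr(G)$. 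This vanishes because the Koszul complex of a Lie algebra is a chain complex, which at the group level is the avatar of the Hall--Witt/Jacobi identity modulo one higher commutator weight.

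The main obstacle is the bookkeeping in Step~2, in particular the nested terms $\mathfrak{c}(\mathfrak{c}(a_i,a_j),a_k)$ that appear when one expands $M_{\langle\mathfrak{c}(a_i,a_j),a_1,\dots,\hat a_i,\dots,\hat a_j,\dots\rangle}$: one must verify that the triple sum over unordered $\{i,j,k\}$ regroups itself into the Hall--Witt identity in $G/\Gamma_{\wt(a_i)+\wt(a_j)+\wt(a_k)+1}(G)$, so that these third-order contributions collapse into pseudoweight $>h$. A secondary piece of bookkeeping handles the non-commutativity corrections $ab=ba\cdot\mathfrak{c}(a,b)$ produced when re-ordering the prefactors $a_ia_j$ and $a_ja_i$; each such correction gains an extra factor of $\lpbracket I_{\wt(a_i)+\wt(a_j)}G\rpbracket$ and therefore automatically lands in pseudoweight $>h$ as well.
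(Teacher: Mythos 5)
Your plan is essentially the paper's own proof: the paper likewise writes $\partial_n(\beta)=M_\beta-\Delta(\beta)$, expands $\partial_n M_{\langle a_0,\dots,a_n\rangle}$ explicitly, disposes of the $\Delta$-corrections via $(a_i-1)\in\lpbracket I_{wt(a_i)}G\rpbracket$ and $wt(\mathfrak{c}(a_i,a_j))\geq wt(a_i)+wt(a_j)$, sends the coefficient-commutator (non-commutativity) terms into $\lpbracket I_1 G\rpbracket$ times weight-$\geq h$ generators, and collapses the remaining triple-bracket sum $[[a_i,a_j],a_k]-[[a_i,a_k],a_j]+[[a_j,a_k],a_i]$ into $\Gamma_{w+1}$ by the Jacobi identity --- exactly the ``$(\partial^K)^2=0$'' mechanism you describe. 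The two obstacles you flag (the Hall--Witt regrouping of the nested $\mathfrak{c}(\mathfrak{c}(a_i,a_j),a_k)$ terms and the reordering corrections) are precisely what the paper's explicit expansion \eqref{eq:6.4.3}--\eqref{eq:6.4.8} and its items (2)--(4) carry out, so your outline is correct and takes the same route.
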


\begin{proof}
	We set $\Delta(\beta):=M_{\beta} - \partial_n(\beta)$. Equivalently, we  have $\partial_n(\beta) = M_{\beta} - \Delta(\beta)$. Then, by assumption \eqref{eq:6.2.11}, we know $wt(\Delta(\beta)) > wt(\beta)$. Noting that for any $a. b \in \mathcal{B}$,  $\langle \mathfrak{c}(a,b) \rangle$ can be written as linear combination 
	\begin{equation} \label{eq:6.4.1}
		\langle \mathfrak{c}(a,b) \rangle = \sum_{m} \gamma_m(a,b) \langle e_m(a,b) \rangle \in C_1(G)
	\end{equation}
	by iterate use of the  relations of the module $C_1(G)$. Here, $| \gamma_m(a, b) | \in G$ and $e_m(a,b) \in \mathcal{B}$. We define a symbol $\epsilon^{ijk}$ as follows: 
	\begin{equation} \label{eq:6.4.2}
		\epsilon^{ijk} = \begin{cases}
  (-1)^{i+j+k} & (i<k<j),\\
  (-1)^{i+j+k+1} & (k < i < j\ \text{or}\ i < j <k).	
 \end{cases}
	\end{equation}
	Then, using \eqref{eq:6.4.1} and \eqref{eq:6.4.2}, through long computation, the differential $\partial_{n} M_{\langle a_0, \ldots, a_n \rangle}$ of molecule of $\langle a_0, \ldots, a_n \rangle \in C_{n+1}$ can be finally computed as follows:
	\begin{alignat}{3}
	\partial_{n}M_{\langle a_0, \ldots, a_n \rangle} &=& &\sum_{i=0}^n (-1)^i (a_i - 1) \partial_n \langle a_0, \ldots, \hat{a}_i, \ldots, a_n \rangle  \label{eq:6.4.3} \\ \nonumber
	& & &+ \sum_{i <j} (-1)^{i+j +1}  \sum_{m} a_i a_j \gamma_m(a_i, a_j) \partial_n \langle e_m(a_i, a_j), \ldots, \hat{a}_i, \ldots, \hat{a}_j, \ldots, a_n \rangle \\ 
	&=& &- \sum_{i=0}^n (a_i - 1) \Delta(\langle a_0, \ldots, \hat{a}_i, \ldots, a_n \rangle) \label{eq:6.4.4} \\ 
	& & & - \sum_{i <j} \sum_{m} (-1)^{i+j+1} a_i a_j \gamma_m(a_i, a_j) \Delta(\langle e_m(a_i, a_j), a_0, \ldots, \hat{a}_i, \ldots, \hat{a}_j, \ldots, a_n \rangle) \label{eq:6.4.5}\\ 
	& & & + \sum_{i <j}\sum_m \epsilon^{ijk} a_k a_i a_j \gamma_m(a_i, a_j)(1- [a_k, a_ia_j |\gamma_m(a_i, a_j)|] \langle e_m(a_i, a_j), \ldots, \hat{a}_i, \hat{a}_j,\hat{a}_k, \ldots \rangle \label{eq:6.4.6}\\ \nonumber
	& & & + \sum_{i < j, i<r<s}\sum_{m,l} (-1)^i \epsilon^{rjs}(a_ia_j \gamma_m(a_i, a_j) a_r a_s \gamma_l(a_r, a_s) - a_r a_s \gamma_l(a_r, a_s) a_i a_j \gamma_m(a_i, a_j))) \nonumber  \\ 
	& & & \cdot \langle e_m(a_i, a_j), e_l(a_r, a_s), \ldots, \hat{a}_i, \hat{a}_j, \hat{a}_r, \hat{a}_s, \dots \rangle  \label{eq:6.4.7} \\ 
	&+ & & \sum_{i <j}\sum_{m,k} \epsilon^{ikj} a_i a_j \gamma_m(a_i, a_j) e_m(a_i, a_j) a_k \langle [e_m(a_i, a_j), a_k], \ldots, \hat{a}_i, \hat{a}_j, \hat{a}_k, \ldots \rangle. \label{eq:6.4.8}
\end{alignat}
By following (1), (2), (3) and (4), we conclude that $\partial_n M_{\langle a_0,\ldots, a_n \rangle}$ has pseudoweight $> h = wt(\langle a_0, \ldots, a_n \rangle)$:
\begin{enumerate}[label=$(\arabic{enumi})$]
	\item Note that $wt(e_m(a_i, a_j)) =wt(\mathfrak{c}(a_i, a_j)) \geq wt(a_i) + wt(a_j)$ by Definition \ref{eq:6.1.4} (2) and \eqref{eq:6.4.1}. Then, by assumption \eqref{eq:6.2.11}, we have
	\begin{equation}
		wt(\langle \Delta(e_m(a_i, a_j), a_0, \ldots, \hat{a}_i, \ldots, \hat{a}_j, \ldots, a_n \rangle)) > h
	\end{equation}
	and
	\begin{equation}
		wt(\Delta(\langle a_0, \ldots, \hat{a}_i, \ldots, a_n \rangle)) > wt(\langle a_0, \ldots, \hat{a}_i, \ldots, a_n \rangle) = h - wt(a_i).
	\end{equation}
	Hence, the sums \eqref{eq:6.4.4} and \eqref{eq:6.4.5} have pseudoweight $>h$ since $(a_i -1) \in (\!(I_{wt(a_i)} G )\!)$.
	\item Since we have
	\begin{equation}
		wt([a_k, a_i a_j|\gamma_m(a_i,a_j)|]) = wt(a_k) +  wt(a_i a_j|\gamma_m(a_i,a_j)|) > wt(a_k)
	\end{equation}
	and
	\begin{alignat}{2}
		& & & wt(\langle e_m(a_i, a_j), \ldots, \hat{a}_i, \hat{a}_j, \hat{a}_k, \ldots \rangle)\\
		 =& & &   wt(e_m(a_i, a_j)) + h - wt(a_i) - wt(a_j) - wt(a_k) \\
		 \geq & & & h - wt(a_k),
	\end{alignat}
	we have $(1- [a_k, a_i a_j|\gamma_m(a_i,a_j)|]) \in (\!(I_{wt(a_k)+1}G )\!)$, and so the sum \eqref{eq:6.4.6} has pseudoweight $>h$.
	\item Note that $a_ia_j \gamma_m(a_i, a_j) a_r a_s \gamma_l(a_r, a_s) - a_r a_s \gamma_l(a_r, a_s) a_i a_j \gamma_m(a_i, a_j)$ is contained in $\Ker(\epsilon)$. Thus, $a_ia_j \gamma_m(a_i, a_j) a_r a_s \gamma_l(a_r, a_s) - a_r a_s \gamma_l(a_r, a_s) a_i a_j \gamma_m(a_i, a_j) \in (\!(I_1G)\!)$. On the other hand, the term $\langle e_m(a_i, a_j), e_l(a_r, a_s), \ldots, \hat{a}_i, \hat{a}_j, \hat{a}_r, \hat{a}_s, \dots \rangle$ has weight $\geq h$. Therefore, the sum \eqref{eq:6.4.7} has pseudoweight $\geq h+1$.
	
	\item The term $ \langle [e_m(a_i, a_j), a_k], \ldots, \hat{a}_i, \hat{a}_j, \hat{a}_k, \ldots \rangle$ has weight $\geq h$ since $wt(e_m(a_i, a_j))\geq wt(a_i) + wt(a_j)$. 
		Therefore, the only problem we need to consider is the remaining sign of $\sgn(\gamma_m(a_i,a_j))$ after taking augmentation to their coefficients. By considering mod $\Gamma_{w+1}(\Fn)$ with $w = wt(a_i) + wt(a_j) +wt(a_k)$, we have
	\begin{equation}
		\sum_{m} \sgn(\gamma_m(a_i,a_j))[e_m,(a_i, a_j), a_k] = [[a_i, a_j], a_k].
	\end{equation}
	In addition, by taking summation on indices $i,j,k$ and noting the effect of $\epsilon^{ikj}$, we get  
	\begin{equation} \label{eq:6.11.1}
		[[a_i, a_j],a_k] - [[a_i, a_k],a_j] + [[a_j, a_k],a_i].
	\end{equation}
	By  Jacobi identity, we know that \eqref{eq:6.11.1} lies in $\Gamma_{w+1}(\Fn)$. Thus, finally we conclude that the sum \eqref{eq:6.4.8} has weight $\geq h+1$, and thus, has pseudoweight $\geq h+1$.
\end{enumerate}
\end{proof}

\begin{lemma}[Pseudoweight Lemma]\label{lem:6.2.11}
Suppose  following condition (1), (2) and (3):

\begin{enumerate}[label=$(\arabic{enumi})$]
	\item Given a following commutative diagram with exact rows:
	\begin{center}
\begin{tikzcd}
C_{n+1}(G) \arrow[r, "\partial_{n+1}"] \arrow[d, "q"] & C_n(G) \arrow[r, "\partial_{n}"] \arrow[d, "q"] & C_{n-1}(G) \arrow[d, "q"] \\
C_{n+1}(\bar{G}) \arrow[r, "\partial_{n+1}"]
& C_{n}(\bar{G}) \arrow[r, "\partial_n"] & C_{n-1}(\bar{G}).
\end{tikzcd}	
\end{center}

\item The differentials  $\partial_{n+1} : C_{n+1}(G) \rightarrow C_{n}(G)$ is given as follows: \begin{eqnarray*}
	&&\partial_n(\langle b_1, \ldots, b_{n+1} \rangle) =  \sum_{i=1}^{n+1} (-1)^{i+1}(b_i - 1) \langle b_1, \ldots, \hat{b}_i, \ldots, b_{n+1} \rangle  \\
	&&+\sum_{i <j }(-1)^{i+j+1} b_i b_j \langle \mathfrak{c}(b_i, b_j), b_1, \ldots, \hat{b}_i, \ldots, \hat{b}_j, \ldots, b_{n+1} \rangle + (\text{terms of higher weight}).
\end{eqnarray*}
\item For $j \in \{n-1, n\}$, the differentials $\partial_{j+1}$ satisfies
\begin{equation} \label{lem:6.2.12}
	\partial_{j+1} \langle \beta, z \rangle = (-1)^j (z - 1) \beta + \langle \partial_j \beta, z \rangle
\end{equation}
for each generator $\beta = \langle b_1, \ldots, b_j \rangle$ of $C_j(G)$.
\end{enumerate}

Then, the following statements hold: If the bottom row satisfies the pseudoweight condition in dimension $n$, then so does the top row. Moreover, the condition is satisfied compatible manner with the above commutative diagram, that is, for any $n$-cycle $c \in C_n(G)$ with pseudoweight $\widetilde{wt}(c) \geq h$ and any $x \in C_{n+1}(\bar{G})$ with weight $wt(x) \geq h$ such that $q(c) = \partial_{n+1}(x)$, there exists a $y \in C_{n+1}(G)$ of weight $wt(y) \geq h$ such that $q(y) = x$ and $\partial_{n+1}(y) = c$.
\end{lemma}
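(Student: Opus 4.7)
The plan is to establish the compatibility statement directly; the plain pseudoweight assertion for the top row then follows by applying the bottom row's condition to the cycle $q(c)$ (which has pseudoweight $\geq h$ since $q$ is pseudoweight preserving) to produce an admissible $x$, and then invoking compatibility. So fix $c \in C_n(G)$ with $\widetilde{wt}(c) \geq h$ and $x \in C_{n+1}(\bar G)$ with $wt(x) \geq h$ and $\partial_{n+1}(x) = q(c)$; I seek $y \in C_{n+1}(G)$ with $q(y) = x$, $\partial_{n+1}(y) = c$, and $wt(y) \geq h$, written as $y = y_0 + y_1$ with $y_0$ a direct lift of $x$ and $y_1 \in \ker q$ absorbing the residue.

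First I produce $y_0$. Since $C_{n+1}(\bar G)$ has an $\llbracket \mathbb{Z}_\ell \bar G \rrbracket$-basis of sorted tuples $\langle b_1, \ldots, b_{n+1}\rangle$ avoiding $z$, each matching tautologically a basis element of $C_{n+1}(G)$ of the same weight, I pick $y_0 \in C_{n+1}(G)$ by lifting coefficients through any set-theoretic section $\llbracket \mathbb{Z}_\ell \bar G\rrbracket \to \llbracket \mathbb{Z}_\ell G\rrbracket$; this gives $q(y_0) = x$ and $wt(y_0) \geq h$. Put $c_1 := c - \partial_{n+1}(y_0) \in \ker q$; then $c_1$ is a cycle with $\widetilde{wt}(c_1) \geq h$, because the explicit formula in hypothesis (2) combined with $wt(\mathfrak{c}(a,b)) \geq wt(a) + wt(b)$ from Definition \ref{eq:6.1.4} shows that $\partial_{n+1}$ preserves pseudoweight.

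Now decompose $C_{\ast}(G) = C_{\ast}^{(1)}(G) \oplus C_{\ast}^{(2)}(G)$ where $C_{\ast}^{(2)}(G)$ is the $\llbracket \mathbb{Z}_\ell G\rrbracket$-span of sorted basis elements ending in $z$; then $\ker q \cap C_n(G) = (z-1)C_n^{(1)}(G) \oplus C_n^{(2)}(G)$, so I may uniquely write $c_1 = (z-1)v + \langle \xi, z\rangle$ with $v \in C_n^{(1)}(G)$ (unique by Lemma \ref{eq:6.2.4}) and $\xi \in C_{n-1}^{(1)}(G)$. Try $y_1 := (-1)^n \langle v, z\rangle$; hypothesis (3) gives $\partial_{n+1}(y_1) = (z-1)v + (-1)^n \langle \partial_n v, z\rangle$. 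Decomposing $\partial_n v = A + B$ with $A \in C_{n-1}^{(1)}(G)$ and $B \in C_{n-1}^{(2)}(G)$, the cycle condition $\partial_n c_1 = 0$ splits according to the $(1)/(2)$ decomposition: its $(1)$-part gives $(z-1)(A + (-1)^{n-1}\xi) = 0$, forcing $A = (-1)^n \xi$ via Lemma \ref{eq:6.2.4} applied in degree $n-1$; and $\langle B, z\rangle = 0$ by the antisymmetry of the sorted basis (a double $z$ entry is zero). Hence $\partial_{n+1}(y_1) = (z-1)v + \langle \xi, z\rangle = c_1$, and $y := y_0 + y_1$ satisfies $q(y) = x$ and $\partial y = c$.

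The main obstacle is the weight bound $wt(y_1) \geq h$, equivalent to $wt(v) \geq h - k$ where $k = wt(z)$. Decomposing $v = \sum_w v^{(w)}$ into pure-weight pieces, the hypothesis $\widetilde{wt}((z-1)v) \geq h$ imposes $(z-1)v^{(w)} \in \lpbracket I_{h-w}G \rpbracket \cdot C_n^{(w),(1)}(G)$ for every $w$: automatic for $w \geq h-k$ (since $z-1 \in \lpbracket I_k G \rpbracket$) but nontrivial for $w < h-k$. I would handle the latter iteratively, rewriting each offending low-weight piece $(z-1)v^{(w)}$ as a boundary of a strictly higher-weight element using the acyclicity of $\ker q$ (itself a consequence of exactness of both rows resolving $\mathbb{Z}_\ell$), and redistributing the correction into $y_1$; maximality of $wt(z) = k$ among letters of $\mathcal{B}$ ensures the procedure strictly increases weight at each step and so terminates, securing $wt(v) \geq h - k$ and hence the conclusion.
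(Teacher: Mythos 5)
Your construction of $y$ coincides with the paper's own proof in all of its essential moves: lift $x$ through $q$ basis\-/by\-/basis to a $y_0$ of weight $\geq h$ (the paper calls this lift $x$ again), place the error $c_1=c-\partial_{n+1}(y_0)$ in $\Ker(q)$, decompose $\Ker(q)$ into $(z-1)$-multiples of $z$-free tuples plus $z$-ending tuples, and correct by $(-1)^n\langle v,z\rangle$, verifying $\partial_{n+1}\bigl((-1)^n\langle v,z\rangle\bigr)=c_1$ from hypothesis (3), the linear independence of the $z$-free and $z$-containing components of $\partial_n(c_1)=0$, and Lemma \ref{eq:6.2.4} (that $z-1$ is not a zero divisor). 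Your splitting $\partial_n v=A+B$ with the antisymmetry observation $\langle B,z\rangle=0$ is a tidier repackaging of the chain of identities the paper writes out, but it is the same argument.

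The divergence is in the final weight estimate, and there your write-up has a real gap relative to what the lemma asserts. The paper closes by reading the bound $wt\bigl(\sum_j\omega_j\langle\beta_j,z\rangle\bigr)\geq h$ directly off the chosen decomposition of the pseudoweight-$\geq h$ element $\delta$; you instead correctly flag that $\widetilde{wt}\bigl((z-1)v\bigr)\geq h$ only controls $(z-1)v^{(w)}$ through the ideal $\lpbracket I_{h-w}G\rpbracket$ and does not by itself force $wt(v)\geq h-k$, and you then propose an iterative repair via ``acyclicity of $\ker q$.'' As written this is not a proof: it is not specified what it means to rewrite an offending piece $(z-1)v^{(w)}$ ``as a boundary of a strictly higher-weight element,'' why such a rewriting exists without invoking the very pseudoweight condition in dimension $n$ (or $n+1$) that is being established, or why redistributing the correction does not destroy the already-verified identities $\partial_{n+1}(y)=c$ and $q(y)=x$. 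To close your argument along the lines you set up, you would need to work on the coefficients themselves: pseudoweight is detected componentwise on the sorted basis, so for a $z$-free $\beta$ of weight $w<h-k$ one has $\omega_\beta(z-1)\in\lpbracket I_{h-w}G\rpbracket$, and the point to be settled is that the corresponding component of $\langle v,z\rangle$ can be taken of weight $\geq h$. That is precisely the step the paper disposes of in one sentence and the one your proposal leaves open; everything before it matches the paper.
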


\begin{proof}
	We suppose that $c \in C_n(G)$ satisfies  the conditions $\widetilde{wt}(c) \geq h$ and $\partial_n(c) =0$. By definition of $q$, we have $\widetilde{wt}(q(c)) \geq h$ and $\partial_n(q(c))=q(\partial_n(c))=0$. Then, by assumption, there exists $x \in C_{n+1}(G)$ with $\partial_{n+1}(q(x))=q(c)$ and $wt(q(x)) \geq h$. Indeed, since the bottom row in the commutative diagram (1) satisfies the pseudoweight condition in dimension $n$ and  $q(c) \in \Ker(\partial_n)$, there exists $l \in C_{n+1}(\bar{G})$ such that $wt(l) \geq h$ and $\partial_{n+1}(l) = q(c)$. By surjectivity of $q$, there exists $x \in C_{n+1}(G)$ such that $q(x)=l$. Thus, we have $\partial_{n+1}(q(x))=q(c)$. Moreover, by definition of $q$, we may assume $wt(x) \geq h$.
	
Set $\delta := c -\partial_{n+1}(x)$. Then, we have $q(\delta) = 0$, so $q \in \Ker(q)$. Therefore, as we mention in the below of \eqref{eq:6.2.3}, $\delta$ can be written as the form:
\begin{equation} \label{eq:6.5.2}
	\delta = \sum_{i} \gamma_i \langle \alpha_i, z \rangle + \sum_{j} \omega_j (z - 1) \langle \beta_j \rangle
\end{equation}
where $\alpha_i = \langle a_1, \ldots, a_{n-1} \rangle$ and $\beta_j \langle b_1, \ldots, b_n \rangle$ are some generators, and $\gamma_i, \omega_j \in \llbracket \mathbb{Z}_{\ell} G \rrbracket$.

Then, to complete the proof, it suffices to show 
\begin{equation} \label{eq:6.5.1}
	\partial_{n+1}\left( \sum_{j} (-1)^n \omega_j \langle \beta_j, z \rangle \right)=\delta.
\end{equation}

In fact, if \eqref{eq:6.5.1} is true, then $y:= x + \sum_j \omega_j \langle \beta_j, z \rangle \in C_{n+1}(G)$ satisfies the desired property. Since $\sum_j \omega_j \langle \beta_j, z \rangle$ has weight $\geq h$ and lies in $\Ker(q)$, $y$ has also weight $\geq h$ and $q(y)=q(x)$. By definition of $\delta$, we have $\partial_{n+1}(y)=c$. Therefore, the pseudoweight condition in dimension $n$ holds in the top row in compatible way.

We give a proof of \eqref{eq:6.5.1}. By \eqref{lem:6.2.12}, we get
\begin{alignat}{2}
\partial_{n+1} \left( \sum_{j} \omega_j \langle \beta_j, z \rangle \right)&=  & \sum_{j} (-1)^n \omega_j (z-1) \langle \beta_j \rangle + \sum_{j} \omega_j \langle \partial_n \beta_j, z \rangle\\
&=& \sum_{j} (-1)^n \omega_j (z-1) \langle \beta_j \rangle + \sum_{j} \omega_j \langle \partial_n' \beta_j, z \rangle \label{eq:6.5.3}
\end{alignat}
where $\partial_n' \beta_j = \partial_n \beta_j - \zeta$ and  $\zeta$ denotes  terms in $\partial_n \beta_j$ which contain $\langle c_1, \ldots, c_n \rangle$ with $c_i = z$ for some $i$. Note that $\langle c_1, \ldots, c_n \rangle = 0$ if $c_i=c_j$ for some $i \neq j$. Note that $\partial_{n}(\delta) = \partial_n(c - \partial_{n+1}(x))=0$ since $c$ is a cycle. Thus, using \eqref{eq:6.5.2} and \eqref{eq:6.5.3}, 
\begin{alignat}{3}
	0 &=& &\partial_n(\delta)\\
	 &=& & \partial_n(\sum_i  \gamma_i \langle \alpha_i, z \rangle + \sum_{j} \omega_j (z - 1) \langle \beta_j \rangle) \\
	 &=& & \partial_n(\sum_i  \gamma_i \langle \alpha_i, z \rangle + (-1)^n \partial_{n+1}(\sum_j \omega_j \langle \beta_j, z \rangle) - (-1)^n \sum_j \omega_j \langle \partial_n' \beta_j, z\rangle)\\
	 &=& & \partial_n (\sum_i  \gamma_i \langle \alpha_i, z \rangle   - (-1)^n \sum_j \omega_j \langle \partial_n' \beta_j, z\rangle) \\
	 &=& & (\sum_i \gamma_i \langle \partial_{n-1}\alpha_i, z \rangle  + \sum_j (-1)^n \omega_j \langle \partial_{n-1} \zeta, z \rangle) \\
	 & & &+ (z-1) (\sum_{i} (-1)^n \gamma_i \langle \alpha_i \rangle - \sum_{j} \omega_j \langle \partial_n' \beta_j \rangle).
\end{alignat}
Then, $\sum_i \gamma_i \langle \partial_{n-1}\alpha_i, z \rangle  + \sum_j (-1)^n \omega_j \langle \partial_{n-1} \zeta, z \rangle$ and $(z-1) (\sum_{i} (-1)^n \gamma_i \langle \alpha_i \rangle - \sum_{j} \omega_j \langle \partial_n' \beta_j \rangle$ are linearly independent because one sum  contain $z$ in $\langle c_1, \ldots, c_n \rangle$ but the other does not. Therefore, we have
\begin{equation}
	\sum_i \gamma_i \langle \partial_{n-1}\alpha_i, z \rangle  + \sum_j (-1)^n \omega_j \langle \partial_{n-1} \zeta, z \rangle = 0
\end{equation}
and
\begin{equation} \label{eq:6.5.5}
	(z-1) (\sum_{i} (-1)^n \gamma_i \langle \alpha_i \rangle - \sum_{j} \omega_j \langle \partial_n' \beta_j \rangle) = 0.
\end{equation}
Then, by Lemma \ref{eq:6.4.2}, $(z-1)$ is not a zero divisor, so we get from \eqref{eq:6.5.5},
\begin{equation} \label{eq:6.5.6}
	\sum_{i} (-1)^n \gamma_i \langle \alpha_i \rangle - \sum_{j} \omega_j \langle \partial_n' \beta_j \rangle = 0.
\end{equation}
Thus, by putting \eqref{eq:6.5.6} into \eqref{eq:6.5.3}, we get \eqref{eq:6.5.1}. Henceforth, the assertion has been proved.
\end{proof}

\begin{theorem}\label{thm:6.2.13}
	Notations being as above, there are continuous $\llbracket \mathbb{Z}_{\ell} G \rrbracket$-module homomorphism
	\begin{equation} \label{eq:6.2.14}
		\partial_n : C_n(G) \rightarrow C_{n-1}(G)
	\end{equation}
	such that
	\begin{enumerate}[label=$(\arabic{enumi})$]
		\item  $(C_{\ast}(G), \partial_{\ast})$ is a free chain resolution of $\mathbb{Z}_{\ell}$ with $\epsilon : C_0(G) \rightarrow \mathbb{Z}_{\ell}$ the augmentation map.
		\item The homomorphism $\partial_n$ is given as follows:\begin{eqnarray*}
			&&\partial_n(\langle b_1, \ldots, b_n \rangle) = \sum_{i=1}^n(-1)^{i+1} (b_i - 1)\langle b_1, \ldots, \hat{b}_i, \ldots, b_n \rangle \\
			 &+& \sum_{i <j }(-1)^{i+j+1} b_i b_j \langle \mathfrak{c}(b_i, b_j), b_1, \ldots, \hat{b}_i, \ldots, \hat{b}_j, \ldots, b_n \rangle 
			+ (\text{terms of higher weight}).
		\end{eqnarray*}
	\end{enumerate}
\end{theorem}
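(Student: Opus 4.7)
The plan is a double induction: outer induction on the dimension $n$, and inner induction on the cardinality $|\mathcal{B}|$. The outer hypothesis at step $n$ asserts the existence of a partial free resolution $(C_\ast(G), \partial_\ast)$ in degrees $\leq n$ satisfying condition (2), together with the pseudoweight condition in dimension $n-1$, for every $G$ in our class. The outer base case is handled by the explicit formulas for $\partial_1, \partial_2$ in \eqref{eq:6.2.2}; condition (2) is immediate and $\partial_1 \circ \partial_2 = 0$ follows from $\mathfrak{c}(a,b) \equiv b^{-1}a^{-1}ba$ in $G$. The inner base case is $\mathcal{B} = \emptyset$, i.e.\ the trivial group, where the pseudoweight condition holds vacuously.

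\emph{Construction of $\partial_{n+1}$.} For each generator $\alpha = \langle b_1, \ldots, b_{n+1}\rangle$ of $C_{n+1}(G)$, form the molecule $M_\alpha \in C_n(G)$. Because $\partial_{n-1} \circ \partial_n = 0$ by outer hypothesis, we have $\partial_n(M_\alpha) \in \ker(\partial_{n-1})$, and Lemma \ref{lem:6.2.10} gives $\widetilde{wt}(\partial_n(M_\alpha)) > wt(\alpha)$. The pseudoweight condition in dimension $n-1$ then furnishes $\Delta_\alpha \in C_n(G)$ with $wt(\Delta_\alpha) > wt(\alpha)$ and $\partial_n(\Delta_\alpha) = \partial_n(M_\alpha)$. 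Define $\partial_{n+1}(\alpha) := M_\alpha - \Delta_\alpha$ and extend continuously and $\llbracket \mathbb{Z}_\ell G \rrbracket$-linearly. This satisfies condition (2) tautologically, while $\partial_n \circ \partial_{n+1} = 0$ by construction.

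\emph{Propagation of the pseudoweight condition to dimension $n$.} Here invoke the inner induction: pick a maximal weight element $z \in \mathcal{B}$ and set $\bar{G} = G/\langle\langle z\rangle\rangle$, whose basis is $\mathcal{B}\setminus\{z\}$. By inner induction, the analogous partial resolution for $\bar{G}$ satisfies the pseudoweight condition in dimension $n$. The choices of $\Delta_\alpha$ for generators of the form $\alpha = \langle \beta, z\rangle$ must be orchestrated to enforce condition (3) of Lemma \ref{lem:6.2.11}, namely $\partial_{n+1}(\langle\beta, z\rangle) = (-1)^n(z-1)\langle\beta\rangle + \langle\partial_n\beta, z\rangle$; this is possible because $M_{\langle\beta,z\rangle}$ already contains the leading term $(-1)^n(z-1)\langle\beta\rangle$, each bracket $\mathfrak{c}(b_i, z)$ has weight $\geq wt(b_i) + wt(z)$, and the remaining difference can be absorbed into the second summand modulo strictly higher-weight corrections. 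Condition (1) of Lemma \ref{lem:6.2.11} is built into the definition of $q$, and condition (2) is our construction. Applying Lemma \ref{lem:6.2.11} then transfers the pseudoweight condition in dimension $n$ from $\bar{G}$ to $G$, closing both inductions.

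Exactness of the resolution is a direct corollary: every element of $C_n(G)$ for $n \geq 1$ has pseudoweight $\geq n$ (the minimum basis weight), so the pseudoweight condition applied with $h = n$ yields $\ker(\partial_n) \subseteq \mathrm{im}(\partial_{n+1})$; exactness at $C_0$ reduces to the standard fact that the augmentation ideal of $\llbracket \mathbb{Z}_\ell G \rrbracket$ is topologically generated by $\{b - 1 : b \in \mathcal{B}\}$. The main technical obstacle is precisely the coordination of the two inductions — arranging that the lift $\Delta_\alpha$ chosen at each step is simultaneously compatible with the projection $q: C_n(G) \to C_n(\bar{G})$ and with the rigid form demanded by Lemma \ref{lem:6.2.11}(3). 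Once these choices are orchestrated, finiteness of $|\mathcal{B}|$ (guaranteed because $G$ is finitely generated nilpotent with torsion-free graded pieces) terminates the inner induction, and continuity of $\partial_{n+1}$ is preserved throughout because the $\lpbracket I_k G\rpbracket$-filtration on $\llbracket \mathbb{Z}_\ell G \rrbracket$ eventually vanishes under the nilpotency assumption.
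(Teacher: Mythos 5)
Your proposal follows essentially the same route as the paper's proof: induct on degree starting from the explicit complex \eqref{eq:6.2.2}, form the molecule $M_\alpha$, use Lemma \ref{lem:6.2.10} to see $\widetilde{wt}(\partial_n M_\alpha) > wt(\alpha)$, invoke the Pseudoweight Lemma to produce $\Delta(\alpha)$ of weight $> wt(\alpha)$ with the same boundary, set $\partial_{n+1}(\alpha) = M_\alpha - \Delta(\alpha)$, and deduce exactness from the pseudoweight condition. If anything, you are more explicit than the paper about the inner induction on $|\mathcal{B}|$ via $\bar G = G/\langle\langle z\rangle\rangle$ and about arranging the lifts $\Delta_\alpha$ so that hypothesis (3) of Lemma \ref{lem:6.2.11} holds, points the paper compresses into the phrase ``by compatibility condition.''
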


\begin{proof}
We prove the assertion by induction. For the first step of induction, we can use the explicitly constructed chain complex in \eqref{eq:6.2.2}.  

We assume that the homomorphisms $\partial_i : C_i(G) \rightarrow C_{i-1}(G)$ has been defined for all $1 \leq i \leq n$ which acts on each generator $\langle b_1, \ldots, b_n \rangle$ as (2). Note that the differential satisfies the assumption of Lemma  \ref{lem:6.2.10}. For each generator $\alpha \in C_{n+1}(G)$, let $M_{\alpha} \in C_n(G)$ be its molecule. Then, using Lemma \ref{lem:6.2.10}, we see $\widetilde{wt}(\partial_n M_{\alpha}) > wt(\alpha)$. By applying the Pseudoweight Lemma to the cycle $\partial_n(M_{\alpha}) \in C_{n-1}(G)$, there exists $\Delta(\alpha) \in C_n(G)$ with $\partial_n(\Delta(\alpha))=\partial_n(M_{\alpha})$ and $wt(\Delta(\alpha)) > wt(\alpha)$. In terms of $M_{\alpha}$ and $\Delta(\alpha)$, we define $\partial_{n+1} : C_{n+1}(G) \rightarrow C_n(G)$ by
\begin{equation}
	\partial_{n+1}(\alpha):= M_{\alpha} - \Delta(\alpha)
\end{equation}
Then, by construction we get $\partial_{n+1}\circ \partial_n = 0$ and $\partial_{n+1} \circ q = q \circ \partial_{n+1}$ by compatibility condition. Since a chain homomorphism $q : C_n(G) \rightarrow C_n(\overline{G})$ is defined compatible way, $C_{\bullet}(G)$ satisfies the Pseudoweight condition for any $n$. This  implies $C_{\bullet}(G)$ is exact a fortirori and the assertion has been proved.
\end{proof}

\subsection{Computation of dimension of $H_3(\Fn / \Gamma_k(\Fn); \mathbb{Z}_{\ell})$}

In this section, we compute the rank of the third homology group $H_3(\Fn/\Gamma_k(\Fn); \mathbb{Z}_{\ell})$ of $\Fn/\Gamma_k(\Fn)$ with (trivial) coefficients in $\mathbb{Z}_{\ell}$ in term of the free chain resolution of $\mathbb{Z}_{\ell}$ constructed in the previous section. Since this free chain resolution is constructed so as to be related to the Koszul complex of free nilpotent Lie algebra, one may show isomorphism between  $H_3(\Fn/\Gamma_k(\Fn); \mathbb{Z}_{\ell})$ and $H_3(\mathcal{L}/\mathcal{L}_{\geq k}; \mathbb{Z}_{\ell})$ using spectral sequence associated with  weight filtrations.

By thinking $\mathbb{Z}_{\ell}$ as a $\llbracket \mathbb{Z}_{\ell} G \rrbracket$-module with trivial $G$-action, we set
\begin{equation}\label{eq:6.3.1}
	D_{\ast}(G):= \mathbb{Z}_{\ell} \hat{\otimes}_{\llbracket \mathbb{Z}_{\ell} G \rrbracket} C_{\ast}(G).
\end{equation}
Here $\hat{\otimes}_{\llbracket \mathbb{Z}_{\ell} G \rrbracket}$ means a complete tensor product over $\llbracket \mathbb{Z}_{\ell} G \rrbracket$ (see \cite[Section 5.5.]{RZ}). Note that by theorem \ref{thm:6.2.13}(2), the induced homomorphism $\partial_{n}$ on $D_{n}(G)$ is given by
\begin{equation}\label{eq:6.3.2}
	\partial_n(\langle b_1, \ldots, b_n \rangle) = \sum_{i <j }(-1)^{i+j+1}  \langle \mathfrak{c}(b_i, b_j), b_1, \ldots, \hat{b}_i, \ldots, \hat{b}_j, \ldots, b_n \rangle 
			+ (\text{terms of higher weight}).
\end{equation}

Filter the chain complex $(D_{\ast}(G), \partial_n)$ by setting $\mathcal{F}^lD_n(G)$ be the submodule of $D_{\ast}(G)$ generated by terms $\langle g_1, \ldots, g_n \rangle$ whose weight $wt(\langle g_1, \ldots, g_n \rangle) \geq l$. Setting the associated graded chain complex as
\begin{equation} \label{eq:6.3.3}
	\mathrm{gr}(D_{\ast}(G)) :=\bigoplus_{l} \mathcal{F}^l D_{\ast}(G) / \mathcal{F}^{l+1} D_{\ast}(G),
\end{equation}
we see that $\mathrm{gr}(D_{\ast}(G))$ is isomorphic to the Koszul complex  $\Lambda_{\ast}(\mathrm{gr}(G))$ of the associated graded Lie algebra $\mathrm{gr}(G)$ of $G$, i.e., $\mathrm{gr}(D_{\ast}(G)) \simeq \Lambda_{\ast}(\mathrm{gr}(G))$. Note that the filtration of $D_n(G)$ is bounded. In fact, for a nilpotent group $G$ of nilpotency class $(k-1)$, $\mathcal{F}^l D_n(G) = D_n(G)$ if $l\leq n$ and $\mathcal{F}^l D_n(G) = 0$ if $l > n(k-1)$. 

Thus, by considering spectral sequence associated to a filtered chain complex (see \cite[Theorem A.3.1]{RZ}), we get a May spectral sequence $E=\{E^r_{p,q} \}$ such that
\begin{equation}\label{cor:6.3.4}
		E^1_{p,q} = H^{(p)}_{p+q}(\mathrm{gr}(D_{\ast}(G)); \mathbb{Z}_{\ell}), \quad E^1_{p,q} \Rightarrow H_{p+q}(G; \mathbb{Z}_{\ell})
	\end{equation}
	where $H^{(p)}_{\ast}(\mathrm{gr} D_{\ast}(G))$ means the weight $p$ part of the homology group of the Lie algebra $\mathrm{gr}(D_{\ast}(G))$ and $H_{\ast}(G;\mathbb{Z}_{\ell}):=H_{\ast}((D_{\ast}(G), \partial_{\ast}))$ is filtered by
	\begin{equation}\label{eq:6.3.7}
		\mathcal{F}^l H_{\ast}(G; \mathbb{Z}_{\ell}) := \Im(H_{\ast}(\mathcal{F}^l D_{\ast}(G)) \rightarrow H_{\ast}(D_{\ast}(G))).
	\end{equation}
	In particular, we have
	\begin{equation}\label{eq:6.3.8}
		E^{\infty}_{l, n - l} \simeq \frac{\mathcal{F}^l H_n(G; \mathbb{Z}_{\ell})}{\mathcal{F}^{l+1} H_n(G; \mathbb{Z}_{\ell})}.
	\end{equation}

\begin{corollary}\label{eq:6.3.9}
	For $i \leq 3$, the homology groups $H_i(\mathcal{L}/ \mathcal{L}_{\geq k}; \mathbb{Z}_{\ell})$ of Lie algebra $\mathcal{L} / \mathcal{L}_{\geq k}$ and those $H_{i}(\Fn / \Gamma_k(\Fn); \mathbb{Z}_{\ell})$ of free nilpotent pro-$\ell$ group $\Fn/\Gamma_k(\Fn)$ are isomorphic, that is, 
	\begin{equation}\label{eq:6.3.10}
		H_i(\mathcal{L} / \mathcal{L}_{\geq k}; \mathbb{Z}_{\ell}) \simeq H_i(\Fn / \Gamma_k(\Fn);\mathbb{Z}_{\ell}) \quad (i \leq 3).
	\end{equation}
	Moreover, associated graded abelian groups are given by
	\begin{equation}\label{eq:6.3.11}
		\frac{\mathcal{F}^l H_3(\Fn / \Gamma_k(\Fn); \mathbb{Z}_{\ell})}{\mathcal{F}^{l+1} H_3(\Fn / \Gamma_k(\Fn); \mathbb{Z}_{\ell})} \simeq H_3^{(l)}(\mathcal{L} / \mathcal{L}_{\geq k}; \mathbb{Z}_{\ell}) \simeq \begin{cases}
			\mathbb{Z}_{\ell}^{\oplus (n N_{l-1} - N_{l})} & (k+1 \leq l \leq 2k-1),\\
			0 & \text{otherwise}.
		\end{cases}
	\end{equation}
	and 
	\begin{equation}\label{eq:6.3.12}
		\mathcal{F}^{2k}H_3(\Fn / \Gamma_k(\Fn); \mathbb{Z}_{\ell}) \simeq 0.
	\end{equation}
	In particular, we have
	\begin{equation}\label{eq:6.3.13}
		H_3(\Fn / \Gamma_k(\Fn); \mathbb{Z}_{\ell}) \simeq \bigoplus_{i=k}^{2k-2} \mathbb{Z}_{\ell}^{\oplus(nN_i - N_{i+1})}
	\end{equation}
\end{corollary}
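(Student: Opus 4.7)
My plan is to read off the result from the May spectral sequence \eqref{cor:6.3.4} associated to the weight filtration on $(D_{\ast}(G), \partial_{\ast})$ for $G = \Fn/\Gamma_k(\Fn)$. Since $\mathrm{gr}(G) \simeq \mathcal{L}/\mathcal{L}_{\geq k}$, the $E^1$-page is $E^1_{p,q} = H^{(p)}_{p+q}(\mathcal{L}/\mathcal{L}_{\geq k}; \mathbb{Z}_{\ell})$. The first step will be to tabulate, using Theorem \ref{thm:5.4.2} together with \eqref{eq:5.5.2} and \eqref{eq:5.5.3}, which bidegrees $(p, q)$ with $p + q \leq 3$ carry a nonzero $E^1$-term: precisely $(1, 0)$, $(k, 2-k)$, and $(p, 3-p)$ for $k+1 \leq p \leq 2k-1$. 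For $i = 1$ and $i = 2$ the $E^1$-page is concentrated in a single bidegree on each diagonal, so the claimed isomorphism $H_i(\mathcal{L}/\mathcal{L}_{\geq k};\mathbb{Z}_{\ell}) \simeq H_i(\Fn/\Gamma_k(\Fn);\mathbb{Z}_{\ell})$ is immediate and is consistent with \eqref{eq:5.5.2}, \eqref{eq:5.5.3}.

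For $i = 3$ my plan is to prove degeneration at $E^1$ along the diagonal $p+q = 3$ by a weight count on both outgoing and incoming higher differentials $d^r$ (which shift bidegree by $(+r, -r-1)$ and decrease total degree by one). Outgoing differentials from $E^r_{p, 3-p}$ with $k+1 \leq p \leq 2k-1$ land in $E^r_{p+r, 2-p-r}$, which is a subquotient of $H^{(p+r)}_2(\mathcal{L}/\mathcal{L}_{\geq k})$; since $H^{(w)}_2 = 0$ for $w \neq k$ and we have $p + r \geq k + 2 > k$, every such outgoing differential vanishes. Incoming differentials into $E^r_{p, 3-p}$ come from $E^r_{p-r, 4-p+r}$, a subquotient of $H^{(p-r)}_4(\mathcal{L}/\mathcal{L}_{\geq k})$; since every element of $\Lambda_4(\mathcal{L}/\mathcal{L}_{\geq k})$ has weight at least $4$, we have $H^{(w)}_4 = 0$ for $w < 4$, which kills incoming differentials whenever $p - r < 4$.

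The hard part will be disposing of the residual incoming differentials when $p - r \geq 4$. My plan is to control these by a parallel Hochschild--Serre argument on the Lie algebra side, of the same flavor used in the proof of Theorem \ref{thm:5.4.2}: apply the weight-preserving spectral sequence \eqref{eq:5.3.3}--\eqref{eq:5.3.4} associated to the central extension $0 \to \mathcal{L}_k \to \mathcal{L}/\mathcal{L}_{\geq k+1} \to \mathcal{L}/\mathcal{L}_{\geq k} \to 0$ together with the known $H^{(\bullet)}_{\leq 3}$ to obtain just enough information on $H^{(w)}_4(\mathcal{L}/\mathcal{L}_{\geq k})$ to rule out the relevant sources $(p-r, 4-p+r)$. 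I expect this extra piece of Igusa--Orr-style combinatorics to be the main technical obstacle; if a direct computation proves too intricate, an alternative is to argue that any nonzero higher differential would strictly decrease $\rank_{\mathbb{Z}_{\ell}} E^{\infty}_{p,3-p}$ relative to $H^{(p)}_3$, contradicting the Euler-characteristic-style count one can extract from the construction of $(C_{\ast}(G), \partial_{\ast})$.

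Granting the collapse, each $E^{\infty}_{p, 3-p}$ coincides with the free $\mathbb{Z}_{\ell}$-module $H^{(p)}_3(\mathcal{L}/\mathcal{L}_{\geq k};\mathbb{Z}_{\ell})$ of rank $nN_{p-1} - N_p$. The filtration is bounded above by $E^1_{p, 3-p} = 0$ for $p \geq 2k$, giving $\mathcal{F}^{2k} H_3(G;\mathbb{Z}_{\ell}) = 0$ \eqref{eq:6.3.12}, and bounded below by $\mathcal{F}^1 = H_3$. Freeness of each associated graded piece then allows me to split the filtration inductively on $l$, yielding simultaneously \eqref{eq:6.3.10}, \eqref{eq:6.3.11}, and after reindexing $i := l - 1$, the explicit decomposition \eqref{eq:6.3.13}: $H_3(\Fn/\Gamma_k(\Fn);\mathbb{Z}_{\ell}) \simeq \bigoplus_{i=k}^{2k-2} \mathbb{Z}_{\ell}^{\oplus (nN_i - N_{i+1})}$.
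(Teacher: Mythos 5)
There is a genuine gap at exactly the point you flag as ``the hard part,'' and neither of your proposed workarounds is carried out or obviously feasible. Your weight count correctly kills the outgoing differentials from the diagonal $p+q=3$ (they land in subquotients of $H_2^{(p+r)}$ with $p+r>k$, which vanish), and the low-degree cases $i\leq 2$ are fine. But the incoming differentials $d^r:E^r_{p-r,4-p+r}\to E^r_{p,3-p}$ live on subquotients of $H_4^{(p-r)}(\mathcal{L}/\mathcal{L}_{\geq k};\mathbb{Z}_{\ell})$ with $p-r$ ranging up to $2k-2$, well inside the range where $H_4$ of a free nilpotent Lie algebra can be nonzero. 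Theorem \ref{thm:5.4.2} (Igusa--Orr) only computes $H_{\leq 3}$, and extracting the needed vanishing of $H_4^{(w)}$ from the Hochschild--Serre spectral sequence of $0\to\mathcal{L}_k\to\mathcal{L}/\mathcal{L}_{\geq k+1}\to\mathcal{L}/\mathcal{L}_{\geq k}\to 0$ is not a routine extension of the $H_3$ computation; your alternative ``Euler-characteristic-style count'' is not specified precisely enough to check. As written, the collapse of the spectral sequence along $p+q=3$ is asserted, not proved.

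The paper closes this gap by a completely different mechanism that sidesteps $H_4$ entirely: after Malcev completion one has Pickel's isomorphism $H_i(\Fn/\Gamma_k(\Fn);\mathbb{Q}_{\ell})\simeq H_i(\mathcal{L}/\mathcal{L}_{\geq k};\mathbb{Q}_{\ell})$ in \emph{all} degrees, so the total $\mathbb{Z}_{\ell}$-rank of $E^{\infty}$ on each diagonal $p+q\leq 3$ must equal that of $E^1$. Since the relevant $E^1$-terms $H^{(p)}_{p+q}(\mathcal{L}/\mathcal{L}_{\geq k};\mathbb{Z}_{\ell})$ are torsion-free by Theorem \ref{thm:5.4.2} and \eqref{eq:5.5.1}--\eqref{eq:5.5.3}, any nonzero differential (incoming or outgoing) would strictly drop rank, forcing $E^1_{p,q}\simeq E^{\infty}_{p,q}$ for $p+q\leq 3$. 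If you want to salvage your approach, you should either import Pickel's theorem in this way, or genuinely compute the weights in which $H_4^{(w)}(\mathcal{L}/\mathcal{L}_{\geq k};\mathbb{Z}_{\ell})$ is concentrated; the latter is substantially more work than the rest of your argument. The remaining steps of your proposal (boundedness of the filtration giving \eqref{eq:6.3.12}, splitting by freeness of the graded pieces to get \eqref{eq:6.3.11} and \eqref{eq:6.3.13}) are fine once degeneration is established.
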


\begin{proof}
After passing to Malcev completion of $\Fn / \Gamma_{k}(\Fn)$ over $\mathbb{Q}_{\ell}$, by applying Pickel's isomorphism $P$ (\cite{P}), we get 
\begin{equation}\label{eq:6.3.14}
	P : H_i(\Fn / \Gamma_{k}(\Fn); \mathbb{Q}_{\ell}) \simeq H_i(\mathcal{L} / \mathcal{L}_{\geq k}; \mathbb{Q}_{\ell}) \quad (\text{for all} \ i).
\end{equation}
Note that, since the above isomorphism is defined over $\mathbb{Q}_{\ell}$, each relevant derived step from $E^r_{p,q}$ to $E^{r+1}_{p,q}$ in the May spectral sequences in \eqref{cor:6.3.4} cannot reduce the rank of $E^r_{p,q}$. Also note that by Theorem \ref{thm:5.4.2}, \eqref{eq:5.5.3} and \eqref{eq:5.5.1} \eqref{eq:5.5.2}, we know that $H_i(\mathcal{L} / \mathcal{L}_{\geq k}; \mathbb{Z}_{\ell})$ is torsion free for $0 \leq i \leq 3$ and so $E^1_{p,q} = H^{(p)}_{p+q}(\gr(D_{\ast}(\Fn/\Gamma_k(\Fn))); \mathbb{Z}_{\ell})$ is also torsion free for $p+q \leq 3$. Then, we conclude that $E^1_{p,q} \simeq E^{\infty}_{p,q}$ for $p+q \leq 3$. In fact, $E^{2}_{p,q}$ is a subquotient of torsion free module $E^{1}_{p,q}$ and must have the same rank with $E^1_{p,q}$, so  $E^1_{p,q} \simeq E^2_{p,q}$. By iterating this procedure, we obtain  $E^1_{p,q} \simeq E^{\infty}_{p,q}$.

Thus, by Theorem \ref{thm:5.4.2}, we deduce that 
	\begin{equation}\label{eq:6.3.15}
		\frac{\mathcal{F}^l H_3(\Fn / \Gamma_k(\Fn); \mathbb{Z}_{\ell})}{\mathcal{F}^{l+1} H_3(\Fn / \Gamma_k(\Fn); \mathbb{Z}_{\ell})} \simeq E^{\infty}_{l, 3-l} \simeq E^1_{l,3-l} \simeq  H_3^{(l)}(\mathcal{L} / \mathcal{L}_{\geq k}; \mathbb{Z}_{\ell}) 
	\end{equation}
	is isomorphic to the free $\mathbb{Z}_{\ell}$-module $\mathbb{Z}_{\ell}^{\oplus (nN_{l-1} - N_l)}$ if $k+1 \leq l \leq 2k-1$ and 0 otherwise. Since every relevant terms $E^1_{p,q} \simeq E^{\infty}_{p,q}$ converging to $H_{p+q}(\Fn/\Gamma_k(\Fn); \mathbb{Z}_{\ell})$ are torsion free, the extension problem can be solved by just taking the direct sum. Combination of this extension with Pickel's isomorphism leads to $\mathcal{F}^{2k} H_3(\Fn/ \Gamma_k(\Fn); \mathbb{Z}_{\ell})=0$, and therefore, the desired equation \eqref{eq:6.3.11} and \eqref{eq:6.3.12}. 
\end{proof}

Consider the reduction map, for $l \geq k$,
\begin{equation} \label{eq:6.3.16}
	\phi_{l,k} : H_3(\Fn/\Gamma_l(\Fn); \mathbb{Z}_{\ell}) \rightarrow H_3(\Fn/ \Gamma_k(\Fn); \mathbb{Z}_{\ell}).
\end{equation}
In the following, we consider the chain complex $D_{\ast}(\Fn/\Gamma_k(\Fn))$ by setting $G=\Fn/\Gamma_k(\Fn)$ in \eqref{eq:6.3.1}.
\begin{lemma}\label{lem:6.3.17}
For	each $n \leq 3$ and integer $s$, we have
\begin{equation}\label{eq:6.3.18}
	\mathcal{F}^sH_n(D_{\ast}(\Fn/\Gamma_l(\Fn)); \mathbb{Z}_{\ell}) \simeq H_n(\mathcal{F}^s D_{\ast}(\Fn/\Gamma_l(\Fn)); \mathbb{Z}_{\ell}).
\end{equation}
For $l \geq k$, the following reduction map is surjective;
\begin{equation}\label{eq:6.3.19}
	H_3(\mathcal{F}^{s} D_{\ast}(\Fn/\Gamma_l(\Fn)); \mathbb{Z}_{\ell}) \rightarrow H_3(\mathcal{F}^s D_{\ast}(\Fn/\Gamma_k(\Fn)); \mathbb{Z}_{\ell}).
\end{equation}

\end{lemma}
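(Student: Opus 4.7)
My plan is to prove part (1) by a spectral-sequence argument on the filtered subcomplex $\mathcal{F}^s D_\ast(\Fn/\Gamma_l(\Fn))$ modeled on the proof of Corollary \ref{eq:6.3.9}, and then to deduce part (2) by transporting the claim to a weight-by-weight statement about Lie-algebra homology that is amenable to Theorem \ref{thm:5.4.2}.

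For part (1), I would equip $\mathcal{F}^s D_\ast(\Fn/\Gamma_l(\Fn))$ with the restriction of the weight filtration; its associated graded complex is the truncation $\bigoplus_{p \geq s}\Lambda^{(p)}_\ast(\mathcal{L}/\mathcal{L}_{\geq l})$ of the Koszul complex. The corresponding May-type spectral sequence therefore reads
\[
E^1_{p,q} = H^{(p)}_{p+q}(\mathcal{L}/\mathcal{L}_{\geq l}; \mathbb{Z}_\ell) \text{ for } p \geq s, \qquad E^1_{p,q} = 0 \text{ for } p < s,
\]
and converges to $H_{p+q}(\mathcal{F}^s D_\ast(\Fn/\Gamma_l(\Fn)); \mathbb{Z}_\ell)$. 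Repeating the argument in the proof of Corollary \ref{eq:6.3.9} essentially verbatim, Pickel's $\mathbb{Q}_\ell$-isomorphism combined with the $\mathbb{Z}_\ell$-freeness of the entries in total degree $\leq 3$ (from Theorem \ref{thm:5.4.2} and the $H_1$, $H_2$ computations earlier in Section \ref{sec:4}) rules out further differentials and splits every relevant extension, yielding
\[
H_n(\mathcal{F}^s D_\ast(\Fn/\Gamma_l(\Fn)); \mathbb{Z}_\ell) \simeq \bigoplus_{p \geq s} H^{(p)}_n(\mathcal{L}/\mathcal{L}_{\geq l}; \mathbb{Z}_\ell) \quad (n \leq 3).
\]
Under this identification, the inclusion $\mathcal{F}^s D_\ast \hookrightarrow D_\ast$ becomes the inclusion of a direct summand on $H_n$, whose image is exactly $\mathcal{F}^s H_n$ by the very definition of the latter filtration.

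For part (2), I would combine part (1) for $l$ and for $k$ to identify the reduction map with the weight-preserving homomorphism
\[
\bigoplus_{p \geq s} H^{(p)}_3(\mathcal{L}/\mathcal{L}_{\geq l}; \mathbb{Z}_\ell) \longrightarrow \bigoplus_{p \geq s} H^{(p)}_3(\mathcal{L}/\mathcal{L}_{\geq k}; \mathbb{Z}_\ell)
\]
induced by $\mathcal{L}/\mathcal{L}_{\geq l} \twoheadrightarrow \mathcal{L}/\mathcal{L}_{\geq k}$. Factoring this quotient through the one-step reductions $\mathcal{L}/\mathcal{L}_{\geq j+1} \twoheadrightarrow \mathcal{L}/\mathcal{L}_{\geq j}$ for $j = k, \ldots, l-1$ and iterating Theorem \ref{thm:5.4.2}(2) shows that at any weight $w$ that avoids the ``bad'' sets $\{j+1, 2j, 2j+1\}$ at every intermediate step, the composite is an isomorphism. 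The main technical obstacle lies in the boundary weights where some $\delta_{j+1,j}$ vanishes: here one must use Theorem \ref{thm:5.4.2}(1) together with a careful bookkeeping of how the non-vanishing interval $[j+1, 2j-1]$ slides as $j$ varies, in order to verify that in the filtered range $p \geq s$ under consideration, whenever the source piece is zero at a boundary weight the corresponding target piece also vanishes, so that surjectivity holds automatically.
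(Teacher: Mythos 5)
Your overall strategy (the May-type spectral sequence of the weight-filtered subcomplex, degeneration in total degree $\leq 3$, then a weight-by-weight analysis via Theorem \ref{thm:5.4.2}) is the same as the paper's, but there are two concrete gaps. First, in part (1) you cannot "repeat the proof of Corollary \ref{eq:6.3.9} essentially verbatim": that proof kills the differentials by comparing the total rank of $E^1$ with the rank of the abutment, and the rank of the abutment is known only because Pickel's isomorphism computes $H_\ast(\Fn/\Gamma_l(\Fn);\mathbb{Q}_\ell)$ — i.e.\ the homology of the \emph{full} complex $D_\ast$. For the truncation $\mathcal{F}^s D_\ast$ there is no group whose homology it computes, so you have no independent handle on its abutment and the rank count does not get off the ground. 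The paper instead uses the filtration-preserving inclusion $\mathcal{F}^s D_\ast \hookrightarrow D_\ast$: it induces an isomorphism on $E^1_{p,q}$ for $p \geq s$, the differentials raise the weight $p$, and since they vanish in the ambient spectral sequence (where Pickel does apply) they are forced to vanish in the subcomplex's spectral sequence. Your argument needs to be rerouted through this comparison; once that is done, your identification of the image with $\mathcal{F}^s H_n$ is indeed tautological from the definition \eqref{eq:6.3.7}.

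Second, the "careful bookkeeping" you defer to in part (2) cannot succeed as described. By Theorem \ref{thm:5.4.2}, at the step $\mathcal{L}/\mathcal{L}_{\geq j+1} \twoheadrightarrow \mathcal{L}/\mathcal{L}_{\geq j}$ the map on $H_3^{(w)}$ is zero for $w \in \{j+1, 2j, 2j+1\}$; at $w = 2j, 2j+1$ the target vanishes, but at $w = j+1$ the source $H_3^{(j+1)}(\mathcal{L}/\mathcal{L}_{\geq j+1})$ is zero while the target $H_3^{(j+1)}(\mathcal{L}/\mathcal{L}_{\geq j}) \simeq \mathbb{Z}_\ell^{\oplus(nN_j - N_{j+1})}$ is in general nonzero. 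So it is \emph{not} true that "whenever the source piece is zero the target piece also vanishes", and surjectivity is genuinely false at these weights: e.g.\ for $s = k+1$, $l = k+1$, the reduction map has image $\mathcal{F}^{k+2}H_3(\Fn/\Gamma_k(\Fn);\mathbb{Z}_\ell)$, which misses the weight-$(k+1)$ part. The statement only holds once $s$ is large enough that the filtration excludes all such weights — the paper's proof carries the hypothesis $s \geq k+2$ for the one-step reduction $\Gamma_{k+1} \to \Gamma_k$ (and this is exactly what is used later, with $s = l+1$, in the proof of Theorem \ref{thm:6.3.21}). Your proposal neither identifies this constraint nor supplies an argument at the problematic weights, so as written the proof of \eqref{eq:6.3.19} does not close.
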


\begin{proof}
	Let us consider a spectral sequence associated with the weight filtration of $\mathcal{F}^lD_{\ast}(\Fn/\Gamma_k(\Fn))$. Then, one gets a May-like spectral sequence $\{E_{p,q}^r\}$ satisfying 
	\begin{equation}
		E^1_{p,q} = H_{p+q}^{(p)}(\mathrm{gr}(\mathcal{F}^s D_{\ast}(\Fn/\Gamma_k(\Fn)));\mathbb{Z}_{\ell}), \quad E_{p,q}^1 \Rightarrow H_{p+q}(\mathcal{F}^s D_{\ast}(\Fn/\Gamma_k(\Fn)); \mathbb{Z}_{\ell}).
	\end{equation}
	Here, $\mathrm{gr}(\mathcal{F}^s D_{\ast}(\Fn/\Gamma_k(\Fn)))$ denotes the associated graded chain complex with respect to the weight filtration of $\mathcal{F}^lD_{\ast}(\Fn/\Gamma_k(\Fn))$. Then, by  Corollary \ref{eq:6.3.9}, we conclude
	\begin{align} \label{eq:6.6.1}
		H_{p+q}^{(p)}(\mathrm{gr}(\mathcal{F}^s D_{\ast}(\Fn/\Gamma_k(\Fn)); \mathbb{Z}_{\ell})  & \simeq  \bigoplus_{l \geq s} H_{p+q}^{(p)}(\Lambda^l_{\ast}(\mathrm{gr}(\Fn/\Gamma_k(\Fn))); \mathbb{Z}_{\ell})\\
		 & \simeq  \begin{cases}
				H_{p+q}^{(p)}(\mathcal{L}/\mathcal{L}_{\geq k}; \mathbb{Z}_{\ell}) & (p \geq s)\\
				0 & (\text{otherwise}) \nonumber
 		\end{cases}
	\end{align}

	Since the differentials preserve the weights, the inclusion $\mathcal{F}^s D_{\ast}(\Fn/\Gamma_k(\Fn)) \subset D_{\ast}(\Fn/\Gamma_k(\Fn))$ preserves the filtration, and hence,  it induces a homomorphism of their spectral sequences. By Corollary \ref{eq:6.3.9} and \eqref{eq:6.6.1}, the induced homomorphisms give isomorphism on $E_{p,q}^1$ terms for $p \geq s$. As we see in the proof of Corollary \ref{eq:6.3.9}, in the spectral sequence for $D_{\ast}(\Fn/\Gamma_k(\Fn))$ with  range $p+q \leq 3$,  all differentials involving terms $E^{\ast}_{p,q}$ must be trivial. Thus, the comparison of spectral sequences implies that the differentials must also be zero in the spectral sequence for $\mathcal{F}^s D_{\ast}(\Fn/\Gamma_k(\Fn))$ with $p+q \leq 3$. Therefore, in both spectral sequences, $E_{p,q}^1 \simeq E_{p,q}^{\infty}$ in the range of $p+q \leq 3$. As in the proof of Corollary \ref{eq:6.3.9}, when $p+q \leq 3$ all therms $E^{\infty}_{p,q}$ are torsion free, and therefore, $H_n(\mathcal{F}^s D_{\ast}(\Fn/\Gamma_k(\Fn)); \mathbb{Z}_{\ell}) \rightarrow \mathcal{F}^s H_n(D_{\ast}(\Fn/\Gamma_k(\Fn)); \mathbb{Z}_{\ell})$ is an isomorphism for $0 \leq n \leq 3$ and any $s$. 
	
	Next, by comparing the spectral sequence for $\mathcal{F}^sD_{\ast}(\Fn/\Gamma_{k+1}(\Fn))$ and $\mathcal{F}^s D_{\ast}(\Fn/\Gamma_k(\Fn))$ in terms of the reduction map $\Fn/\Gamma_{k+1}(\Fn) \rightarrow \Fn / \Gamma_k(\Fn)$, we obtain surjective homomorphism, for $s \geq k+2$,
	\begin{equation}
		H_3(\mathcal{F}^s D_{\ast}(\Fn/\Gamma_{k+1}(\Fn)); \mathbb{Z}_{\ell}) \rightarrow H_3(\mathcal{F}^s D_{\ast}(\Fn/\Gamma_k(\Fn)); \mathbb{Z}_{\ell}).
	\end{equation}
	In fact, the reduction map induces surjection on $E_{p,q}^1$ terms for $p+q \leq 3$ and $s \geq k+2$, and differentials involving these terms are trivial in both spectral sequences. This completes the proof.
	\end{proof}

We introduce another filtration to $H_3(\Fn / \Gamma_k(\Fn) ; \mathbb{Z}_{\ell})$ induced by the reduction homomorphisms $\phi_{l,k}$ as follows:
\begin{equation} \label{eq:6.3.20}
	\mathcal{G}^l H_3(\Fn/\Gamma_k(\Fn); \mathbb{Z}_{\ell}) := \Im \phi_{l-1,k} := \Im\left( H_3(\Fn/\Gamma_{l-1}(\Fn); \mathbb{Z}_{\ell}) \rightarrow H_3(\Fn/\Gamma_k(\Fn); \mathbb{Z}_{\ell}) \right)
\end{equation}

\begin{theorem}\label{thm:6.3.21}
Two filtrations $\mathcal{G}^l H_3(\Fn / \Gamma_{k}(\Fn); \mathbb{Z}_{\ell})$ and $\mathcal{F}^l H_3(\Fn / \Gamma_{k}(\Fn); \mathbb{Z}_{\ell})$ coincide:
	
\begin{equation} \label{eq:6.3.22}
	\mathcal{G}^l H_3(\Fn / \Gamma_{k}(\Fn); \mathbb{Z}_{\ell}) = \mathcal{F}^l H_3(\Fn / \Gamma_{k}(\Fn); \mathbb{Z}_{\ell})	
\end{equation}

In particular, 
\begin{equation} \label{eq:6.3.23}
	0 = \mathcal{G}^{2k}H_3(\Fn / \Gamma_{k}(\Fn); \mathbb{Z}_{\ell}) \subset \cdots \subset \mathcal{G}^{k+1} H_3(\Fn / \Gamma_{k}(\Fn); \mathbb{Z}_{\ell}) = H_3(\Fn / \Gamma_{k}(\Fn); \mathbb{Z}_{\ell})
\end{equation}
with associated graded terms 
\begin{equation} \label{eq:6.3.24}
	\frac{\mathcal{G}^l H_3(\Fn / \Gamma_{k}(\Fn); \mathbb{Z}_{\ell})}{\mathcal{G}^{l+1}H_3(\Fn / \Gamma_{k}(\Fn); \mathbb{Z}_{\ell})} \simeq \mathbb{Z}_{\ell}^{\oplus(nN_{l-1} - N_l)}
\end{equation}
and $x \in \ker(\phi_{l,k})$  if and only if $x \in \Im(\phi_{2k-1,l})$ for $k \leq l \leq 2k-1$.
\end{theorem}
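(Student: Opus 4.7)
The plan is to prove the two filtrations agree by mutual inclusion, then read off the remaining statements from Corollary \ref{eq:6.3.9} and Theorem \ref{thm:5.4.2}. The key inputs are Lemma \ref{lem:6.3.17}, which identifies $\mathcal{F}^l H_3$ with the homology of the filtered subcomplex and provides the required surjectivity under reduction, and Corollary \ref{eq:6.3.9}, which pins down the exact weight range in which the associated graded is supported.

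For $\mathcal{G}^l \subseteq \mathcal{F}^l$, I start with $y \in H_3(\Fn/\Gamma_{l-1}; \mathbb{Z}_\ell)$; applying Corollary \ref{eq:6.3.9} with nilpotency index $l-1$ in place of $k$, all graded pieces vanish in weight $<l$, so $H_3(\Fn/\Gamma_{l-1}; \mathbb{Z}_\ell) = \mathcal{F}^l H_3(\Fn/\Gamma_{l-1}; \mathbb{Z}_\ell)$. Lemma \ref{lem:6.3.17} then represents $y$ by a cycle in $\mathcal{F}^l D_3(\Fn/\Gamma_{l-1})$; since the reduction preserves weights on generators, the image cycle lies in $\mathcal{F}^l D_3(\Fn/\Gamma_k)$, so $\phi_{l-1,k}(y) \in \mathcal{F}^l H_3(\Fn/\Gamma_k; \mathbb{Z}_\ell)$. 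Conversely, for $\mathcal{F}^l \subseteq \mathcal{G}^l$, I take $x \in \mathcal{F}^l H_3(\Fn/\Gamma_k; \mathbb{Z}_\ell) \cong H_3(\mathcal{F}^l D_*(\Fn/\Gamma_k); \mathbb{Z}_\ell)$; the second part of Lemma \ref{lem:6.3.17} with $s = l$ (valid since $l-1 \geq k$) lifts $x$ to some $\tilde{y} \in H_3(\mathcal{F}^l D_*(\Fn/\Gamma_{l-1}); \mathbb{Z}_\ell) \hookrightarrow H_3(\Fn/\Gamma_{l-1}; \mathbb{Z}_\ell)$ with $\phi_{l-1,k}(\tilde{y}) = x$, giving $x \in \mathcal{G}^l H_3(\Fn/\Gamma_k; \mathbb{Z}_\ell)$.

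Once the filtrations are identified, \eqref{eq:6.3.23} and \eqref{eq:6.3.24} follow immediately from Corollary \ref{eq:6.3.9}: $\mathcal{G}^{k+1} = \Im\phi_{k,k} = H_3(\Fn/\Gamma_k;\mathbb{Z}_\ell)$ tautologically, $\mathcal{G}^{2k} = \mathcal{F}^{2k} = 0$ by \eqref{eq:6.3.12}, and the graded ranks come from \eqref{eq:6.3.11}. For the iff statement, I will show both sides coincide with $\mathcal{F}^{2k}H_3(\Fn/\Gamma_l;\mathbb{Z}_\ell)$. One has $\Im\phi_{2k-1,l} = \mathcal{G}^{2k}H_3(\Fn/\Gamma_l;\mathbb{Z}_\ell) = \mathcal{F}^{2k}H_3(\Fn/\Gamma_l;\mathbb{Z}_\ell)$ by the main equality applied to $\Fn/\Gamma_l$, and $\mathcal{F}^{2k}H_3(\Fn/\Gamma_l;\mathbb{Z}_\ell) \subseteq \ker\phi_{l,k}$ because weight-$\geq 2k$ cycles map into $\mathcal{F}^{2k}H_3(\Fn/\Gamma_k;\mathbb{Z}_\ell) = 0$. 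The reverse inclusion $\ker\phi_{l,k} \subseteq \mathcal{F}^{2k}$ uses graded-piece injectivity: by functoriality of the May spectral sequence of Corollary \ref{eq:6.3.9} under the weight-preserving chain reduction, $\mathrm{gr}(\phi_{l,k})$ on $H_3$ is identified with the iterated Lie-algebra reduction $\delta_{l,k}$, and iterating Theorem \ref{thm:5.4.2}(2) shows $\delta_{l,k}$ is an isomorphism on weights $w \in [l+1, 2k-1]$ and zero on weights $w \in [2k, 2l-1]$. A leading-weight argument then concludes: if $x \in \ker\phi_{l,k}$ had nontrivial image in some $\mathrm{gr}^w H_3(\Fn/\Gamma_l;\mathbb{Z}_\ell)$ with $w < 2k$, graded-injectivity would force $\phi_{l,k}(x) \notin \mathcal{F}^{w+1}$, contradicting $\phi_{l,k}(x) = 0$.

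The main obstacle will be making the identification $\mathrm{gr}(\phi_{l,k}) = \delta_{l,k}$ fully rigorous. The weight-preserving reduction $D_*(\Fn/\Gamma_l) \to D_*(\Fn/\Gamma_k)$ visibly induces the canonical Lie algebra quotient $\mathcal{L}/\mathcal{L}_{\geq l} \to \mathcal{L}/\mathcal{L}_{\geq k}$ on the associated graded Koszul complexes, but one must verify that the collapse $E^1 = E^\infty$ from Corollary \ref{eq:6.3.9} happens compatibly in the two May spectral sequences, so that no hidden differentials obstruct the graded comparison. This compatibility is built into the functoriality of the May spectral sequence under weight-preserving chain maps, but deserves an explicit statement in the write-up.
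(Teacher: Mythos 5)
Your proof of the main equality $\mathcal{G}^l = \mathcal{F}^l$ is essentially the paper's argument: both inclusions come from Lemma \ref{lem:6.3.17} (the identification $\mathcal{F}^s H_3 \simeq H_3(\mathcal{F}^s D_{\ast})$ together with surjectivity of the reduction on filtered subcomplexes) combined with the fact from Corollary \ref{eq:6.3.9} that $H_3(\Fn/\Gamma_{l-1}(\Fn);\mathbb{Z}_{\ell})$ is concentrated in weights $\geq l$, which the paper simply packages into a single commutative diagram rather than two separate inclusions. The one genuine difference is that you supply an explicit argument for the final kernel--image equivalence (identifying both $\ker(\phi_{l,k})$ and $\Im(\phi_{2k-1,l})$ with $\mathcal{F}^{2k}H_3(\Fn/\Gamma_l(\Fn);\mathbb{Z}_{\ell})$ via graded injectivity of the reduction in weights below $2k$), a point the paper's proof leaves implicit; that argument is correct, and the spectral-sequence compatibility you flag as needing verification is exactly the functoriality already invoked in the proof of Lemma \ref{lem:6.3.17}.
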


\begin{proof}
We consider the following commutative diagram:
\begin{center}
\begin{tikzcd}
H_3(\mathcal{F}^{l+1} D_{\ast}(\Fn/\Gamma_l(\Fn)); \mathbb{Z}_{\ell}) \ar[r] \ar[d] \ar[dr, "\alpha"] & H_3(\mathcal{F}^{l+1} D_{\ast}(\Fn/\Gamma_k(\Fn)); \mathbb{Z}_{\ell})	\ar[d, "\beta"] \\
\mathcal{F}^{l+1} H_3(\Fn/\Gamma_{l}(\Fn); \mathbb{Z}_{\ell}) \ar[r] & \mathcal{F}^{l+1} H_3(\Fn/\Gamma_k(\Fn) ; \mathbb{Z}_{\ell})
\end{tikzcd}
\end{center}

We immediately see that the tow vertical homomorphisms are isomorphisms by definition of filtration $\mathcal{F}$ on $H_3(\Fn/ \Gamma_{\ast}(\Fn); \mathbb{Z}_{\ell})$ and the top horizontal homomorphism is onto by Lemma \ref{lem:6.3.17}. Note that $\mathcal{F}^{l+1}H_3(\Fn / \Gamma_l(\Fn);\mathbb{Z}_{\ell}) = H_3(\Fn/\Gamma_l(\Fn); \mathbb{Z}_{\ell})$ by Corollary \ref{eq:6.3.9}. Since the reduction homomorphism $\phi_{l,k}$ preserves the filtration $\mathcal{F}$ on $H_3(\Fn/\Gamma_k(\Fn); \mathbb{Z}_{\ell})$, one sees that the image $\Im(\alpha)$ is exactly equal to $\Im(\phi_{l,k})=\mathcal{G}^{l+1}H_3(\Fn/\Gamma_k(\Fn); \mathbb{Z}_{\ell})$. Therefore, we conclude that
\begin{equation}\label{eq:6.3.25}
	\mathcal{G}^{l+1}H_3(\Fn/\Gamma_k(\Fn); \mathbb{Z}_{\ell}) = \Im(\alpha) = \Im(\beta) = \mathcal{F}^{l+1}H_3(\Fn/\Gamma_k(\Fn); \mathbb{Z}_{\ell}).
\end{equation}
\end{proof}

As a byproduct of the above arguments, we get the following pro-$\ell$ version of Stallings' theorem.

\begin{corollary} \label{cor:6.3.26} The homomorphism 
\begin{equation}
	H_2(\Fn / \Gamma_{k+1}(\Fn); \mathbb{Z}_{\ell}) \rightarrow H_2(\Fn / \Gamma_{k}(\Fn); \mathbb{Z}_{\ell})
\end{equation}
is zero map.
	
\end{corollary}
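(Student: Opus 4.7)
The plan is to repeat the weight-filtration argument of Theorem \ref{thm:6.3.21} in degree $2$, exploiting that at the Lie-algebra level $H_2(\mathcal{L}/\mathcal{L}_{\geq k}; \mathbb{Z}_{\ell})$ is concentrated in a single weight, namely weight $k$, by \eqref{eq:5.5.3}. This makes the situation strictly simpler than for $H_3$: the target filtration vanishes already at level $k+1$, so one does not even need the analogue of $\mathcal{G}^l = \mathcal{F}^l$.

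First I would apply the $H_2$ case of Corollary \ref{eq:6.3.9} (whose proof treats all $i \leq 3$ uniformly): the May-type spectral sequence associated with the weight filtration of $D_{\ast}(\Fn/\Gamma_k(\Fn))$ degenerates at $E^1$ in total degrees $p+q \leq 3$, and its $E^1$-page in total degree $2$ is $H_2^{(\bullet)}(\mathcal{L}/\mathcal{L}_{\geq k}; \mathbb{Z}_{\ell}) \simeq \mathcal{L}_k$, concentrated in weight $k$. Consequently the induced weight filtration $\mathcal{F}^{\bullet}$ on $H_2(\Fn/\Gamma_k(\Fn); \mathbb{Z}_{\ell})$ satisfies $\mathcal{F}^{k} = H_2$ and $\mathcal{F}^{k+1} = 0$. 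The same reasoning applied to $\Fn/\Gamma_{k+1}(\Fn)$ gives $\mathcal{F}^{k+1} H_2(\Fn/\Gamma_{k+1}(\Fn); \mathbb{Z}_{\ell}) = H_2(\Fn/\Gamma_{k+1}(\Fn); \mathbb{Z}_{\ell})$.

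Next I would observe that the reduction morphism $\Fn/\Gamma_{k+1}(\Fn) \to \Fn/\Gamma_k(\Fn)$ lifts to a chain map $D_{\ast}(\Fn/\Gamma_{k+1}(\Fn)) \to D_{\ast}(\Fn/\Gamma_k(\Fn))$ that preserves weights, since the generators $\langle b_1,\ldots,b_n\rangle$ of $D_{\ast}$ and the reduced commutators $\mathfrak{c}(a,b)$ entering the differentials of Theorem \ref{thm:6.2.13} inherit intrinsic weight from the fixed ordered basis $\mathcal{B}$. Therefore the induced homomorphism on $H_2$ is filtered with respect to $\mathcal{F}^{\bullet}$.

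Combining these two steps, the image of the reduction map on $H_2$ lies in $\mathcal{F}^{k+1} H_2(\Fn/\Gamma_k(\Fn); \mathbb{Z}_{\ell}) = 0$, which is the desired vanishing. I do not foresee a serious obstacle, as every ingredient was already assembled in Section 6.3; the proof is essentially the degree-$2$ shadow of Theorem \ref{thm:6.3.21}, trivialised by the single-weight concentration of $H_2(\mathcal{L}/\mathcal{L}_{\geq k}; \mathbb{Z}_{\ell})$. The only minor point worth double-checking is that the chain-level reduction map indeed preserves the weight filtration, which is immediate from the construction of $D_{\ast}$ via $\mathcal{B}$ and the fact that $\mathfrak{c}(a,b)$ depends only on the commutator structure inherited from the quotient.
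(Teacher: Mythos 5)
Your proposal is correct and is essentially the paper's own argument: the proof in the text likewise derives the vanishing from the fact that $H_2(\mathcal{L}/\mathcal{L}_{\geq k+1};\mathbb{Z}_{\ell})$ and $H_2(\mathcal{L}/\mathcal{L}_{\geq k};\mathbb{Z}_{\ell})$ are concentrated in the distinct weights $k+1$ and $k$ (via \eqref{eq:5.5.3}) and then transfers this to the group homology through the isomorphism of Corollary \ref{eq:6.3.9}. Your version merely spells out the filtration-compatibility of the chain-level reduction map, a step the paper leaves implicit, so no further comment is needed.
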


\begin{proof}
By \eqref{eq:5.5.3}, we know that the homomorphism $H_2(\mathcal{L} / \mathcal{L}_{\geq k+1}; \mathbb{Z}_{\ell}) \rightarrow H_2(\mathcal{L} / \mathcal{L}_{\geq k}; \mathbb{Z}_{\ell})$ is zero map since they are concentrated at different weight to each other. By applying the isomorphism $H_2(\mathcal{L} / \mathcal{L}_{\geq k}; \mathbb{Z}_{\ell}) \simeq H_2(\Fn / \Gamma_{k}(\Fn); \mathbb{Z}_{\ell})$, the result follows.
\end{proof}

\section{Computation of dimension of $\pi_3(\widehat{K}_k^{(\ell)})$} \label{sec:6}

In this section, we compute the rank of $\pi_3(\widehat{K}_k^{(\ell)})$. The computation is very similar to the computation of $\pi_3(K_k)$ in \cite{IO}. That is, we show the existence of an isomorphism between $\pi_3(\widehat{K}_k^{(\ell)})$ and the third homology group $H_3(\xi_k^{(\ell)}; \mathbb{Z}_{\ell})$ of a pro-$\ell$ torus bundle  $\xi_k^{(\ell)}$ over $K_k^{(\ell)}$ and reduce the computation to that of $H_3(\xi_k^{(\ell)}; \mathbb{Z}_{\ell})$. However, to apply their techniques to our situation, we will need more subtle arguments than the ones in [loc.cit.]

\subsection{Isomorphism $\pi_3(\widehat{K}_k^{(\ell)}) \simeq H_3(\xi_k^{(\ell)}; \mathbb{Z}_{\ell})$}

To begin with, let us show that there is an isomorphism $\pi_3(\widehat{K}_k^{(\ell)}) \simeq H_3(\xi_k^{(\ell)}; \mathbb{Z}_{\ell})$ to reduce the computation of $\pi_3(\widehat{K}_k^{(\ell)})$ to that of $H_3(\xi_k^{(\ell)}; \mathbb{Z}_{\ell})$.

For this, we describe the behaviour of universal circle bundle $\mathbb{S}^{\infty} \rightarrow \mathbb{CP}^{\infty}$ under pro-$\ell$ completion.

\begin{lemma}\label{lem:7.1.1} 
Let $\widehat{\mathbb{S}}^{\infty}{}^{(\ell)}  \rightarrow  \widehat{\mathbb{C} \mathbb{P}}{}^{\infty}{}^{(\ell)}$ be the pro-$\ell$ completion of  the universal circle bundle $\mathbb{S}^{\infty} \rightarrow \mathbb{C} \mathbb{P}^{\infty}$. Then the following statements hold:
\begin{enumerate}[label=$(\arabic{enumi})$]
	\item The fibre of $\widehat{\mathbb{S}}^{\infty}{}^{(\ell)}  \rightarrow  \widehat{\mathbb{C} \mathbb{P}}{}^{\infty}{}^{(\ell)}$ is weak equivalent to the pro-$\ell$ completion  $\widehat{S}^1{}^{(\ell)}$ of $S^1$. 
	\item The pro-$\ell$ completion $\widehat{\mathbb{C} \mathbb{P}}{}^{\infty}{}^{(\ell)}$ is weak equivalent to $K(\mathbb{Z}_{\ell}, 2)$.
\end{enumerate}

\end{lemma}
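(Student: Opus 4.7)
The plan is to handle (2) first and then use it together with the fibration structure to derive (1).

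For part (2), I would invoke the pro-$\ell$ analogue of Example \ref{ex:2.3.1} (iii): since $\mathbb{CP}^{\infty} \sim K(\mathbb{Z}, 2)$ and $\mathbb{Z}$ is a finitely generated abelian group, hence $\ell$-good in the sense of Serre, pro-$\ell$ completion commutes with the Eilenberg--MacLane construction to yield
\[
\widehat{\mathbb{CP}^{\infty}}{}^{(\ell)} \sim \widehat{K(\mathbb{Z},2)}{}^{(\ell)} \sim K(\mathbb{Z} \otimes_{\mathbb{Z}} \mathbb{Z}_{\ell},\, 2) = K(\mathbb{Z}_{\ell}, 2).
\]
This is a direct pro-$\ell$ transcription of Example \ref{ex:2.3.1} (iii), which is stated there for the profinite completion.

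For part (1), I would apply pro-$\ell$ completion to the universal circle bundle, viewed as a fibration sequence $S^1 \to \mathbb{S}^{\infty} \to \mathbb{CP}^{\infty}$. Since the base $\mathbb{CP}^{\infty}$ is simply connected and the fibre $S^1$ has $\ell$-good (abelian) fundamental group, the Artin--Mazur fibre completion theorem (\cite[Theorem (6.7) and Corollary (6.9)]{AM}) guarantees that the completed sequence
\[
\widehat{F}{}^{(\ell)} \longrightarrow \widehat{\mathbb{S}^{\infty}}{}^{(\ell)} \longrightarrow \widehat{\mathbb{CP}^{\infty}}{}^{(\ell)}
\]
is again a fibration sequence of pro-spaces, and moreover identifies $\widehat{F}{}^{(\ell)}$ with the pro-$\ell$ completion of the original fibre $S^1$. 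Since $\mathbb{S}^{\infty}$ is contractible, its pro-$\ell$ completion has trivial pro-$\ell$ homotopy groups, so the long exact sequence of pro-$\ell$ homotopy groups combined with (2) gives
\[
\pi_i(\widehat{F}{}^{(\ell)}) \;\simeq\; \pi_{i+1}\bigl(K(\mathbb{Z}_{\ell}, 2)\bigr) \;=\; \begin{cases} \mathbb{Z}_{\ell} & (i=1), \\ 0 & (\text{otherwise}). \end{cases}
\]
Hence $\widehat{F}{}^{(\ell)} \sim K(\mathbb{Z}_{\ell}, 1) \sim \widehat{S}^1{}^{(\ell)}$ by Example \ref{ex:2.3.1} (i).

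The main obstacle is the justification that pro-$\ell$ completion preserves the fibration in the sense required to read off the fibre; this is precisely what the Artin--Mazur fibre theorem supplies, and its hypotheses (simple connectivity of the base, $\ell$-goodness of $\pi_1$ of the fibre, absence of local coefficients in the Serre spectral sequence) are all automatic here. Once this is in place, (2) and the contractibility of $\mathbb{S}^{\infty}$ make the long exact sequence argument essentially a one-line computation.
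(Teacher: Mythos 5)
Your proposal is correct, but it runs the logic in the opposite direction from the paper. The paper proves (1) first: it applies the Artin--Mazur fibration-completion results (\cite[Lemma (5.3), Theorem (5.9)]{AM}) to the simply connected base $\mathbb{CP}^{\infty}$ to conclude directly that the fibre $\bar{F}$ of the completed bundle has the same homotopy pro-groups as $\widehat{S}^1{}^{(\ell)}$, and then identifies them by uniqueness of Eilenberg--MacLane pro-spaces; part (2) is then read off from the homotopy exact sequence of the completed fibration together with (1). You instead establish (2) independently from the $\ell$-goodness of the finitely generated abelian group $\mathbb{Z}$ (exactly the alternative the paper itself records in the Remark following the lemma), and then recover (1) from the long exact sequence of the completed fibration using the contractibility of $\mathbb{S}^{\infty}$. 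Both routes rest on the same essential input --- that pro-$\ell$ completion preserves the fibration over the simply connected base --- so neither is more economical; your version has the small advantage of making (2) independent of the fibration argument, while the paper's version avoids having to invoke goodness of $\mathbb{Z}$ at all. One cosmetic caveat: once you invoke the fibre theorem to identify $\widehat{F}{}^{(\ell)}$ with the completion of $S^1$, statement (1) is already proved, so your subsequent long-exact-sequence computation is really a second, self-contained derivation rather than a needed step; also, the hypothesis of the Artin--Mazur theorem you need is simple connectivity of the base (your references to $\ell$-goodness of $\pi_1$ of the fibre are not the operative condition, though nothing breaks).
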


\begin{proof}
(1) By applying \cite[Lemma(5.3)]{AM}, we have the following commutative diagram:
\begin{center}
\begin{tikzcd}
S^1 \arrow[r] \arrow[d] & \mathbb{S}^{\infty} \arrow[r] \arrow[d] &  \mathbb{C} \mathbb{P}^{\infty} \arrow[d] \\
\bar{F} \arrow[r] & \widehat{\mathbb{S}}^{\infty}{}^{(\ell)}  \arrow[r] & \widehat{\mathbb{C} \mathbb{P}}{}^{\infty}{}^{(\ell)}
\end{tikzcd}
\end{center}
where $\bar{F}$ is the fibres of $\widehat{\mathbb{S}}^{\infty}{}^{(\ell)}  \rightarrow  \widehat{\mathbb{C} \mathbb{P}}{}^{\infty}{}^{(\ell)}$. Since $\mathbb{C}\mathbb{P}^{\infty}\sim K(\mathbb{Z}, 2)$ is simply connected, so is $\widehat{\mathbb{C} \mathbb{P}}{}^{\infty}{}^{(\ell)}$. Hence, by \cite[Theorem(5.9)]{AM} and \cite[Corollary(4.4)]{AM}, we see that $\pi_i(\widehat{S}^1{}^{(\ell)}) \simeq \pi_i(\bar{F})$ for each $i$. Therefore, the statement follows from the uniqueness of Eilenberg-MacLane (pro-)spaces \cite[Corollary 4.14]{AM}.

(2) Applying homotopy exact sequence of the fibre space to $\widehat{\mathbb{S}}^{\infty}{}^{(\ell)}  \rightarrow  \widehat{\mathbb{C} \mathbb{P}}{}^{\infty}{}^{(\ell)}$ and using statement (1), we obtain the result from the uniqueness of Eilenberg-MacLane (pro-)spaces.
\end{proof}

\begin{remark}
Here, we give an another proof of  $\widehat{\mathbb{C}\mathbb{P}}^{\infty}{}^{(\ell)} \sim K(\mathbb{Z}_{\ell},2)$ in terms of the relation of completion and fibrations, but it also follows from the fact that $\mathbb{Z}$ is ``good'' since $\mathbb{Z}$ is finitely generated Abelian group (cf. Example \ref{ex:2.3.1}).	
\end{remark}

In terms of above Lemma \ref{lem:7.1.1}, we define a pro-$\ell$ torus bundle $\xi_k^{(\ell)} \rightarrow K_k^{(\ell)}$ in the following manner. The central extension
\begin{equation} \label{eq:7.1.2} 
	0 \rightarrow \Gamma_{k}(F_n) / \Gamma_{k+1}(F_n)\simeq \mathbb{Z}^{\oplus N_k} \rightarrow F_n / \Gamma_{k+1}(F_n) \rightarrow F_n / \Gamma_{k}(F_n) \rightarrow 0,
\end{equation}
gives rise to a fibre bundle  $K(k+1) \rightarrow K(k)$ whose fibre is $N_k$-dimensional torus denoted by $T_k$. Since $\mathbb{S}^{\infty} \rightarrow \mathbb{C} \mathbb{P}^{\infty}$ is the universal circle bundle, the bundle $K(k+1) \rightarrow K(k)$ is classified by a map
\begin{equation}\label{eq:7.1.3}
	K(k) \rightarrow \prod_{i=1}^{N_k} (\mathbb{C}\mathbb{P}^{\infty})_i \sim  K(H_2(F_n/\Gamma_{k}(F_n); \mathbb{Z}), 2) \end{equation}
where $(\mathbb{C}\mathbb{P}^{\infty})_i$ is just a copy of  $\mathbb{C}\mathbb{P}^{\infty}$. Note that the above map $K(k) \rightarrow K(H_2(F_n/\Gamma_{k}(F_n); \mathbb{Z}), 2)$ is isomorphic on second homology group with coefficients in $\mathbb{Z}$.

By taking the pro-$\ell$ completion, we get the induced map
\begin{equation}\label{eq:7.1.4}
	K^{(\ell)}(k) \rightarrow \prod_{i=1}^{N_k} (\widehat{\mathbb{C}\mathbb{P}}^{\infty}{}^{(\ell)})_i \sim K(H_2(\Fn/\Gamma_k(\Fn); \mathbb{Z}_{\ell}), 2).
\end{equation}
Here, we note that $F_n/\Gamma_k(F_n)$ is ``$\ell$-good'' in the sense of Serre by central extension \eqref{eq:7.1.2} (cf.\cite[2.6 Excercises 2)(c)(d)]{Ser2}). Thus, $\widehat{K(k)}^{(\ell)} \sim K(\widehat{F_n/\Gamma_k(F_n)}^{(\ell)}, 1)$. By universal property of pro-$\ell$ completion,  there is a map $\Fn / \Gamma_k(\Fn) \rightarrow \widehat{F_n / \Gamma_k(F_n)}^{(\ell)}$, and thus, the corresponding map $K^{(\ell)}(k) \rightarrow \widehat{K(k)}^{(\ell)}$.

Similar to the above, the fibre of $K^{(\ell)}(k+1) \rightarrow K^{(\ell)}(k)$ is $\widehat{T}_k{}^{(\ell)} \sim K(\mathbb{Z}_{\ell}^{\oplus N_k}, 1)$ since we have
\begin{equation}\label{eq:7.1.5}
	0 \rightarrow \Gamma_{k}(\Fn) / \Gamma_{k+1}(\Fn)  \rightarrow \Fn / \Gamma_{k+1}(\Fn) \rightarrow \Fn / \Gamma_k (\Fn) \rightarrow 0
\end{equation}
and $\Gamma_{k}(\Fn) / \Gamma_{k+1}(\Fn) \simeq \widehat{\Gamma_{k} (F_n)/\Gamma_{k+1}(F_n)}{}^{(\ell)} \simeq \mathbb{Z}_{\ell}^{\oplus N_k} $. In terms of Hurewicz Theorem in the pro-category \cite[Corollary(4.5)]{AM}, the map $K^{(\ell)}(k) \rightarrow \prod_{i=1}^{N_k}(\widehat{\mathbb{C}\mathbb{P}}^{\infty}{}^{(\ell)})_i$ induces isomorphism on second homology group with coefficients in $\mathbb{Z}_{\ell}$.

Since $K(H_2(\Fn / \Gamma_{k}(\Fn); \mathbb{Z}_{\ell}), 2)$ is simply connected, the map $K^{(\ell)}(k) \rightarrow \prod_{i=1}^{N_k}(\widehat{\mathbb{C}\mathbb{P}}^{\infty}{}^{(\ell)})_i$ factors through the the pro-$\ell$ Orr space $K^{(\ell)}_k$. We denote by $\xi^{(\ell)}_k \rightarrow K^{(\ell)}_k$ the pullback bundle obtained from $\widehat{\mathbb{S}}^{\infty}{}^{(\ell)}  \rightarrow  \widehat{\mathbb{C} \mathbb{P}}{}^{\infty}{}^{(\ell)}$ via pulling back by $K^{(\ell)}_k \rightarrow \prod_{i=1}^{N_k}(\widehat{\mathbb{C}\mathbb{P}}^{\infty}{}^{(\ell)})_i$. Thus, we obtain the following commutative diagram:
\begin{center}
\begin{tikzcd}
\vee \widehat{S}^1{}^{(\ell)} \times \widehat{T}_k^{(\ell)} \arrow[r] \arrow[d] & K^{(\ell)}(k+1) \arrow[r] \arrow[d] & \xi_k^{(\ell)} \arrow[r] \arrow[d] & \prod_{i=1}^{N_k} \widehat{\mathbb{S}}^{\infty}{}^{(\ell)} \arrow[d]\\
\vee \widehat{S}^1{}^{(\ell)} \arrow[r] & K^{(\ell)}(k) \arrow[r]& K^{(\ell)}_k \arrow[r] &	\prod_{i=1}^{N_k} (\widehat{\mathbb{C}\mathbb{P}}^{\infty}{}^{(\ell)})_i\end{tikzcd}
	
\end{center}

Since $K_k^{(\ell)}$ can be thought of as pushout (fiber coproduct) of morphisms $\vee \widehat{S}^1{}^{(\ell)} \rightarrow K^{(\ell)}(k)$ and $\vee \widehat{S}^1{}^{(\ell)} \rightarrow \ast$ by definition of $K_k^{(\ell)}$, we obtain the following commutative diagram compatible with the above one:

\begin{center}
\begin{tikzcd}[row sep=scriptsize, column sep=scriptsize]
& \vee \widehat{S}^1{}^{(\ell)} \times \widehat{T}_k^{(\ell)}  \arrow[dr] \arrow[rr] \arrow[dd] & & K^{(\ell)}(k+1)\arrow[dr] \arrow[dd] \\
 & & \widehat{T}_k^{(\ell)} \arrow[rr, crossing over]& & \xi^{(\ell)}_k \arrow[dd] \\
& \vee \widehat{S}^1{}^{(\ell)} \arrow[dr] \arrow[rr] & & K^{(\ell)}(k) \arrow[dr] \\
 & & \ast \arrow[from=uu, crossing over] \arrow[rr]&  & K^{(\ell)}_k
\end{tikzcd}	
\end{center}
Here, the bottom (and also the top) square is a pushout diagram, and the vertical maps are bundle maps with fiber the pro-$\ell$ completion of a torus 
$\widehat{T}_k^{(\ell)}$.
\begin{lemma}\label{lem:7.1.6}
For each $k \geq 2$, there is a map $\gamma_{k+1, k}: \xi_k^{(\ell)} \rightarrow \xi_k^{(\ell)}$ such that it induces the following commutative diagram:
\begin{center}
\begin{tikzcd}
\pi_3(\xi^{(\ell)}_{k+1}) \arrow[r, "\gamma_{k+1, k}{}_{\ast}"] \arrow[d] &  \pi_3(\xi_k^{(\ell)}) 	\arrow[d] \\
\pi_3(K_{k+1}^{(\ell)}) \arrow[r, "\psi_{k+1, k}"] & \pi_3(K^{(\ell)}_k).
\end{tikzcd}
\end{center}
	Moreover, the vertical homomorphisms $\pi_3(\xi^{(\ell)}_k) \rightarrow \pi_3(K^{(\ell)}_k)$ are isomorphisms for each $k \geq 2$.
\end{lemma}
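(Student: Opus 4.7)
The plan is to handle the two assertions in reverse order, since the fibre-sequence argument for the homotopy isomorphism is clean and logically independent of the construction of $\gamma_{k+1,k}$, which I interpret as the map $\gamma_{k+1,k}\colon \xi_{k+1}^{(\ell)} \to \xi_k^{(\ell)}$ (the stated source $\xi_k^{(\ell)}$ appears to be a typo in light of the displayed diagram).

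\textbf{Step 1 (the isomorphism).} I would apply the long exact sequence of pro-homotopy groups to the pro-fibration $\widehat{T}_k^{(\ell)} \to \xi_k^{(\ell)} \to K_k^{(\ell)}$ established in the paragraphs preceding the lemma. By Lemma \ref{lem:7.1.1} one has $\widehat{T}_k^{(\ell)} \sim K(\mathbb{Z}_\ell^{\oplus N_k},1)$, so $\pi_i(\widehat{T}_k^{(\ell)}) = 0$ for all $i \geq 2$. The exact segment
\[ 0 = \pi_3(\widehat{T}_k^{(\ell)}) \longrightarrow \pi_3(\xi_k^{(\ell)}) \longrightarrow \pi_3(K_k^{(\ell)}) \longrightarrow \pi_2(\widehat{T}_k^{(\ell)}) = 0 \]
immediately yields the required isomorphism $\pi_3(\xi_k^{(\ell)}) \simeq \pi_3(K_k^{(\ell)})$.

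\textbf{Step 2 (constructing $\gamma_{k+1,k}$).} The reduction $\Fn/\Gamma_{k+1}(\Fn) \to \Fn/\Gamma_k(\Fn)$ induces $\psi_{k+1,k}\colon K_{k+1}^{(\ell)} \to K_k^{(\ell)}$. Let $c_k\colon K_k^{(\ell)} \to K(H_2(\Fn/\Gamma_k(\Fn);\mathbb{Z}_\ell),2)$ be the classifying map of the bundle $\xi_k^{(\ell)}$, chosen so as to be an isomorphism on $H_2(\,\cdot\,;\mathbb{Z}_\ell)$. Since $K_{k+1}^{(\ell)}$ is simply connected by Lemma \ref{lem:4.2.1}, the universal coefficient theorem gives $H^2(K_{k+1}^{(\ell)};A) \simeq \Hom_{\mathbb{Z}_\ell}(H_2(K_{k+1}^{(\ell)};\mathbb{Z}_\ell),A)$, and under this identification the pulled-back class $[c_k \circ \psi_{k+1,k}]$ corresponds to the composition
\[ H_2(K_{k+1}^{(\ell)};\mathbb{Z}_\ell) \simeq H_2(\Fn/\Gamma_{k+1}(\Fn);\mathbb{Z}_\ell) \longrightarrow H_2(\Fn/\Gamma_k(\Fn);\mathbb{Z}_\ell), \]
which is the pro-$\ell$ Stallings map and therefore vanishes by Corollary \ref{cor:6.3.26}. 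Hence $c_k \circ \psi_{k+1,k}$ is null-homotopic, the pullback bundle $\psi_{k+1,k}^*\xi_k^{(\ell)}\to K_{k+1}^{(\ell)}$ is trivial, and it admits a section $s$. The composite
\[ \xi_{k+1}^{(\ell)} \longrightarrow K_{k+1}^{(\ell)} \xrightarrow{\ s\ } \psi_{k+1,k}^*\xi_k^{(\ell)} \longrightarrow \xi_k^{(\ell)} \]
defines $\gamma_{k+1,k}$, and by construction it covers $\psi_{k+1,k}$. Commutativity of the $\pi_3$-square follows from functoriality of $\pi_3$, and combined with the vertical isomorphisms from Step 1 identifies both rows of the displayed diagram with $\psi_{k+1,k*}$.

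\textbf{Main obstacle.} The delicate point is Step 2: one must verify that the naturally induced map $(\psi_{k+1,k})_*\colon H_2(K_{k+1}^{(\ell)};\mathbb{Z}_\ell) \to H_2(K_k^{(\ell)};\mathbb{Z}_\ell)$ is intertwined with the algebraic Stallings map by the isomorphisms $H_2(K_\bullet^{(\ell)};\mathbb{Z}_\ell)\simeq H_2(\Fn/\Gamma_\bullet(\Fn);\mathbb{Z}_\ell)$ obtained from Lemma \ref{lem:4.4.2}, and that the classifying-map / pullback-section manipulations remain valid in the Artin--Mazur pro-category, so that ``null-homotopic classifying map implies trivial bundle with a section'' holds levelwise in a compatible manner.
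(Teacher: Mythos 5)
Your Step 1 is exactly the paper's argument for the second assertion: since the fibre $\widehat{T}_k^{(\ell)}$ of $\xi_k^{(\ell)}\rightarrow K_k^{(\ell)}$ is a $K(\pi,1)$, the homotopy exact sequence gives $0=\pi_3(\widehat{T}_k^{(\ell)})\rightarrow\pi_3(\xi_k^{(\ell)})\rightarrow\pi_3(K_k^{(\ell)})\rightarrow\pi_2(\widehat{T}_k^{(\ell)})=0$. Step 2, however, takes a genuinely different route to the key point, namely the existence of a lift $K_{k+1}^{(\ell)}\rightarrow\xi_k^{(\ell)}$ of $\psi_{k+1,k}$; in both treatments $\gamma_{k+1,k}$ is this lift precomposed with the bundle projection $\xi_{k+1}^{(\ell)}\rightarrow K_{k+1}^{(\ell)}$, and the stated source ``$\xi_k^{(\ell)}$'' is indeed a typo. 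The paper gets the lift for free from functoriality of pushouts: it maps the span $\ast\leftarrow\vee\widehat{S}^1{}^{(\ell)}\rightarrow K^{(\ell)}(k+1)$ defining $K_{k+1}^{(\ell)}$ into the span $\widehat{T}_k^{(\ell)}\leftarrow\vee\widehat{S}^1{}^{(\ell)}\times\widehat{T}_k^{(\ell)}\rightarrow K^{(\ell)}(k+1)$ defining $\xi_k^{(\ell)}$, via $\mathrm{id}\times\ast$ over the identity of $K^{(\ell)}(k+1)$, producing the sequence $\xi_{k+1}^{(\ell)}\rightarrow K_{k+1}^{(\ell)}\rightarrow\xi_k^{(\ell)}\rightarrow K_k^{(\ell)}$ with no input beyond the definitions. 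You instead obtain the lift from obstruction theory, using that $\prod\widehat{\mathbb{S}}^{\infty}{}^{(\ell)}$ is contractible and that the pullback of the classifying class along $\psi_{k+1,k}$ vanishes because, under the universal coefficient identification, it is the Stallings map, which is zero by Corollary \ref{cor:6.3.26}. Your argument is heavier (that corollary rests on the whole of Section \ref{sec:5}, whereas the paper needs nothing), but it isolates \emph{why} the lift exists; the compatibility of $(\psi_{k+1,k})_{\ast}$ on $H_2(K_{\bullet}^{(\ell)};\mathbb{Z}_{\ell})$ with the algebraic Stallings map is just naturality of the exact sequence of the pair, so that part of your ``main obstacle'' is harmless, and the pro-categorical caveats about sections of pulled-back bundles are at the same level of informality as the paper's own manipulation of homotopy pushouts and fibrations of pro-spaces. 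Finally, since $K_{k+1}^{(\ell)}$ is simply connected and the fibre is aspherical, $[K_{k+1}^{(\ell)},\widehat{T}_k^{(\ell)}]\simeq H^1(K_{k+1}^{(\ell)};\mathbb{Z}_{\ell}^{\oplus N_k})=0$, so your section is unique up to homotopy and your $\gamma_{k+1,k}$ coincides with the paper's; this is worth recording because Lemma \ref{lem:7.2.14} and Theorem \ref{thm:7.2.21} later use finer properties of this particular map than the commutativity of the $\pi_3$-square.
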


\begin{proof}
Let us consider the following commutative diagram:
\begin{center}
\begin{tikzcd}
K^{(\ell)}(k+2) \arrow[r] & K^{(\ell)}(k+1) \arrow[r, "="] & K^{(\ell)}(k+1)	 \arrow[r] & K^{(\ell)}(k) \\
\vee \widehat{S}^1{}^{(\ell)} \times \widehat{T}_{k+1}^{(\ell)} \arrow[u] \arrow[r, "proj"] \arrow[d] & \vee \widehat{S}^1{}^{(\ell)} \arrow[u] \arrow[r, "id \times \ast"] \arrow[u]  \arrow[d] & \vee \widehat{S}^1{}^{(\ell)} \times \widehat{T}_{k+1}^{(\ell)} \arrow[r, "proj"]  \arrow[d] \arrow[u] & \vee  \widehat{S}^1{}^{(\ell)} \arrow[u]  \arrow[d]\\
\widehat{T}_{k+1}^{(\ell)} \arrow[r] & \ast \arrow[r] &\widehat{T}_{k+1}^{(\ell)} \arrow[r] & \ast 
\end{tikzcd}
\end{center}
Here, the symbol $\ast$ denotes the constant map or the base point depending on context. By taking the pushout for each vertical two morphisms, the above commutative diagram induces the following sequence of four pushouts:
\begin{equation}\label{eq:7.1.7}
	\xi_{k+1}^{(\ell)} \rightarrow K_{k+1}^{(\ell)} \rightarrow \xi_{k}^{(\ell)} \rightarrow K_k^{(\ell)}
\end{equation}
Note that this map is well-defined up to homotopy since the spaces appearing here are all $K(\pi, 1)$ spaces for some group $\pi$.

Denotes $\gamma_{k+1,k} : \xi_{k+1}^{(\ell)} \rightarrow K_{k+1}^{(\ell)} \rightarrow \xi_{k}$ and $\psi_{k+1,k} : K_{k+1}^{(\ell)} \rightarrow \xi_{k}^{(\ell)} \rightarrow K_k^{(\ell)}$ the composition maps in the above sequence. Then, by inserting $\gamma_{k+1,k}$ and $\psi_{k+1,k}$ in the sequence above, one obtains the following commutative diagram:
	
\begin{center}
\begin{tikzcd}
\xi^{(\ell)}_{k+1} \arrow[r, "\gamma_{k+1,k}"] \arrow[d] & \xi^{(\ell)}_{k} \arrow[d]\\
K^{(\ell)}_{k+1} \arrow[ur] \arrow[r, "\psi_{k+1,k}"] & K^{(\ell)}_k	
\end{tikzcd}
\end{center}

Henceforth, by taking homotopy functor $\pi_3$ to the above diagram, we get the desired commutative diagram. The second statement follows from homotopy exact sequence of the fibration $\xi_{k} \rightarrow K^{(\ell)}_k$ because its fibre $\widehat{T}_k^{(\ell)}$ is $K(\pi, 1)$ and, in particular, $\pi_2(\widehat{T}_k^{(\ell)})$ and $\pi_3(\widehat{T}_k^{(\ell)})$ vanish. This completes the proof.
\end{proof}

\begin{lemma} \label{eq:7.1.8}
For each $k \geq 2$, the pullback bundle $\xi_k^{(\ell)} \rightarrow K^{(\ell)}_k$ is 2-connected. In particular, the Hurewicz homomorphism $\pi_3(\xi_k^{(\ell)}) \rightarrow H_3(\xi^{(\ell)}_{k})$ is isomorphism.
\end{lemma}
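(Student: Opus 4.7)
The plan is to read off $\pi_i(\xi_k^{(\ell)})$ for $i = 0, 1, 2$ from the long exact homotopy sequence of the fibration $\widehat{T}_k^{(\ell)} \to \xi_k^{(\ell)} \to K_k^{(\ell)}$ (valid for pro-spaces by Artin--Mazur), and then invoke the pro-space Hurewicz theorem (\cite[Corollary (4.5)]{AM}) for the $\pi_3$ statement. Connectedness is automatic because both base and fiber are connected, so the real content lies in $\pi_1$ and $\pi_2$.

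Since the fiber $\widehat{T}_k^{(\ell)} \sim K(\mathbb{Z}_\ell^{\oplus N_k}, 1)$ has $\pi_i = 0$ for $i \geq 2$, and since $\pi_1(K_k^{(\ell)}) = 0$ by Lemma \ref{lem:4.2.1}, the relevant portion of the long exact sequence collapses to
\begin{equation*}
0 \longrightarrow \pi_2(\xi_k^{(\ell)}) \longrightarrow \pi_2(K_k^{(\ell)}) \overset{\partial}{\longrightarrow} \pi_1(\widehat{T}_k^{(\ell)}) \longrightarrow \pi_1(\xi_k^{(\ell)}) \longrightarrow 0.
\end{equation*}
Thus it suffices to show that the connecting homomorphism $\partial$ is an isomorphism. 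The strategy is to identify $\partial$ via naturality with the corresponding connecting map of the universal bundle $\widehat{\mathbb{S}}^{\infty(\ell)} \to \widehat{\mathbb{CP}}^{\infty(\ell)} \sim K(\mathbb{Z}_\ell, 2)$, for which the transgression $\pi_2 K(\mathbb{Z}_\ell, 2) \overset{\sim}{\to} \pi_1 K(\mathbb{Z}_\ell, 1)$ is (canonically) the identity on $\mathbb{Z}_\ell$.

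Concretely, I would argue as follows. The bundle $\xi_k^{(\ell)} \to K_k^{(\ell)}$ was constructed by pullback along the classifying map $c \colon K_k^{(\ell)} \to \prod_{i=1}^{N_k}(\widehat{\mathbb{CP}}^{\infty(\ell)})_i \sim K(\mathbb{Z}_\ell^{\oplus N_k}, 2)$, which was noted in the paragraph after \eqref{eq:7.1.4} to induce an isomorphism on $H_2(-;\mathbb{Z}_\ell)$. Since $K_k^{(\ell)}$ is simply connected (Lemma \ref{lem:4.2.1}) with $\pi_2(K_k^{(\ell)}) \simeq \mathbb{Z}_\ell^{\oplus N_k}$ (Proposition \ref{lem:4.2.2} (2)), the relative Hurewicz theorem in the pro-category (\cite[Corollary (4.5)]{AM}) shows that $c$ induces an isomorphism on $\pi_2$. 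Naturality of the boundary map in the homotopy exact sequence then gives a commutative square
\begin{equation*}
\begin{tikzcd}
\pi_2(K_k^{(\ell)}) \arrow[r, "\partial"] \arrow[d, "c_*", "\sim"'] & \pi_1(\widehat{T}_k^{(\ell)}) \arrow[d, "="] \\
\pi_2\!\left(\textstyle\prod_{i=1}^{N_k}(\widehat{\mathbb{CP}}^{\infty(\ell)})_i\right) \arrow[r, "\partial_{\mathrm{univ}}"'] & \pi_1\!\left(\textstyle\prod_{i=1}^{N_k}\widehat{S}^{1(\ell)}\right),
\end{tikzcd}
\end{equation*}
where the bottom map is the transgression of the $N_k$-fold product of the universal circle bundle, hence the canonical identification $\mathbb{Z}_\ell^{\oplus N_k} \overset{\sim}{\to} \mathbb{Z}_\ell^{\oplus N_k}$. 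Therefore $\partial$ is an isomorphism, which forces $\pi_2(\xi_k^{(\ell)}) = 0$ and $\pi_1(\xi_k^{(\ell)}) = 0$.

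The main point requiring care is the identification of $\partial_{\mathrm{univ}}$ with the identity in the pro-$\ell$ setting; this reduces to Lemma \ref{lem:7.1.1}, which guarantees that the pro-$\ell$-completed universal circle bundle is (up to weak equivalence) the path-loop fibration $K(\mathbb{Z}_\ell, 1) \to \ast \to K(\mathbb{Z}_\ell, 2)$, whose transgression is tautologically the identity on $\mathbb{Z}_\ell$. Once $\xi_k^{(\ell)}$ is known to be $2$-connected, the pro-Hurewicz theorem of Artin--Mazur immediately yields the isomorphism $\pi_3(\xi_k^{(\ell)}) \overset{\sim}{\to} H_3(\xi_k^{(\ell)})$, completing the proof.
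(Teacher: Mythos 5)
Your proposal is correct and follows essentially the same route as the paper: both proofs run the long exact homotopy sequence of the pullback fibration, identify the connecting map $\pi_2(K_k^{(\ell)}) \to \pi_1(\widehat{T}_k^{(\ell)})$ with the transgression of the (contractible-total-space) universal bundle via the classifying map, use the fact that this classifying map is an isomorphism on $\pi_2$ (coming from the $H_2$-isomorphism plus simple connectivity), and finish with the Artin--Mazur pro-Hurewicz theorem. Your write-up merely makes the naturality square and the $H_2 \Rightarrow \pi_2$ step more explicit than the paper's ``by comparing this with $\xi_k^{(\ell)} \rightarrow K_k^{(\ell)}$''.
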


\begin{proof}
Consider the homotopy exact sequence for the fibration $\prod \widehat{\mathbb{S}}^{\infty}{}^{(\ell)} \rightarrow \prod \widehat{\mathbb{CP}}^{\infty}{}^{(\ell)}$, we get
\begin{equation}\label{eq:7.1.9}
	\stackbelow{\pi_2(\widehat{\mathbb{S}}^{\infty}{}^{(\ell)})}{0} \rightarrow \pi_2(\prod \widehat{\mathbb{CP}}^{\infty}{}^{(\ell)}) \rightarrow \pi_1(\widehat{T}_k^{(\ell)}) \rightarrow \stackbelow{\pi_1(\prod \widehat{\mathbb{S}}^{\infty}{}^{(\ell)})}{0}
\end{equation} 
since $\widehat{\mathbb{S}}^{\infty}{}^{(\ell)}$ is contractible. By comparing this with $\xi_k^{(\ell)} \rightarrow K_k^{(\ell)}$, one sees that  $\pi_2(K_k^{(\ell)}) \simeq \pi_1(\widehat{T}_k^{(\ell)})$. Noting that $\widehat{T}_k^{(\ell)}$ is $K(\pi,1)$ space  and $\pi_1(K_k^{(\ell)})=0$, the assertion follows from the following homotopy exact sequence for the fibration $\xi_k^{(\ell)} \rightarrow K_k^{(\ell)}$;
\begin{equation}\label{eq:7.1.10}
	\pi_2(\widehat{T}^{(\ell)}_k) \rightarrow \pi_2(\xi^{(\ell)}) \rightarrow \pi_2(K^{(\ell)}_k) \rightarrow \pi_1(\widehat{T}_k^{(\ell)}) \rightarrow \pi_1(\xi_k^{(\ell)}) \rightarrow \pi_1(K^{(\ell)}_k).
\end{equation}		
\end{proof}

In short, we get the following.

\begin{lemma}\label{lem:7.1.10}
We have an isomorphism
\begin{equation} \label{eq:7.1.11}
	\pi_3(K_k^{(\ell)}) \simeq H_3(\xi_k^{(\ell)}).
\end{equation}
In particular, we have
\begin{equation}\label{eq:7.1.12}
	\pi_3(\widehat{K}_k^{(\ell)}) \simeq  \pi_3(K_k^{(\ell)})\otimes_{\mathbb{Z}} \mathbb{Z}_{\ell} \simeq H_3(\xi_k^{(\ell)}) \otimes_{\mathbb{Z}} \mathbb{Z}_{\ell} \simeq H_3(\xi_k^{(\ell)}; \mathbb{Z}_{\ell}) 
\end{equation}
\end{lemma}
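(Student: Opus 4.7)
The plan is to assemble the result directly from the two preceding lemmas. First I would invoke Lemma 7.1.6: the bundle map $\xi_k^{(\ell)} \to K_k^{(\ell)}$ is a fibration with fiber $\widehat{T}_k^{(\ell)} \sim K(\mathbb{Z}_\ell^{\oplus N_k}, 1)$, whose higher homotopy groups vanish, so the long exact sequence in homotopy immediately yields an isomorphism $\pi_3(\xi_k^{(\ell)}) \overset{\sim}{\to} \pi_3(K_k^{(\ell)})$. Next I would invoke Lemma 7.1.8: since $\xi_k^{(\ell)}$ is $2$-connected, the Hurewicz theorem in the pro-category (\cite[Corollary (4.5)]{AM}) provides a second isomorphism $\pi_3(\xi_k^{(\ell)}) \overset{\sim}{\to} H_3(\xi_k^{(\ell)})$. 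Composing the inverse of the first with the second gives the asserted isomorphism
\begin{equation*}
\pi_3(K_k^{(\ell)}) \simeq H_3(\xi_k^{(\ell)}).
\end{equation*}

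For the ``in particular'' statement, I would proceed in three steps. First, by Proposition \ref{lem:4.2.2}(3), one has $\pi_3(\widehat{K}_k^{(\ell)}) \simeq \widehat{\pi}_3^{(\ell)}(K_k^{(\ell)})$, and since $\pi_3(K_k^{(\ell)})$ is a finitely generated $\mathbb{Z}_\ell$-module (as is visible from the homological computation of Section 6 via the isomorphism we have just established, combined with Corollary \ref{eq:6.3.9}), its pro-$\ell$ completion is given by tensoring with $\mathbb{Z}_\ell$, yielding $\widehat{\pi}_3^{(\ell)}(K_k^{(\ell)}) \simeq \pi_3(K_k^{(\ell)}) \otimes_{\mathbb{Z}} \mathbb{Z}_\ell$. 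Second, tensoring the isomorphism $\pi_3(K_k^{(\ell)}) \simeq H_3(\xi_k^{(\ell)})$ with $\mathbb{Z}_\ell$ over $\mathbb{Z}$ (which is exact since $\mathbb{Z}_\ell$ is $\mathbb{Z}$-flat) gives $\pi_3(K_k^{(\ell)}) \otimes_{\mathbb{Z}} \mathbb{Z}_\ell \simeq H_3(\xi_k^{(\ell)}) \otimes_{\mathbb{Z}} \mathbb{Z}_\ell$. Third, by the universal coefficient theorem applied levelwise in the pro-system (as in the proof of Lemma \ref{lem:4.4.2}(2), noting $\mathrm{Tor}_1^{\mathbb{Z}}(-, \mathbb{Z}_\ell)=0$), one has $H_3(\xi_k^{(\ell)}) \otimes_{\mathbb{Z}} \mathbb{Z}_\ell \simeq H_3(\xi_k^{(\ell)}; \mathbb{Z}_\ell)$. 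Chaining these identifications gives the second display.

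Since every ingredient is already in place, there is no serious obstacle; the only point that deserves a line of justification is the identification $\widehat{\pi}_3^{(\ell)}(K_k^{(\ell)}) \simeq \pi_3(K_k^{(\ell)}) \otimes_{\mathbb{Z}} \mathbb{Z}_\ell$, which reduces to the standard fact that pro-$\ell$ completion of finitely generated (or already $\mathbb{Z}_\ell$-module) abelian groups agrees with $(-) \otimes_{\mathbb{Z}} \mathbb{Z}_\ell$.
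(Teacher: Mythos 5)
Your proposal is correct and follows essentially the same route as the paper: the first isomorphism is obtained by composing the fibration isomorphism of Lemma \ref{lem:7.1.6} with the Hurewicz isomorphism of Lemma \ref{eq:7.1.8}, and the second display is deduced from Proposition \ref{lem:4.2.2}(3). The paper states this in one line; you have merely spelled out the intermediate identifications (pro-$\ell$ completion as $-\otimes_{\mathbb{Z}}\mathbb{Z}_{\ell}$ and the universal coefficient step), which the paper leaves implicit.
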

\begin{proof}
The former statement is a consequence of the above discussion. The latter follows from Proposition \ref{lem:4.2.2} (3).	
\end{proof}

Hence, the computation of $\pi_3(\widehat{K}_k^{(\ell)})$ reduces to that of $H_3(\xi_k^{(\ell)};\mathbb{Z}_{\ell})$.

\subsection{Computation of dimension of $H_3(\xi^{(\ell)}_k; \mathbb{Z}_{\ell})$}

In previous section, we have shown that $\pi_3(\widehat{K}_k^{(\ell)}) \simeq H_3(\xi_k^{(\ell)}; \mathbb{Z}_{\ell})$ and the computation of $\pi_3(\widehat{K}_k^{(\ell)})$ reduces to that of $H_3(\xi_k^{(\ell)}; \mathbb{Z}_{\ell})$. To know the rank of $\pi_3(\widehat{K}_k^{(\ell)})$, this section computes $H_3(\xi_k^{(\ell)}; \mathbb{Z}_{\ell})$. 

By taking $G$ as  $\Fn/\Gamma_{k+1}(\Fn)$ in Theorem \ref{thm:6.2.13}, we obatain the chain complex 
\begin{equation} \label{eq:7.2.1}
	D_{\ast}(K^{(\ell)}(k+1); \mathbb{Z}_{\ell}) = C_{\ast}(\Fn/\Gamma_{k+1}(\Fn)) \otimes_{\llbracket \mathbb{Z}_{\ell} \Fn/\Gamma_{k+1}(\Fn) \rrbracket} \mathbb{Z}_{\ell}
\end{equation}
where we consider $\mathbb{Z}_{\ell}$ as trivial $\llbracket \mathbb{Z}_{\ell} \Fn/\Gamma_{k+1}(\Fn) \rrbracket$-module. Note that this identification between chain complex of $K(\pi, 1)$ pro-space and one of $\pi$ is deduced from Lemma \ref{lem:2.3.3}. We consider the subcomplex of $D_{\ast}(K^{(\ell)}(k+1); \mathbb{Z}_{\ell})$ which is generated by the terms $\langle \alpha, \beta_1,\ldots, \beta_{n-1} \rangle$ with $\alpha  \in \mathcal{B}_1$ and $\beta_1,\ldots, \beta_{n-1} \in \mathcal{B}_{k}$. Then, this chain complex can be identified with the chain complex $D_{\ast}(\vee \widehat{S}^1{}^{(\ell)} \times \widehat{T}_k^{(\ell)})$ with induced weight filtration. Note that the subcomplex is chain complex with zero differentials. Then, we have the following weight preserving pushout diagram of chain complexes over $\mathbb{Z}_{\ell}$:
\begin{center}
\begin{tikzcd}
D_{\ast}(\vee \widehat{S}^1{}^{(\ell)} \times \widehat{T}_k^{(\ell)}; \mathbb{Z}_{\ell}) \arrow[r] \arrow[d] & D_{\ast}(K^{(\ell)}(k+1); \mathbb{Z}_{\ell}) \arrow[d] \\
D_{\ast}(\widehat{T}_k^{(\ell)}; \mathbb{Z}_{\ell}) \arrow[r] & D_{\ast}(\xi_k^{(\ell)}; \mathbb{Z}_{\ell})	
\end{tikzcd}
	
\end{center}

Note that from the above pushout diagram, $D_{\ast}(\xi_k^{(\ell)}; \mathbb{Z}_{\ell})$ is endowed with weight filtration.

\begin{lemma} \label{lem:7.2.2} The inclusion  of chain complex $D_{\ast}(\vee \widehat{S}^1{}^{(\ell)} \times \widehat{T}_k^{(\ell)}; \mathbb{Z}_{\ell}) \rightarrow D_{\ast}(K^{(\ell)}(k+1); \mathbb{Z}_{\ell})$ induces the  homomorphism 
\begin{equation} \label{eq:7.2.3}
	H_3(\vee \widehat{S}^1{}^{(\ell)}\times \widehat{T}_k^{(\ell)}; \mathbb{Z}_{\ell}) \rightarrow H_3(K^{(\ell)}(k+1); \mathbb{Z}_{\ell})
\end{equation}
whose image is equal to the $2k+1$-th term of filtration
\begin{equation} \label{eq:7.2.4}
	\mathcal{F}^{2k+1} H_3(K^{(\ell)}(k+1); \mathbb{Z}_{\ell}) \simeq \mathcal{F}^{2k+1} H_3(\Fn / \Gamma_{k+1}(\Fn); \mathbb{Z}_{\ell}).
\end{equation}

Moreover, the composition 
\begin{equation} \label{eq:7.2.5}
	H_3(\widehat{T}_k^{(\ell)}; \mathbb{Z}_{\ell}) \rightarrow  H_3(\vee \widehat{S}^1{}^{(\ell)} \times \widehat{T}_k^{(\ell)}; \mathbb{Z}_{\ell}) \rightarrow H_3(K^{(\ell)}(k+1); \mathbb{Z}_{\ell})
\end{equation}
is the zero homomorphism.
\end{lemma}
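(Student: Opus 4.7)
The plan is to derive both assertions from the Künneth decomposition of the product together with the weight-filtration analysis of Section 6. Since $\vee\widehat{S}^1{}^{(\ell)}\sim K(\Fn,1)$ has $H_0=\mathbb{Z}_\ell$, $H_1=\mathbb{Z}_\ell^n$, and $H_{\ge 2}=0$, while $\widehat{T}_k^{(\ell)}\sim K(\mathbb{Z}_\ell^{N_k},1)$ has $H_\ast=\Lambda^\ast\mathbb{Z}_\ell^{N_k}$, the Künneth formula yields
\begin{equation*}
H_3\bigl(\vee\widehat{S}^1{}^{(\ell)}\times\widehat{T}_k^{(\ell)};\mathbb{Z}_\ell\bigr)
\simeq H_3(\widehat{T}_k^{(\ell)};\mathbb{Z}_\ell) \oplus \bigl(H_1(\vee\widehat{S}^1{}^{(\ell)};\mathbb{Z}_\ell)\otimes H_2(\widehat{T}_k^{(\ell)};\mathbb{Z}_\ell)\bigr),
\end{equation*}
with the first summand represented by chains $\langle\beta_1,\beta_2,\beta_3\rangle$ ($\beta_i\in\mathcal{B}_k$) of weight $3k$, and the second by chains $\langle\alpha,\beta_1,\beta_2\rangle$ ($\alpha\in\mathcal{B}_1$, $\beta_i\in\mathcal{B}_k$) of weight $2k+1$.

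The second (vanishing) assertion then follows immediately: the image of $H_3(\widehat{T}_k^{(\ell)};\mathbb{Z}_\ell)$ lies in $\mathcal{F}^{3k}H_3(\Fn/\Gamma_{k+1}(\Fn);\mathbb{Z}_\ell)$, and since $k\ge 2$ forces $3k\ge 2(k+1)$, Corollary \ref{eq:6.3.9} applied to $\Fn/\Gamma_{k+1}(\Fn)$ gives $\mathcal{F}^{2(k+1)}H_3=0$, so the composition is zero. The same weight consideration shows that only the second Künneth summand can contribute nontrivially to the image in $H_3(K^{(\ell)}(k+1);\mathbb{Z}_\ell)$, yielding the containment $\mathrm{image}\subseteq\mathcal{F}^{2k+1}$.

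For the reverse containment, I would first use Corollary \ref{eq:6.3.9} to identify $\mathcal{F}^{2k+1}H_3(\Fn/\Gamma_{k+1}(\Fn);\mathbb{Z}_\ell)\simeq H_3^{(2k+1)}(\mathcal{L}/\mathcal{L}_{\ge k+1};\mathbb{Z}_\ell)$, and then analyze the target via the weighted Hochschild--Serre spectral sequence \eqref{eq:5.3.3}--\eqref{eq:5.3.4} for the central extension $0\to\mathcal{L}_k\to\mathcal{L}/\mathcal{L}_{\ge k+1}\to\mathcal{L}/\mathcal{L}_{\ge k}\to 0$. A direct weight check at total degree three and weight $2k+1$ shows that only $E^2_{1,2}=\mathcal{L}_1\otimes\Lambda^2\mathcal{L}_k$ survives: $E^2_{3,0}=H_3^{(2k+1)}(\mathcal{L}/\mathcal{L}_{\ge k};\mathbb{Z}_\ell)$ vanishes by Theorem \ref{thm:5.4.2} (since $2k+1$ lies outside the range $k+1\le w\le 2k-1$), $E^2_{2,1}$ is concentrated at weight $2k$ by \eqref{eq:5.5.3}, and $E^2_{0,3}=\Lambda^3\mathcal{L}_k$ is concentrated at weight $3k\ne 2k+1$ for $k\ge 2$. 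Hence $H_3^{(2k+1)}(\mathcal{L}/\mathcal{L}_{\ge k+1};\mathbb{Z}_\ell)$ is a quotient of $\mathcal{L}_1\otimes\Lambda^2\mathcal{L}_k$, and this quotient is realized at the level of associated graded chains precisely by the $(1,k,k)$-type Koszul cycles coming from the second Künneth summand; surjectivity onto $\mathcal{F}^{2k+1}$ follows.

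The main technical obstacle will be this last step: one must verify that the subcomplex inclusion is compatible with both the weight filtration and the Hochschild--Serre (fiber) filtration on $D_\ast(K^{(\ell)}(k+1);\mathbb{Z}_\ell)$, so that the abstract identification of $H_3^{(2k+1)}$ as a quotient of $\mathcal{L}_1\otimes\Lambda^2\mathcal{L}_k$ is literally realized by the chain-level map $\langle\alpha,\beta_1,\beta_2\rangle\mapsto\alpha\wedge\beta_1\wedge\beta_2$. Combined with Lemma \ref{lem:6.3.17}, which controls the $\mathcal{F}^s$-pieces, a spectral sequence comparison between the subcomplex and the full complex should then close the argument.
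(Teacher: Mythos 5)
Your proposal is correct and follows essentially the same route as the paper: the containment and the vanishing of the composite both come from the weight bookkeeping ($3k$ versus $\mathcal{F}^{2k+2}H_3(\Fn/\Gamma_{k+1}(\Fn);\mathbb{Z}_{\ell})=0$ from Corollary \ref{eq:6.3.9}), and surjectivity onto $\mathcal{F}^{2k+1}$ is obtained exactly as in the paper by comparing the collapsing spectral sequence of the product (concentrated in bidegree $(1,2)$ at weight $2k+1$) with the weighted Hochschild--Serre spectral sequence of $0\to\mathcal{L}_k\to\mathcal{L}/\mathcal{L}_{\geq k+1}\to\mathcal{L}/\mathcal{L}_{\geq k}\to 0$, whose only surviving term at that weight is $E^2_{1,2}$. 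The chain-level compatibility you flag as the remaining obstacle is handled in the paper precisely by the combination you anticipate: Lemma \ref{lem:6.3.17} together with the identification $\mathrm{gr}(D_{\ast}(G))\simeq\Lambda_{\ast}(\mathrm{gr}(G))$ and a comparison of the two filtered complexes.
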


\begin{proof}
Noting that  $D_{\ast}(\vee \widehat{S}^1{}^{(\ell)} \times \widehat{T}_k^{(\ell)}; \mathbb{Z}_{\ell})$ and $D_{\ast}(K^{(\ell)}(k+1); \mathbb{Z}_{\ell})$ have weight filtration, consider the May like spectral sequence $'E=\{'E^{r}_{p,q}\}$ and $''E=\{''E^r_{p,q}\}$ associated with filtered chain complexes respcetively. Consider homomorphisms between them induced by the inclusion $D_{\ast}(\vee \widehat{S}^1{}^{(\ell)} \times \widehat{T}_k^{(\ell)}; \mathbb{Z}_{\ell}) \rightarrow D_{\ast}(K^{(\ell)}(k+1); \mathbb{Z}_{\ell})$, then we have the following commutative diagram:
\begin{center}
\begin{tikzcd}
\mathcal{F}^{2k+1}H_3(\vee \widehat{S}^1{}^{(\ell)} \times \widehat{T}_k^{(\ell)}); \mathbb{Z}_{\ell}) \arrow[r] \arrow[d] & \mathcal{F}^{2k+1} H_3(K^{(\ell)}(k+1); \mathbb{Z}_{\ell}) \arrow[d] & H_3(\mathcal{F}^{2k+1}D_{\ast}(K^{(\ell)}(k+1); \mathbb{Z}_{\ell}) \arrow[d] \arrow[l]\\
'E^{\infty}_{2k+1, 3-2k-1} \arrow[r] & ''E^{\infty}_{2k+1, 3-2k-1} \arrow[r] & H_3^{(2k+1)}(\mathcal{L}/\mathcal{L}_{\geq k}; \mathbb{Z}_{\ell})	
\end{tikzcd}
\end{center}
Here, the top right homomorphism is isomorphism by definition of the filtration $\mathcal{F}$ on the third homology group and the right most vertical homomorphism is induced by quotient morphism $\mathcal{F}^{2k+1}D_{\ast}(K^{(\ell)}(k+1) \rightarrow \Lambda^{(2k+1)}(\mathcal{L} / \mathcal{L}_{\geq k}; \mathbb{Z}_{\ell})$ given by taking associated graded Lie algebra.

Since the May like spectral sequences $'E$ and $''E$ converge to $H_{\ast}(\vee \widehat{S}^1{}^{(\ell)} \times \widehat{T}_k^{(\ell)}; \mathbb{Z}_{\ell})$ and $H_{\ast}(K^{(\ell)}(k+1); \mathbb{Z}_{\ell})$ respectively, the left most and the middle vertical homomorphism is surjective. By Corollary \ref{eq:6.3.9} and \eqref{eq:6.3.8}, we know that the bottom right homomorphism is isomorphism. Thus, the right most vertical homomorphism must be surjective.

Noting that $D_{\ast}(\vee \widehat{S}^1{}^{(\ell)} \times \widehat{T}_k^{(\ell)}; \mathbb{Z}_{\ell})$ is concentrated on weight $2k+1$ part, we have
\begin{equation} \label{eq:7.2.6}
	H_3(\vee \widehat{S}^1{}^{(\ell)} \times \widehat{T}_k^{(\ell)}; \mathbb{Z}_{\ell}) \simeq H_3(\mathcal{F}^{2k+1}D_{\ast}(\vee \widehat{S}^1{}^{(\ell)} \times \widehat{T}_k^{(\ell)}); \mathbb{Z}_{\ell}).
\end{equation}
Therefore, from the above arguments, it is enough to show that the inclusion $\mathcal{F}^{2k+1} D_{\ast}(\vee \widehat{S}^1{}^{(\ell)} \times \widehat{T}_k^{(\ell)}; \mathbb{Z}_{\ell}) \rightarrow \Lambda_{\ast}^{2k+1}(\mathcal{L}/\mathcal{L}_{k+1}; \mathbb{Z}_{\ell})$ induces a surjective  homomorphism on third homology groups because their third homology groups are free module with same rank.

For this, we compare two spectral sequences associated to $D_{\ast}(\vee \widehat{S}^1{}^{(\ell)} \times \widehat{T}_k^{(\ell)}; \mathbb{Z}_{\ell})$ and central extension $\mathcal{L}/\mathcal{L}_{\geq k+1} \rightarrow \mathcal{L} / \mathcal{L}_{\geq k}$ of Lie algebra as in \eqref{eq:5.5.3} and \eqref{eq:5.5.4}. Regarding $D_{\ast}(\vee \widehat{S}^1{}^{(\ell)} \times \widehat{T}_k^{(\ell)}; \mathbb{Z}_{\ell})$ as double complex concentrated on degree $(1, 2k)$ with zero differentials, we obtain a first quadrant spectral sequence $'''E=\{'''E_{p,q}^r\}$ with
\begin{equation}\label{eq:7.2.7}
	'''E^2_{p,q} = H_p^{(w-qk)}(\vee \widehat{S}^1{}^{(\ell)}; \mathbb{Z}_{\ell}) \otimes_{\mathbb{Z}_{\ell}} \Lambda_{q}(\Fn/ \Gamma_{k+1}(\Fn)), \quad '''E^2_{p,q} \Rightarrow H_{p+q}^{(w)}(\vee \widehat{S}^1{}^{(\ell)} \times \widehat{T}_k^{(\ell)}; \mathbb{Z}_{\ell}).
\end{equation}
Then, by construction, we see that this spectral sequence collapses, that is $'''E^{\infty}_{p,q} \simeq '''E^2_{p,q}$, and so
\begin{equation}\label{eq:7.2.8}
	'''E^{\infty}_{p,q} = H_{p+q}^{(w)}(\vee \widehat{S}^1{}^{(\ell)} \times \widehat{T}_k^{(\ell)}; \mathbb{Z}_{\ell}) \simeq H_p^{(w -qk)}(\vee \widehat{S}^1{}^{(\ell)}; \mathbb{Z}_{\ell}) \otimes_{\mathbb{Z}_{\ell}} \Lambda_q(\Fn/\Gamma_{k+1}(\Fn); \mathbb{Z}_{\ell})='''E^2_{p,q}.
\end{equation}
	On the other hand, we have the Hochschild-Serre spectral sequence $''''E=\{''''E^r_{p,q} \}$ converging to $H_{p+q}(\mathcal{L} /\mathcal{L}_{\geq k+1}; \mathbb{Z}_{\ell})$ given in \eqref{eq:5.5.3} and \eqref{eq:5.5.4}. Let us compare the spectral sequences for  weight $2k+1$ part. We see that the only $E^2_{1,2}$ terms are non-zero in both spectral sequences with weight $2k+1$. Moreover, by comparing explicit form of $E^2_{p,q}$ terms, one sees that they are isomorphic. Since the former spectral sequence $'''E$ collapses, the homomorphism $'''E^{\infty}_{1,2} \rightarrow ''''E^{\infty}_{1,2}$ is surjective. As we see in \eqref{eq:5.5.3} and \eqref{eq:5.5.4}, we have $''''E^{\infty}_{1,2} = H^{(2k+1)}_3(\mathcal{L}/\mathcal{L}_{\geq k+1}; \mathbb{Z}_{\ell})$, so we conclude that $H_3(\mathcal{F}^{2k+1} D_{\ast}(\vee \widehat{S}^1{}^{(\ell)} \times \widehat{T}_k^{(\ell)}); \mathbb{Z}_{\ell}) \rightarrow H^{(2k+1)}_3(\mathcal{L}/\mathcal{L}_{\geq k+1}; \mathbb{Z}_{\ell})$ is surjective and we get the desired isomorphism \eqref{eq:7.2.5}.
	
	Since the split subgroup $H_3(\widehat{T}_k^{(\ell)}; \mathbb{Z}_{\ell})$ in $H_3(\vee \widehat{S}^1{}^{(\ell)} \times \widehat{T}_k^{(\ell)}; \mathbb{Z}_{\ell})$  concentrates on weight $3k$ part, by Corollary \ref{eq:6.3.9} $\mathcal{F}^{2k+2} H_3(K^{(\ell)}(k+1); \mathbb{Z}_{\ell}) =0$, and inclusions induce weight preserving homomorphisms on homology groups,  we conclude that the image of the composition
	\begin{equation} \label{eq:7.2.9}
		H_3(\widehat{T}_k^{(\ell)}; \mathbb{Z}_{\ell}) \rightarrow  H_3(\vee \widehat{S}^1 \times \widehat{T}_k^{(\ell)}, \mathbb{Z}_{\ell}) \rightarrow H_3(K^{(\ell)}(k+1); \mathbb{Z}_{\ell})
	\end{equation}
	must be zero.
\end{proof}

\begin{proposition} \label{prop:7.2.10}
The cokernel of the induced homomorphism $H_3(K^{(\ell)}(k+1); \mathbb{Z}_{\ell}) \rightarrow H_3(\xi^{(\ell)}_k; \mathbb{Z}_{\ell})$ is free $\mathbb{Z}_{\ell}$-module of rank $nN_k - N_{k+1}$;
\begin{equation} \label{eq:7.2.11}
	\coker(H_3(K^{(\ell)}(k+1); \mathbb{Z}_{\ell}) \rightarrow H_3(\xi^{(\ell)}_k; \mathbb{Z}_{\ell})) \simeq \mathbb{Z}_{\ell}^{\oplus (nN_k - N_{k+1})}.
\end{equation}

In addition, the third homotopy group $\pi_3(\widehat{K}_k^{(\ell)})$ of pro-$\ell$ (complete) Orr space is free $\mathbb{Z}_{\ell}$-module  whose rank is computed as 
\begin{equation}\label{eq:7.2.12}
	\pi_3(\widehat{K}_k^{(\ell)}) \simeq H_3(\xi^{(\ell)}_k; \mathbb{Z}_{\ell}) \simeq \bigoplus_{i=k}^{2k-1} \mathbb{Z}_{\ell}^{\oplus (nN_i - N_{i+1})}
\end{equation}
\end{proposition}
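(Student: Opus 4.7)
My plan is to extract both assertions from the Mayer--Vietoris sequence attached to the pushout defining $\xi_k^{(\ell)}$, using Lemma 7.2.2 to pin down the kernel of $H_3(K^{(\ell)}(k+1);\mathbb{Z}_{\ell}) \to H_3(\xi_k^{(\ell)};\mathbb{Z}_{\ell})$ and then reducing the cokernel to a Lie-bracket computation in degree two. Write $M := \bigvee \widehat{S}^1{}^{(\ell)} \times \widehat{T}_k^{(\ell)}$, with projection $p\colon M \to \widehat{T}_k^{(\ell)}$ and inclusion $i\colon M \to K^{(\ell)}(k+1)$. The pushout of chain complexes from Section~7.1 yields a short exact sequence $0 \to D_{\ast}(M;\mathbb{Z}_{\ell}) \to D_{\ast}(\widehat{T}_k^{(\ell)};\mathbb{Z}_{\ell}) \oplus D_{\ast}(K^{(\ell)}(k+1);\mathbb{Z}_{\ell}) \to D_{\ast}(\xi_k^{(\ell)};\mathbb{Z}_{\ell}) \to 0$ and hence a Mayer--Vietoris long exact sequence with connecting map $\delta_n$ and middle map $\phi_n$. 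The K\"unneth section of $p$ splits $H_3(M;\mathbb{Z}_{\ell}) \simeq H_3(\widehat{T}_k^{(\ell)};\mathbb{Z}_{\ell}) \oplus \bigl(\mathcal{L}_1 \otimes H_2(\widehat{T}_k^{(\ell)};\mathbb{Z}_{\ell})\bigr)$, and Lemma 7.2.2 says that $i_{\ast}$ vanishes on the first summand while having total image equal to $\mathcal{F}^{2k+1} H_3(K^{(\ell)}(k+1);\mathbb{Z}_{\ell})$. Chasing the exact sequence, the kernel of $H_3(K^{(\ell)}(k+1)) \to H_3(\xi_k^{(\ell)})$ therefore equals $\mathcal{F}^{2k+1} H_3(K^{(\ell)}(k+1);\mathbb{Z}_{\ell})$, and by Theorem 6.3.21 together with Corollary 6.3.9 the image of this map is $\bigoplus_{i=k+1}^{2k-1} \mathbb{Z}_{\ell}^{\oplus (nN_i - N_{i+1})}$.

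Next I would reduce the cokernel question to a computation on $H_2$. Because $(a,0)$ lies in $(p_{\ast},i_{\ast})(H_3(M))$ for every $a \in H_3(\widehat{T}_k^{(\ell)};\mathbb{Z}_{\ell})$ (again by the vanishing statement in Lemma 7.2.2, applied to the section), the image $\Im(\phi_3)$ already coincides with the image of $H_3(K^{(\ell)}(k+1);\mathbb{Z}_{\ell})$ alone. Hence the cokernel we seek equals $\Im(\delta_3) = \Ker\!\bigl((p_{\ast},i_{\ast})\colon H_2(M;\mathbb{Z}_{\ell}) \to H_2(\widehat{T}_k^{(\ell)};\mathbb{Z}_{\ell}) \oplus H_2(K^{(\ell)}(k+1);\mathbb{Z}_{\ell})\bigr)$. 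K\"unneth gives $H_2(M;\mathbb{Z}_{\ell}) \simeq H_2(\widehat{T}_k^{(\ell)};\mathbb{Z}_{\ell}) \oplus (\mathcal{L}_1 \otimes_{\mathbb{Z}_{\ell}} \mathcal{L}_k)$, and since $p_{\ast}$ is projection onto the first summand the kernel in question equals the kernel of the restriction of $i_{\ast}$ to $\mathcal{L}_1 \otimes \mathcal{L}_k$. Under the identification $H_2(K^{(\ell)}(k+1);\mathbb{Z}_{\ell}) \simeq \mathcal{L}_{k+1}$ coming from \eqref{eq:5.5.3} and Corollary 6.3.9, I expect this restricted $i_{\ast}$ to be exactly the Lie bracket $[\cdot,\cdot]\colon \mathcal{L}_1 \otimes \mathcal{L}_k \to \mathcal{L}_{k+1}$, which is surjective between free $\mathbb{Z}_{\ell}$-modules of ranks $nN_k$ and $N_{k+1}$; its kernel is therefore free of rank $nN_k - N_{k+1}$, yielding the cokernel claim of the proposition.

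Assembling the pieces, the short exact sequence $0 \to \bigoplus_{i=k+1}^{2k-1}\mathbb{Z}_{\ell}^{\oplus (nN_i - N_{i+1})} \to H_3(\xi_k^{(\ell)};\mathbb{Z}_{\ell}) \to \mathbb{Z}_{\ell}^{\oplus(nN_k - N_{k+1})} \to 0$ has a free $\mathbb{Z}_{\ell}$-module as quotient, so it splits and gives $H_3(\xi_k^{(\ell)};\mathbb{Z}_{\ell}) \simeq \bigoplus_{i=k}^{2k-1}\mathbb{Z}_{\ell}^{\oplus(nN_i - N_{i+1})}$; the $\pi_3$-statement then follows from Lemma 7.1.10. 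The main obstacle will be the identification of $i_{\ast}|_{\mathcal{L}_1 \otimes \mathcal{L}_k}$ with the Lie bracket. This requires tracing the chain-level inclusion $D_{\ast}(M;\mathbb{Z}_{\ell}) \to D_{\ast}(K^{(\ell)}(k+1);\mathbb{Z}_{\ell})$ through the free resolution constructed in Theorem 6.2.13, so that the bracket-like term in $\partial_2 \langle a,b\rangle = (a-1)\langle b\rangle - (b-1)\langle a\rangle + ab\langle \mathfrak{c}(a,b)\rangle$ can be read off as the Koszul differential on the weight-$(k+1)$ part of the associated graded complex $\gr(D_{\ast}(\Fn/\Gamma_{k+1}(\Fn)))$. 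Given this identification, the remaining ingredients---weight filtration considerations, K\"unneth decompositions, and rank counting---are routine in view of Theorem 6.3.21 and Corollary 6.3.9.
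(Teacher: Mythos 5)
Your proposal is correct and is essentially the paper's own argument: the Mayer--Vietoris sequence of the defining pushout, Lemma 7.2.2 to identify the image of $H_3(\bigvee \widehat{S}^1{}^{(\ell)}\times\widehat{T}_k^{(\ell)};\mathbb{Z}_{\ell})$ in $H_3(K^{(\ell)}(k+1);\mathbb{Z}_{\ell})$ with $\mathcal{F}^{2k+1}$ and to kill the torus summand via the splitting, followed by rank counting with Corollary 6.3.9. The one place you diverge is the step you flag as the main obstacle, and it is in fact an unnecessary detour: you do not need to identify $i_{\ast}|_{\mathcal{L}_1\otimes\mathcal{L}_k}$ with the Lie bracket to get surjectivity onto $H_2(K^{(\ell)}(k+1);\mathbb{Z}_{\ell})\simeq\mathcal{L}_{k+1}$. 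Since $\xi_k^{(\ell)}$ is $2$-connected (Lemma 7.1.8), $H_2(\xi_k^{(\ell)};\mathbb{Z}_{\ell})=0$, so exactness of Mayer--Vietoris forces $(p_{\ast},i_{\ast})\colon H_2(M;\mathbb{Z}_{\ell})\to H_2(\widehat{T}_k^{(\ell)};\mathbb{Z}_{\ell})\oplus H_2(K^{(\ell)}(k+1);\mathbb{Z}_{\ell})$ to be onto; taking preimages of $(0,d)$ and using that $p_{\ast}$ is the projection onto the torus summand shows that the restriction of $i_{\ast}$ to $\mathcal{L}_1\otimes\mathcal{L}_k$ is already surjective. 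Combined with freeness of all the modules involved, this gives the kernel rank $nN_k-N_{k+1}$ without any chain-level tracing through Theorem 6.2.13 (this is exactly how the paper proceeds, via its four-term exact sequence). Your Lie-bracket identification is true --- it follows from the Hopf formula for the central extension $\mathcal{L}_k\to\Fn/\Gamma_{k+1}(\Fn)\to\Fn/\Gamma_k(\Fn)$ --- and buys a more explicit description of the cokernel as $D_k$, but it is not needed to prove the proposition as stated.
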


\begin{proof}
By considering $\xi_k^{(\ell)}$ as the pushout of $ \vee \widehat{S}^1{}^{(\ell)} \times \widehat{T}_k^{(\ell)} \rightarrow K^{(\ell)}(k+1)$ and $\vee \widehat{S}^1{}^{(\ell)} \times \widehat{T}_k^{(\ell)}	\rightarrow T_k^{(\ell)}$, its associated Mayer-Vietoris exact sequence is given as follows.
\begin{center}
\begin{tikzcd}
H_3(\vee \widehat{S}^1{}^{(\ell)} \times \widehat{T}_k^{(\ell)}; \mathbb{Z}_{\ell}) \arrow[r]
& H_3(K^{(\ell)}(k+1); \mathbb{Z}_{\ell}) \oplus H_3(\widehat{T}_k^{(\ell)}; \mathbb{Z}_{\ell}) \arrow[r]
\arrow[d, phantom, ""{coordinate, name=Z}]
& H_3(\xi_k^{(\ell)}; \mathbb{Z}_{\ell}) \arrow[dll,
rounded corners=3mm,
to path={ -- ([xshift=2ex]\tikztostart.east)
|- (Z) [near end]\tikztonodes
-| ([xshift=-2ex]\tikztotarget.west)
-- (\tikztotarget)}] \\
H_2(\vee \widehat{S}^1{}^{(\ell)} \times \widehat{T}_k^{(\ell)}; \mathbb{Z}_{\ell}) \arrow[r]
& H_2(K^{(\ell)}(k+1); \mathbb{Z}_{\ell}) \oplus H_2(\widehat{T}_k^{(\ell)}; \mathbb{Z}_{\ell}) \arrow[r]
& 0
\end{tikzcd}
\end{center}
Here, note that $H_2(\xi_k^{(\ell)}; \mathbb{Z}_{\ell})=0$ since $\xi_k^{(\ell)}$ is 2-connected. Also, note that morphisms in the above exact sequence preserve filtration of homology groups since morphisms of chain complexes preserve filtration. 

Noting that the map $\vee \widehat{S}^1{}^{(\ell)} \times \widehat{T}_k^{(\ell)} \rightarrow \widehat{T}_k^{(\ell)}$ split and using Lemma \ref{lem:7.2.2}, the above Mayer-Vietoris exact sequence induces the following commutative diagram whose bottom row is exact:

\begin{center}
\begin{tikzcd}
\mathcal{F}^{2k+1} H_3(\vee \widehat{S}^1{}^{(\ell)} \times \widehat{T}_k^{(\ell)}; \mathbb{Z}_{\ell}) \ar[r]	\ar[d, "="'] & \mathcal{F}^{2k+1}H_3(K^{(\ell)}(k+1); \mathbb{Z}_{\ell}) \ar[d] & & \\
H_3(\vee \widehat{S}^1{}^{(\ell)} \times \widehat{T}_k^{(\ell)}; \mathbb{Z}_{\ell})  \ar[r] & H_3(K^{(\ell)}(k+1); \mathbb{Z}_{\ell}) \ar[r]& H_3(\xi_k^{(\ell)}; \mathbb{Z}_{\ell}) \ar[r] & \cdots
\end{tikzcd}
\end{center}

Since the top horizontal homomorphism is onto by Lemma \ref{lem:7.2.2}, noting again that the map $\vee \widehat{S}^1{}^{(\ell)} \times \widehat{T}_k^{(\ell)} \rightarrow \widehat{T}_k^{(\ell)}$ split, we obtain the following exact sequence:

\begin{center}
\begin{tikzcd}
0 \arrow[r]
& \displaystyle\frac{H_3(K^{(\ell)}(k+1); \mathbb{Z}_{\ell})}{\mathcal{F}^{2k+1}H_3(K^{(\ell)}(k+1); \mathbb{Z}_{\ell})} \arrow[r]
\arrow[d, phantom, ""{coordinate, name=Z}]
& H_3(\xi_k^{(\ell)}; \mathbb{Z}_{\ell}) \arrow[dll,
rounded corners=3mm,
to path={ -- ([xshift=2ex]\tikztostart.east)
|- (Z) [near end]\tikztonodes
-| ([xshift=-2ex]\tikztotarget.west)
-- (\tikztotarget)}] \\
H_1(\widehat{T}_k^{(\ell)}; \mathbb{Z}_{\ell})\otimes_{\mathbb{Z}_{\ell}} \mathbb{Z}_{\ell}^{\oplus n} \arrow[r]
& H_2(K^{(\ell)}(k+1); \mathbb{Z}_{\ell}) \arrow[r]
& 0.
\end{tikzcd}
\end{center}
Here, we use the fact that $H_1(\Fn;  \mathbb{Z}_{\ell}) \simeq \mathbb{Z}_{\ell}^{\oplus n}$ and $H^i(\Fn; \mathbb{Z}_{\ell})=0$ for $i \geq 2$. The former is \cite[Lemma 6.8.6 (b)]{RZ} and the latter follows from the fact that the cohomological $\ell$-dimension of $\Fn$ is 1 and the existence of duality between homology groups and cohomology groups (\cite[Proposition 6.3.6]{RZ}). Using the result of Corollary \ref{eq:6.3.9}, the sequence leads to the following exact sequence
\begin{equation} \label{eq:7.2.13}
	0 \rightarrow \bigoplus_{i=k+1}^{2k-1} \mathbb{Z}_{\ell}^{\oplus (nN_i - N_{i+1})} \rightarrow H_3(\xi^{(\ell)}_k; \mathbb{Z}_{\ell}) \rightarrow \mathbb{Z}_{\ell}^{\oplus n N_k} \rightarrow \mathbb{Z}_{\ell}^{\oplus N_{k+1}} \rightarrow 0.
\end{equation}
Since all the relevant groups in the above sequence are free $\mathbb{Z}_{\ell}$-modules, by dimension counting, the assertion follows.
\end{proof}

\begin{lemma}\label{lem:7.2.14}
The  homomorphism $\gamma_{k+1,k}: H_3(\xi_{k+1}^{(\ell)}; \mathbb{Z}_{\ell}) \rightarrow H_3(\xi^{(\ell)}_k; \mathbb{Z}_{\ell})$ factors through a homomorphism $H_3(K^{(\ell)}(k+1); \mathbb{Z}_{\ell}) \rightarrow H_3(\xi^{(\ell)}_k; \mathbb{Z}_{\ell})$. Moreover, the resulting homomorphism $H_3(\xi^{(\ell)}_{k+1}; \mathbb{Z}_{\ell}) \rightarrow H_3(K^{(\ell)}(k+1); \mathbb{Z}_{\ell})$ is surjective. That is to say, we have the following commutative diagram:
\begin{center}
\begin{tikzcd}
H_3(\xi^{(\ell)}_{k+1}; \mathbb{Z}_{\ell}) \ar[r, "\gamma_{k+1,k}"] \ar[d, twoheadrightarrow, dashed] & H_3(\xi^{(\ell)}_{k}; \mathbb{Z}_{\ell}) \\
H_3(K^{(\ell)}(k+1); \mathbb{Z}_{\ell}) \ar[ur] & 
\end{tikzcd}
	
\end{center}

\end{lemma}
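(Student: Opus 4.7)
The plan is to prove the two assertions---the factoring of $\gamma_{k+1,k}$ and the surjectivity of the resulting map---in sequence, chiefly using naturality of the Mayer--Vietoris sequences of the pushout diagrams defining $\xi_{k+1}^{(\ell)}$ and $\xi_k^{(\ell)}$.

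For the factoring, I would start from the observation, implicit in the proof of Proposition~\ref{prop:7.2.10}, that the image of $H_3(K^{(\ell)}(k+1);\mathbb{Z}_\ell) \to H_3(\xi_k^{(\ell)};\mathbb{Z}_\ell)$ coincides with $\ker(\partial_k \colon H_3(\xi_k^{(\ell)};\mathbb{Z}_\ell) \to H_2(A_k;\mathbb{Z}_\ell))$, where $A_k = \vee \widehat{S}^1{}^{(\ell)} \times \widehat{T}_k^{(\ell)}$ and $\partial_k$ is the Mayer--Vietoris connecting homomorphism (the $H_3(\widehat{T}_k^{(\ell)})$-contribution is absorbed into the $H_3(K^{(\ell)}(k+1))$-contribution via the splitting $A_k \to \widehat{T}_k^{(\ell)}$). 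By naturality, $\partial_k \circ \gamma_{k+1,k} = (\gamma_A)_* \circ \partial_{k+1}$, where $\gamma_A \colon A_{k+1} \to A_k$ is the restriction of $\gamma_{k+1,k}$. By the construction in Lemma~\ref{lem:7.1.6}, $\gamma_A$ factors as the projection $A_{k+1} \twoheadrightarrow \vee \widehat{S}^1{}^{(\ell)}$ followed by the basepoint section $\vee \widehat{S}^1{}^{(\ell)} \hookrightarrow A_k$; since $H_2(\vee \widehat{S}^1{}^{(\ell)};\mathbb{Z}_\ell) = 0$ (the pro-$\ell$ free group has cohomological $\ell$-dimension one), the induced $(\gamma_A)_*$ vanishes on $H_2$. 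Hence $\partial_k \circ \gamma_{k+1,k} = 0$, yielding the desired factoring.

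For the surjectivity, I would combine two sources of classes in $H_3(K^{(\ell)}(k+1);\mathbb{Z}_\ell)$. First, the composition of the pushout inclusion $H_3(K^{(\ell)}(k+2);\mathbb{Z}_\ell) \to H_3(\xi_{k+1}^{(\ell)};\mathbb{Z}_\ell)$ with the reduction $H_3(K^{(\ell)}(k+2);\mathbb{Z}_\ell) \to H_3(K^{(\ell)}(k+1);\mathbb{Z}_\ell)$ covers every weight-$w$ piece of $H_3(K^{(\ell)}(k+1);\mathbb{Z}_\ell)$ with $w \ne k+2$ by Theorem~\ref{thm:5.4.2}(2). Second, the missing weight-$(k+2)$ piece, of rank $n N_{k+1} - N_{k+2}$ by Theorem~\ref{thm:5.4.2}(1), is recovered from the Mayer--Vietoris connecting term of $\xi_{k+1}^{(\ell)}$: the kernel of $H_2(A_{k+1};\mathbb{Z}_\ell) \to H_2(K^{(\ell)}(k+2);\mathbb{Z}_\ell) \oplus H_2(\widehat{T}_{k+1}^{(\ell)};\mathbb{Z}_\ell)$ coincides with the kernel of the Lie bracket $\mathcal{L}_1 \otimes \mathcal{L}_{k+1} \to \mathcal{L}_{k+2}$, which has precisely this rank. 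A rank comparison using Proposition~\ref{prop:7.2.10} and Corollary~\ref{eq:6.3.9} then confirms the surjectivity.

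The main obstacle is the coherent assembly of the lift $H_3(\xi_{k+1}^{(\ell)};\mathbb{Z}_\ell) \to H_3(K^{(\ell)}(k+1);\mathbb{Z}_\ell)$, because there is no space-level map $\xi_{k+1}^{(\ell)} \to K^{(\ell)}(k+1)$: on $A_{k+1}$ the two sides of the pushout defining $\xi_{k+1}^{(\ell)}$ yield incompatible composites (the bundle inclusion into $K^{(\ell)}(k+2)$ followed by reduction sends $(s,t)$ to the generator class $s$, while the $\widehat{T}_{k+1}^{(\ell)}$-side collapses $A_{k+1}$ to the basepoint). Reconciling these at the chain level requires the weight filtration of Section~\ref{sec:5}---specifically, that every weight-$\ge 2k+2$ class in $H_3(K^{(\ell)}(k+2);\mathbb{Z}_\ell)$ dies under reduction to $H_3(K^{(\ell)}(k+1);\mathbb{Z}_\ell)$ by Theorem~\ref{thm:6.3.21}---together with careful tracking of the Mayer--Vietoris connecting contributions.
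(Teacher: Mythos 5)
There is a genuine gap, and you have in fact located it yourself without closing it. Your Mayer--Vietoris argument correctly shows that $\operatorname{im}\bigl((\gamma_{k+1,k})_*\bigr)$ is contained in $\operatorname{im}\bigl(H_3(K^{(\ell)}(k+1);\mathbb{Z}_\ell)\to H_3(\xi_k^{(\ell)};\mathbb{Z}_\ell)\bigr)$: the identification of that image with $\ker(\partial_k)$ (absorbing the $H_3(\widehat{T}_k^{(\ell)})$-contribution via the section and the commutativity of the pushout square) is fine, and so is the vanishing of $(\gamma_A)_*$ on $H_2$ because $\gamma_A$ factors through $\vee\widehat{S}^1{}^{(\ell)}$. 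But image containment only yields \emph{some} lift by projectivity of the free $\mathbb{Z}_\ell$-module $H_3(\xi_{k+1}^{(\ell)};\mathbb{Z}_\ell)$; since $H_3(K^{(\ell)}(k+1);\mathbb{Z}_\ell)\to H_3(\xi_k^{(\ell)};\mathbb{Z}_\ell)$ has a large kernel (namely $\mathcal{F}^{2k+1}$, by the proof of Proposition \ref{prop:7.2.10}), such a lift is far from unique, and the lemma's ``moreover'' clause --- surjectivity of \emph{the resulting} homomorphism --- is about a specific lift that you never construct. Your closing paragraph names this as ``the main obstacle'' and defers it to ``careful tracking of the Mayer--Vietoris connecting contributions,'' which is exactly the step that is missing; your surjectivity count (reduction from $H_3(K^{(\ell)}(k+2))$ plus the weight-$(k+2)$ connecting term) presupposes knowing what the lift does on each piece.

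The missing idea is that, although there is no space-level map $\xi_{k+1}^{(\ell)}\to K^{(\ell)}(k+1)$ (your observation about the incompatible composites on $A_{k+1}$ is correct), there \emph{is} a space-level factorization $\xi_{k+1}^{(\ell)}\to K_{k+1}^{(\ell)}\to\xi_k^{(\ell)}$ already built into the definition of $\gamma_{k+1,k}$ in Lemma \ref{lem:7.1.6} (see \eqref{eq:7.1.7}), and collapsing $\vee\widehat{S}^1{}^{(\ell)}$ does not change $H_3$: at the chain level $\sigma_{\geq 2}D_\ast(K^{(\ell)}(k+1);\mathbb{Z}_\ell)\simeq\sigma_{\geq 2}D_\ast(K^{(\ell)}_{k+1};\mathbb{Z}_\ell)$, so $H_3(K^{(\ell)}_{k+1};\mathbb{Z}_\ell)\simeq H_3(K^{(\ell)}(k+1);\mathbb{Z}_\ell)$ (cf.\ Lemma \ref{lem:4.4.2}(2)). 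This is how the paper produces the canonical lift, and it makes surjectivity immediate: the bundle projection $\xi_{k+1}^{(\ell)}\to K_{k+1}^{(\ell)}$ is an isomorphism on $\pi_3$ (Lemma \ref{lem:7.1.6}), $\xi_{k+1}^{(\ell)}$ is $2$-connected so $\pi_3\simeq H_3$ there, and the Hurewicz map of the simply connected $K_{k+1}^{(\ell)}$ is onto in degree $3$ --- no rank count or weight-filtration bookkeeping is needed. Your route could presumably be completed by supplying this same identification, at which point it collapses into the paper's argument.
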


\begin{proof}
	For a chain complex $C_{\ast}$, we denote by $\sigma_{\geq 2}C_{\ast}$ the ``stupid'' truncation $\sigma_{\geq 2}$ of $C_{\ast}$, that is, we set
	\begin{equation} \label{eq:7.2.15}
		\sigma_{\geq 2}C_{n} = \begin{cases}
 C_{n} & (n \geq 2) \\
 0 & (n \leq 1 ).
 \end{cases}
\end{equation}
Then, by definition, we have $H_{i}(C_{\ast}) \simeq  H_i(\sigma_{\geq 2}C_{\ast})$ for $i \geq 3$. Since the quotient by $K(\Fn, 1)$ does note affect to $D_{i}(K^{(\ell)}(k); \mathbb{Z}_{\ell})$ with degree $i \geq 2$, we have $\sigma_{\geq 2}D_{\ast}(K^{(\ell)}(k); \mathbb{Z}_{\ell}) \simeq \sigma_{\geq 2} D_{\ast}(K^{(\ell)}_k; \mathbb{Z}_{\ell})$. Henceforth, the morphisms of pro-spaces $K^{(\ell)}(k+1) \rightarrow \xi^{(\ell)}_{k+1}$, $\xi^{(\ell)}_{k+1} \rightarrow K^{(\ell)}_{k+1}$ and $K^{(\ell)}(k+1) \rightarrow K^{(\ell)}_{k+1}$ induces the morphisms of chain complexes as follows.
\begin{equation}\label{eq:7.2.16}
	\sigma_{\geq 2}D_{\ast}(\xi^{(\ell)}_{k+1}; \mathbb{Z}_{\ell}) \rightarrow \sigma_{\geq 2} D_{\ast}(K^{(\ell)}_{k+1}; \mathbb{Z}_{\ell}) \rightarrow \sigma_{\geq 2}D_{\ast}(K^{(\ell)}(k+1); \mathbb{Z}_{\ell}) \rightarrow \sigma_{\geq 2}D_{\ast}(\xi^{(\ell)}_k; \mathbb{Z}_{\ell}).
\end{equation}
Since the middle homomorphism is isomorphism noted as above, by taking homology group functor, we get the desired commutative diagram. The surjectivity of $H_3(\xi^{(\ell)}_{k+1}; \mathbb{Z}_{\ell}) \rightarrow H_3(K^{(\ell)}(k+1); \mathbb{Z}_{\ell})$ follows from that of $\xi^{(\ell)}_{k+1} \rightarrow K^{(\ell)}_{k+1}$. Thus, the assertion has been proved.
\end{proof}

\begin{lemma} \label{lem:7.2.17}
For homomorphisms $\gamma_{k+1,k} : \pi_3(\xi^{(\ell)}_{k+1}) \rightarrow \pi_3(\xi^{(\ell)}_k)$ as in Lemma \ref{lem:7.2.14}, we have
\begin{equation}\label{eq:7.2.18}
	\ker(\gamma_{k+1, k}) \subset \Im(\gamma_{k+2, k+1})
\end{equation}
\end{lemma}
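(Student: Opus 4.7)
The plan is to transport the problem to homology via the isomorphisms $\pi_3(\xi_m^{(\ell)}) \simeq H_3(\xi_m^{(\ell)}; \mathbb{Z}_{\ell})$ of Lemma \ref{lem:7.1.10} (for $m = k, k+1, k+2$), and then perform a diagram chase whose essential input is the identification of the relevant kernels with images of reduction maps via Theorem \ref{thm:6.3.21}. Specifically, by Lemma \ref{lem:7.2.14} each $\gamma_{m+1,m}$ factors as
\begin{equation*}
 \gamma_{m+1,m}\colon H_3(\xi_{m+1}^{(\ell)};\mathbb{Z}_{\ell}) \overset{p_{m+1}}{\twoheadrightarrow} H_3(K^{(\ell)}(m+1);\mathbb{Z}_{\ell}) \overset{j_{m+1}}{\longrightarrow} H_3(\xi_m^{(\ell)};\mathbb{Z}_{\ell}),
\end{equation*}
with $p_{m+1}$ surjective, and a direct unraveling of the pushout that produces $\xi_m^{(\ell)}$ gives the commutativity $p_{m+1} \circ j_{m+2} = \phi_{m+2,m+1}$, where $\phi_{m+2,m+1}$ is the reduction on $H_3(\Fn/\Gamma_\ast(\Fn);\mathbb{Z}_{\ell})$.

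First, I would identify $\ker(j_{k+1})$. By Lemma \ref{lem:7.2.2} the image of $H_3(\vee\widehat{S}^{1(\ell)} \times \widehat{T}_k^{(\ell)}; \mathbb{Z}_\ell) \to H_3(K^{(\ell)}(k+1);\mathbb{Z}_\ell)$ equals $\mathcal{F}^{2k+1}H_3(\Fn/\Gamma_{k+1}(\Fn);\mathbb{Z}_\ell)$ and the summand from $H_3(\widehat{T}_k^{(\ell)};\mathbb{Z}_\ell)$ maps to zero; combining this with the Mayer--Vietoris sequence used in the proof of Proposition \ref{prop:7.2.10}, one deduces $\ker(j_{k+1}) = \mathcal{F}^{2k+1}H_3(\Fn/\Gamma_{k+1}(\Fn);\mathbb{Z}_\ell)$. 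By Theorem \ref{thm:6.3.21} this filtration piece equals $\mathcal{G}^{2k+1}H_3(\Fn/\Gamma_{k+1}(\Fn);\mathbb{Z}_\ell) = \Im(\phi_{2k,k+1})$, and since $\phi_{2k,k+1}$ factors as $\phi_{k+2,k+1}\circ\phi_{2k,k+2}$, we obtain the inclusion $\ker(j_{k+1}) \subseteq \Im(\phi_{k+2,k+1})$. Now, given $x \in \ker(\gamma_{k+1,k})$, the element $p_{k+1}(x)$ lies in $\ker(j_{k+1})$, hence equals $\phi_{k+2,k+1}(z)$ for some $z \in H_3(K^{(\ell)}(k+2);\mathbb{Z}_\ell)$; lifting $z$ through the surjection $p_{k+2}$ to an element $w \in H_3(\xi_{k+2}^{(\ell)};\mathbb{Z}_\ell)$ and using the commutativity $p_{k+1}\circ\gamma_{k+2,k+1} = \phi_{k+2,k+1}\circ p_{k+2}$ yields $p_{k+1}(\gamma_{k+2,k+1}(w)-x) = 0$, so that $\gamma_{k+2,k+1}(w)-x \in \ker(p_{k+1})$.

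The main obstacle will be the last step: showing that $\ker(p_{k+1}) \subseteq \Im(\gamma_{k+2,k+1})$, since only this inclusion allows one to correct $w$ to a genuine $\gamma_{k+2,k+1}$-preimage of $x$. My plan is to analyze the Mayer--Vietoris sequence for the pushout defining $\xi_{k+1}^{(\ell)}$ together with the exact sequence of Proposition \ref{prop:7.2.10} (applied with $k$ replaced by $k+1$), which expresses $\Im(\gamma_{k+2,k+1}) = \Im(j_{k+2})$ as the submodule $H_3(K^{(\ell)}(k+2);\mathbb{Z}_\ell)/\mathcal{F}^{2k+3}H_3(K^{(\ell)}(k+2);\mathbb{Z}_\ell) \hookrightarrow H_3(\xi_{k+1}^{(\ell)};\mathbb{Z}_\ell)$. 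Rank counting via Proposition \ref{prop:7.2.10} and Corollary \ref{eq:6.3.9} then shows that $\ker(p_{k+1})$ and the kernel of the induced map $p_{k+1}|_{\Im(j_{k+2})}\colon \Im(j_{k+2}) \to \Im(\phi_{k+2,k+1})$ are both free $\mathbb{Z}_\ell$-modules of the same rank $nN_{2k+1} - N_{2k+2}$, and the latter is contained in the former; one then upgrades this to an equality by a weight-filtration comparison, exploiting the fact that every cycle in $\ker(p_{k+1})$ is, via the Mayer--Vietoris connecting map, represented by a class concentrated in the highest available weight, which is precisely the image in $H_3(\xi_{k+1}^{(\ell)};\mathbb{Z}_\ell)$ of the top weight piece $\mathcal{F}^{2k+2}H_3(K^{(\ell)}(k+2);\mathbb{Z}_\ell)/\mathcal{F}^{2k+3}$ under $j_{k+2}$. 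With this inclusion in hand the argument concludes: write $\gamma_{k+2,k+1}(w)-x = \gamma_{k+2,k+1}(w')$ for some $w'$, so that $x = \gamma_{k+2,k+1}(w-w') \in \Im(\gamma_{k+2,k+1})$.
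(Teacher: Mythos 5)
Your reduction of the problem to the single inclusion $\ker(p_{k+1}) \subseteq \Im(\gamma_{k+2,k+1})$ is correct as far as it goes: the identification $\ker(j_{k+1}) = \mathcal{F}^{2k+1}H_3(\Fn/\Gamma_{k+1}(\Fn);\mathbb{Z}_{\ell}) = \Im(\phi_{2k,k+1}) \subseteq \Im(\phi_{k+2,k+1})$ via Proposition \ref{prop:7.2.10} and Theorem \ref{thm:6.3.21}, and the subsequent chase producing $\gamma_{k+2,k+1}(w)-x \in \ker(p_{k+1})$, are both sound. But the step you yourself flag as ``the main obstacle'' is a genuine gap, and the method you propose for it does not close it. Over $\mathbb{Z}_{\ell}$ an inclusion of free modules of equal finite rank can be proper (e.g.\ $\ell\mathbb{Z}_{\ell}\subset\mathbb{Z}_{\ell}$), so the rank count comparing $\ker(p_{k+1})$ with $\ker\bigl(p_{k+1}|_{\Im(j_{k+2})}\bigr)$ establishes nothing by itself, and the promised ``weight-filtration comparison'' is exactly where the content of the lemma sits: identifying $\ker(p_{k+1})$ with the top filtration piece $\mathcal{G}^{2k+2}H_3(\xi_{k+1}^{(\ell)};\mathbb{Z}_{\ell})$ is essentially equivalent to the kernel--image statement of Theorem \ref{thm:7.2.21}, which is proved \emph{from} this lemma, so the argument risks circularity. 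Moreover $p_{k+1}$ is induced by the bundle projection $\xi_{k+1}^{(\ell)}\to K_{k+1}^{(\ell)}$, not by any map appearing in the Mayer--Vietoris sequence for the pushout defining $\xi_{k+1}^{(\ell)}$, so the assertion that the connecting map represents every class of $\ker(p_{k+1})$ in top weight would itself require a separate comparison that you do not supply.

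The paper sidesteps the problem entirely: it places the two short exact sequences $0\to H_3(K^{(\ell)}(k+2);\mathbb{Z}_{\ell})/\mathcal{F}^{2k+3}\overset{\alpha}{\to}H_3(\xi_{k+1}^{(\ell)};\mathbb{Z}_{\ell})\to\coker(\alpha)\to 0$ and its analogue at level $k$ into a commutative ladder with vertical maps $\epsilon$, $\gamma_{k+1,k}$, $\rho$, observes that $\rho=0$ because the factorization $\gamma$ of Lemma \ref{lem:7.2.14} is onto, and applies the snake lemma. Since $\coker(\alpha)$ and $\coker(\epsilon)$ are free of the same rank and the connecting map $\delta\colon\coker(\alpha)\to\coker(\epsilon)$ is forced to be surjective (hence an isomorphism, hence injective), the map $\ker(\gamma_{k+1,k})\to\coker(\alpha)$ vanishes, giving $\ker(\gamma_{k+1,k})\subseteq\Im(\alpha)=\Im(\gamma_{k+2,k+1})$ directly --- no control of $\ker(p_{k+1})$ is ever needed. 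If you want to salvage your element-theoretic route, you should replace your final step by this cokernel/snake-lemma argument; as written, the proof is incomplete.
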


\begin{proof}
	Consider the following commutative diagram whose horizontal sequences are exact:
	\begin{center}
	\begin{tikzcd}
		0 \ar[r] & \displaystyle\frac{H_3(K^{(\ell)}(k+2); \mathbb{Z}_{\ell})}{\mathcal{F}^{2k+3}H_3(K^{(\ell)}(k+2); \mathbb{Z}_{\ell})} \ar[r, "\alpha"] \ar[d, "\epsilon"']  & H_3(\xi^{(\ell)}_{k+1}; \mathbb{Z}_{\ell}) \ar[r, "i"] \ar[dl, "\gamma", dashed] \ar[d, "\gamma_{k+1,k}"]  & \coker(\alpha) \ar[r] \ar[d, "\rho"]& 0\\
		0 \ar[r] & \displaystyle\frac{H_3(K^{(\ell)}(k+1); \mathbb{Z}_{\ell})}{\mathcal{F}^{2k+1} H_3(K^{(\ell)}(k+1); \mathbb{Z}_{\ell})} \ar[r, "\beta"] & H_3(\xi^{(\ell)}_k; \mathbb{Z}_{\ell}) \ar[r, "j"] & \coker(\beta) \ar[r] & 0.
	\end{tikzcd}	
	\end{center}
	Here, $\rho$ is the induced homomorphism from the left most square of the diagram and $\gamma$ is the induced map from the factorization of $\gamma_{k+1,k}$ in Lemma \ref{lem:7.2.14}. Applying the results of Corollary \ref{eq:6.3.9} and Proposition \ref{prop:7.2.10} to the top and bottom short exact sequences, we can compute the rank of $\coker(\alpha)$ and $\coker(\beta)$ as 
	\begin{eqnarray*}
		\coker(\alpha)& \simeq &\mathbb{Z}_{\ell}^{\oplus (nN_{k+1} - N_{k+2})}, \\
		 \coker(\beta)& \simeq & \mathbb{Z}_{\ell}^{\oplus (nN_{k} - N_{k+1})}.
	\end{eqnarray*}
Now, the homomorphism $\rho$ is zero homomorphism, since $\gamma$ is onto by Lemma \ref{lem:7.2.14}. In fact, for any element $x \in \coker(\alpha)$ there is an element $c \in H_3(\xi_{k+1}^{(\ell)}; \mathbb{Z}_{\ell})$. In fact, since $\rho \circ i = j \circ \gamma_{k+1,k}$ and $\Im (\gamma_{k+1, k}) \simeq H_3(\xi^{(\ell)}_{k+1}; \mathbb{Z}_{\ell})/\ker (\gamma_{k+1,k}) \simeq H_3(\xi^{(\ell)}_{k+1}; \mathbb{Z}_{\ell})/\ker (\gamma) \simeq \Im(\gamma)$, so $\Im(\gamma_{k+1,k}) \simeq \Im (\beta \circ \gamma)$ and therefore $\rho \circ i = j \circ \beta \circ \gamma = 0$. This implies $\rho = 0$.

Then, by snake lemma, we have the following exact sequence

\begin{center}
\begin{tikzcd}
0 \arrow[r]
& \ker(\epsilon) \arrow[r]
& \ker(\gamma_{k+1,k}) \arrow[r] & \coker(\alpha)  \arrow[dlll,
rounded corners=2mm,"\delta",
to path={ -- ([xshift=2ex]\tikztostart.east)
|- (Z) [near end]\tikztonodes
-| ([xshift=-2ex]\tikztotarget.west)
-- (\tikztotarget)}] \\
\coker(\epsilon) \arrow[r]
& \coker(\gamma_{k+1,k}) \arrow[r]
& \coker(\beta) \arrow[r] & 0
\end{tikzcd}
\end{center}
where we use $\ker(\rho)=\coker(\alpha)$ mentioned as above. Since we have $\coker(\alpha) = \coker(\epsilon)$ and $\coker(\gamma_{k+1,k})=\coker(\beta)$ by counting dimension, we conclude that $\ker(\gamma_{k+1,k}) \subset \Im(\alpha)$. Hence, this completes the proof.
\end{proof}

We introduce a (descending) filtration $\mathcal{G}^{\bullet}$ to $H_3(\xi^{(\ell)}_k; \mathbb{Z}_{\ell})$ by setting
\begin{equation}\label{eq:7.2.19}
	\mathcal{G}^l H_3(\xi^{(\ell)}_k; \mathbb{Z}_{\ell}) = \Im(\gamma_{l-1,k} : H_3(\xi^{(\ell)}_{l-1}; \mathbb{Z}_{\ell}) \rightarrow H_3(\xi^{(\ell)}_k; \mathbb{Z}_{\ell}))
\end{equation}
where the homomorphism $\gamma_{l-1, k}$ is defined as composition $\gamma_{l-1,k}=\gamma_{l-1, l-2}\circ \cdots \circ \gamma_{k+1, k}$. Note that the isomorphism $\pi_3(\xi^{(\ell)}_k) \rightarrow \pi_3(K^{(\ell)}_k)$ in Lemma maps the $l$-th term of the filtration $\mathcal{G}^l H_3(\xi^{(\ell)}_k; \mathbb{Z}_{\ell})$ to the $l$-th term of the filtration $\mathcal{G}^l\pi_3(\widehat{K}^{(\ell)}_k)$. So, we have
\begin{equation}\label{eq:7.2.20}
	\coker(\gamma_{k+1,k}) \simeq \coker(\psi_{k+1,k}) \simeq \mathbb{Z}_{\ell}^{\oplus(nN_k - N_{k+1})}.
\end{equation}

\begin{theorem} \label{thm:7.2.21}
Let $\widehat{K}^{(\ell)}_k$ be the complete pro-$\ell$ Orr space. Then, the following statements hold:

\begin{enumerate}[label=$(\arabic{enumi})$]
\item For the filtration $\mathcal{G}^l \widehat{K}^{(\ell)}_k$ is actually given as follows:
\begin{equation} \label{eq:7.2.22}
	0= \mathcal{G}^{2k+1} \pi_3(\widehat{K}^{(\ell)}_k) \subset \cdots \subset \mathcal{G}^{k+1} \pi_3(\widehat{K}^{(\ell)}_k) = \pi_3(\widehat{K}^{(\ell)}_k).
\end{equation}
\item  For each integer $k \leq l \leq 2k-1$, the associated graded quotient is isomorphic to rank $nN_l - N_{l+1}$ free $\mathbb{Z}_{\ell}$-module, i.e., there is an isomorphism
\begin{equation}\label{eq:7.2.23}
	\Phi_l : \frac{\mathcal{F}^{l+1} \pi_3(\widehat{K}^{(\ell)}_k)}{\mathcal{F}^{l+2} \pi_3(\widehat{K}^{(\ell)}_k)} \overset{\sim}{\longrightarrow} \mathbb{Z}_{\ell}^{\oplus (nN_l - N_{l+1})}. 
\end{equation}
\item The homomorphisms
\begin{equation}\label{eq:7.2.24}
	\psi_{k+1, k} : \pi_3(\widehat{K}^{(\ell)}_{k+1}) \rightarrow \pi_3(\widehat{K}^{(\ell)}_k)
\end{equation}
preserve the filtration $\mathcal{G}^{\bullet}\pi_3(K^{(\ell)}_{\ast})$, and   induced homomorphism on the associated graded quotients 
\begin{equation}\label{eq:7.2.25}
	\frac{\mathcal{G}^l \pi_3(\widehat{K}^{(\ell)}_{k+1})}{\mathcal{G}^{l+1} \pi_3(\widehat{K}^{(\ell)}_{k+1})} \rightarrow \frac{\mathcal{G}^l \pi_3(\widehat{K}^{(\ell)}_{k})}{\mathcal{G}^{l+1} \pi_3(\widehat{K}^{(\ell)}_k)}
\end{equation}
	are isomorphisms for $k+2 \leq l \leq 2k$ and zero homomorphisms otherwise.
\item In particular, for each $k\geq 2$, there is an isomorphism
\begin{equation} \label{eq:7.2.26}
	\pi_3(\widehat{K}^{(\ell)}_k) \simeq \bigoplus_{i=k}^{2k-1} \mathbb{Z}_{\ell}^{\oplus (nN_i - N_{i+1})}
\end{equation}
and
\begin{equation}\label{eq:7.2.27}
	x \in \ker(\psi_{l,k}) \text{if and only if} \ x \in \Im(\psi_{2k,l}).
\end{equation}
\end{enumerate}
\end{theorem}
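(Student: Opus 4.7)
The plan is to transport the whole statement across the isomorphism $\pi_3(\widehat{K}_k^{(\ell)}) \simeq H_3(\xi_k^{(\ell)}; \mathbb{Z}_{\ell})$ of Lemma \ref{lem:7.1.10} and to work with the homology-level filtration $\mathcal{G}^{l} H_3(\xi_k^{(\ell)}; \mathbb{Z}_{\ell}) := \Im(\gamma_{l-1,k})$, where $\gamma_{l-1,k} := \gamma_{k+1,k} \circ \gamma_{k+2,k+1} \circ \cdots \circ \gamma_{l-1,l-2}$. The total rank $\sum_{i=k}^{2k-1}(nN_i - N_{i+1})$ is fixed once and for all by Proposition \ref{prop:7.2.10}, so the remaining task is to identify the successive graded pieces and then account for the total by counting.

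First I would dispense with the boundary values of the filtration in (1). The equality $\mathcal{G}^{k+1}\pi_3 = \pi_3(\widehat{K}_k^{(\ell)})$ is definitional since $\gamma_{k,k}$ is the identity. The vanishing $\mathcal{G}^{2k+1}\pi_3 = 0$ will be a consequence of the graded computation in (2): once the $k$ pieces $\mathcal{G}^{l+1}/\mathcal{G}^{l+2}$ for $l=k,\ldots,2k-1$ are shown to be free of ranks $nN_l - N_{l+1}$, they will already exhaust the known total rank, forcing the filtration to stabilize at zero from index $2k+1$ onwards.

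For (2) and (3), I would reindex Proposition \ref{prop:7.2.10} by replacing the base level $k$ with $l$, yielding $\coker(\gamma_{l+1,l}) \simeq \mathbb{Z}_{\ell}^{\oplus(nN_l-N_{l+1})}$ together with equation \eqref{eq:7.2.20}. The map induced by $\gamma_{l,k}$ produces a canonical surjection $\coker(\gamma_{l+1,l}) \twoheadrightarrow \mathcal{G}^{l+1}/\mathcal{G}^{l+2}$, and the heart of the proof is to check that this surjection is an isomorphism; equivalently, that $\ker(\gamma_{l,k}) \subset \Im(\gamma_{l+1,l})$ for all $l > k$. Lemma \ref{lem:7.2.17} provides exactly this at the single step $l=k$ (and, verbatim, at any base level), and I would promote it to the longer composition by induction on $l - k$: given a class $x$ with $\gamma_{l,k}(x)=0$, I first use the single-step inclusion to adjust $x$ by something in $\Im(\gamma_{k+2,k+1})$, then push up the tower $\xi_{l+1}^{(\ell)} \to \xi_l^{(\ell)} \to \cdots \to \xi_k^{(\ell)}$ stage by stage, lifting further at each step. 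A free-module dimension count then promotes the surjection to an isomorphism. Part (3) follows immediately: $\psi_{k+1,k}$ corresponds to $\gamma_{k+1,k}$ under Lemma \ref{lem:7.1.6}, so its restriction to $\mathcal{G}^l$ agrees with $\gamma_{l-1,k+1}$ composed with the identification, which hits precisely $\mathcal{G}^{l+1}/\mathcal{G}^{l+2}$ isomorphically for $k+2 \le l \le 2k$ (the graded pieces are intrinsically $\coker(\gamma_{l+1,l})$, independent of the base) and vanishes on $\mathcal{G}^{k+1}/\mathcal{G}^{k+2}$. The decomposition in (4) follows because each graded piece is free, hence the extensions split, and the kernel/image characterization is a direct consequence of the exactness $\ker(\gamma_{l,k}) = \Im(\gamma_{l+1,l}\circ\cdots\circ\gamma_{2k,2k-1})$ established above.

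The main obstacle, I expect, will be the inductive promotion of Lemma \ref{lem:7.2.17} to the longer composition $\gamma_{l,k}$. The one-step inclusion was proved through a careful snake-lemma chase in a diagram mixing $H_3(K^{(\ell)}(k+1))/\mathcal{F}^{2k+1}$, $H_3(\xi_k^{(\ell)})$, and their cokernels; iterating it cleanly requires showing that the corresponding snake diagrams for varying $k$ are compatible with the tower maps, which in turn rests on the compatibility of the weight filtrations on $D_{\ast}(K^{(\ell)}(k+1);\mathbb{Z}_{\ell})$ across different $k$ (cf.\ Corollary \ref{eq:6.3.9} and Theorem \ref{thm:6.3.21}). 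Once that compatibility is in place the rest of the argument is essentially formal dimension counting over $\mathbb{Z}_{\ell}$.
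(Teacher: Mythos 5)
Your proposal is correct and follows essentially the same route as the paper: both arguments transport the problem to $H_3(\xi_k^{(\ell)};\mathbb{Z}_{\ell})$ via Lemma \ref{lem:7.1.10}, fix the total rank by Proposition \ref{prop:7.2.10}, iterate the one-step inclusion $\ker(\gamma_{k+1,k})\subset\Im(\gamma_{k+2,k+1})$ of Lemma \ref{lem:7.2.17} up the tower, and finish by dimension counting over free $\mathbb{Z}_{\ell}$-modules (the paper kills the potential torsion in the graded pieces by the surjectivity of $\mathcal{G}^{l}H_3(\xi_{l-1}^{(\ell)};\mathbb{Z}_{\ell})\to\mathcal{G}^{l}H_3(\xi_k^{(\ell)};\mathbb{Z}_{\ell})$, which is the same device as your identification of the graded piece with $\coker(\gamma_{l+1,l})$).
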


\begin{proof}
	Consider the following exact sequence
	\begin{equation} \label{eq:7.2.28}
		0 \rightarrow \ker(\gamma_{k+1,k}) \rightarrow H_3(\xi^{(\ell)}_{k+1}; \mathbb{Z}_{\ell}) \overset{\gamma_{k+1,k}}{\rightarrow} H_3(\xi^{(\ell)}_k; \mathbb{Z}_{\ell}) \rightarrow \coker(\gamma_{k+1,k}) \rightarrow 0.
	\end{equation}
	By inserting the result of Proposition \ref{prop:7.2.10} and $\coker(\gamma_{k+1,k}) \simeq \mathbb{Z}_{\ell}^{\oplus(nN_k - N_{k+1})}$, the above sequence becomes
	\begin{equation} \label{eq:7.2.29}
		0 \rightarrow \ker(\gamma_{k+1, k}) \rightarrow \bigoplus_{i=k+1}^{2k+1} \mathbb{Z}_{\ell}^{\oplus(nN_i - N_{i+1})} \rightarrow \bigoplus_{i=k}^{2k-1} \mathbb{Z}_{\ell}^{\oplus (nN_i - N_{i+1})} \rightarrow \mathbb{Z}_{\ell}^{\oplus(nN_{k} - N_{k+1})} \rightarrow 0.
	\end{equation}
	Therefore, dimension counting implies that
	\begin{equation} \label{eq:7.2.30}
		\ker(\gamma_{k+1,k}) \simeq \bigoplus_{i=2k}^{2k+1} \mathbb{Z}_{\ell}^{\oplus(nN_i - N_{i+1})}.
	\end{equation}
	Since all homology groups $H_3(\xi^{(\ell)}_m; \mathbb{Z}_{\ell})$ are free $\mathbb{Z}_{\ell}$-modules, in terms of Lemma \ref{lem:7.2.17}, we have
	\begin{equation} \label{eq:7.2.32}
		\ker(\gamma_{l,k}) \simeq \bigoplus_{i=2k}^{2l-1} \mathbb{Z}_{\ell}^{\oplus (nN_i - N_{i+1})}.
	\end{equation}
	Hence, by group isomorphism theorem, we get
	\begin{equation} \label{eq:7.2.33}
		\Im(\gamma_{l,k}) \simeq \bigoplus_{i=l}^{2k-1} \mathbb{Z}_{\ell}^{\oplus (nN_i - N_{i+1})}.
	\end{equation}
	By definition of the filtration $\mathcal{G}^{\bullet}$, its associated graded quotient is given as the form
	\begin{equation} \label{eq:7.2.34}
		\frac{\mathcal{G}^{l}H_3(\xi_{k}^{(\ell)}; \mathbb{Z}_{\ell})}{\mathcal{G}^{l+1}H_3(\xi_{k}^{(\ell)}; \mathbb{Z}_{\ell})} \simeq \mathbb{Z}_{\ell}^{\oplus (nN_{l-1} - N_{l})} \oplus \text{torsion}.
	\end{equation}
	Now, we know that 
	\begin{equation} \label{eq:7.2.35}
		\frac{\mathcal{G}^{l}H_3(\xi_{l-1}^{(\ell)}; \mathbb{Z}_{\ell})}{\mathcal{G}^{l+1}H_3(\xi_{l-1}^{(\ell)}; \mathbb{Z}_{\ell})} \simeq \mathbb{Z}_{\ell}^{\oplus (nN_{l-1} - N_{l})}
	\end{equation}
	and $\mathcal{G}^l H_3(\xi^{(\ell)}_{l-1}; \mathbb{Z}_{\ell}) \rightarrow \mathcal{G}^{l}H_3(\xi^{(\ell)}_k; \mathbb{Z}_{\ell})$ is surjective by definition, so the torsion part of equation \eqref{eq:7.2.34} must be zero. So we conclude that
	\begin{equation} \label{eq:7.2.36}
		\frac{\mathcal{G}^{l}H_3(\xi_{k}^{(\ell)}; \mathbb{Z}_{\ell})}{\mathcal{G}^{l+1}H_3(\xi_{k}^{(\ell)}; \mathbb{Z}_{\ell})} \simeq \mathbb{Z}_{\ell}^{\oplus (nN_{l-1} - N_{l})}.
	\end{equation}
	By proposition \ref{prop:7.2.10} and counting dimension, we have $\mathcal{G}^{2k+1}H_3(\xi^{(\ell)}_k; \mathbb{Z}_{\ell}) \simeq 0$.
\end{proof}

\section{Massey products and  $H^{2}(\Fn/\Gamma_k(\Fn); \mathbb{F})$} \label{sec:7}

In this section, we recall the notion of Massey products for pro-$\ell$ group cohomology. Then, we describe a basis of  the dual module $\Hom_{\mathbb{Z}_{\ell}}(H_2(\Fn/\Gamma_k(\Fn); \mathbb{Z}_{\ell}), \mathbb{Z}_{\ell})$ and the second cohomology group $H^2(\Fn/\Gamma_k(\Fn); \mathbb{F})$ via  Massey products. Here, $\mathbb{F}$ denotes the field of $\ell$-adic numbers $\mathbb{Q}_{\ell}$ or finite field $\mathbb{F}_{\ell}$.

\subsection{Higher Massey products for pro-$\ell$ group cohomology}

Here, we recall the basics of higher Massey product for pro-$\ell$ group cohomology following \cite{Dwy} and \cite{Mo1}.

Let $G$ be a profinite group and let $A$ be a discrete $G$-module. Let us consider the inhomogeneous cochain complex of $G$ with coefficients in $A$ as in \cite[Section 2]{NSW} (see also  \cite[Section 6.4]{RZ}), that is, for $k \geq 0$, the module $C^k(G;A)$ of $k$-cochains is given by the module of all continuous maps $\mathrm{Map}(G^k, A)$ and coboundary operator $d: C^k(G;A) \rightarrow C^{k+1}(G; A)$ is given by
\begin{eqnarray} \label{eq:-1.1.1}
	&&(d f)(g_1, \ldots, g_{k+1}) = g_1 f(g_2, \ldots, g_{k+1}) \nonumber  \\
	&& \quad +\sum_{i=1}^{k} (-1)^i f(g_1,\ldots, g_{i-1}, g_i g_{i+1}, g_{i+2}, \ldots, g_{k+1}) + (-1)^{k+1} f(g_1, \ldots, g_{k}).
\end{eqnarray}
The cup product $\smile : C^p(G; A) \times C^q(G; A) \rightarrow C^{p+q}(G; A)$ is defined as, for $u \in C^p(G;A)$ and $v \in C^q(G;A)$, 
\begin{equation} \label{eq:-1.1.2}
	(u\smile v)(g_1, \ldots, g_{p+q}):= (-1)^{pq} u(g_1, \ldots, g_p)\cdot v(g_{p+1}, \ldots, g_{p+q}) \in A.
\end{equation}
Let $\alpha_1, \ldots, \alpha_m \in H^1(G; A)$ be first cohomology classes. A {\it Massey product}  $\langle \alpha_1, \ldots, \alpha_m \rangle$ is said to be {\it defined} if there is an array $\mathcal{A}$
\begin{equation} \label{eq:-1.1.3}
	\mathcal{A} = \{ a_{ij} \in C^1(G; A) \mid 1 \leq i < j \leq m+1,\ (i,j) \neq (1, m+1) \}
\end{equation}
such that
\begin{equation} \label{eq:-1.1.4}
	\begin{cases}
	[a_{i,i+1}] = \alpha_i & (1\leq i \leq m) \\
	da_{ij} = \sum_{k=i+1}^{j-1} a_{ik} \smile a_{kj} & (j \neq i+1).
\end{cases}
\end{equation} 

We call such an array $\mathcal{A}$ a {\it defining system} for $\langle \alpha_1, \ldots, \alpha_m \rangle$. Then, for a defining system $\mathcal{A}$, the cohomology class $\langle \alpha_1, \ldots, \alpha_m \rangle_{\mathcal{A}}$ in $H^2(G; A)$ is defined as the cohomology class represented by the 2-cocycle
\begin{equation} \label{eq:-1.1.5}
	\sum_{k=2}^m a_{1k} \smile a_{k, m+1}.
\end{equation}
Then, we define a Massey product of $\alpha_1, \ldots, \alpha_m$ as the subset of $H^2(G; A)$ given by
\begin{equation} \label{eq:-1.1.6}
	\langle \alpha_1, \ldots, \alpha_m \rangle := \{ \langle \alpha_1, \ldots, \alpha_m \rangle_{\mathcal{A}} \in H^2(G; A)\  | \ \mathcal{A} \ \text{is defining system for } \langle \alpha_1, \ldots, \alpha_m\rangle \}.
\end{equation}

\begin{remark} \label{rem:-1.1.7}
	(1) The Massey product $\langle \alpha_1 \rangle$ is just $\alpha_1$ and its defining system $\mathcal{A}$ consists of any 1-cocycle representing $\alpha_1$. The Massey product $\langle \alpha_1, \alpha_2 \rangle$ is the cup product $\alpha_1 \smile \alpha_2$.  \\
	(2) (Uniqueness) For any integer $m \geq 3$, the Massey product $\langle \alpha_1, \ldots, \alpha_m \rangle$ is defined and uniquely defined if $\langle \alpha_{i_1}, \ldots, \alpha_{i_r} \rangle = 0$ for any proper subset $\{ i_1, \ldots, i_r \}$ of $\{1, \ldots, m\}$ with $r \geq 1$.\\
	(3) (Naturality) Let $G$ and $G'$ be profinite groups and $f : G \rightarrow G'$ be a continuous homomorphism. Assume that $\langle \alpha_1, \ldots, \alpha_m \rangle$ is defined for $\alpha_1, \ldots, \alpha_m \in H^1(G'; A)$ with defining system $\mathcal{A} = (a_{ij})$. Then, the Massey product $\langle f^{\ast}(\alpha_1), \ldots, f^{\ast}(\alpha_m) \rangle$ is defined for $f^{\ast}(\alpha_1), \ldots, f^{\ast}(\alpha_m) \in H^1(G; A)$ with the defining system $\mathcal{A}^{\ast} = (f^{\ast}(a_{ij}))$ and we have $f^{\ast}(\langle \alpha_1, \ldots, \alpha_m \rangle) \subset \langle f^{\ast}(\alpha_1), \ldots, f^{\ast}(\alpha_m) \rangle$.
\end{remark}

Next, let us recall interpretation of a defining system for the Massey product as uni-triangular matrix (cf. \cite{Dwy} and \cite{St}). Let $U_{m+1}( A)$ denote the multiplicative group of upper triangular matrices coefficients in $A$ whose diagonal entries are identity. Similarly, we denote by $Z_{m+1}(A)$ the subgroup of $U_{m+1}(A)$ whose non-zero entries are only diagonal entries and $(1, m+1)$-entry. Then, noting that $Z_{m+1}(A)$ is identified with $A$, we have the following exact sequence
\begin{equation} \label{eq:-1.1.8}
	1 \rightarrow A \rightarrow U_{m+1}(A) \rightarrow \overline{U}_{m+1}(A) \rightarrow 1
\end{equation}
where $\overline{U}_{m+1}(A)$ is the quotient group $U_{m+1}(A)/Z_{m+1}(A)$. Now, for the defining system $\mathcal{A}=(a_{ij})$ for $\langle \alpha_1, \ldots, \alpha_m \rangle$ can be identified with a homomorphism
\begin{equation} \label{eq:-1.1.9}
	\phi_{\mathcal{A}} : G \rightarrow \overline{U}_{m+1}(A)
\end{equation}
which sends $g \in G$ to $\phi_{\mathcal{A}}(g):=(\phi_{ij}(g)):=(-a_{ij}(g))$ where  we set $-a_{ii}(g)=1$ for each $1\leq i \leq m$. Indeed, by the condition \eqref{eq:-1.1.4}, one obtains
\begin{equation} \label{eq:-1.1.10}
	d a_{ij}(g_1 g_2) = a_{ij}(g_1) + a_{ij}(g_2) - a_{ij}(g_1g_2)= \sum_{k=i+1}^{j-1} a_{ik}(g_1)a_{kj}(g_2)
\end{equation}
and so
\begin{equation} \label{eq:-1.1.11}
	\phi_{ij}(g_1 g_2) = \phi_{ij}(g_1) + \phi_{ij}(g_2) + \sum_{k=i+1}^{j-1} \phi_{ik}(g_1) \phi_{kj}(g_2).
\end{equation}
Besides, from this interpretation, one easily sees that the Massey product $\langle \alpha_1, \ldots, \alpha_m \rangle$ plays a role of obstruction class for existence of a lift of $\phi_{\mathcal{A}} : G \rightarrow \overline{U}_{m+1}(A)$ to $U_{m+1}(A)$. More precisely, the cohomology class $\langle \alpha_1, \ldots, \alpha_m \rangle_{\mathcal{A}} =0$ if and only if $\phi_{\mathcal{A}} : G \rightarrow \overline{U}_{m+1}(A)$ extends to a homomorphism $G \rightarrow U_{m+1}(A)$.

Finally, we recall the relation between Massey products and Magnus coefficients.

Let $G$ denote a pro-$\ell$ group with minimal presentation given by
\begin{equation} \label{eq:-1.1.12}
	1 \rightarrow R \rightarrow \Fn \overset{\pi}{\rightarrow} G \rightarrow 1.
\end{equation} 
We set $\pi(x_i)=:g_i$ for $1 \leq i \leq n$. Then, we recall the transgression map
\begin{equation} \label{eq:-1.1.13}
	\mathrm{tg} : H^1(R; A)^G \rightarrow H^2(G; A)
\end{equation}
associated with Hochschild-Serre spectral sequence defined as follows. For a 1-cocycle $a$ of $H^1(R; A)^G$, there exists a 1-cochain $b \in Z^1(\Fn; A)$ with $b|_R = a$ and $b(\sigma \tau) = b(\tau \sigma) = b(\sigma) + b(\tau)$ for $\sigma \in \Fn$ and $\tau \in R$. Then, the value $db(z_1 z_2)$ depends only cosets $z_1 \mod R$ and $z_2 \mod R$, so there is a 2-cocycle $c \in Z^1(G; A)$ such that the pullback is $\pi^{\ast}(c) = db$. Then, the transgression $\mathrm{tg}$ is defined as $\mathrm{tg}([a]) = [c]$. Since $H^1(G; A) \simeq  H^1(\Fn;A)$ and $H^2(\Fn; A) = 0$, the transgression map is an isomorphism. Dually we have the isomorphism $\mathrm{tg}^{\vee}$, called {\it Hopf isomorphism},
\begin{equation} \label{eq:-1.1.14}
	\mathrm{tg}^{\vee} : H_2(G; A) \rightarrow H_1(R; A)_G \simeq R \cap [\Fn, \Fn] / [R, \Fn].
\end{equation}
Then, an explicit formula that relates Massey products and Magnus coefficients is given as follows (cf. \cite[Lemma 1.5]{St}).

\begin{proposition} \label{prop:-1.1.1}
	Notations being as above, let $\alpha_1, \ldots, \alpha_m \in  H_1(G; A)$ and let $\mathcal{A} = (a_{ij})$ be a defining system for the Massey product $\langle \alpha_1, \ldots, \alpha_m \rangle$. For $f \in R \cap [\Fn, \Fn]$, we  set $\eta := (\mathrm{tg}^{\vee})^{-1}(f \mod [R,\Fn])$. Then, we have
	\begin{eqnarray*}
		&&\langle \alpha_1, \ldots, \alpha_m \rangle_{\mathcal{A}}(\eta) \\
		&=&\sum_{r=1}^m (-1)^{r+1} \sum_{c_1 + \cdots + c_r = m} \sum_{1 \leq i_1, \ldots, i_r \leq n} a_{1,1+c_1}(g_{i_1}) a_{1+c_1, 1+c_1 + c_2}(g_{i_2}) \cdots a_{m+1-c_j,m+1}(g_{i_r})\mu(I;f)
	\end{eqnarray*}
where $c_i$ $(1 \leq  i \leq  j)$ runs over positive integers with $c_1 + \cdots + c_j = m$ and $g_i := \pi(x_i)$ $(1 \leq  i \leq n)$ and $\mu(i_1 \cdots i_j;f)$ is the Magnus coefficient of $f$ with  respect to $I = (i_1 \cdots i_j)$. 

	\end{proposition}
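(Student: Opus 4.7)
The plan is to use the matrix-theoretic interpretation of the Massey product recalled just above the statement and combine it with the Magnus expansion via a carefully chosen lift to the free pro-$\ell$ group.

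First, I would promote the defining system $\mathcal{A}=(a_{ij})$ to the continuous homomorphism
\[
	\phi_{\mathcal{A}}\colon G\longrightarrow \overline{U}_{m+1}(A),\qquad \phi_{\mathcal{A}}(g)=(-a_{ij}(g)),
\]
as in \eqref{eq:-1.1.9}. Because $\Fn$ is free pro-$\ell$, the composition $\phi_{\mathcal{A}}\circ\pi$ admits a (continuous) set-theoretic lift to a homomorphism $\widetilde{\phi}\colon \Fn\to U_{m+1}(A)$: choose any lift of $\phi_{\mathcal{A}}\circ\pi(x_i)$ to $U_{m+1}(A)$ for each generator $x_i$ and extend multiplicatively. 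Since $\pi(R)=1$, the restriction $\widetilde{\phi}|_{R}$ takes values in the kernel $Z_{m+1}(A)\simeq A$ of $U_{m+1}(A)\twoheadrightarrow \overline{U}_{m+1}(A)$, so $\widetilde{\phi}|_{R}\colon R\to A$ is a continuous $1$-cocycle.

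Second, I would verify directly from the definition of the transgression map \eqref{eq:-1.1.13} that
\[
	\mathrm{tg}\bigl(\bigl[\widetilde{\phi}|_{R}\bigr]\bigr)=\langle\alpha_1,\ldots,\alpha_m\rangle_{\mathcal{A}}
	\in H^2(G;A).
\]
The key identity is the matrix-product expansion
$\widetilde{\phi}(w_1w_2)_{1,m+1}=\widetilde{\phi}(w_1)_{1,m+1}+\widetilde{\phi}(w_2)_{1,m+1}+\sum_{k=2}^{m}\widetilde{\phi}(w_1)_{1,k}\widetilde{\phi}(w_2)_{k,m+1}$,
which, on inserting the signs $(-1)^{pq}$ of the cup product, yields $d(\widetilde{\phi}(\,\cdot\,)_{1,m+1})=\pi^{\ast}\bigl(\sum_{k=2}^{m} a_{1k}\smile a_{k,m+1}\bigr)$. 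By the (dual) Hopf isomorphism, evaluating the Massey product on $\eta=(\mathrm{tg}^{\vee})^{-1}(f \bmod [R,\Fn])$ then reduces to computing the single quantity $\widetilde{\phi}(f)_{1,m+1}\in A$.

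Third, I would compute $\widetilde{\phi}(f)_{1,m+1}$ through the pro-$\ell$ Magnus expansion of Section~2.8. Writing $\widetilde{\phi}(x_i)=I+N_i$ with $N_i$ a strictly upper-triangular matrix whose $(p,q)$-entry ($p<q$, $(p,q)\neq(1,m+1)$) equals $-a_{pq}(g_i)$, the map $X_i\mapsto N_i$ extends to a continuous $\mathbb{Z}_{\ell}$-algebra homomorphism $\Phi\colon \mathbb{Z}_{\ell}\langle\!\langle X_1,\ldots,X_n\rangle\!\rangle \to M_{m+1}(A^{+})$ (with $A^{+}=\mathbb{Z}_{\ell}\oplus A$ suitably extended) which agrees on units with $\widetilde{\phi}\circ\Theta^{-1}$. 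Applying $\Phi$ to the Magnus expansion \eqref{eq:2.8.4} of $f$ and reading off the $(1,m+1)$-entry gives
\[
	\widetilde{\phi}(f)_{1,m+1}=\sum_{r\geq 1}\sum_{J=(j_1\cdots j_r)}\mu(J;f)\bigl(N_{j_1}\cdots N_{j_r}\bigr)_{1,m+1},
\]
and the strictly upper-triangular product $(N_{j_1}\cdots N_{j_r})_{1,m+1}$ expands as a sum over the chains $1=p_0<p_1<\cdots<p_r=m+1$ in $\{1,\ldots,m+1\}$; setting $c_\ell:=p_\ell-p_{\ell-1}$ gives exactly the partition sum $c_1+\cdots+c_r=m$ appearing in the statement, with each factor producing one sign $-1$. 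The $r=1$ contribution automatically vanishes because $f\in[\Fn,\Fn]$ forces $\mu((i);f)=0$ (Remark~\ref{rem:2.8.6}(2)).

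The main bookkeeping obstacle will be reconciling all the signs: those coming from $\phi_{ij}=-a_{ij}$, from the cup product factor $(-1)^{pq}$, and from the coboundary/transgression conventions in \eqref{eq:-1.1.1}. Once these are handled uniformly (and the identification of the pairing $H^2(G;A)\otimes H_2(G;\mathbb{Z})\to A$ with evaluation on the Hopf representative is made precise), the resulting expression matches the formula in the statement term-by-term.
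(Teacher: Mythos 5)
The paper does not actually prove Proposition \ref{prop:-1.1.1}; it states it with a citation to Stein \cite[Lemma 1.5]{St}, so there is no in-paper argument to compare against. Your proposal reconstructs precisely the standard proof behind that citation (and behind Dwyer's dictionary between defining systems and unipotent representations): lift $\phi_{\mathcal{A}}\circ\pi$ to a continuous homomorphism $\widetilde{\phi}\colon \Fn\to U_{m+1}(A)$ using freeness, observe that $\widetilde{\phi}|_{R}$ lands in $Z_{m+1}(A)\simeq A$ and transgresses to $\langle\alpha_1,\ldots,\alpha_m\rangle_{\mathcal{A}}$, and then expand $\widetilde{\phi}(f)_{1,m+1}$ through the Magnus embedding, with the chains $1=p_0<p_1<\cdots<p_r=m+1$ producing the compositions $c_1+\cdots+c_r=m$. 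This is structurally correct, and your observation that the $r=1$ block (which formally involves the undefined entry $a_{1,m+1}$) is killed by $\mu((i);f)=0$ for $f\in[\Fn,\Fn]$ is exactly the right way to handle that term.

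The one step you have genuinely deferred rather than completed is the global sign. With $\widetilde{\phi}(x_i)=I+N_i$ and $(N_i)_{pq}=-a_{pq}(g_i)$, the chain expansion of $(N_{j_1}\cdots N_{j_r})_{1,m+1}$ contributes $(-1)^{r}$ per block-count $r$, whereas the statement asserts $(-1)^{r+1}$; so an extra factor of $-1$ must be extracted from exactly one of three places: the factor $(-1)^{pq}=-1$ in the cup product \eqref{eq:-1.1.2} (which makes the Massey 2-cocycle equal to $-\sum_k a_{1k}(g_1)a_{k,m+1}(g_2)$ as a bilinear function), the sign in the adjunction between $\mathrm{tg}$ and $\mathrm{tg}^{\vee}$, or the chosen identification $Z_{m+1}(A)\simeq A$. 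Saying ``once these are handled uniformly the signs match'' is not yet a proof of the sign, especially since the paper's own equations \eqref{eq:-1.1.4} and \eqref{eq:-1.1.10} are not obviously consistent with \eqref{eq:-1.1.2} on this point; you should fix one convention and carry it through the three steps explicitly. Two further small points to make precise: the multiplicative extension of the lift requires $U_{m+1}(A)$ to be a pro-$\ell$ (or at least profinite) group so that the universal property of $\Fn$ applies, and the matrix products require a ring structure (or pairing) on $A$ so that $M_{m+1}(A^{+})$ and the algebra map $\Phi$ on $\mathbb{Z}_{\ell}\langle\!\langle X_1,\ldots,X_n\rangle\!\rangle$ are defined; both hold in the paper's applications ($A=\mathbb{Z}_{\ell}$ or $\mathbb{Z}/\ell^{n}$) but should be stated.
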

	
Recalling the fact that $\mu(I;f)=0$ for $f \in \Gamma_k(\Fn)$ with respect to $I=(i_1 \ldots i_r)$ with $r \leq k-1$, we immediately get the following corollary.

\begin{corollary} \label{cor:-1.1.2}
	With the same notation as in Proposition \ref{prop:-1.1.1}, for $f \in R \cap \Gamma_k(\Fn)$, we have
\begin{equation} \label{eq:-1.1.15}
	\langle \alpha_1, \ldots, \alpha_k \rangle_{\mathcal{A}}(\eta) = (-1)^{k+1} \sum_{I = (i_1 \cdots i_k)} \alpha_1(g_{i_1}) \cdots \alpha_k(g_{i_k}) \mu(I; f)
\end{equation}
\end{corollary}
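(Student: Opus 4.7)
The plan is to derive this corollary as an immediate specialization of Proposition \ref{prop:-1.1.1} combined with the vanishing property of pro-$\ell$ Magnus coefficients on lower central subgroups recorded in Remark \ref{rem:2.8.6}(2). The key point is that almost every summand in the general formula is killed by the hypothesis $f \in \Gamma_k(\Fn)$.

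First, I would apply Proposition \ref{prop:-1.1.1} with $m = k$, writing out the full expression
\[
\langle \alpha_1, \ldots, \alpha_k \rangle_{\mathcal{A}}(\eta) = \sum_{r=1}^k (-1)^{r+1} \sum_{c_1 + \cdots + c_r = k} \sum_{1 \leq i_1, \ldots, i_r \leq n} a_{1,1+c_1}(g_{i_1}) \cdots a_{1+c_1+\cdots+c_{r-1},\, k+1}(g_{i_r})\, \mu(I;f),
\]
where $I = (i_1 \cdots i_r)$ has length $r$. By Remark \ref{rem:2.8.6}(2), since $f \in \Gamma_k(\Fn)$, we have $\mu(I; f) = 0$ for every multi-index $I$ of length $r < k$. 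Thus all terms with $r \leq k-1$ vanish, and only the contribution from $r = k$ remains.

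Next, I would analyze the surviving $r = k$ term. The constraint $c_1 + \cdots + c_k = k$ with each $c_i \geq 1$ forces $c_i = 1$ for all $i$. Consequently the telescoping indices in the product collapse: $a_{1 + c_1 + \cdots + c_{j-1},\, 1 + c_1 + \cdots + c_j} = a_{j,\, j+1}$ for each $1 \leq j \leq k$. Since the defining system $\mathcal{A}$ satisfies $[a_{j,j+1}] = \alpha_j$ and $a_{j,j+1}$ is a representative $1$-cocycle, we have $a_{j,j+1}(g_{i_j}) = \alpha_j(g_{i_j})$ when viewed as an element of $A$ (i.e.\ evaluated on the generator). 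The sign is $(-1)^{r+1} = (-1)^{k+1}$.

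Putting these observations together gives
\[
\langle \alpha_1, \ldots, \alpha_k \rangle_{\mathcal{A}}(\eta) = (-1)^{k+1} \sum_{I = (i_1 \cdots i_k)} \alpha_1(g_{i_1}) \cdots \alpha_k(g_{i_k})\, \mu(I; f),
\]
which is the stated formula. There is no real obstacle here; the only subtlety is the bookkeeping with the partitions $c_1 + \cdots + c_r = k$, and the observation that the extremal case $r = k$ forces all $c_i = 1$, collapsing the telescoped indices into the consecutive pairs $(j, j+1)$. Everything else is a direct substitution.
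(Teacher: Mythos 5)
Your proof is correct and follows exactly the route the paper intends: the paper states the corollary "immediately" follows from Proposition \ref{prop:-1.1.1} together with the vanishing $\mu(I;f)=0$ for $f\in\Gamma_k(\Fn)$ and $|I|<k$, which is precisely your argument. Your additional bookkeeping (that $c_1+\cdots+c_k=k$ with $c_i\geq 1$ forces $c_i=1$, collapsing the array entries to $a_{j,j+1}$ with $[a_{j,j+1}]=\alpha_j$) just makes explicit what the paper leaves to the reader.
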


\begin{remark} \label{rem:-1.1.3}
Here, we follow the sign convention in \cite{Dwy}, \cite{Mo1} and \cite{W1}. This is different from \cite{May} and \cite{Kr}.
\end{remark}

\subsection{A basis of $H^2(\Fn/\Gamma_k(\Fn); \mathbb{F})$ in terms of Massey products}

We describe a basis of $\Hom_{\mathbb{Z}_{\ell}}(H_2(\Fn/\Gamma_k(\Fn); \mathbb{Z}_{\ell}), \mathbb{Z}_{\ell})$ and $H^2(\Fn/\Gamma_k(\Fn); \mathbb{F})$ via Massey products. Here $\mathbb{F}=\mathbb{Q}_{\ell}$ or $\mathbb{F}_{\ell}$. 

To begin with, let us recall the notion of {\it Lyndon basis} or {\it standard basis} of $\mathcal{L}_k\simeq \Gamma_k(F_n)/\Gamma_{k+1}(F_n)$ following \cite{CFL}.

Let $\mathcal{I}$ be the set of all multi-indices $I=(i_1 \cdots i_m)$ with $1 \leq i_1, \ldots, i_k \leq n$. Suppose that $\mathcal{I}$ is endowed with lexicographic order induced from that of $\{1, \ldots, n\}$. An {\it end} of a multi-index $I=(i_1 \cdots i_k) \in \mathcal{I}$  is proper subindex $I_m :=(i_m i_{m+1} \cdots i_k)$ for each $m \geq 2$. A multi-index $I \in \mathcal{I}$ is said to be {\it standard} if $I < I_m$ for any $m \geq 2$ with respect to lexicographic order of $\mathcal{I}$. Then, any standard multi-index $I$ factorise into two standard indices $I=I_1 I_2$ so that $I_2$ is the longest standard end. We denote it by $I=(I_1, I_2)$. By iterating this factorisation, we obtain the indices with bracketing such that each index in bracket is standard. A standard multi-index $I=(i_1\cdots i_k)$ is also called a {\it Lyndon word} of length $k$ over $\{1,\ldots, n\}$.\footnote{Lyndon words can be computable in terms of module {\tt LyndonWords} in Sage Mathematics Software \cite{sage}} Let $LW_k$ be the set of Lyndon words of length $k$ over $\{1,\ldots, n\}$.
\begin{example}\label{ex:-1.2.1}
(1) The multi-indices $(122)$ and $(1122)$ are Lyndon words, but $(121)$ and $(1312)$ are not. (2) The Lyndon words $(122)$ and   $(1112)$ factorise into $(122)= ((12)2)$ and $(1112)=(1(1(12)))$ respectively. 
\end{example}
Inductively, we define a map
\begin{equation} \label{eq:-1.2.1}
	e : LW_{k} \rightarrow \mathcal{L}_k
\end{equation}
which assign elements of $\mathcal{L}_k$ corresponding to  Lyndon words as follows: For $k=1$ set $e(i)=X_i$, and for $k > 1$, set $e(I) = [e(I_1), e(I_2)]$ corresponding to standard factorisation of a Lyndon word $I=(I_1, I_2)$. It turns out that the image of $e$ forms a basis of $\mathcal{L}_k$. We  denote it by $LB_k$ and call it  {\it Lyndon basis} or {\it standard commutator} of $\mathcal{L}_k$. 
\begin{example} \label{ex:-1.2.2}
	 For Lyndon words $(122)$ and $(1112)$ as above, the corresponding elements of Lyndon basis  are given by  $e((122))=[[X_1, X_2] X_2]$ and $e((1112))=[X_1[X_1[X_1, X_2]]]$.
\end{example}

Next, we consider a defining system of Massey product obtained from Magnus coefficients as follows:

Let $x_i^{\ast} : F_n \rightarrow \mathbb{Z}$ be a homomorphism given by $x_i^{\ast}(x_j) = \delta_{ij}$ for $1 \leq i,j \leq n$. Here $\delta_{ij}$ denotes the Kronecker delta, that is, $\delta_{ij} = 1$ if $i=j$ and $\delta_{ij}=0$ otherwise. By abuse of notation, we denote its extension to $\Fn \rightarrow \mathbb{Z}_{\ell}$ by the same $x_i^{\ast}$.

As in equation \eqref{eq:2.8.5}, we consider the Magnus coefficient with respect to a multi-index $I$ as a homomorphism
\begin{equation} \label{eq:-1.2.2}
	\mu(I; -) : \Fn \rightarrow \mathbb{Z}_{\ell}
\end{equation}
and also as a 1-cocycle in $Z^1(\Fn, \mathbb{Z}_{\ell})$. Then, for a multi-index $I=(i_1 \cdots i_{m})$ of length $m$ and for $1 \leq i < j \leq m+1$, we set
\begin{equation} \label{eq:-1.2.3}
	\mu_{ij}:= \mu((i_ii_{i+1} \cdots i_{j-1}); -) : \Fn/\Gamma_k(\Fn)  \rightarrow \mathbb{Z}_{\ell}
\end{equation}
the induced 1-cocycle on $k$-th nilpotent quotient $\Fn/\Gamma_k(\Fn)$. Then, we denote by $\mathcal{A}$ the array consisting of $-\mu_{ij}$ with $1 \leq i < j \leq m+1$. Then, we see that $\mathcal{A}=(-\mu_{ij})$ forms a defining system  for $\langle -x^{\ast}_{i_1}, \ldots, -x^{\ast}_{i_{m}} \rangle$. In fact $\mu_{i,i+1} = \mu(i_i; -) = x^{\ast}_{i_i}$ and we can check
\begin{equation} \label{eq:-1.2.4}
	d \mu_{ij} = \sum_{m=i+1}^{j-1} \mu_{ik} \smile \mu_{k j}
\end{equation}
for $j \neq i+1$ since we have
\begin{equation} \label{eq:-1.2.5}
\mu(I; g_1 g_2) = \sum_{(I_1 I_2) = I} \mu(I_1; g_1) \mu(I_2; g_2).	
\end{equation}
Therefore, the resulting Massey product
\begin{equation} \label{eq:-1.2.6}
	\langle -x^{\ast}_{i_1}, \ldots, -x^{\ast}_{i_{m}} \rangle_{\mathcal{A}} \in H^2(\Fn/\Gamma_k(\Fn); \mathbb{Z}_{\ell})
\end{equation}
is represented by the following 2-cocycle:
\begin{equation} \label{eq:-1.2.7}
	\sum_{1\leq l \leq m} \mu((i_1i_2 \cdots i_l); -) \smile \mu((i_{l+1} \cdots i_m); -).
\end{equation}

Then, we have the following pro-$\ell$ version of result in \cite{O3}.
\begin{theorem}	\label{thm:-1.2.3}
Let $k \geq 2$ be a fixed integer. Then, following statements hold:
\begin{enumerate}[label=$(\arabic{enumi})$]
	\item Any  Massey product $\langle -x_{i_1}^{\ast}, \ldots, -x_{i_m}^{\ast} \rangle$ with $m < k$ vanishes. In particular, any  Massey product $\langle -x_{i_1}^{\ast}, \ldots, -x_{i_k}^{\ast} \rangle \in H^2(\Fn/\Gamma_{k}(\Fn); \mathbb{Z}_{\ell})$ is uniquely defined for each standard index $I=(i_1 \cdots i_k)$.
	\item The Massey products $\langle -x_{i_1}^{\ast}, \ldots, -x_{i_k}^{\ast} \rangle$ with standard indices $I=(i_1 \cdots i_k)$ form a $\mathbb{Z}_{\ell}$-basis for the dual module of the second homology group $\Hom_{\mathbb{Z}_{\ell}}(H_2(\Fn/\Gamma_{k}(\Fn);\mathbb{Z}_{\ell}), \mathbb{Z}_{\ell})$ under the surjective homomorphism
	\begin{equation} \label{eq:-1.2.8}
		H^2(\Fn/\Gamma_k(\Fn); \mathbb{Z}_{\ell}) \rightarrow \Hom_{\mathbb{Z}_{\ell}}(H_2(\Fn/\Gamma_{k}(\Fn);\mathbb{Z}_{\ell}), \mathbb{Z}_{\ell}).
	\end{equation}
\end{enumerate}
\end{theorem}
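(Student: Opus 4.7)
The plan is to run the strategy of Orr \cite{O3} in the pro-$\ell$ setting, using the Hopf isomorphism and the explicit formula of Corollary~\ref{cor:-1.1.2}. A preliminary observation that underlies everything: since $H_1(\Fn/\Gamma_k(\Fn); \mathbb{Z}_{\ell}) \simeq \mathbb{Z}_{\ell}^{\oplus n}$ is free, the universal coefficient theorem (applied to $\sigma_{\geq 2}$ of a free resolution, as in Corollary~\ref{eq:6.3.9}) makes the comparison map
$$H^2(\Fn/\Gamma_k(\Fn); \mathbb{Z}_{\ell}) \longrightarrow \Hom_{\mathbb{Z}_{\ell}}(H_2(\Fn/\Gamma_k(\Fn); \mathbb{Z}_{\ell}), \mathbb{Z}_{\ell})$$
an \emph{isomorphism}. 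Composing with the Hopf isomorphism (Corollary~\ref{eq:6.3.9} together with $\Gamma_{k+1}(\Fn) = [\Gamma_k(\Fn), \Fn]$), we identify $H_2(\Fn/\Gamma_k(\Fn); \mathbb{Z}_{\ell})$ with $\Gamma_k(\Fn)/\Gamma_{k+1}(\Fn) \simeq \mathcal{L}_k$, a free $\mathbb{Z}_{\ell}$-module of rank $N_k$. Thus checking vanishing or independence of cohomology classes reduces to evaluating against classes $\eta_f := (\mathrm{tg}^{\vee})^{-1}(f \bmod [\Gamma_k(\Fn), \Fn])$ for $f \in \Gamma_k(\Fn)$.

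For part (1), fix $m < k$ and any defining system $\mathcal{A} = (a_{ij})$ for $\langle -x_{i_1}^{\ast}, \ldots, -x_{i_m}^{\ast} \rangle$. Proposition~\ref{prop:-1.1.1} expresses the value $\langle -x_{i_1}^{\ast}, \ldots, -x_{i_m}^{\ast} \rangle_{\mathcal{A}}(\eta_f)$ as a sum of products of $a_{\ast\ast}(g_{\ast})$'s multiplied by Magnus coefficients $\mu(J; f)$ where $J$ is a multi-index of length $r$ with $1 \le r \le m$. Because $f \in \Gamma_k(\Fn)$ and $|J| \le m < k$, Remark~\ref{rem:2.8.6}(2) forces every such $\mu(J; f)$ to vanish, so $\langle -x_{i_1}^{\ast}, \ldots, -x_{i_m}^{\ast} \rangle_{\mathcal{A}}$ annihilates every $\eta_f$. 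Under the above identification this means the cohomology class is zero, independently of $\mathcal{A}$. Hence the whole Massey product subset reduces to $\{0\}$. Uniqueness (and thus well-definedness) of the length-$k$ Massey products now follows directly from Remark~\ref{rem:-1.1.7}(2), because every proper subproduct of length $< k$ vanishes.

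For part (2), Corollary~\ref{cor:-1.1.2} applied with $\alpha_j = -x_{i_j}^{\ast}$ and $g_i = x_i \bmod \Gamma_k(\Fn)$ gives
$$\langle -x_{i_1}^{\ast}, \ldots, -x_{i_k}^{\ast} \rangle(\eta_f) = (-1)^{k+1}\sum_{J=(j_1\cdots j_k)} (-\delta_{i_1 j_1})\cdots(-\delta_{i_k j_k})\,\mu(J;f) = -\mu(I; f),$$
so that, as a functional on $\mathcal{L}_k$, $\langle -x_I^{\ast} \rangle$ coincides with $-\mu(I;-)|_{\mathcal{L}_k}$. Since $|LW_k| = N_k = \mathrm{rank}_{\mathbb{Z}_{\ell}}\mathcal{L}_k$ by Witt's formula, it is enough to show that the $N_k \times N_k$ pairing matrix
$$M = \bigl(\mu(I; e(J))\bigr)_{I, J \in LW_k}$$
is invertible over $\mathbb{Z}_{\ell}$. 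This is where one invokes the classical Chen--Fox--Lyndon/Reutenauer combinatorics of Lyndon words: under a suitable total order on $LW_k$, the expansion of the standard commutator $e(J)$ in the Magnus algebra has the monomial $X_{j_1}\cdots X_{j_k}$ as leading term with coefficient $1$, and all other monomials appearing correspond to strictly larger multi-indices (which are either non-Lyndon or Lyndon words beyond the diagonal entry). Consequently $M$ is unitriangular with $\pm 1$ on the diagonal, so $\det M \in \mathbb{Z}_{\ell}^{\times}$, and the Massey products $\{\langle -x_I^{\ast}\rangle\}_{I \in LW_k}$ form a $\mathbb{Z}_{\ell}$-basis of $\Hom_{\mathbb{Z}_{\ell}}(H_2(\Fn/\Gamma_k(\Fn); \mathbb{Z}_{\ell}), \mathbb{Z}_{\ell})$.

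The main technical obstacle is the triangularity of $M$ in the last step: it is a purely combinatorial statement in the free Lie algebra over $\mathbb{Z}$, but one must choose the correct ordering on Lyndon words (and orient the definition of the map $e$) so that the diagonal coefficient is literally $1$ and not merely a nonzero constant. Once the integral version is established, the pro-$\ell$ setting is formally identical since the pairing is degree-homogeneous and $\mathbb{Z}_{\ell}$ is a flat $\mathbb{Z}$-algebra; the rest of the argument (naturality of Massey products, Hopf, UCT) is formal.
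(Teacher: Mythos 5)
Your proposal is correct and follows essentially the same route as the paper: part (1) via Proposition~\ref{prop:-1.1.1} together with the vanishing of length~$<k$ Magnus coefficients on $\Gamma_k(\Fn)$ and Remark~\ref{rem:-1.1.7}(2), and part (2) via Corollary~\ref{cor:-1.1.2} combined with the Chen--Fox--Lyndon relation between standard commutators $e(J)$ and Magnus coefficients $\mu(I;-)$. The only differences are cosmetic: you invoke unitriangularity of the pairing matrix $\bigl(\mu(I;e(J))\bigr)$ where the paper uses the sharper CFL duality $\mu(I;e(J))=\delta_{IJ}$, and you make explicit the universal-coefficient identification $H^2 \simeq \Hom_{\mathbb{Z}_{\ell}}(H_2,\mathbb{Z}_{\ell})$ (needed to pass from ``pairs to zero with all of $H_2$'' to ``is the zero class''), which the paper leaves implicit.
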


\begin{proof}
(1) By Corollary \ref{eq:6.3.9}, we have $H_2(\Fn/\Gamma_k(\Fn); \mathbb{Z}_{\ell}) \simeq \Gamma_k(\Fn)/\Gamma_{k+1}(\Fn)$. We know that for such an element $g$, $\mu(I; g) = 0$ with any multi-index $I$ of length $<k$ by Remark \ref{rem:2.8.6} (2). Therefore, any Massey products with any multi-index $I$ of length $m<k$ vanish by Proposition \ref{prop:-1.1.1}. The uniqueness follows from Remark \ref{rem:-1.1.7} (2).\\
(2) In terms of the Lyndon basis, there is a homomorphism
\begin{equation} \label{eq:-1.2.9}
	\bigoplus_{I \in LW_k} \mu(I;-) : \Gamma_k(F_n) / \Gamma_{k+1}(F_n) \rightarrow \mathbb{Z}^{\oplus N_k}
\end{equation}
given by 
\begin{equation} \label{eq:-1.2.10}
	 \Gamma_k(F_n) / \Gamma_{k+1}(F_n) \ni g=\sum_{I \in LW_k} \mu(I; g) \cdot e(I) \longmapsto (\mu(I; g))_{I \in LW_k} \in \mathbb{Z}^{\oplus N_k}.
\end{equation}
It is known that $\bigoplus_{I \in LW_k}$ is surjective and so isomorphism (cf. \cite[Theorem 3.5 and 3.9]{CFL}). By abuse of notation, we denote by the same $\bigoplus_{I \in LW_k} \mu(I;-)$ the extension
\begin{equation} \label{eq:-1.2.11}
	\bigoplus_{I \in LW_k} \mu(I;-) : \Gamma_k(\Fn) / \Gamma_{k+1}(\Fn) \rightarrow \mathbb{Z}_{\ell}^{\oplus N_k},
\end{equation}
and we see that this also gives an isomorphism. In fact, this follows from  $\Gamma_k(F_n) / \Gamma_{k+1}(F_n) \widehat{\ }^{(\ell)} \simeq \Gamma_k(\Fn) / \Gamma_{k+1}(\Fn)$. 

Let  $e^{\ast}(I)$ denote the Kronecker dual to $e(I)$ for each standard indices $I=(i_1 \cdots i_k)$. Then, by applying  Corollary \ref{cor:-1.1.2} to $\langle - x^{\ast}_{i_1}, \ldots, -x^{\ast}_{i_k} \rangle_{\mathcal{A}}$, for $g=\sum_{J \in LW_k} \mu(J; g) \cdot e(J)$, we have
\begin{eqnarray*}
	\langle - x^{\ast}_{i_1}, \ldots, -x^{\ast}_{i_k} \rangle_{\mathcal{A}}(g) &=& (-1)^{k+1} \sum_{J=(j_1 \cdots j_k)} (x^{\ast}_{i_1}(x_{j_1}) \cdots x^{\ast}_{i_k}(x_{j_k}) \mu(J; g) \\
	&=& (-1)^{k+1} \mu((i_1 \cdots i_k; g)\\
	&=& (-1)^{k+1} \langle e^{\ast}(I), g \rangle.
\end{eqnarray*}
Here, $\langle -, - \rangle$ denotes the Kronecker paring. Therefore, as an element of $\Hom_{\mathbb{Z}_{\ell}}(H_2(\Fn/\Gamma_{k}(\Fn); \mathbb{Z}_{\ell}), \mathbb{Z}_{\ell})$, the 2-cocycle $\langle - x^{\ast}_{i_1}, \ldots, -x^{\ast}_{i_k} \rangle_{\mathcal{A}}$ is identified with the Kronecker dual $(-1)^{k+1} e^{\ast}(I)$. Thus, the 2-cocycles $\{ \langle -x^{\ast}_{i_1}, \ldots, -x^{\ast}_{i_k}\rangle | I \in LW_k\}$ forms a basis of  $\Hom_{\mathbb{Z}_{\ell}}(H_2(\Fn/\Gamma_{k}(\Fn); \mathbb{Z}_{\ell}), \mathbb{Z}_{\ell})$. This completes the proof.
\end{proof}

We complete this section by giving Massey products description of a basis for the second cohomology groups $H^{2}(\Fn/\Gamma_{k}(\Fn); \mathbb{F})$ with coefficients in $\mathbb{F}=\mathbb{Q}_{\ell}$ or $\mathbb{F}_{\ell}$, as a corollary of above discussions. 
\begin{corollary} \label{cor:7.2.6}
Let $\mathbb{F}$ denote $\mathbb{Q}_{\ell}$ or $\mathbb{F}_{\ell}$.
	Then, the following statement holds:
		The  Massey products $\langle -x_{i_1}^{\ast}, \ldots, -x_{i_k} \rangle$ with standard indices $I=(i_1 \cdots i_k)$ form a $\mathbb{F}$-basis for the second cohomology group  $H^2(\Fn/\Gamma_k(\Fn); \mathbb{F})$ with coefficients in $\mathbb{F}$.

\end{corollary}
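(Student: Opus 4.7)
The plan is to deduce Corollary \ref{cor:7.2.6} from Theorem \ref{thm:-1.2.3} together with the universal coefficient theorem for profinite group cohomology, exploiting the freeness of low-dimensional homology of $\Fn/\Gamma_k(\Fn)$ with $\mathbb{Z}_\ell$-coefficients established in earlier sections.

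First I would record that the relevant integral homology is torsion-free: by \eqref{eq:5.5.2} we have $H_1(\Fn/\Gamma_k(\Fn);\mathbb{Z}_\ell)\simeq \mathbb{Z}_\ell^{\oplus n}$, and by \eqref{eq:5.5.3} together with Corollary \ref{eq:6.3.9} we have $H_2(\Fn/\Gamma_k(\Fn);\mathbb{Z}_\ell)\simeq \mathbb{Z}_\ell^{\oplus N_k}$, both of which are free of finite rank over $\mathbb{Z}_\ell$. Consequently the profinite universal coefficient short exact sequence
\[
0 \longrightarrow \Ext^1_{\mathbb{Z}_\ell}\!\bigl(H_1(\Fn/\Gamma_k(\Fn);\mathbb{Z}_\ell),\mathbb{Z}_\ell\bigr)\longrightarrow H^2(\Fn/\Gamma_k(\Fn);\mathbb{Z}_\ell)\longrightarrow \Hom_{\mathbb{Z}_\ell}\!\bigl(H_2(\Fn/\Gamma_k(\Fn);\mathbb{Z}_\ell),\mathbb{Z}_\ell\bigr)\longrightarrow 0
\]
has vanishing Ext-term, so the surjection in Theorem \ref{thm:-1.2.3}(2) is in fact an isomorphism. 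Combining this with Theorem \ref{thm:-1.2.3}(2), the family $\{\langle -x^{\ast}_{i_1},\ldots,-x^{\ast}_{i_k}\rangle\}_{I\in LW_k}$ is a $\mathbb{Z}_\ell$-basis of $H^2(\Fn/\Gamma_k(\Fn);\mathbb{Z}_\ell)$.

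Next I would pass from $\mathbb{Z}_\ell$-coefficients to $\mathbb{F}$-coefficients by a second application of the universal coefficient theorem. For $\mathbb{F}=\mathbb{Q}_\ell$ flatness gives immediately $H^2(\Fn/\Gamma_k(\Fn);\mathbb{Q}_\ell)\simeq H^2(\Fn/\Gamma_k(\Fn);\mathbb{Z}_\ell)\otimes_{\mathbb{Z}_\ell}\mathbb{Q}_\ell$. For $\mathbb{F}=\mathbb{F}_\ell$ the UCT reads
\[
0 \longrightarrow H^2(\Fn/\Gamma_k(\Fn);\mathbb{Z}_\ell)\otimes_{\mathbb{Z}_\ell}\mathbb{F}_\ell \longrightarrow H^2(\Fn/\Gamma_k(\Fn);\mathbb{F}_\ell) \longrightarrow \Tor^{\mathbb{Z}_\ell}_1\!\bigl(H^3(\Fn/\Gamma_k(\Fn);\mathbb{Z}_\ell),\mathbb{F}_\ell\bigr)\longrightarrow 0,
\]
and I would check vanishing of the Tor-term either by using the dual formulation based on $H_1$ and $H_2$ (both free, so $\Ext^1$ vanishes), or directly from the torsion-freeness of $H^2(\Fn/\Gamma_k(\Fn);\mathbb{Z}_\ell)$ already obtained. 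In either case one gets $H^2(\Fn/\Gamma_k(\Fn);\mathbb{F})\simeq H^2(\Fn/\Gamma_k(\Fn);\mathbb{Z}_\ell)\otimes_{\mathbb{Z}_\ell}\mathbb{F}$, so it is free of rank $N_k$ over $\mathbb{F}$.

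Finally, I would invoke naturality of Massey products with respect to the coefficient map $\mathbb{Z}_\ell\to\mathbb{F}$ (cf. Remark \ref{rem:-1.1.7}(3) applied to the coefficient change, which preserves defining systems and 2-cocycle representatives as in \eqref{eq:-1.2.7}): the image of $\langle -x^{\ast}_{i_1},\ldots,-x^{\ast}_{i_k}\rangle_{\mathcal{A}}\in H^2(\Fn/\Gamma_k(\Fn);\mathbb{Z}_\ell)$ under the canonical map $H^2(-;\mathbb{Z}_\ell)\to H^2(-;\mathbb{F})$ coincides with the Massey product computed with $\mathbb{F}$-coefficients via the reduced defining system. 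Since a $\mathbb{Z}_\ell$-basis of the free module $H^2(\Fn/\Gamma_k(\Fn);\mathbb{Z}_\ell)$ descends to an $\mathbb{F}$-basis of $H^2(\Fn/\Gamma_k(\Fn);\mathbb{Z}_\ell)\otimes_{\mathbb{Z}_\ell}\mathbb{F}\simeq H^2(\Fn/\Gamma_k(\Fn);\mathbb{F})$, the claim follows. No step should present real difficulty; the only point requiring a little care is the second UCT step for $\mathbb{F}_\ell$-coefficients, which is why I would set it up by confirming torsion-freeness of the integral $H^2$ first.
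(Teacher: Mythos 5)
Your proposal is correct and rests on the same underlying idea as the paper's proof: reduce everything to the identification of the Massey products with the Kronecker duals of the Lyndon basis (Theorem \ref{thm:-1.2.3}) plus a dimension count showing $\dim_{\mathbb{F}}H^2(\Fn/\Gamma_k(\Fn);\mathbb{F})=N_k$, which in both arguments ultimately comes from the torsion-freeness of $H_1$ and $H_2$ with $\mathbb{Z}_\ell$-coefficients. The organization differs slightly: you first upgrade the surjection of Theorem \ref{thm:-1.2.3}(2) to an isomorphism onto a free integral $H^2$ and then base-change to $\mathbb{F}$, invoking naturality of Massey products under coefficient reduction, whereas the paper compares $\dim_{\mathbb{F}}$ of cohomology directly with $\mathrm{rank}_{\mathbb{Z}_\ell}$ of homology, using Pontryagin duality for $\mathbb{F}=\mathbb{F}_\ell$ and the injectivity of $\mathbb{Q}_\ell$ for $\mathbb{F}=\mathbb{Q}_\ell$. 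Your route has the small advantage of making the integral statement ($H^2(\Fn/\Gamma_k(\Fn);\mathbb{Z}_\ell)$ is free with the Massey products as basis) explicit. One imprecision: in the Bockstein-type sequence for $\mathbb{F}_\ell$-coefficients the obstruction term is $\Tor_1^{\mathbb{Z}_\ell}(H^3(\Fn/\Gamma_k(\Fn);\mathbb{Z}_\ell),\mathbb{F}_\ell)$, so torsion-freeness of $H^2$ is not the right reason for its vanishing; your alternative justification via the universal coefficient sequence $0\to\Ext^1_{\mathbb{Z}_\ell}(H_1,\mathbb{F}_\ell)\to H^2(\cdot;\mathbb{F}_\ell)\to\Hom(H_2,\mathbb{F}_\ell)\to 0$ with $H_1$ free is the correct one (alternatively, $H^3$ is itself free because $H_2$ and $H_3$ are free by Corollary \ref{eq:6.3.9}).
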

\begin{proof}
It is enough to show that $\dim_{\mathbb{F}}(H^{\bullet}(\Fn/\Gamma_k(\Fn); \mathbb{F})) = \mathrm{rank}_{\mathbb{Z}_{\ell}}(H_{\bullet}(\Fn/\Gamma_k(\Fn); \mathbb{Z}_{\ell}))$. 
	When $\mathbb{F}=\mathbb{F}_{\ell}$, this follows from Pontryagin duality  $H^{\bullet}(\Fn/\Gamma_k(\Fn); \mathbb{F}_{\ell}) \simeq H_{\bullet}(\Fn/\Gamma_k(\Fn); \mathbb{F}_{\ell})$. For the case of $\mathbb{F} = \mathbb{Q}_{\ell}$, by the universal coefficient theorem, we have
	\begin{equation} \label{eq:7.2.25}
		H^{\bullet}(\Fn/\Gamma_k(\Fn); \mathbb{Q}_{\ell}) \simeq \Hom_{\mathbb{Z}_{\ell}}(H_{\bullet}(\Fn/\Gamma_k(\Fn); \mathbb{Z}_{\ell}), \mathbb{Q}_{\ell})
	\end{equation}
	since $\mathbb{Q}_{\ell}$ is injective $\mathbb{Z}_{\ell}$-module and so $\Ext^1_{\mathbb{Z_{\ell}}}(-, \mathbb{Q}_{\ell}) = 0$.
\end{proof}

\section{Properties of pro-$\ell$ Orr invariants.} \label{sec:8}

This section examines some (algebraic) properties of pro-$\ell$ Orr invariants $\theta_k^{(\ell)}$ and its image under Hurewicz homomorphism $\tau_k^{(\ell)}$ defined in Section \ref{sec:2} by applying the results in Section \ref{sec:6} and \ref{sec:7},

\subsection{Properties of $\theta_k^{(\ell)}(\sigma,\tau)$}

To begin with,  we show the following properties of pro-$\ell$ Orr invariants.
\begin{theorem}[properties of pro-$\ell$ Orr invariants] \label{thm:8.1.1}
	Let $\mathcal{T}$ be a set of basing and let $G_K[k]$ be the $k$-th Johnson subgroup of absolute Galois group $G_K$ of a number field $K$. Suppose that, for each $\tau \in \mathcal{T}$, we have a representation $G_K \rightarrow \Aut(\Fn)$ given as in section 2.7. Then, the pro-$\ell$ Orr invariant $\theta_k^{(\ell)}(\sigma, \tau) \in \pi_3(K^{(\ell)}_k)$ for based Galois element $(\sigma, \tau) \in G_K[k] \times \mathcal{T}$ satisfies the following properties:
	\begin{enumerate}[label=$(\arabic{enumi})$]
		\item The invariant $\theta_k^{(\ell)}(\sigma, \tau)$ is defined if and only if the length $\leq k$ $\ell$-adic Milnor invariants of $\sigma$ vanish.
		\item The map $\theta^{(\ell)}_k : G_K[k] \times \mathcal{T} \rightarrow \pi_3(K^{(\ell)}_k)$ is additive with respect to the product of $(E_{\sigma}, \tau)$, i.e., we have
		\begin{equation}
			\theta_{k}^{(\ell)}((\sigma_1, \tau_1) \circ (\sigma_2, \tau_2))=\theta_k^{(\ell)}(\sigma_1, \tau_1) + \theta^{(\ell)}_k(\sigma_2, \tau_2).
		\end{equation}
		for $(\sigma_1, \tau_1), (\sigma_2, \tau_2) \in G_K[k] \times \mathcal{T}$. Here, $\theta_{k}^{(\ell)}((\sigma_1, \tau_1) \circ (\sigma_2, \tau_2))$ denotes the pro-$\ell$ Orr invariant obtained from $(E_{\sigma_1}, \tau_1) \circ (E_{\sigma_2}, \tau_2)$.
		In particular, for fixed basing $\tau$, we have
		\begin{equation}
			\theta_k^{(\ell)}(\sigma_1 \circ \sigma_2, \tau) = \theta_k^{(\ell)}(\sigma_1, \tau) + \theta_k^{(\ell)}(\sigma_2, \tau).
		\end{equation}
		\item $\psi_{l,k}(\theta_l^{(\ell)}(\sigma, \tau)) = \theta_k^{(\ell)}(\sigma, \tau)$
	\item $\theta_k^{(\ell)}(\sigma, \tau) \in \Im(\psi_{k+1,k})$ if and only if all length $k+1$ $\ell$-adic Milnor invariants of $\sigma$ vanish.
	\item Let $\tau, \tau'$ be two basing. Then, $\theta_k(\sigma, \tau) \equiv \theta_k(\sigma, \tau')  \mod \Im(\psi_{k+1, k})$.
	\end{enumerate}
\end{theorem}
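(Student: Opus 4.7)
The plan is to treat items (1)--(5) in order, leveraging (a) the Magnus-coefficient characterization of $\Gamma_k(\Fn)$ in Remark~\ref{rem:2.8.6}(2); (b) the pushout construction of $\rho^{(\ell)}\colon S^3 \to K_k^{(\ell)}$ from Lemma~\ref{lem:4.3.6}; (c) the Hurewicz identification $\tau_k^{(\ell)} = h_\ast \theta_k^{(\ell)}$ together with Proposition~\ref{prop:4.4.4}; (d) the matching of the two filtrations $\mathcal{F}^\bullet$ and $\mathcal{G}^\bullet$ (Theorem~\ref{thm:6.3.21}) and its homotopy counterpart (Theorem~\ref{thm:7.2.21}); and (e) the Massey-product basis for $H^2(\Fn/\Gamma_k(\Fn);\mathbb{F})$ (Theorem~\ref{thm:-1.2.3}, Corollary~\ref{cor:7.2.6}) combined with the Massey--Magnus formula (Corollary~\ref{cor:-1.1.2}).

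Item (1) is immediate: $\sigma \in G_K[k]$ is equivalent to $y_i(\sigma) \in \Gamma_k(\Fn)$ for every $i$, which by Remark~\ref{rem:2.8.6}(2) is equivalent to the vanishing of $\mu(I;y_i(\sigma))$ for all multi-indices with $|I|\leq k-1$, i.e., of all $\ell$-adic Milnor invariants $\mu(\sigma;(i_1\cdots i_r))$ of length $r\leq k$; and $\theta_k^{(\ell)}(\sigma,\tau)$ is defined precisely on this subgroup. Item (2) is the additivity theorem proved in Section~3.3, applied to $(E_\sigma,\tau)$. For item (3), the reduction $\Fn/\Gamma_l \twoheadrightarrow \Fn/\Gamma_k$ induces maps $K^{(\ell)}(l)\to K^{(\ell)}(k)$ compatible with the wedge inclusions $\vee\widehat{S}^{1}{}^{(\ell)}\hookrightarrow K^{(\ell)}(\bullet)$, so the defining pushout of Lemma~\ref{lem:4.3.6} is strictly natural in $k$; consequently the representative $\rho^{(\ell)}_l\colon S^3\to K_l^{(\ell)}$ composes with $\psi_{l,k}$ to give $\rho^{(\ell)}_k$, yielding $\psi_{l,k}(\theta_l^{(\ell)}(\sigma,\tau)) = \theta_k^{(\ell)}(\sigma,\tau)$.

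For item (4), Theorem~\ref{thm:7.2.21} identifies $\Im(\psi_{k+1,k}) = \mathcal{G}^{k+2}\pi_3(\widehat{K}_k^{(\ell)})$ with cokernel the free module $\mathbb{Z}_\ell^{\oplus(nN_k - N_{k+1})}$ concentrated in weight $k+1$. By the Hurewicz surjection (Lemma~\ref{lem:4.4.2}) and the compatibility of $\mathcal{G}$-filtrations from Theorem~\ref{thm:6.3.21}, $\theta_k^{(\ell)}(\sigma,\tau)\in\Im(\psi_{k+1,k})$ is equivalent to $\tau_k^{(\ell)}(\sigma,\tau)$ lying in $\mathcal{G}^{k+2}H_3(\Fn/\Gamma_k(\Fn);\mathbb{Z}_\ell)$. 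To identify the projection into the weight-$(k+1)$ quotient, I would pair $\tau_k^{(\ell)}(\sigma,\tau)$ against the Massey-product basis $\langle -x_{i_1}^\ast,\ldots,-x_{i_k}^\ast\rangle$ indexed by Lyndon words (Theorem~\ref{thm:-1.2.3}); by Proposition~\ref{prop:4.4.4} the class $\tau_k^{(\ell)}(\sigma,\tau)$ is the image of $[E_\sigma]$ through the relations $[x_i,y_i(\sigma)]$, and the Massey--Magnus identity (Corollary~\ref{cor:-1.1.2}) then evaluates each pairing to a linear combination of $\mu(I;y_i(\sigma))$ with $|I|=k$, i.e., of length-$(k+1)$ Milnor invariants $\mu(\sigma;(i_1\cdots i_k i))$. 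Since these pairings span the dual of $\mathcal{G}^{k+1}/\mathcal{G}^{k+2}$ (by the Lyndon enumeration on both sides), vanishing of all length-$(k+1)$ Milnor invariants is equivalent to $\theta_k^{(\ell)}(\sigma,\tau)\in\Im(\psi_{k+1,k})$.

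Item (5) is the main obstacle. Two basings $\tau,\tau'$ are related by an automorphism $\psi(x_i) = w_i^{-1} x_i w_i$ fixing $x_n\cdots x_1$. A short computation with $\sigma(x_i)=y_i(\sigma)^{-1}x_i^{\chi(\sigma)}y_i(\sigma)$ shows that, for $\sigma \in G_K[k]$, the reparametrised words satisfy $y_i'(\sigma)\equiv y_i(\sigma)\pmod{\Gamma_{k+1}(\Fn)}$: the correction term involves $w_i^{-1}\sigma(w_i)$, which lies in $\Gamma_{k+1}(\Fn)$ because $\sigma$ acts trivially modulo $\Gamma_{k+1}$. Consequently every length-$(k+1)$ Milnor invariant of $\sigma$ is basing-independent. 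Applying the Lyndon-indexed pairing identification established in (4), the images of $\theta_k^{(\ell)}(\sigma,\tau)$ and $\theta_k^{(\ell)}(\sigma,\tau')$ in $\pi_3(\widehat{K}_k^{(\ell)})/\Im(\psi_{k+1,k})\simeq\mathbb{Z}_\ell^{\oplus(nN_k-N_{k+1})}$ coincide, giving the asserted congruence. The delicate point I expect to require the most care is making sure the Massey--Magnus pairing of (4) is genuinely basis-independent at the level of the cokernel (rather than merely detecting individual coordinates); this should follow from the freeness of the cokernel together with the surjectivity of the Massey-product pairing onto the weight-$(k+1)$ dual, both of which are consequences of Theorems~\ref{thm:7.2.21} and~\ref{thm:-1.2.3}.
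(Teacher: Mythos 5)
Your proposal is correct, and on items (1)--(3) it follows essentially the paper's own route: (1) is the Magnus--coefficient characterization of $\Gamma_k(\Fn)$ from Remark \ref{rem:2.8.6}, (2) is a citation of the additivity theorem of Section \ref{sec:3}, and (3) is the naturality of the pushout construction under the reductions $\Fn/\Gamma_l(\Fn)\to\Fn/\Gamma_k(\Fn)$ (the paper phrases this more tersely in terms of vanishing of graded pieces, but the content is the same). Where you genuinely diverge is in (4) and (5). For the converse of (4) the paper simply asserts that $\theta_k^{(\ell)}(\sigma,\tau)\in\Im(\psi_{k+1,k})$ forces $\theta_{k+1}^{(\ell)}(\sigma,\tau)$ to be defined and then invokes (1); you instead identify the cokernel of $\psi_{k+1,k}$, free of rank $nN_k-N_{k+1}$ by Theorem \ref{thm:7.2.21}, with the span of the length-$(k+1)$ Milnor invariants via the Hurewicz map and the Lyndon-indexed Massey-product pairing. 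This is heavier machinery, but it supplies exactly the justification the paper leaves implicit: the paper's one-line converse is only valid once one knows that the class of $\theta_k^{(\ell)}(\sigma,\tau)$ in the cokernel is $\sum_i[x_i]\otimes[y_i(\sigma)]$, which is what your pairing computes. The one step you should still make explicit is the passage from the Massey evaluations on the classes $\eta_i(\sigma)\in H_2(\pi_1(E_\sigma);\mathbb{Z}_\ell)$ to the weight-$(k+1)$ component of $\tau_k^{(\ell)}(\sigma,\tau)\in H_3(\Fn/\Gamma_k(\Fn);\mathbb{Z}_\ell)$; this rests on Proposition \ref{prop:4.4.4} and is precisely the ``delicate point'' you flag. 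For (5) the paper uses additivity to write $\theta_k^{(\ell)}(\sigma,\tau)-\theta_k^{(\ell)}(\sigma,\tau')=\theta_k^{(\ell)}(h^{-1}\sigma^{-1}h\sigma,\tau)$ and then shows the commutator element has vanishing length-$(k+1)$ invariants, whereas you compare the cocycles directly via $y_i'(\sigma)=\bigl(w_i^{-1}y_i(\sigma)w_i\bigr)\bigl(w_i^{-1}\sigma(w_i)\bigr)\equiv y_i(\sigma)\bmod\Gamma_{k+1}(\Fn)$; do record both correction terms here, the conjugation $w_i^{-1}y_i(\sigma)w_i$ (trivial on $\Gamma_k/\Gamma_{k+1}$ since $y_i(\sigma)\in\Gamma_k(\Fn)$) as well as the coboundary $w_i^{-1}\sigma(w_i)\in\Gamma_{k+1}(\Fn)$. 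Both routes are valid; yours is more self-contained on the converse of (4) at the cost of invoking the filtration results of Sections \ref{sec:5}--\ref{sec:7}, while the paper's is shorter but leans on an unproved identification of the cokernel class.
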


\begin{proof}
(1) Assume that the invariant $\theta^{(\ell)}(\sigma, \tau)$ is defined. Then, by assumption, we know that each $y_i(\sigma)$ lies in $\Gamma_k(\Fn)$, so its Magnus coefficients $\mu(I; y_i(\sigma))$ are all zero with multi-indices of length $\leq k$. Conversely, assume that all the Magnus coefficients $\mu(I; y_i(\sigma)) =0$ for any multi-index $I$ of length $\leq k$, then we see $y_i(\sigma) \in \Gamma_k(\Fn)$. Thus, the invariant $\theta_k^{(\ell)}(\sigma, \tau)$ is defined. \\
(2) This is Theorem 3.4.4 and Corollary 3.4.5. \\
(3) Assume that $\theta_l^{(\ell)}(\sigma, \tau)$ is defined. Then, degree $k$ to $l-1$ part of $\theta_k^{(\ell)}(\sigma, \tau)$ must be zero. This means $\theta_k^{(\ell)}(\sigma, \tau) = \psi_{l,k}(\theta_l^{(\ell)}(\sigma,\tau))$.\\
(4) Suppose that all Magnus coefficients $\mu(I; y_i(\sigma))$ for multi-indices of length $k+1$ vanish. Then, by (1) the invariant $\theta_{k+1}^{(\ell)}(\sigma,\tau)$ is defined. Therefore, applying (3), we have $\theta_k^{(\ell)}(\sigma, \tau) = \psi_{k+1,k}(\theta_{k+1}^{(\ell)}(\sigma, \tau))$. Conversely, if $\theta_{k}^{(\ell)}(\sigma, \tau) \in \Im(\psi_{k+1,k})$, then this means that $\theta_{k+1}^{(\ell)}(\sigma, \tau)$ is defined. By (1), we conclude that  all Magnus coefficients $\mu(I; y_i(\sigma))$ for multi-indices of length $k+1$ vanish. \\
(5) By (2), $\theta_k(\sigma, \tau) - \theta_k(\sigma, \tau') = \theta_k(h^{-1} \circ \sigma^{-1} \circ h \circ \sigma, \tau)$. Here, $h \in \Aut(\Fn)$ such that $\tau = \tau' \circ h$. Since the $\ell$-adic Milnor invariants of length $\leq k$ vanish for $\sigma$ and the first non-vanishing Milnor invariants are additive invariants, length $\leq k+1$ Milnor invariants of $(h^{-1} \circ \sigma^{-1} \circ h \circ \sigma, \tau)$ vanish. In fact, we obtain

\begin{eqnarray*}
	&&\mu(h^{-1} \circ \sigma^{-1} \circ h \circ \sigma; (i_1 \cdots i_{k+1}))\\
	& =& \mu(h^{-1} \circ \sigma^{-1} \circ h; (i_1 \cdots i_{k+1})) + \mu(\sigma; (i_1 \cdots i_{k+1})) \\
	& =& \mu(\sigma^{-1}; (i_1 \cdots i_{k+1})) + \mu(\sigma; (i_1 \cdots i_{k+1})) \\
	& =& 0.
\end{eqnarray*}
The second follows from the fact that $h$ which acts on generators of $\Fn$ by conjugation and the first non-vanishing Milnor invariants are invariant under conjugation of each generator.

Therefore, $\theta_{k+1}^{(\ell)}(h^{-1} \circ \sigma^{-1} \circ h \circ \sigma, \tau)$ is defined. So, by (3), $\psi_{k+1, k}(\theta_{k+1}^{(\ell)}(h^{-1} \circ \sigma^{-1} \circ h \circ \sigma, \tau)) = \theta_{k}^{(\ell)}(h^{-1} \circ \sigma^{-1} \circ h \circ \sigma, \tau) = \theta_k^{(\ell)}(\sigma, \tau) - \theta_k(\sigma, \tau')$.
\end{proof}

\begin{remark}
 Note that in \cite{O1} he proved some geometric properties of his invariant such as realizability (\cite[\S 2]{O1}) in addition to the above properties.
\end{remark}

\subsection{Properties of $\tau_k^{(\ell)}(\sigma,\tau)$}

Next, we study some property of $\tau_k^{(\ell)}(\sigma,\tau)$.

\begin{theorem}[Vanishing condition of $\tau_k^{(\ell)}(\sigma, \tau)$] \label{thm:8.2.1}
	With the same notation as in Theorem 8.1.1, let $\sigma$ be an element in $G_K[k]$. Then, $\tau_k^{(\ell)}(\sigma, \tau)=0$ if and only if $\sigma \in G_K[2k-1]$.
\end{theorem}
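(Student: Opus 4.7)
The forward implication is a formal consequence of the filtration machinery of Section~\ref{sec:5}. If $\sigma \in G_K[2k-1]$, then Theorem~\ref{thm:8.1.1}(1) ensures that $\theta_{2k-1}^{(\ell)}(\sigma,\tau)$ is defined, and by Theorem~\ref{thm:8.1.1}(3) we have $\theta_k^{(\ell)}(\sigma,\tau) = \psi_{2k-1,k}(\theta_{2k-1}^{(\ell)}(\sigma,\tau))$. Applying the Hurewicz homomorphism together with Proposition~\ref{prop:4.4.4} and the naturality of $h_\ast$ with respect to the reduction map $\Fn/\Gamma_{2k-1}(\Fn) \to \Fn/\Gamma_k(\Fn)$, we obtain $\tau_k^{(\ell)}(\sigma,\tau) = \phi_{2k-1,k}(\tau_{2k-1}^{(\ell)}(\sigma,\tau)) \in \mathcal{G}^{2k} H_3(\Fn/\Gamma_k(\Fn);\mathbb{Z}_\ell)$, which vanishes by Theorem~\ref{thm:6.3.21}.

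\textbf{The ``only if'' direction.} Suppose $\sigma \notin G_K[2k-1]$, and let $l$ be the largest integer with $\sigma \in G_K[l]$, so that $k \leq l \leq 2k-2$. The same functoriality argument gives $\tau_k^{(\ell)}(\sigma,\tau) = \phi_{l,k}(\tau_l^{(\ell)}(\sigma,\tau)) \in \mathcal{G}^{l+1} H_3(\Fn/\Gamma_k(\Fn);\mathbb{Z}_\ell)$, and it suffices to show that its image in the graded quotient
\begin{equation*}
\mathcal{G}^{l+1}/\mathcal{G}^{l+2} \;\simeq\; H_3^{(l+1)}(\mathcal{L}/\mathcal{L}_{\geq k};\mathbb{Z}_\ell) \;\simeq\; \mathbb{Z}_\ell^{\oplus (nN_l - N_{l+1})}
\end{equation*}
(Theorem~\ref{thm:6.3.21} combined with Theorem~\ref{thm:5.4.2}) is non-zero. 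Under the Igusa-Orr realisation recalled in Section~\ref{sec:4}, this summand is canonically the kernel of the bracket map $\mathcal{L}_1 \otimes \mathcal{L}_l \to \mathcal{L}_{l+1}$, $X_i \otimes z \mapsto [X_i, z]$. My plan is to identify the projection of $\tau_k^{(\ell)}(\sigma,\tau)$ with the tuple of leading Magnus terms $(\bar y_1(\sigma),\ldots,\bar y_n(\sigma))$, where $\bar y_i(\sigma)$ denotes the class of $y_i(\sigma)$ in $\Gamma_l(\Fn)/\Gamma_{l+1}(\Fn) \simeq \mathcal{L}_l$. This tuple lies in the asserted kernel because the relations $[x_i,y_i(\sigma)] = 1$ in the presentation \eqref{eq:4.1.9} of $\pi_1(E_\sigma)$ force $\sum_i [X_i, \bar y_i(\sigma)] = 0$ in $\mathcal{L}_{l+1}$; and it is non-zero by the maximality of $l$, which guarantees that $\bar y_i(\sigma) \neq 0$ for at least one index $i$.

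\textbf{Main obstacle.} The substantive technical step is the explicit chain-level identification of $\tau_k^{(\ell)}(\sigma,\tau) \bmod \mathcal{G}^{l+2}$ with the leading-term tuple. Using Proposition~\ref{prop:4.4.4}, one represents the generator $[E_\sigma] \in H_3(E_\sigma;\mathbb{Z}_\ell)$ by a 3-chain built from the pushout description of $E_\sigma$ as $M_\sigma \cup_{S^1 \times \widehat{S}^1{}^{(\ell)}} (D^2 \times \widehat{S}^1{}^{(\ell)})$ and pushes it into the free resolution $D_\ast(\Fn/\Gamma_k(\Fn))$ of Theorem~\ref{thm:6.2.13}. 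Its image modulo $\mathcal{F}^{l+2}$ in the weight-graded complex $\mathrm{gr}(D_\ast) \simeq \Lambda_\ast(\mathcal{L}/\mathcal{L}_{\geq k})$ (Corollary~\ref{eq:6.3.9}) is controlled by the 2-cells that realise the relations $[x_i,y_i(\sigma)]=1$, and matching this representative with the Igusa-Orr basis of $H_3^{(l+1)}(\mathcal{L}/\mathcal{L}_{\geq k};\mathbb{Z}_\ell)$ through the May-like spectral sequence of \eqref{cor:6.3.4}, while straightforward in principle, is the only real bookkeeping burden: one must carefully track the Mayer-Vietoris connecting homomorphism $H_3(E_\sigma) \to H_2(S^1 \times \widehat{S}^1{}^{(\ell)})$, the signs in the free resolution, and the weight decomposition in order to pin down the leading term precisely.
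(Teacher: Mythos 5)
Your ``if'' direction is fine and is essentially the paper's argument: $\sigma \in G_K[2k-1]$ gives $\theta_k^{(\ell)}(\sigma,\tau) \in \Im(\psi_{2k-1,k})$ via Theorem 8.1.1\,(3), and the image of this under the Hurewicz map dies because $\mathcal{G}^{2k}H_3(\Fn/\Gamma_k(\Fn);\mathbb{Z}_{\ell}) = \mathcal{F}^{2k}H_3(\Fn/\Gamma_k(\Fn);\mathbb{Z}_{\ell}) = 0$.

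The ``only if'' direction, however, has a genuine gap, and you have flagged it yourself: the entire content of that direction rests on the claim that the projection of $\tau_k^{(\ell)}(\sigma,\tau)$ to $\mathcal{G}^{l+1}/\mathcal{G}^{l+2} \simeq D_l(H)$ is the tuple of leading Magnus terms $(\bar y_1(\sigma),\ldots,\bar y_n(\sigma))$, and this is never proved. This is not mere bookkeeping --- it is the heart of the theorem, namely the precise relation between the (homological) Orr invariant and the first non-vanishing Milnor data, and nothing in the paper up to this point hands you that chain-level identification. (A secondary slip: your reason that the tuple lies in $\ker([-,-]\colon H\otimes\mathcal{L}_l\to\mathcal{L}_{l+1})$ is wrong as stated --- the relations $[x_i,y_i(\sigma)]=1$ hold in $\pi_1(E_\sigma)$, not in $\Fn$; the correct constraint comes from $\sigma(x_0)=x_0^{\chi(\sigma)}$ with $x_0=(x_n\cdots x_1)^{-1}$, which forces $\sum_i[X_i,\bar y_i(\sigma)]=0$.) The paper avoids this computation entirely. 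It observes that, since $h_\ast$ is surjective (Lemma 3.5.1), $\pi_3(\widehat{K}_k^{(\ell)}) \simeq \bigoplus_{i=k}^{2k-1}\mathbb{Z}_{\ell}^{\oplus(nN_i-N_{i+1})}$, and $H_3(\Fn/\Gamma_k(\Fn);\mathbb{Z}_{\ell}) \simeq \bigoplus_{i=k}^{2k-2}\mathbb{Z}_{\ell}^{\oplus(nN_i-N_{i+1})}$, one has an exact sequence
\begin{equation*}
0 \rightarrow \Im(\psi_{2k-1,k}) \rightarrow \pi_3(\widehat{K}_k^{(\ell)}) \overset{h_\ast}{\rightarrow} H_3(\Fn/\Gamma_k(\Fn);\mathbb{Z}_{\ell}) \rightarrow 0,
\end{equation*}
so $\tau_k^{(\ell)}(\sigma,\tau)=0$ if and only if $\theta_k^{(\ell)}(\sigma,\tau)\in\Im(\psi_{2k-1,k})$; Theorem 8.1.1\,(4), applied iteratively, then converts this membership into the vanishing of all $\ell$-adic Milnor invariants of length $\leq 2k-1$, i.e.\ into $\sigma\in G_K[2k-1]$. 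To repair your argument, either supply the leading-term identification in full, or replace the whole ``only if'' direction by this exact-sequence argument, which needs only results already established.
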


\begin{proof}
Since $\tau_k^{(\ell)}(\sigma, \tau)$ is the image of $\theta_k^{(\ell)}(\sigma, \tau)$ under the Hurewicz homomorphism, if $\tau_k^{(\ell)}(\sigma, \tau)$ is defined, then $\theta_k^{(\ell)}(\sigma, \tau)$ is also defined. Note that by Lemma 3.5.1, Corollary 5.3.2 and Theorem 6.2.5, we obtain the following exact sequence
\begin{equation}
	0 \rightarrow \mathbb{Z}_{\ell}^{\oplus n N_{2k-1} - N_{2k}} \rightarrow \pi_3(\widehat{K}_k^{(\ell)}) \rightarrow H_3(\Fn/\Gamma_k(\Fn)) \rightarrow 0.
\end{equation}
Thus, the vanishing of $\tau_k^{(\ell)}(\sigma, \tau)$ is equivalent to  $\theta_{k}^{(\ell)}(\sigma, \tau) \in \Im(\psi_{2k-1,k})$. Assume that $\theta_{k}^{(\ell)}(\sigma, \tau) \in \Im(\psi_{2k-1,k})$. Then, by Theorem 8.1.1 (4), we see that  all length $l$ $(l \leq 2k-1)$ $\ell$-adic Milnor invariants of $\sigma$ vanish. This means $\sigma \in G_K[2k-1]$.

Conversely, assume that $\sigma \in G_K[2k-1]$. Then, by Theorem 8.1.1 (3), $\theta_{2k-1}^{(\ell)}(\sigma, \tau)$ is defined. Now, by Theorem 8.1.1 (3), $\theta_{k}^{(\ell)}(\sigma, \tau) \in \Im(\psi_{2k-1, k})$. Thus, $\tau_k^{(\ell)}(\sigma, \tau)=0$.
\end{proof}

As corollary of Theorem \ref{thm:8.2.1} and Theorem 7.2.3, vanishing of $\tau_k^{(\ell)}(\sigma, \tau)$ can be described by Massey products as follows.
\begin{corollary}
	With notations as in Theorem 7.1.2 and Theorem 8.2.1, let $\sigma$ be an element of $G_K[k]$. Let $y_1(\sigma), \ldots, y_n(\sigma)$ be pro-$\ell$ words given as in Section 2.9. Then, $\tau_k^{(\ell)}(\sigma, \tau)=0$ if and only if, for any $m \leq 2k-1$,  the evaluations of Massey products $\langle -x_{i_1}^{\ast}, \ldots, -x_{i_m}^{\ast} \rangle \in H^2(\pi_1(E_g); \mathbb{Z}_{\ell})$ at $\eta_i(\sigma) := (\mathrm{tg}^{\vee})^{-1}([x_i, y_i(\sigma)] \mod [R, \Fn])$ $(1\leq i \leq n)$,
	\begin{equation}
		\langle -x_{i_1}^{\ast}, \ldots, -x_{i_m}^{\ast} \rangle (\eta_i(\sigma)) = 0
	\end{equation}
	for any index $I=(i_1 \cdots i_m)$ of length $m$.
		\end{corollary}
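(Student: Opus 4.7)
The plan is to derive the corollary by combining Theorem \ref{thm:8.2.1} with the Magnus-coefficient description of Massey products furnished by Proposition \ref{prop:-1.1.1}. By Theorem \ref{thm:8.2.1}, $\tau_k^{(\ell)}(\sigma,\tau)=0$ is equivalent to $\sigma \in G_K[2k-1]$, which via Remark \ref{rem:2.8.6}(2) translates to $\mu(J; y_i(\sigma))=0$ for every $1\leq i\leq n$ and every multi-index $J$ with $|J|\leq 2k-2$. My first step is to recast this in terms of Magnus coefficients of the relator $[x_i, y_i(\sigma)]$ rather than of $y_i(\sigma)$ itself. Expanding $\Theta([x_i, y_i(\sigma)])$ through the pro-$\ell$ Magnus embedding and invoking the normalisation of Section 2.9, the leading nonzero term of $[x_i, y_i(\sigma)]$ has weight exactly one greater than that of $y_i(\sigma)$; here I will use that for $k\ge 2$ the centraliser of $X_i$ in the free Lie algebra is one-dimensional, so the bracket $[X_i, h_d]$ is nonzero whenever $d\ge 2$ and $h_d\ne 0$. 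Hence $y_i(\sigma)\in \Gamma_{2k-1}(\Fn)$ if and only if $[x_i, y_i(\sigma)]\in \Gamma_{2k}(\Fn)$, i.e.\ $\mu(I;[x_i, y_i(\sigma)])=0$ for every multi-index $I$ with $|I|\le 2k-1$.

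Setting $f:=[x_i, y_i(\sigma)]\in R\cap[\Fn,\Fn]$, the Hopf isomorphism $\mathrm{tg}^{\vee}$ sends $\eta_i(\sigma)$ to $f\bmod [R,\Fn]$, so I may apply Proposition \ref{prop:-1.1.1} with the defining system $\mathcal{A}=(-\mu_{ij})$ constructed in Section \ref{sec:7}. This expresses
$$
\langle -x_{i_1}^{\ast},\ldots,-x_{i_m}^{\ast}\rangle_{\mathcal{A}}(\eta_i(\sigma))
$$
as an explicit $\mathbb{Z}_{\ell}$-linear combination of Magnus coefficients $\mu(J;f)$ with $k+1\le |J|\le m$ (the lower bound coming from $f\in \Gamma_{k+1}(\Fn)$). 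The ``only if'' direction is immediate: if $\sigma\in G_K[2k-1]$ then all such $\mu(J;f)$ vanish for $m\le 2k-1$, whence the Massey-product evaluation vanishes.

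For the converse, I will induct on $m$ from $m=k+1$ up to $m=2k-1$. The base case $m=k+1$ is Corollary \ref{cor:-1.1.2}: the evaluation reduces, up to a sign and the Kronecker pairing of Theorem \ref{thm:-1.2.3}, to $\mu((i_1\cdots i_{k+1});f)$, so its vanishing for every such $I$ forces the length-$(k+1)$ Magnus coefficients of $f$ to be zero. In the inductive step, the induction hypothesis kills every lower-length contribution in Proposition \ref{prop:-1.1.1}'s formula, leaving only the length-$m$ term, which is again of Kronecker form and hence forces $\mu(I;f)=0$ for $|I|=m$. Running the induction up to $m=2k-1$ yields $[x_i, y_i(\sigma)]\in\Gamma_{2k}(\Fn)$ for every $i$, so $\sigma\in G_K[2k-1]$, and Theorem \ref{thm:8.2.1} closes the loop. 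The principal technical obstacle will be the bookkeeping in the inductive step: organising the mixed terms in Proposition \ref{prop:-1.1.1}'s expansion by the length filtration so that the induction cleanly isolates the new length-$m$ contribution from the already-vanishing lower ones.
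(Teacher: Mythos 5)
Your proposal is correct and follows essentially the same route the paper intends: the corollary is derived from Theorem 8.2.1 (reducing $\tau_k^{(\ell)}(\sigma,\tau)=0$ to $\sigma\in G_K[2k-1]$) together with the Magnus--Massey dictionary of Section 7 (Proposition 7.1.1 and Corollary 7.1.2). You merely make explicit two steps the paper leaves implicit --- the weight shift $y_i(\sigma)\in\Gamma_{2k-1}(\Fn)\Leftrightarrow[x_i,y_i(\sigma)]\in\Gamma_{2k}(\Fn)$ via the centraliser of $X_i$, and the induction on $m$ that isolates the length-$m$ Magnus coefficients from the lower-order cross terms --- both of which are sound.
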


\section{Applications} \label{sec:9}

In this section, we consider pro-$\ell$ Orr invariants in the context of  Grothendieck's section conjecture \cite{Gr}. In particular, we study its relation with Ellenberg obstruction introduced by Ellenberg in \cite{E} and further studied by Wickelgren in \cite{W1}, \cite{W2} and \cite{W3}. In addition, we give examples of $\ell$-adic Milnor invariants which are equivalent to the vanishing of  $\tau_k^{(\ell)}(\sigma, \tau)$ for $X=\mathbb{P}^1_{\mathbb{Q}}\setminus \{0,1,\infty\}$.

\subsection{Review of Ellenberg's obstruction to $\pi_1$ sections}

To begin with, let us recall the notion of Ellenberg's obstruction introduced by Jordan Ellenberg in \cite{E}.

Let $X \rightarrow \Spec(K)$ be a geometrically connected curve over a number field $K$. Here, a curve over $K$ means a finite type, separated, reduced scheme of dimension 1 over $K$. Fix an embedding $K \hookrightarrow \mathbb{C}$. Let $\overline{K}$ be a fixed algebraic closure of $K$ in $\mathbb{C}$. We set  $X_{\overline{K}}=X \times_{K} \Spec(\overline{K})$ and take   a rational point or rational tangential base point $b$. Let $X(K)$ denote the set of $K$-rational points of $X$. Then, we consider non-abelian Kummer map
\begin{equation}
	\kappa=\kappa_{(X,b)} : X(K) \rightarrow H^1(G_K; \Piet(X_{\overline{K}}, b))
\end{equation}
given by, for $z \in X(K)$, $\sigma \in G_K$ and $\gamma_z \in \Path(b,z)$,
\begin{equation}
	\kappa(z)(\sigma) := [\gamma_z^{-1}\cdot (\sigma(\gamma_z))] \in \Piet(X_{\overline{K}}, b).
\end{equation}
We can easily show that $\kappa(z)$ is 1-cocycle and does not depend on the choice of the path $\gamma_z \in \Path(b,z)$, so is well-defined. It is known that $\kappa$ is an injection. The section conjecture states that for a smooth proper curve $X$ of genus at least 2  over a number field $K$, the non-abelian Kummer map is not only injection but also surjection, that is, a bijection. 

One of the difficult points to study this conjecture 
 comes from fact that  the $\Piet(X_{\overline{K}}, b)$ is huge and complicated group. To remedy this,  Ellenberg considered its approximation using  the tower of nilpotent quotients of $\Piet:=\Piet(X_{\overline{K}}, b)$:
\begin{center}
\begin{tikzcd}
	& \vdots \ar[d]\\
	& H^1(G_K; \Piet / \Gamma_4(\Piet)) \ar[d] \\
	& H^1(G_K; \Piet / \Gamma_3(\Piet)) \ar[d]\\
	X(K) \ar[r] \ar[ur] \ar[uur]& H^1(G_K; \Piet / \Gamma_2(\Piet))
\end{tikzcd}	
\end{center}
Note that with the choice of the base point $b$, $X$ embedded in to its generalized Jacobian $\mathrm{Jac}(X)$ (for generalized Jacobian, see \cite[Chapter V]{Ser1})
\begin{equation}
	X \rightarrow \mathrm{Jac}(X)
\end{equation}
by Abel-Jacobi map and its induced homomorphism on \'etale fundamental groups
\begin{equation}
	\Piet(X_{\overline{K}},b) \rightarrow \Piet(\mathrm{Jac}(X)_{\overline{K}}, b)=\Piet/\Gamma_2(\Piet).
\end{equation}
Since $X$ is embedded into $\mathrm{Jac}(X)$, we have $X(K) \subset \mathrm{Jac}(X)(K)$. Thus, one sees that $H^1(G_K; \Piet/\Gamma_2(\Piet))$ contains a homotopy section comes not from a $K$-rational point in $X(K)$  but from that in $\mathrm{Jac}(X)(K)$.

For each $k \geq 2$, the central extension of profinite groups
\begin{equation}
	1 \rightarrow \Gamma_k(\Piet)/\Gamma_{k+1}(\Piet) \rightarrow \Piet / \Gamma_{k+1}(\Piet) \rightarrow \Piet / \Gamma_k(\Piet) \rightarrow 1
\end{equation}
give rise to a continuous boundary homomorphism $\delta_k$ on cohomology groups 
\begin{equation}
	\delta_k : H^1(G_K; \Piet/\Gamma_{k}(\Piet)) \rightarrow H^2(G_k; \Gamma_k(\Piet) / \Gamma_{k+1}(\Piet)).
\end{equation}

This boundary homomorphism $\delta_k$ is an obstruction to a homotopy section $H^1(G_K; \Piet / \Gamma_k(\Piet))$ to be the image of a homotopy section of $H^1(G_K; \Piet / \Gamma_{k+1}(\Piet))$. That is, for $f \in H^1(G_K; \Piet/\Gamma_k(\Piet))$, if $\delta_k(f)=0$ then there is a lift $\tilde{f}$ of $f$  in $H^1(G_k; \Piet/\Gamma_{k+1}(\Piet))$.  Thus, starting from a homotopy section $f \in H^1(G_K; \Piet/\Gamma_2(\Piet))$,  computing $\delta_2, \ldots, \delta_k, \ldots$ recursively, one can know whether $f$ comes from $K$-rational point of $X(K)$ or not. If there is a positive integer $k$, the value of $\delta_k$ of a lift of $f$ does not vanish, then $f$ must not come from a homotopy section of $H^1(G_K, \Piet)$. As we see in Section 7.1, the Ellenberg obstruction class $\delta_k$ is computable from Massey products. We note that, Wickelgren studied Ellenberg obstruction from this viewpoints in her series of articles \cite{W1}, \cite{W2} and \cite{W3}.

\subsection{Relation between pro-$\ell$ Orr invariants and Ellenberg's obstruction}

Here, we describe the relation between Ellenberg's obstruction and pro-$\ell$ Orr invariants.

Here, we consider the spacial case that $X=\mathbb{P}^1_{K} \setminus \{\infty, a_1, \ldots, a_n \} \rightarrow \Spec(K)$ with $a_1,\ldots, a_n \in K$ as in Section 2.7. Note that we use the same notation as in Section 2.7. 

Then, in terms of $G_K$-action $\varphi$ on the maximal pro-$\ell$ quotient $\Piet = \Piet(X_{\bar{K}}, b_{\infty})^{(\ell)}$ we define the Johnson filtration of $G_K$ with respect to $\varphi$, as
\begin{equation} \label{eq:9.2.1}
	G_K = G_K[0] \supset G_K[1] \supset G_K[2] \supset \cdots \supset G_K[m] \supset \cdots 
\end{equation}

Note that, for $m \leq l$,  the inclusion homomorphism $\iota:G_K[l] \hookrightarrow  G_K[m]$ induces the  homomorphism 
\begin{equation} \label{eq:9.2.2}
	H^1(G_K[m]; \Piet / \Gamma_k(\Piet)) \rightarrow H^1(G_K[l]; \Piet / \Gamma_k(\Piet))
\end{equation}
via pullback along $\iota$. By considering approximation not only via nilpotent quotient by Ellenberg but also via Johnson filtration, we obtain the following double indexed nilpotent tower:
\begin{center}
\begin{tikzcd}
	& \vdots \ar[d] & \vdots \ar[d] & \vdots \ar[d] & \\
	& H^1(G_K; \Piet / \Gamma_4(\Piet)) \ar[d] \ar[r] & H^1(G_K[2]; \Piet / \Gamma_4(\Piet)) \ar[d] \ar[r] & H^1(G_K[3]; \Piet / \Gamma_4(\Piet)) \ar[r] \ar[d]& \cdots \\
	& H^1(G_K; \Piet / \Gamma_3(\Piet)) \ar[d] \ar[r] & H^1(G_K[2]; \Piet / \Gamma_3(\Piet) \ar[r] \ar[d] & H^1(G_K[3]; \Piet / \Gamma_3(\Piet)) \ar[r] \ar[d] & \cdots \\
	X(k) \ar[r] \ar[ur] \ar[uur]& H^1(G_K; \Piet / \Gamma_2(\Piet)) \ar[r] & H^1(G_K[2]; \Piet / \Gamma_2(\Piet)) \ar[r] & H^1(G_K[3]; \Piet / \Gamma_2(\Piet)) \ar[r] & \cdots
\end{tikzcd}	
\end{center}

Corresponding to the double indexed nilpotent tower, there is the double indexed boundary map, for $k\geq 1$ and $m \geq 1$,
\begin{equation} \label{eq:9.2.3}
	\delta_{k, m} : H^1(G_K[m]; \Piet / \Gamma_k(\Piet)) \rightarrow H^2(G_K[m];  \Gamma_k(\Piet)/ \Gamma_{k+1}(\Piet))
\end{equation}
obtained by restricting the boundary map $\delta_k : H^1(G_K; \Piet / \Gamma_k(\Piet)) \rightarrow H^2(G_K;  \Gamma_k(\Piet)/ \Gamma_{k+1}(\Piet))$ to the $m$-th Johnson subgroup $G_K[m]$. 

Then, by definition of Johnson filtration and Ellenberg obstruction, we see the following properties:

\begin{lemma} \label{lem:9.2.1}
Notations being as above, let $y_i : G_K \rightarrow \Piet$ be 1-cocycles corresponding to $G_K$-action on $\Piet$ such that $\sigma(x_i) = y_i(\sigma)^{-1} x_i^{\chi(\sigma)} y_i(\sigma)$. Let $[y_i]_k \in H^1(G_K; \Piet/\Gamma_k(\Piet))$ be the image of $[y_i] \in H^1(G_K; \Piet)$ under the reduction map $H^1(G_K; \Piet) \rightarrow H^1(G_K; \Piet/\Gamma_k(\Piet))$. Then, the following statements hold:
\begin{enumerate}[label=$(\arabic{enumi})$]
	\item For any $k \geq 2$ we have
	\begin{equation} \label{eq:9.2.4}
		\delta_k([y_i]_k) = 0 , \quad (1 \leq i \leq n),
	\end{equation}
	\item Similarly, for any $m \geq 1$ and $k \geq 2$, we have
	\begin{equation} \label{eq:9.2.5}
		\delta_{k,m}([y_i|_{G_K[m]}]) = 0 , \quad (1 \leq i \leq n),
	\end{equation}
	\item For $m \geq 2$ and $m \geq k$, we have
	\begin{equation} \label{eq:9.2.6}
		[y_i|_{G_K[m]}] = 0 \in H^1(G_K[m];  \Piet / \Gamma_k(\Piet))
	\end{equation}
\end{enumerate}
\end{lemma}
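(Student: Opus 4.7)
The plan is to reduce all three statements to direct consequences of the cocycle nature of the $y_i$ and the defining properties of the Johnson filtration. The preliminary step is to verify that each $y_i : G_K \to \Piet$ is a continuous non-abelian 1-cocycle for the natural $G_K$-action on $\Piet$. Comparing the two sides of the identity $(\sigma\tau)(x_i) = \sigma(\tau(x_i))$ using the conjugation formula $\sigma(x_i) = y_i(\sigma)^{-1} x_i^{\chi(\sigma)} y_i(\sigma)$ together with the multiplicativity of $\chi$ yields the non-abelian cocycle relation $y_i(\sigma\tau) = y_i(\sigma)\cdot\sigma(y_i(\tau))$, exactly as already used in \cite{KMT}; in particular $[y_i] \in H^1(G_K; \Piet)$ is a well-defined cohomology class.

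For (1), the class $[y_i]_k$ is by construction the image of $[y_i] \in H^1(G_K; \Piet)$ under the reduction $\Piet \twoheadrightarrow \Piet/\Gamma_k(\Piet)$, which factors through $\Piet/\Gamma_{k+1}(\Piet)$. Hence $[y_i]_{k+1}$ is a preimage of $[y_i]_k$ along the induced map $H^1(G_K; \Piet/\Gamma_{k+1}(\Piet)) \to H^1(G_K; \Piet/\Gamma_k(\Piet))$. Since $\delta_k$ is the connecting map of the central extension
\[
1 \to \Gamma_k(\Piet)/\Gamma_{k+1}(\Piet) \to \Piet/\Gamma_{k+1}(\Piet) \to \Piet/\Gamma_k(\Piet) \to 1
\]
in non-abelian cohomology, it measures precisely the obstruction to the existence of such a lift, so $\delta_k([y_i]_k) = 0$. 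Assertion (2) then follows by naturality: $\delta_{k,m}$ is the composition of $\delta_k$ with restriction along $G_K[m] \hookrightarrow G_K$, and restriction sends the zero class to the zero class.

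For (3), I would invoke Remark 3.1.2 (2), which, under our normalization of the $y_i$, asserts that $\sigma \in G_K[m]$ if and only if $y_i(\sigma) \in \Gamma_m(\Piet)$ for every $i$ (together with $\chi(\sigma) = 1$ when $m \geq 1$). When $m \geq k$ the inclusion $\Gamma_m(\Piet) \subseteq \Gamma_k(\Piet)$ forces the restricted cocycle $y_i|_{G_K[m]}$ to be identically trivial in $\Piet/\Gamma_k(\Piet)$, and in particular to represent the zero class. The only subtle point in the whole argument is bookkeeping of conventions, namely ensuring that the non-abelian $H^1$ groups, the restrictions of coefficient systems along $G_K[m] \hookrightarrow G_K$, and the Ellenberg connecting homomorphism are all set up with respect to the same outer $G_K$-action on $\Piet$; once these conventions are pinned down, each of the three assertions becomes essentially formal.
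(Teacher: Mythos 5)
Your proposal is correct and follows essentially the same route as the paper: part (1) via the existence of the lift $[y_i]_{k+1}$, part (2) by restriction to the subgroup $G_K[m]$, and part (3) from the characterization of the Johnson filtration by $y_i(\sigma)\in\Gamma_m(\Piet)$ together with $\Gamma_m(\Piet)\subseteq\Gamma_k(\Piet)$ for $m\geq k$. The extra verification of the non-abelian cocycle identity and the remarks on conventions are sensible but not a departure from the paper's argument.
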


\begin{proof}
(1) By definition of $[y_i]_k$, this is the image of $[y_i]_{k+1}$ under the reduction $H^1(G_K; \Piet/\Gamma_{k+1}(\Piet)) \rightarrow H^1(G_K; \Piet/\Gamma_k(\Piet))$. Thus, by definition of $\delta_k$, its evaluation $\delta_k([y_i]_k)$ must vanish.\\
(2) This follows from (1) since $G_K[m]$ is subgroup of $G_K$.\\
(3) By definition of Johnson filtration, the restriction $y_i|_{G_K[m]}$ is 1-cocycle   sending $\sigma \in G_K[m]$ to $y_i|_{G_K[m]}(\sigma) \in \Gamma_m(\Piet)$. Since $\Gamma_m(\Piet)/\Gamma_k(\Piet)=0$ for $m \geq k$, $y_i|_{G_K[m]}(\sigma) = 0 \in \Piet/\Gamma_k(\Piet)$ for $m \geq k$. Thus, the assertion follows.
\end{proof}

To relate pro-$\ell$ Orr invariants and Ellenberg obstruction, we give 1-cocycle presentation of them. For this, we prepare the following notation. Let $\mu(I; -) : \Fn \rightarrow \mathbb{Z}_{\ell}$ be the pro-$\ell$ Magnus coefficient function with respect to the multi-index $I$. Then, for a 1-cocycle $f : G_K \rightarrow \Fn$, we set $\mu(I; f): G_K \rightarrow \mathbb{Z}_{\ell}$ which sends any $\sigma \in G_K$ to the pro-$\ell$ Magnus coefficient $\mu(I; f(\sigma)) \in \mathbb{Z}_{\ell}$ of $f(\sigma) \in \Fn$ with respect to $I$. Then, we have the following proposition stating the 1-cocycle version of the relation between the first non-vanishing $\ell$-adic Milnor invariants and the Massey products.

\begin{proposition} \label{prop:9.2.9}
With notations as in the above, the following statement holds: For $m \geq 1$ and $k=m+1$, the restriction of  1-cocycle $y_i$ to $G_K[m]$
\begin{equation} \label{eq:9.2.7}
	[y_i|_{G_K[m]}] \in H^1(G_K[m]; \Piet / \Gamma_{m+1}(\Piet))
\end{equation}
actually lies on $H^1(G_K[m], \Gamma_{m}(\Piet)/\Gamma_{m+1}(\Piet))$ and is explicitly written as
\begin{equation} \label{eq:9.2.8}
	[y_i|_{G_K[m]}] = \bigoplus_{I \in LW_m} \mu(I;y_i|_{G_K[m]}) \cdot e(I) \in H^1(G_K[m]; \Gamma_{m}(\Piet) / \Gamma_{m+1}(\Piet)).
\end{equation}
In particular, the Kronecker pairing of the Massey product $\langle -x_{i_1}^{\ast}, \ldots, -x_{i_m}^{\ast} \rangle$ in $H^2(\Fn/\Gamma_m(\Fn); \mathbb{Z}_{\ell})$ with $[y_i|_{G_K[m]}]$ gives the 1-cocycle
\begin{equation} \label{eq:9.2.9}
	\langle -x_{i_1}^{\ast}, \ldots, -x_{i_m}^{\ast} \rangle (y_i|_{G_K[m]}) = \mu((i_1 \cdots i_m); y_i|_{G_K[m]}) \in H^1(G_K; \mathbb{Z}_{\ell})
\end{equation}
\end{proposition}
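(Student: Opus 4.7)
The proposition splits into three assertions, each of which I would derive directly from material already established in Sections~3, 5, and 7. For the first assertion, that $[y_i|_{G_K[m]}]$ actually factors through $H^1(G_K[m];\Gamma_m(\Piet)/\Gamma_{m+1}(\Piet))$, I would simply invoke the characterisation of the Johnson filtration from Section~3.1: the Remark there records that $\sigma \in G_K[m]$ iff $y_i(\sigma) \in \Gamma_m(\Piet)$ for every $1 \leq i \leq n$. Reducing the cocycle modulo $\Gamma_{m+1}(\Piet)$ therefore lands in the abelian subquotient $\Gamma_m(\Piet)/\Gamma_{m+1}(\Piet)$.

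For the explicit Lyndon-basis expansion, I would invoke the isomorphism $\Gamma_m(\Piet)/\Gamma_{m+1}(\Piet) \simeq \mathcal{L}_m$ provided by the pro-$\ell$ Magnus embedding $\Theta$ (Section~5.1 and the proof of Theorem~7.2.3). Under $\Theta$ the Lyndon basis $\{e(I)\}_{I \in LW_m}$ maps to a $\mathbb{Z}_\ell$-basis of $\Gamma_m(\Piet)/\Gamma_{m+1}(\Piet)$, and the dual coordinate functionals on this free module are precisely the Magnus coefficient maps $\mu(I;-)$. Applying this coordinate decomposition pointwise to $y_i|_{G_K[m]}$ produces the claimed formula
\begin{equation*}
[y_i|_{G_K[m]}] = \bigoplus_{I \in LW_m} \mu(I;y_i|_{G_K[m]}) \cdot e(I).
\end{equation*}

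For the Massey product pairing, I would apply Corollary~7.1.2 verbatim. Since $\sigma \in G_K[m]$ forces $y_i(\sigma) \in \Gamma_m(\Piet)$, that corollary (via the Hopf identification of $H_2(\Fn/\Gamma_m(\Fn);\mathbb{Z}_\ell)$ with $\Gamma_m(\Piet)/\Gamma_{m+1}(\Piet)$) evaluates the Massey product on $\eta = y_i(\sigma)$ as
\begin{equation*}
\langle -x_{i_1}^{\ast},\ldots,-x_{i_m}^{\ast}\rangle(\eta) \;=\; (-1)^{m+1} \sum_{J=(j_1\cdots j_m)} x_{i_1}^{\ast}(x_{j_1})\cdots x_{i_m}^{\ast}(x_{j_m})\,\mu(J;\eta).
\end{equation*}
Because $x_{i_s}^{\ast}(x_{j_s}) = \delta_{i_s j_s}$, only the term $J = (i_1\cdots i_m)$ survives, and the sum collapses to $\mu((i_1\cdots i_m);\eta)$. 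Running this computation for every $\sigma \in G_K[m]$ identifies the two 1-cocycles.

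The proof is essentially a direct bookkeeping exercise, so I do not expect a genuine conceptual obstacle. The only subtle point is the reconciliation of sign conventions: the explicit $(-1)^{m+1}$ appearing in Corollary~7.1.2 (cf.\ Remark~7.1.3) must be absorbed into the convention for $\langle-x_{i_1}^\ast,\ldots,-x_{i_m}^\ast\rangle$ used in the statement of Proposition~9.2.9, and one must check that the profinite transgression/Hopf isomorphism used in Proposition~7.1.1 is indeed the one compatible with the identification $H_2(\Fn/\Gamma_m(\Fn);\mathbb{Z}_\ell) \simeq \Gamma_m(\Piet)/\Gamma_{m+1}(\Piet)$ employed here (this compatibility is ensured by Lemma~2.3.3 together with the continuity of the Magnus coefficient maps).
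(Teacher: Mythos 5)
Your proposal is correct and follows essentially the same route as the paper: the Johnson-filtration characterisation for the first claim, the Lyndon-basis/Magnus-coefficient expansion of $\Gamma_m(\Piet)/\Gamma_{m+1}(\Piet)$ for the second, and the Massey-product duality for the third (the paper cites Theorem 7.2.3, whose proof is exactly the Corollary 7.1.2 computation you carry out). Your explicit flag of the $(-1)^{m+1}$ sign is in fact a point the paper glosses over when stating \eqref{eq:9.2.9}.
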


\begin{proof}
	By definition of Johnson subgroup $G_K[m]$, we know that $y_i(\sigma) \in \Gamma_m(\Fn)$ for $\sigma \in G_K[m]$. Thus, $[y_i|_{G_K[m]}] \in H^1(G_K; \Gamma_m(\Piet)/\Gamma_{m+1}(\Piet)) \subset H^1(G_K; \Piet/\Gamma_{m+1}(\Piet))$. As in equation \eqref{eq:-1.2.10}, any element $g$ in $\Gamma_m(\Fn)/\Gamma_{m+1}(\Fn)$, can be written as linear combination $g=\sum_{I \in LW_m} \mu(I; g)\cdot e(I)$ in terms of Magnus coefficients and standard basis $e(I)$. Therefore, from our definition of $\mu(I; y_i)$, we conclude that $[y_i|_{G_K[m]}]$ is represented by
	\begin{equation} \label{eq:9.2.10}
		y_i|_{G_K[m]} = \sum_{I \in LW_m} \mu(I; y_i|_{G_K[m]})\cdot e(I).
	\end{equation}
	The last statement follows from Theorem \ref{thm:-1.2.3} since $\langle -x_{i_1}^{\ast},\ldots, -x_{i_m}^{\ast} \rangle$ is the Kronecker dual of $e((i_1 \cdots i_m))$ for standard multi-index $I=(i_1\cdots i_m)$.
\end{proof}

Further, we describe the pro-$\ell$ Orr invariants as 1-cocycle as follows.

\begin{proposition} \label{prop:9.2.3}
	Let $\tau$ be a basing and fix it. Let $\varphi : G_K \rightarrow \Aut(\Piet)$ be a representation as above with $\varphi(\sigma)(x_i) = y_i(\sigma)^{-1} x_i^{\chi_{\ell}(\sigma)} y_i(\sigma)$ $(1 \leq i \leq n)$. We define the map
	\begin{equation} \label{eq:9.2.11}
		\theta_m^{(\ell)}: G_K \rightarrow K_m^{(\ell)}
	\end{equation}
	by $\sigma \mapsto \theta_m^{(\ell)}(\sigma, \tau)$ for $\sigma \in G_K$. 
	Then, the following statements about pro-$\ell$ Orr invariants hold:
	\begin{enumerate}[label=$(\arabic{enumi})$]
		\item The map $\theta_m^{(\ell)}$ is defined for $G_K[l]$ with $l \geq m$.
		\item For $l\geq m$, the map $\theta_m^{(\ell)}$ is 1-cocycle and gives a cohomology class
		\begin{equation} \label{eq:9.2.12}
			[\theta_m^{(\ell)}] \in H^1(G_K[l]; \pi_3(K_m^{(\ell)}))
		\end{equation}
		\item For $l\geq 2m$, the map  $\theta_m^{(\ell)}$ is zero map.
	\end{enumerate}
	In particular, for $l \geq m$, the cohomology class $[\theta_m^{(\ell)}]$ lies in
	\begin{equation} \label{eq:9.2.13}
		[\theta_m^{(\ell)}] \in \bigoplus_{k=l}^{2m-1} H^1(G_K[l]; \mathbb{Z}_{\ell}^{\oplus n N_k - N_{k-1}})
	\end{equation}
\end{proposition}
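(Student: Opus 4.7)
My plan is to combine the additivity and functoriality properties of pro-$\ell$ Orr invariants established in Theorem 8.1.1 with the filtration of $\pi_3(\widehat{K}_m^{(\ell)})$ computed in Theorem 7.2.21 and Proposition 7.2.10. Part (1) is essentially tautological: if $\sigma \in G_K[l]$ with $l \geq m$, then by definition of the Johnson filtration $y_i(\sigma) \in \Gamma_l(\Piet) \subset \Gamma_m(\Piet)$, so all $\ell$-adic Milnor $\mu$-invariants of $\sigma$ of length $\leq m$ vanish, and Theorem 8.1.1(1) ensures that $\theta_m^{(\ell)}(\sigma, \tau)$ is defined.

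For part (2), I would first observe that $\pi_3(\widehat{K}_m^{(\ell)})$ is naturally a trivial $G_K$-module, since $K_m^{(\ell)}$ is constructed from the pro-$\ell$ free nilpotent group $\Fn / \Gamma_m(\Fn)$ in a way that involves no Galois datum. Since $G_K[l] \subset G_K[m]$, the additivity from Theorem 8.1.1(2) applied with the fixed basing $\tau$ gives
\begin{equation*}
\theta_m^{(\ell)}(\sigma_1 \sigma_2, \tau) = \theta_m^{(\ell)}(\sigma_1, \tau) + \theta_m^{(\ell)}(\sigma_2, \tau)
\end{equation*}
for all $\sigma_1, \sigma_2 \in G_K[l]$; that is, $\theta_m^{(\ell)}|_{G_K[l]}$ is a group homomorphism into a trivial module, hence a 1-cocycle, and represents a class in $H^1(G_K[l]; \pi_3(\widehat{K}_m^{(\ell)}))$.

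For part (3), if $\sigma \in G_K[l]$ with $l \geq 2m$, then all Milnor invariants of $\sigma$ of length $\leq 2m$ vanish; Theorem 8.1.1(1) therefore guarantees that $\theta_{2m}^{(\ell)}(\sigma, \tau)$ is defined, and Theorem 8.1.1(3) yields $\theta_m^{(\ell)}(\sigma, \tau) = \psi_{2m, m}(\theta_{2m}^{(\ell)}(\sigma, \tau))$. But by Theorem 7.2.21(1), $\Im(\psi_{2m, m}) = \mathcal{G}^{2m+1}\pi_3(\widehat{K}_m^{(\ell)}) = 0$, so $\theta_m^{(\ell)}(\sigma, \tau) = 0$.

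For the final refined inclusion, let $\sigma \in G_K[l]$ with $l \geq m$. Applying Theorem 8.1.1(3) again yields
\begin{equation*}
\theta_m^{(\ell)}(\sigma, \tau) = \psi_{l, m}(\theta_l^{(\ell)}(\sigma, \tau)) \in \Im(\psi_{l, m}) = \mathcal{G}^{l+1}\pi_3(\widehat{K}_m^{(\ell)}),
\end{equation*}
so $[\theta_m^{(\ell)}] \in H^1(G_K[l]; \mathcal{G}^{l+1}\pi_3(\widehat{K}_m^{(\ell)}))$. The desired direct-sum decomposition then follows from the graded-quotient identification in Theorem 7.2.21: its pieces are free $\mathbb{Z}_\ell$-modules of the claimed rank, and because all relevant quotients are torsion-free the filtration splits as an internal direct sum over the range $l \leq k \leq 2m-1$. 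The most delicate point I anticipate is making precise the triviality of the $G_K$-action on $\pi_3(\widehat{K}_m^{(\ell)})$, which amounts to checking that the Galois module structure inherited from the construction of $K_m^{(\ell)}$ genuinely reduces to the additivity formula of Theorem 8.1.1(2); once that is settled, the remaining steps are direct applications of previously proved results.
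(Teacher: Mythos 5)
Your proposal is correct and follows essentially the same route as the paper, which proves this proposition in one line by citing Theorem 8.1.1 and Theorem 7.2.21 together with the observation that $G_K[l]$ acts trivially on $\pi_3(K_m^{(\ell)})$ for $l \geq m$. Your more detailed unwinding — parts (1)--(3) from Theorem 8.1.1(1),(2),(3), the vanishing $\Im(\psi_{2m,m}) = \mathcal{G}^{2m+1}\pi_3(\widehat{K}_m^{(\ell)}) = 0$, and the splitting of the torsion-free filtration into the graded direct sum for the final inclusion — is exactly the intended argument, and your "delicate point" about triviality of the action is resolved by the same remark the paper makes.
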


\begin{proof}
These are consequence of basic (algebraic) properties of pro-$\ell$ Orr invariants in Theorem \ref{thm:8.1.1} and Theorem \ref{thm:7.2.21}. In particular, note that $G_K[l]$ acts on $\pi_3(K_m^{(\ell)})$ trivially when $l \geq m$.
\end{proof}

Similarly, for $\tau_k^{(\ell)}(\sigma, \tau) \in H_3(\Fn/\Gamma_k(\Fn); \mathbb{Z}_{\ell})$, there is 1-cocycle presentation as follows.

\begin{proposition} \label{prop:9.2.14}
	With the same notation as in Proposition \ref{prop:9.2.3}, we define a map
	\begin{equation} \label{eq:9.2.14}
		\tau_m^{(\ell)} : G_K \rightarrow H_3(\Fn/\Gamma_m(\Fn); \mathbb{Z}_{\ell})
	\end{equation}
	by $\sigma \mapsto \tau_m^{(\ell)}(\sigma, \tau)$. Then, the following statements about $\tau_m^{(\ell)}$ hold:
	\begin{enumerate}[label=$(\arabic{enumi})$] 
		\item The map $\tau_k^{(\ell)}$ is defined for $G_K[l]$ with $l \geq m$.
		\item For $l \geq m$, the map $\tau_m^{(\ell)}$ is 1-cocycle and gives a cohomology class
		\begin{equation} \label{eq:9.2.15}
			[\tau_m^{(\ell)}] \in H^1(G_k[l]; H_3(\Fn/\Gamma_m(\Fn); \mathbb{Z}_{\ell})).
		\end{equation}
		\item For $l \geq 2m-1$, the map $\tau_m^{(\ell)}$ is zero map.
		\item For $m \leq l \leq 2m-2$, the map $\tau_m^{(\ell)}$ vanish if and only if the 1-cocyles
		\begin{equation} \label{eq:9.2.16}
			\langle -x_{i_1}^{\ast},\ldots, -x_{i_r}^{\ast}\rangle (y_i|_{G_K[m]}) = \mu((i_1\cdots i_r); y_i|_{G_K[l]})  =0
		\end{equation}
		for any standard index $I=(i_1 \cdots i_r)$ with $m \leq r \leq 2m-2$ and $1 \leq i \leq n$.
	\end{enumerate}
\end{proposition}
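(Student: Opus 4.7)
The plan is to deduce (1)--(3) directly from Proposition 9.2.3 and Theorem 8.2.1 via the identity $\tau_m^{(\ell)}=h_\ast\circ\theta_m^{(\ell)}$ (Definition 3.5.1), and to attack (4) through the weight filtration on $H_3(\Fn/\Gamma_m(\Fn);\mathbb{Z}_\ell)$ from Theorem 6.3.21. Since $h_\ast$ is $\mathbb{Z}_\ell$-linear and both $\pi_3(\widehat{K}_m^{(\ell)})$ and $H_3(\Fn/\Gamma_m(\Fn);\mathbb{Z}_\ell)$ are trivial $G_K[l]$-modules for $l\geq m$, statements (1) and (2) are immediate from Proposition 9.2.3(1)--(2). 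For (3), Theorem 8.2.1 gives $\tau_m^{(\ell)}(\sigma,\tau)=0$ for $\sigma\in G_K[2m-1]\supset G_K[l]$.

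For (4), Theorem 6.3.21 together with Corollary 6.3.9 provides a descending filtration
\begin{equation*}
0=\mathcal{G}^{2m}\subset \mathcal{G}^{2m-1}\subset\cdots\subset \mathcal{G}^{m+1}=H_3(\Fn/\Gamma_m(\Fn);\mathbb{Z}_\ell),
\end{equation*}
whose graded pieces $\mathcal{G}^{r+1}/\mathcal{G}^{r+2}$ are free $\mathbb{Z}_\ell$-modules of rank $nN_r-N_{r+1}$. Naturality of the Hurewicz map combined with Theorem 8.1.1(3) yields $\tau_m^{(\ell)}(\sigma,\tau)=\phi_{l,m}(\tau_l^{(\ell)}(\sigma,\tau))\in \mathcal{G}^{l+1}$ for $\sigma\in G_K[l]$. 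By torsion-freeness of every graded quotient, $\tau_m^{(\ell)}(\sigma,\tau)=0$ if and only if the graded image in each $\mathcal{G}^{r+1}/\mathcal{G}^{r+2}$ vanishes for all $l\leq r\leq 2m-2$.

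The key technical step, which I expect to be the main obstacle, is to identify the graded image in $\mathcal{G}^{r+1}/\mathcal{G}^{r+2}$ with the tuple of Magnus coefficients $(\mu(I;y_i(\sigma)))_{I\in LW_r,\,1\leq i\leq n}$ modulo a rank-$N_{r+1}$ relation coming from the Jacobi identity; the dimensions match by construction. The identification should proceed from Proposition 4.4.4: $\tau_m^{(\ell)}(\sigma,\tau)$ is the image of $[E_\sigma]$ along $H_3(E_\sigma;\mathbb{Z}_\ell)\to H_3(\pi_1(E_\sigma)/\Gamma_m(\pi_1(E_\sigma));\mathbb{Z}_\ell)\simeq H_3(\Fn/\Gamma_m(\Fn);\mathbb{Z}_\ell)$, and the Hopf-type 2-class presenting this fundamental class is carried by the commutators $[x_i,y_i(\sigma)]$. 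Dualizing via the Massey-product basis of Theorem 7.2.3 / Corollary 7.2.6 and applying Corollary 7.1.2, the pairing with $\langle -x_{i_1}^\ast,\ldots,-x_{i_r}^\ast\rangle$ recovers $\mu((i_1\cdots i_r);y_i(\sigma))$ up to sign, exactly as in Proposition 9.2.9. Granting this identification, (4) follows at once, with the lower bound $r\geq l$ coming from the filtration equating to the stated bound $r\geq m$ because $y_i|_{G_K[l]}\in\Gamma_l(\Fn)$ forces the Magnus coefficients in the gap $m\leq r<l$ to vanish automatically.
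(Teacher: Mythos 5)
Your treatment of (1)--(3) is correct and matches the paper: (1) and (2) are immediate from Proposition \ref{prop:9.2.3} since $\tau_m^{(\ell)}=h_\ast\circ\theta_m^{(\ell)}$ and the coefficient module is a trivial $G_K[l]$-module, and your use of Theorem \ref{thm:8.2.1} for (3) is if anything cleaner than the paper's blanket citation of Proposition \ref{prop:9.2.3}, since it explains the sharper bound $l\geq 2m-1$ (rather than $l\geq 2m$, which is all that the vanishing of $\theta_m^{(\ell)}$ gives).

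For (4) you take a genuinely different and substantially harder route than the paper, and the route as written has a gap. You filter $H_3(\Fn/\Gamma_m(\Fn);\mathbb{Z}_\ell)$ by $\mathcal{G}^\bullet$ and propose to identify the graded image of $\tau_m^{(\ell)}(\sigma,\tau)$ in each quotient $\mathcal{G}^{r+1}/\mathcal{G}^{r+2}$ with the tuple of Magnus coefficients $(\mu(I;y_i(\sigma)))_{I\in LW_r}$ modulo the Jacobi relation; you flag this identification as ``the main obstacle'' and then assume it (``Granting this identification\ldots''). A dimension count does not establish that the specific class $\tau_m^{(\ell)}(\sigma,\tau)$ is detected graded-piece-by-graded-piece in this way, so as it stands (4) is not proved. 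The point is that this finer statement is not needed: the paper's proof of (4) is simply the combination of Theorem \ref{thm:8.2.1} with Corollary \ref{cor:-1.1.2}. Namely, for $\sigma\in G_K[l]\subset G_K[m]$, Theorem \ref{thm:8.2.1} gives $\tau_m^{(\ell)}(\sigma,\tau)=0$ if and only if $\sigma\in G_K[2m-1]$, which by definition of the Johnson filtration means $y_i(\sigma)\in\Gamma_{2m-1}(\Fn)$ for all $i$, i.e.\ $\mu(I;y_i(\sigma))=0$ for all $I$ of length $r\leq 2m-2$ (the range $r<m$ being automatic and, within each graded quotient $\Gamma_r(\Fn)/\Gamma_{r+1}(\Fn)$, the standard indices sufficing by the Lyndon basis); Corollary \ref{cor:-1.1.2} then converts these Magnus coefficients into the stated Massey-product evaluations. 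You already invoke exactly this mechanism in your proof of (3), so the gap in (4) is self-imposed and closes immediately once you replace the graded identification by a direct appeal to Theorem \ref{thm:8.2.1}.
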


\begin{proof}
(1), (2) and (3) follows from Proposition \ref{prop:9.2.3}, since $\tau_m^{(\ell)}$ is given by composition of Hurewicz homomorphism and $\theta_m^{(\ell)}$.\\
(4) This is a consequence of proof of Theorem \ref{thm:8.2.1} and Corollary \ref{cor:-1.1.2}.	
\end{proof}

Now, we relate Ellenberg obstruction and pro-$\ell$ Orr invariants as follows:

\begin{theorem} \label{thm:9.2.5}
For a representation $\varphi : G_K \rightarrow \Aut(\Piet)$ with fixed basing $\tau$. Let $y_1, \ldots, y_n : G_K \rightarrow \Piet$ be the corresponding 1-cocycles. Let $G_K[m]$ be the Johnson subgroup defined by $\varphi$. Let 1-cocycle $\theta_m^{(\ell)}$ obtained from pro-$\ell$ Orr invariants associated with $\varphi$ representing a class in $H^1(G_K[l]; \Piet/\Gamma_m(\Piet))$ for $l \geq m$. Then, followings are equivalent:
\begin{enumerate}[label=$(\arabic{enumi})$]
	\item  The 1-cocycle $\theta_m^{(\ell)}$ vanishes.
	\item The successive computation of double indexed Ellenberg obstructions 
	\begin{equation} \label{eq:9.2.16}
		\delta_{m,l}, \delta_{m+1,l},\ldots, \delta_{l,l}, \delta_{l+1,l+1}, \ldots, \delta_{2m-1,2m-1}
	\end{equation}
	of $[y_1]_m,\ldots, [y_n]_m$ vanish.
\end{enumerate}
\end{theorem}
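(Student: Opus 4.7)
The strategy is to translate both conditions (1) and (2) into a single common statement about the vanishing of certain Magnus coefficients of the cocycles $y_i$, and then appeal to the results of Sections~\ref{sec:7} and~\ref{sec:8} that relate these coefficients to Orr invariants and Massey products respectively. I expect the common condition to be: for all $1 \leq i \leq n$ and all standard (Lyndon) multi-indices $I$ with $l \leq |I| \leq 2m-1$, the Magnus coefficient $\mu(I;y_i|_{G_K[l]})$ vanishes as a $1$-cocycle.

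For the translation of (1), I would first invoke Proposition~\ref{prop:9.2.3} and Theorem~\ref{thm:7.2.21} to decompose $\theta_m^{(\ell)}$ on $G_K[l]$ according to the weight filtration $\mathcal{G}^{\bullet}\pi_3(\widehat{K}_m^{(\ell)})$, whose associated graded quotients are free $\mathbb{Z}_\ell$-modules of rank $nN_{k-1}-N_k$ for $k=m+1,\ldots,2m$. Combined with Proposition~\ref{prop:9.2.14}(4) extended from $\tau_m^{(\ell)}$ to the full invariant $\theta_m^{(\ell)}$ (the extension accounts for the additional weight $2m-1$ piece of $\pi_3$ lying in the kernel of the Hurewicz map, by Lemma~\ref{lem:7.2.2} and Proposition~\ref{prop:7.2.10}), vanishing of $\theta_m^{(\ell)}$ reduces to vanishing of the Magnus coefficients $\mu(I; y_i|_{G_K[l]})$ for all standard $I$ with $l \leq |I| \leq 2m-1$ and all $i$.

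For the translation of (2), I would use the Massey product formalism of Section~\ref{sec:7}. By Proposition~\ref{prop:-1.1.1} and Corollary~\ref{cor:-1.1.2}, each obstruction class is represented by Massey products $\langle -x_{j_1}^\ast, \ldots, -x_{j_k}^\ast \rangle$, which in turn evaluate to Magnus coefficients of length $k$ under the transgression. At the base of the induction, Lemma~\ref{lem:9.2.1}(3) guarantees $[y_i|_{G_K[l]}]_m=0$, so $\delta_{m,l}$ trivially vanishes and we may choose the natural lift provided by $y_i$ itself. Proceeding inductively through $k=m+1,\ldots,l$, each $\delta_{k,l}$ reads off the length-$k$ Magnus coefficients of the current lift, and its vanishing allows us to continue. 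At $k=l+1$ we restrict to $G_K[l+1]$ (consistent with the lift, since once the length-$\leq l$ coefficients are known to vanish, the refined cocycle naturally factors through the deeper Johnson subgroup); the sequence $\delta_{k,k}$ for $l+1\leq k\leq 2m-1$ then tests successive Magnus coefficients on $G_K[k]$. By Theorem~\ref{thm:-1.2.3}, the Massey products attached to standard Lyndon indices of length $k$ span the relevant cohomology, so the successive vanishing of the obstructions captures exactly the same Magnus coefficients as in the translation of (1).

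The main obstacle will be the careful bookkeeping in the induction step, namely showing that the Johnson subgroup index transition from $l$ to $l+1,l+2,\ldots$ in the obstruction sequence is forced by, and compatible with, the choice of successive lifts, so that the matching between the weight components of $\theta_m^{(\ell)}$ and the terms of $\{\delta_{k,\cdot}\}$ is in fact a bijection on the level of Milnor invariants. In addition, I need to verify that the extension of Proposition~\ref{prop:9.2.14}(4) from $\tau_m^{(\ell)}$ to $\theta_m^{(\ell)}$ goes through, which requires identifying the weight $2m-1$ summand of $\pi_3(\widehat{K}_m^{(\ell)})$ with a Magnus coefficient functional; this should follow from tracking the exact sequence $0 \to \mathbb{Z}_\ell^{\oplus(nN_{2m-1}-N_{2m})} \to \pi_3(\widehat{K}_m^{(\ell)}) \to H_3(\Fn/\Gamma_m(\Fn);\mathbb{Z}_\ell) \to 0$ used in the proof of Theorem~\ref{thm:8.2.1}, together with the Lyndon basis description from Theorem~\ref{thm:-1.2.3}.
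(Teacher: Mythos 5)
Your proposal is correct and follows essentially the same route as the paper: both directions are reduced to the common condition that the $\ell$-adic Milnor invariants (Magnus coefficients of the $y_i$) in the range between $l$ and $2m-1$ vanish, using Theorem \ref{thm:8.1.1} together with the filtration on $\pi_3(\widehat{K}_m^{(\ell)})$ for condition (1), and the lifting interpretation of the double-indexed obstructions (with Lemma \ref{lem:9.2.1}(3) handling the automatic vanishing of $\delta_{m,l},\ldots,\delta_{l,l}$) for condition (2). The paper's own proof is much terser and does not spell out the Massey-product bookkeeping you describe, but the underlying argument is the same.
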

\begin{proof}
	We assume (1). Then, by Theorem \ref{thm:8.1.1}, we see that $[y_i]_m \in H^1(G_K[l]; \Piet/\Gamma_m(\Piet))$ has a lift to $H^1(G_K[2m]; \Piet/\Gamma_m(\Piet))$, and therefore to $H^1(G_K[2m]; \Piet/\Gamma_{2m}(\Piet))$. This leads to (2). Conversely, assume (2). Then, the reverse direction of the above arguments gives (1). Note that $\delta_{m,l},\ldots, \delta_{l,l}$ automatically vanish by Lemma \ref{lem:9.2.1} (3) and, in the next step,  the cocycles lift to an elements in $H^1(G_K[l]; \Piet/\Gamma_{l+1}(\Piet))$ which are not automatically 0 but whose pullbacks to $H^1(G_K[l+1]; \Piet/\Gamma_{l+1}(\Piet))$ become 0. This completes the proof.
\end{proof}

\begin{remark}
	One of the possible advantages of  Theorem \ref{thm:9.2.5} is as follows. Suppose that 1-cocycles which would correspond to a representation $\varphi': G_K \rightarrow \Aut(\Piet)$ with the condition to define $\theta_m^{(\ell)}$ for some $m$ are given. Then, one may know whether two Johnson subgroups associated with $\varphi$ and $\varphi'$ coincide in some range by computing (1) or (2).
\end{remark}

\subsection{Explicit vanishing condition of   $\tau_k^{(\ell)}(\sigma, \tau)$ for $X=\mathbb{P}_{\mathbb{Q}}^1 \setminus \{0,1, \infty\}$ in lower degrees}

In general, it is difficult to compute pro-$\ell$ Orr invariants $\theta_k^{(\ell)}(\sigma, \tau)$ and its image under Hurewicz homomorphism $\tau_k^{(\ell)}(\sigma, \tau)$ from our definition. Hence, this section gives vanishing condition of $\tau_k^{(\ell)}(\sigma, \tau)$ in terms of $\ell$-adic Milnor invariants for the case of $X=\mathbb{P}_{\mathbb{Q}}^1 \setminus \{0,1, \infty\}$ using  Ihara-Kaneko-Yukinari's computation as in \cite{I3} and \cite{IKY}. Moreover, we give some diagrammatic computation applying methods given in \cite[Section 6]{HaMa}. Throughout this section, we use the same notation as in Section 2.7.

Let $k$ be an integer with $\geq 1$. Let $G_{\mathbb{Q}}[k]$ be the $k$-th Johnson subgroup of $G_{\mathbb{Q}}$. We set $H_{\mathbb{Q}_{\ell}} := H_1(\Fn;\mathbb{Q}_{\ell})$ and, by abuse of notation, $\mathcal{L}_k := \mathcal{L}_k \otimes_{\mathbb{Z}_{\ell}} \mathbb{Q}_{\ell}$. Then, we have the following homomorphism
\begin{equation}
	\mu_k : G_{\mathbb{Q}}[k] \rightarrow H_{\mathbb{Q}_{\ell}} \otimes_{\mathbb{Q}_{\ell}} \mathcal{L}_k
\end{equation}
which sends each $\sigma \in G_{\mathbb{Q}}[k]$ to $[x_1] \otimes [y_1(\sigma)] + [x_2] \otimes [y_2(\sigma)]$.  Then, we know that $\Im(\mu_k) \in \Ker([-,-] : H_{\mathbb{Q}_{\ell}} \otimes_{\mathbb{Q}_{\ell}} \mathcal{L}_k \rightarrow \mathcal{L}_{k+1})=:D_k(H_{\mathbb{Q}_{\ell}})$ by \eqref{eq:2.8.-1}. According to \cite{HaMa} and \cite{O1}, dimension of $D_k(H_{\mathbb{Q}_{\ell}})$ is equal to $nN_k - N_{k+1}$. Moreover, the space $D_k(H_{\mathbb{Q}_{\ell}})$ is isomorphic to the $\mathbb{Q}_{\ell}$-vector space $\mathcal{C}_k^t$ of  tree  Jacobi diagrams of degree $k$ labelled by $\{1,2\}$ subject to AS and IHX relations: 
\begin{equation*}
\begin{tikzpicture}
\begin{scope} 
\begin{scope}[densely dashed]
\draw (0,0)--(0,-0.45);
\draw (0,0)--(0.424,0.424);
\draw (0,0)--(-0.424,0.424);
\draw (-1,0) node{$=$};
\draw (-0.6,0) node{$-$};
\draw (-1,-0.9) node{AS};
\node at (0.5,-0.7) {,};
\end{scope}
\begin{scope}[xshift=-1.7cm]
\draw[densely dashed] (0,0)--(0,-0.45);
\draw[densely dashed] (0,0) .. controls (0.3,0.1) and (0.3,0.25)..(-0.424,0.424);
\draw[densely dashed] (0,0) .. controls (-0.3,0.1) and (-0.3,0.25)..(0.424,0.424);
\end{scope}
\end{scope}
\begin{scope}[xshift=-0.5cm]
\begin{scope}[xshift=2cm,densely dashed]
\draw (-0.3, 0.4) --(0.3,0.4);
\draw (0,0.4)--(0,-0.4);
\draw (-0.3,-0.4) --(0.3,-0.4);
\end{scope}
\begin{scope}[xshift=3.5cm, densely dashed]
\draw (-0.3,0.4)--(-0.3,-0.4);
\draw (0.3,0.4)--(0.3,-0.4);
\draw (-0.3,0)--(0.3,0);
\draw (-0.8, 0) node{$-$};
\end{scope}
\begin{scope}[xshift=5cm, densely dashed]
\draw (-0.3,0.4)--(0.3,-0.4);
\draw (0.3,0.4)--(-0.3,-0.4);
\draw (-0.19,-0.26)--(0.19,-0.26);
\draw (-0.8, 0) node{$+$};
\draw (0.8,0) node{$=0$};
\draw (-1.5, -0.9) node{IHX};
\end{scope}
\end{scope}
\end{tikzpicture}
\end{equation*}
Here, by a Jacobi diagram $D$, we mean a uni-trivalent graph with a cyclic order at each trivalent vertex. The degree of $D$ is the half of the number of its vertices. Customary, Jacobi diagrams are depicted by dashed lines in picture. In plane diagram, we always assume that, for any Jacobi diagram, the cyclic order at each trivalent vertex is given by opposite clockwise direction. Let us consider a linear map $\phi : \mathcal{C}_k^t \rightarrow D_k(H_{\mathbb{Q}_{\ell}})$  given by, for each degree $k$ tree Jacobi diagram $D$, 
\begin{equation}\label{eq:268}
	\phi(D) = \sum_{v} X_{l(v)} \otimes L_v(D) \in H_{\mathbb{Q}_{\ell}} \otimes \mathcal{L}_k
\end{equation}
where $v$ runs over all univalent vertices of $D$, $l(v) \in \{1,2\}$ is the label at $v$, and $L_v(D) \in \mathcal{L}_k$ is defined by the following manner: For any univalent vertex $v' \neq v$, label the edge incident to $v'$ of $D$ by $X_{v'}$. Next, we assign the label $[a,b]$ to any edge meeting $a$-labelled edge and $b$-labelled edge at a trivalent vertex following the cyclic orientation. Finally, one obtains the label associated the the edge incident to $v$. This is the desired element $L_v(D)$. For example, the element $L_v(D)$ for the following degree 3 tree Jacobi diagram is given by $[[X_1, X_2], X_2]$.
\begin{equation*}
 	\begin{tikzpicture}
 	\node at (-0.8,0.5) {$D=$};
 	\node at (2.8,0.5) {$\overset{L_v}{\longmapsto} [[X_1, X_2], X_2]$};
	\draw[densely dashed] (0,0) -- (0,1);
	\draw[densely dashed] (0,0.5) -- (1,0.5);
	\draw[densely dashed] (1,0) -- (1,1);
	\node[label=above:$v$] at (0,1) {};
	\node[label=below:$2$] at (0,0) {};
	\node[label=above:$1$] at (1,1) {};
	\node[label=below:$2$] at (1,0) {};
\end{tikzpicture}
\end{equation*}
Then, it turns out that the map $\phi$ actually gives an isomorphism of vector spaces $\phi : \mathcal{C}_k^t \overset{\sim}{\rightarrow} D_k(H_{\mathbb{Q}_{\ell}})$. For more details, see \cite[Section 6]{HaMa}.

Next, we recall Ihara's result in our situation. For this, we fix some notation. Let us fix a system $(\zeta_{\ell^k})$ of prmitive $\ell^k$-th roots of unity $\zeta_{\ell^k}$ in $\overline{\mathbb{Q}}$ such that $\zeta_{\ell^{k+1}}^{\ell} = \zeta_{\ell^k}$ $(k \geq 1)$. For each positive integer $m$, we set
\begin{equation}
	\epsilon_k^{(m)} := \prod_{\substack{1 \leq a \leq \ell^k \\ (a, \ell)=1}} (\zeta_{\ell^k}^a - 1)^{\langle a^m - 1 \rangle}
\end{equation}
where $\langle a^{m - 1}\rangle$ is the representative of $a^{m-1} \mod \ell^k$ lying on the interval $[0, \ell^{k})$. Then, we define the homomorphism
\begin{equation}
	\chi_m : G_{\mathbb{Q}}[1]  \rightarrow \mathbb{Z}_{\ell}^{\times}
\end{equation}
such that
\begin{equation}
	\frac{((\epsilon_k^{(m)})^{\ell^k})^{\sigma}}{(\epsilon_k^{(m)})^{\ell^k}} = \zeta_{\ell^k}^{\chi_m(\sigma)}.
\end{equation}
Under this notation, we have the following.

\begin{proposition}[{\cite[(4.5)]{I3}}]
Let $m \geq 0$ be an integer. Let $(12^{m}1)$ denote the multi-index $(122\cdots 21)$ with $m$-times iteration of $2$. Then, following statements hold:
\begin{enumerate}[label=$(\arabic{enumi})$]
	\item Let $k \geq 0$ be an integer. For any $\sigma \in G_{\mathbb{Q}}[2k+2]$, we have
	\begin{equation}
		\mu(\sigma; (12^{2k+1}1)) = 0.
	\end{equation} 	
	\item Let $k \geq 1$ be an integer. For any $\sigma \in G_{\mathbb{Q}}[2k+1]$, we have
	\begin{equation}
		\mu(\sigma; (12^{2k}1)) = \frac{(1-\ell^{2k})^{-1}}{(2k)!} \chi_{2k+1}(\sigma).
	\end{equation} 	
\end{enumerate}
\end{proposition}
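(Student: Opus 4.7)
The plan is to transport the computation of the Magnus coefficient $\mu(\sigma;(1\,2^m\,1)) = \mu((1\,2^m); y_1(\sigma))$ into the world of tree Jacobi diagrams via the Habegger--Masbaum isomorphism $\phi\colon \mathcal{C}_k^t \xrightarrow{\sim} D_k(H_{\mathbb{Q}_\ell})$ recalled above. Since $\sigma \in G_{\mathbb{Q}}[m+1]$ places $\mu_{m+1}(\sigma) = X_1\otimes y_1(\sigma) + X_2\otimes y_2(\sigma)$ in $D_{m+1}(H_{\mathbb{Q}_\ell})$, we may pull it back to an element $X_\sigma \in \mathcal{C}_{m+1}^t$, and the Magnus coefficient in question becomes, up to a universal nonzero scalar, the coefficient in $X_\sigma$ of a specific ``caterpillar'' tree $T_m$: the degree-$(m+1)$ tree with $m+2$ univalent vertices labeled $1,2,2,\dots,2,1$ along a horizontal spine (two endpoint leaves labeled $1$ and $m$ intermediate leaves labeled $2$), joined by $m$ internal trivalent vertices. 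The inductive definition of $L_v$ in \eqref{eq:268} shows that $L_{v_L}(T_m) = \mathrm{ad}_{X_2}^m(X_1)$, which carries a nonzero coefficient of $X_1 X_2^m$ under the Magnus expansion, confirming that the pairing used really detects the claimed Milnor invariant.

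For part (1), where $m=2k+1$ is odd, the diagram $T_{2k+1}$ admits an obvious reflection automorphism exchanging its two $1$-labeled endpoints. This automorphism is a graph isomorphism of the labeled tree which simultaneously reverses the cyclic orientation at each of its $2k+1$ trivalent vertices. Applying the AS relation once at each such vertex contributes a total sign $(-1)^{2k+1} = -1$, hence $T_{2k+1} = -T_{2k+1}$ in $\mathcal{C}_{2k+2}^t$, forcing $T_{2k+1} = 0$. Transporting this vanishing back through $\phi$ yields $\mu(\sigma;(1\,2^{2k+1}\,1)) = 0$ for every $\sigma \in G_{\mathbb{Q}}[2k+2]$. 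This is precisely the ``simple diagrammatic proof'' announced in the introduction, requiring no arithmetic input beyond the constraint $\mathrm{Im}(\mu_{2k+2}) \subset D_{2k+2}(H_{\mathbb{Q}_\ell})$.

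For part (2), where $m=2k$ is even, the same reflection contributes $(-1)^{2k} = +1$ and no longer kills $T_{2k}$. The explicit value must therefore be imported from arithmetic: combining the cocycle presentation of $\chi_{2k+1}(\sigma)$ via the cyclotomic units $\epsilon_r^{(2k+1)} = \prod_{(a,\ell)=1}(\zeta_{\ell^r}^a-1)^{\langle a^{2k}-1\rangle}$ with the Ihara--Kaneko--Yukinari formula for the Soulé characters (see \cite{IKY} and \cite{I3}) expresses $\chi_{2k+1}(\sigma)$ as a specific $\mathbb{Z}_\ell$-linear combination of $\ell$-adic Milnor invariants of $\sigma$. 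The Euler factor $(1-\ell^{2k})^{-1}$ arises from the standard comparison between $\epsilon_r^{(2k+1)}$ and the cyclotomic $\ell$-unit controlling $\chi_{2k+1}$, while the factor $\frac{1}{(2k)!}$ is produced by expanding the iterated bracket $\mathrm{ad}_{X_2}^{2k}(X_1)$ read off from $T_{2k}$ into associative monomials of $\mathbb{Z}_\ell\langle\langle X_1,X_2\rangle\rangle$ and pairing with $X_1 X_2^{2k}$.

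The main obstacle is the bookkeeping in part (2): matching the orientation and sign conventions of $\phi$ with those of the Magnus embedding $\Theta$, verifying that only the Lyndon basis element $e((1\,2^{2k}))$ survives when one extracts the $X_1 X_2^{2k}$-coefficient (basis elements of $\mathcal{L}_{2k+1}$ whose Lyndon words contain two or more $1$'s contribute nothing), and then correctly normalizing against the cyclotomic-unit description of $\chi_{2k+1}$. Part (1), by contrast, is essentially a combinatorial identity in $\mathcal{C}_{2k+2}^t$ that uses only the AS relation together with the reflection symmetry of the caterpillar, and should proceed cleanly.
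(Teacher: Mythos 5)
Your treatment of part (1) is essentially the paper's own proof: both identify $\mu(\sigma;(1\,2^{2k+1}\,1))$ with the palindromic caterpillar tree (two endpoints labelled $1$, $2k+1$ leaves labelled $2$) and kill it via the reflection that reverses the cyclic order at each of the $2k+1$ trivalent vertices, so that the AS relation gives $D=-D$ and hence $D=0$ over $\mathbb{Q}_{\ell}$. For part (2) the paper gives no argument --- the value is simply quoted from \cite[(4.5)]{I3} --- and you likewise defer to \cite{I3} and \cite{IKY}, so the approaches agree there as well. One correction to your sketch of the constants in part (2): the coefficient of $X_1X_2^{2k}$ in the associative expansion of $\mathrm{ad}_{X_2}^{2k}(X_1)$ is $(-1)^{2k}=1$, not $1/(2k)!$; in Ihara's formula the factorial comes from the Taylor coefficient of $(\log(1+X_2))^{2k}/(2k)!$ in the power-series expression for $y_1(\sigma)$, so the bracket expansion alone will not produce it.
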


\begin{proof}
	Here, we give a simple diagrammatic proof of (1). Noting the isomorphism $\phi$ in \eqref{eq:268}, we see that the tree Jacobi diagram corresponding $\mu(\sigma; (12^{2k+1}1))$ is given by the following diagram.
\begin{center}
\begin{tikzpicture}
\begin{scope}{
\begin{scope}[rotate=240]{
	\draw[densely dashed] (0,0) -- (0,2);
	\draw[densely dashed] (0,0.2) -- (0.3, 0.2);
	\draw[densely dashed] (0,0.6) -- (0.3, 0.6);
	\draw[densely dashed] (0,1.6) -- (0.3, 1.6);
	\node[rotate=240] at (0.15,1.2) {$\vdots$};
	\node at (0.5,0.2) {$2$};
	\node at (0.5,0.6) {$2$};
	\node at (0.5,1.6) {$2$};
	\node at (0,-0.2) {$1$};
	}
	\end{scope}
\begin{scope}[xshift=3.5cm, yshift=-0cm, rotate=-60,yscale=-1,xscale=1]{
	\draw[densely dashed] (0,0) -- (0,2);
	\draw[densely dashed] (0,0.2) -- (0.3, 0.2);
	\draw[densely dashed] (0,0.6) -- (0.3, 0.6);
	\draw[densely dashed] (0,1.6) -- (0.3, 1.6);
	\node[rotate=-60] at (0.15,1) {$\vdots$};
	\node at (0.5,0.2) {$2$};
	\node at (0.5,0.6) {$2$};
	\node at (0.5,1.6) {$2$};
	\node at (0,-0.2) {$1$};
	}
\end{scope}
\draw[densely dashed] ({sqrt(3)},-1) -- ({sqrt(3)},-2);
\node at ({sqrt(3)}, -2.2) {$2$};
}
\end{scope}

\node at (5,-1) {$=$};
\node at (6,-1) {$-$};

\begin{scope}[xshift=7cm]{
\begin{scope}[rotate=240]{
	\draw[densely dashed] (0,0) -- (0,2);
	\draw[densely dashed] (0,0.2) -- (0.3, 0.2);
	\draw[densely dashed] (0,0.6) -- (0.3, 0.6);
	\draw[densely dashed] (0,1.6) -- (0.3, 1.6);
	\node[rotate=240] at (0.15,1.2) {$\vdots$};
	\node at (0.5,0.2) {$2$};
	\node at (0.5,0.6) {$2$};
	\node at (0.5,1.6) {$2$};
	\node at (0,-0.2) {$1$};
	}
	\end{scope}
\begin{scope}[xshift=3.5cm, yshift=-0cm, rotate=-60,yscale=-1,xscale=1]{
	\draw[densely dashed] (0,0) -- (0,2);
	\draw[densely dashed] (0,0.2) -- (0.3, 0.2);
	\draw[densely dashed] (0,0.6) -- (0.3, 0.6);
	\draw[densely dashed] (0,1.6) -- (0.3, 1.6);
	\node[rotate=-60] at (0.15,1) {$\vdots$};
	\node at (0.5,0.2) {$2$};
	\node at (0.5,0.6) {$2$};
	\node at (0.5,1.6) {$2$};
	\node at (0,-0.2) {$1$};
	}
\end{scope}
\draw[densely dashed] ({sqrt(3)},-1) -- ({sqrt(3)},-2);
\node at ({sqrt(3)}, -2.2) {$2$};
}
\end{scope}
\end{tikzpicture}	
\end{center}
Here, there are $k$-times iterated vertices labelled by 2 on each side of the diagram, i.e., there are $2k+1$ vertices labelled by 2 on it. For convenience, let $D$ denote the above diagram. As we see in the above figure, the diagram $D$ is equal to $-D$ by AS relation. Therefore, over the field of characteristic 0, we have $D=0$. The assertion has been proved.
\end{proof}

Next, we give vanishing conditions of $\tau_k^{(\ell)}(\sigma, \tau)$ via $\ell$-adic Milnor invariants.
\begin{theorem}
With notations in above, the following statements hold:
\begin{enumerate}[label=$(\arabic{enumi})$]
\item For $\sigma \in G_{\mathbb{Q}}[2]$, the invariant $\tau_2^{(\ell)}(\sigma, \tau)$ always 0.
\item For $\sigma \in G_{\mathbb{Q}}[3]$, the invariant $\tau_3^{(\ell)}(\sigma, \tau)=0$ if and only if
\begin{equation} \label{eq:9.3.1}
	\mu(\sigma; (1221))=\frac{\chi_3(\sigma)}{2(1-\ell^2)} =0.
\end{equation}

\item For $\sigma \in G_{\mathbb{Q}}[4]$, the invariant $\tau_4^{(\ell)}(\sigma, \tau)=0$ if and only if
\begin{equation} \label{eq:9.3.2}
	\mu(\sigma; (111221))=0, \quad \mu(\sigma; (121221)=0, \quad \mu(\sigma;(122221))=\frac{\chi_5(\sigma)}{24(1-\ell^4)}=0.
\end{equation}
\end{enumerate}
\end{theorem}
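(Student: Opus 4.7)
The plan is to combine Theorem 8.2.1 (which asserts $\tau_k^{(\ell)}(\sigma,\tau)=0$ precisely when $\sigma \in G_\mathbb{Q}[2k-1]$) with the Habegger--Masbaum isomorphism $\phi : \mathcal{C}_k^t \xrightarrow{\sim} D_k(H_{\mathbb{Q}_\ell})$ recalled just before the theorem. Under $\phi$, the collective length-$(k+1)$ Milnor invariant $\mu_k(\sigma) = X_1\otimes y_1(\sigma) + X_2\otimes y_2(\sigma)$ is described by dual coordinates coming from degree-$k$ tree Jacobi diagrams labeled by $\{1,2\}$, and the step from $\sigma \in G_\mathbb{Q}[j]$ to $\sigma \in G_\mathbb{Q}[j+1]$ is exactly the vanishing of $\mu_j(\sigma) \in D_j(H_{\mathbb{Q}_\ell})$. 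So the task for each $k$ is to realize a basis of $D_j(H_{\mathbb{Q}_\ell})^{\vee}$, for $k \leq j \leq 2k-2$, by coordinate Milnor invariants $\mu(\sigma;J)$, and then invoke Ihara's formulas (Proposition 9.3.1) to evaluate the surviving ones.

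For $n=2$, Witt's formula yields $N_1,\ldots,N_7 = 2,1,2,3,6,9,18$, hence
\begin{equation*}
\dim_{\mathbb{Q}_\ell} D_k(H_{\mathbb{Q}_\ell}) = nN_k - N_{k+1} = 3,\ 0,\ 1,\ 0,\ 3,\ 0 \quad (k=1,\ldots,6).
\end{equation*}
Claim (1) is then immediate: the target $H_3(\Fn/\Gamma_2(\Fn);\mathbb{Z}_\ell) \simeq \bigwedge^3 \mathbb{Z}_\ell^2 = 0$ (equivalently $\bigoplus_{i=2}^{2}\mathbb{Z}_\ell^{2N_2-N_3}=0$ by Corollary 6.3.9), so $\tau_2^{(\ell)}$ vanishes for dimension reasons.

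For (2), Theorem 8.2.1 reduces the problem to $\mu_3(\sigma)=0$ together with $\mu_4(\sigma)=0$; the latter is automatic since $\dim D_4=0$. To handle $\mu_3$, I would compute the matrix of the bracket $[\cdot,\cdot] : H\otimes \mathcal{L}_3 \to \mathcal{L}_4$ in the Lyndon bases $\{e((112)),e((122))\}$ and $\{e((1112)),e((1122)),e((1222))\}$; a direct Jacobi-identity calculation shows that $D_3$ is the one-dimensional line spanned by $X_1\otimes e((122)) + X_2\otimes e((112))$. Thus all four length-4 Milnor coordinates of $\sigma \in G_\mathbb{Q}[3]$ are determined by the single coordinate $\mu(\sigma;(1221)) = \mu((122);y_1(\sigma))$, and $\mu_3(\sigma)=0$ is equivalent to the vanishing of this one invariant; substituting Ihara's formula (Proposition 9.3.1(2) with $k=1$) gives the stated value $\chi_3(\sigma)/(2(1-\ell^2))$.

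For (3), the analogous reduction demands $\mu_j(\sigma)=0$ for $j=4,5,6$. Degrees $4$ and $6$ are automatic from $\dim D_4=\dim D_6=0$, so only $\mu_5(\sigma) \in D_5(H_{\mathbb{Q}_\ell})$ contributes, with $\dim D_5 = 3$. I would identify the three coordinates $\mu(\sigma;(111221))$, $\mu(\sigma;(121221))$, $\mu(\sigma;(122221))$ with a basis of $D_5^{\vee}$ by computing the bracket matrix $[\cdot,\cdot] : H\otimes \mathcal{L}_5 \to \mathcal{L}_6$ in the degree-5 Lyndon basis $\{(11112),(11122),(11212),(11222),(12122),(12222)\}$ (and the corresponding length-6 Lyndon basis for $\mathcal{L}_6$), extracting a three-dimensional kernel, and checking that the three prescribed coordinates restrict to a basis of the dual. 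Ihara's formula (Proposition 9.3.1(2) with $k=2$) then evaluates the final coordinate as $\chi_5(\sigma)/(24(1-\ell^4))$. The main technical obstacle is precisely this finite linear-algebra verification in $\mathcal{L}_5$ and $\mathcal{L}_6$: proving the three designated Lyndon-type coordinates are linearly independent as functionals on the three-dimensional space $D_5 \subset H\otimes \mathcal{L}_5$. Once it is carried out, the even-degree vanishing of $D_4$ and $D_6$ obviates any length-5 or length-7 auxiliary conditions, and (3) follows.
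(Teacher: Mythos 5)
Your proposal is correct and follows essentially the same route as the paper: reduce via Theorem 8.2.1 to the vanishing of $\mu_j(\sigma)\in D_j(H_{\mathbb{Q}_\ell})$ for $k\le j\le 2k-2$, use the dimension count $\dim D_j = nN_j-N_{j+1}$ (equivalently $\dim\mathcal{C}_j^t$) to discard the even degrees, identify the surviving coordinates with a basis of $D_j^{\vee}$ (the paper does this via the labeled tree Jacobi diagrams and the isomorphism $\phi$, you via the kernel of the bracket map — these are the same computation), and finish with Ihara's evaluation. The paper likewise leaves the degree-$5$ basis verification at the level of exhibiting the three diagrams, so your proposal contains no gap relative to the published argument.
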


\begin{proof}
 	(1) This follows from the Table 3 in A.3. Indeed, we have $H_3(\Fn/\Gamma_2(\Fn);\mathbb{Z}_{\ell})=0$.\\
 	(2) By Theorem 8.2.1, $\tau_3^{(\ell)}(\sigma, \tau)=0$ if and only if $\mu(\sigma; I)=0$ for any multi-index $I$ with lengths $\leq 4$. By Table 2 in A.2, we know that $\dim(\mathcal{C}_3^t)=1$ and $\dim(\mathcal{C}_4^t)=0$. By isomorphism $\phi$ in (269), the condition $\tau_3^{(\ell)}(\sigma, \tau)=0$ is equivalent to the vanishing of $\ell$-adic Milnor invariants of length 4 corresponding to the tree labeled Jacobi diagram below:
 	\begin{center}
 	\begin{tikzpicture}
	\draw[densely dashed] (0,0) -- (0,1);
	\draw[densely dashed] (0,0.5) -- (1,0.5);
	\draw[densely dashed] (1,0) -- (1,1);
	\node[label=above:$1$] at (0,1) {};
	\node[label=below:$2$] at (0,0) {};
	\node[label=above:$1$] at (1,1) {};
	\node[label=below:$2$] at (1,0) {};
\end{tikzpicture}
\end{center}
In particular, we can see that the $\ell$-adic Milnor invariants of length 4 are spanned by $\mu(\sigma; (1221))$. Therefore, we conclude that $\mu(\sigma; (1221))=0$ if and only if $\tau_3(\sigma, \tau)=0$. The equality \eqref{eq:9.3.1} follows from (273).\\
(3) Similarly, by Table 2 in A.2, we know that $\dim(\mathcal{C}_4^t)=0$, $\dim(\mathcal{C}_5^t)=3$, and $\dim(\mathcal{C}_6^t)=0$. By isomorphism $\phi$ in (269), the condition $\tau_4^{(\ell)}(\sigma, \tau)=0$ is equivalent to the vanishing of $\ell$-adic Milnor invariants of length 6 correspond to the following diagrams: 
\begin{center}
	\begin{tikzpicture}
	\begin{scope}
		\draw[densely dashed] (0,0) -- (0,1);
		\draw[densely dashed] (0,0.5) -- (3,0.5);
		\foreach \x in {1,2} 
		{
			\draw[densely dashed] (\x,0.5) -- (\x,1);
		}
		\draw[densely dashed] (3,0) -- (3,1);
		\node[label=above:$1$] at (0,1) {};
		\node[label=below:$2$] at (0,0) {};
		\node[label=below:$2$] at (3,0) {};
		\node[label=above:$1$] at (3,1) {};
		\node[label=above:$1$] at (1,1) {};
		\node[label=above:$1$] at (2,1) {};
	\end{scope}
	
	\begin{scope}[xshift=4cm]
		\draw[densely dashed] (0,0) -- (0,1);
		\draw[densely dashed] (0,0.5) -- (3,0.5);
		\foreach \x in {1,2} 
		{
			\draw[densely dashed] (\x,0.5) -- (\x,1);
		}
		\draw[densely dashed] (3,0) -- (3,1);
		\node[label=above:$1$] at (0,1) {};
		\node[label=below:$2$] at (0,0) {};
		\node[label=below:$2$] at (3,0) {};
		\node[label=above:$1$] at (3,1) {};
		\node[label=above:$1$] at (1,1) {};
		\node[label=above:$2$] at (2,1) {};
	\end{scope}
	
	\begin{scope}[xshift=8cm]
		\draw[densely dashed] (0,0) -- (0,1);
		\draw[densely dashed] (0,0.5) -- (3,0.5);
		\foreach \x in {1,2} 
		{
			\draw[densely dashed] (\x,0.5) -- (\x,1);
		}
		\draw[densely dashed] (3,0) -- (3,1);
		\node[label=above:$1$] at (0,1) {};
		\node[label=below:$2$] at (0,0) {};
		\node[label=below:$2$] at (3,0) {};
		\node[label=above:$1$] at (3,1) {};
		\node[label=above:$2$] at (1,1) {};
		\node[label=above:$2$] at (2,1) {};
	\end{scope}
\end{tikzpicture}
\end{center}
More concretely, we can see that $\mu(\sigma; (111221))$, $\mu(\sigma; (121221)$, and $\mu(\sigma;(122221))$ are linear span of $\ell$-adic Milnor invariants. By (273), we get the right most equality \eqref{eq:9.3.2}.
\end{proof}

\subsection*{Acknowledgement}

The authors would like to thank James F. Davis, Rostislav Devyatov, Bingxiao Liu, Gw\'ena\"el Massuyeau, Pieter Moree, Masanori Morishita and Kent E. Orr for helpful communications. The authors would also like to thank Jordan Ellenberg for sending his unpublished preprint (\cite{E}) to them. H.K. is grateful to Max Planck Institute for Mathematics in Bonn for its hospitality and financial support. Y.T. is partially supported by JSPS KAKENHI Grant Numbers 17K05243 and JST CREST Grant Number JPMJCR14D6, Japan.

\appendix

\section{Table of $N_k$, $D_k$ and the rank of  $H_3(\Fn/\Gamma_k(\Fn); \mathbb{Z}_{\ell})$}
This appendix gives explicit computational table of ranks $N_k$, $D_k$ and $H_3(\Fn/\Gamma_k(\Fn); \mathbb{Z}_{\ell})$ for lower $n$ and $k$. In addition, we present the generating functions associated to them.
\subsection{Computation of $N_k$}
Let $N_k(n)$ be the rank of $\Gamma_k(F_n)/\Gamma_{k+1}(F_n)$ as $\mathbb{Z}$-module. The following is a table of  ranks $N_k(n)$ for $2 \leq n,k \leq 9$.

\begin{table}[hbt]
\begin{center}
\begin{tabular}{|l|l|l|l|l|l|l|l|l|} \hline
\diagbox[width=3em]{$n$}{$k$} & $2$ & $3$ & $4$ & $5$ & $6$ & $7$ & $8$ & $9$ \\ \hline
$2$ & $1$ & $2$ & $3$ & $6$ & $9$ & $18$ & $30$ & $56$ \\
$3$ & $3$ & $8$ & $18$ & $48$ & $116$ & $312$ & $810$ & $2184$ \\
$4$ & $6$ & $20$ & $60$ & $204$ & $670$ & $2340$ & $8160$ & $29120$ \\
$5$ & $10$ & $40$ & $150$ & $624$ & $2580$ & $11160$ & $48750$ & $217000$ \\
$6$ & $15$ & $70$ & $315$ & $1554$ & $7735$ & $39990$ & $209790$ & $1119720$ \\
$7$ & $21$ & $112$ & $588$ & $3360$ & $19544$ & $117648$ & $720300$ & $4483696$ \\
$8$ & $28$ & $168$ & $1008$ & $6552$ & $43596$ & $299592$ & $2096640$ & $14913024$ \\
$9$ & $36$ & $240$ & $1620$ & $11808$ & $88440$ & $683280$ & $5380020$ & $43046640$ \\ \hline
\end{tabular}
\caption{Table of $N_k(n)$}
\end{center}
\end{table}

It is known that the generating function associated to $N_k(n)$ is given by the cyclotomic identity
\begin{equation}\label{eq:A.1}
	\prod_{k=1}^{\infty}\left(\frac{1}{1-z^k}\right)^{N_k(n)} = \frac{1}{1-nz}
\end{equation}

\subsection{Computation of $D_k(n)$}
We set $D_k(n)= nN_k(n)-N_{k+1}(n)$. Then, the table of $D_k(n)$ for $2 \leq n \leq 9$ and $2 \leq k \leq 9$ is given as follows:
\begin{table}[hbt]
\begin{center}
\begin{tabular}{|l|l|l|l|l|l|l|l|l|} \hline
\diagbox[width=3em]{$n$}{$k$} & $2$ & $3$ & $4$ & $5$ & $6$ & $7$ & $8$ & $9$ \\ \hline
$2$ & $0$ & $1$ & $0$ & $3$ & $0$ & $6$ & $4$ & $13$ \\
$3$ & $1$ & $6$ & $6$ & $28$ & $36$ & $126$ & $246$ & $672$ \\
$4$ & $4$ & $20$ & $36$ & $146$ & $340$ & $1200$ & $3520$ & $11726$ \\
$5$ & $10$ & $50$ & $126$ & $540$ & $1740$ & $7050$ & $26750$ & $108752$ \\
$6$ & $20$ & $105$ & $336$ & $1589$ & $6420$ & $30150$ & $139020$ & $672483$ \\
$7$ & $35$ & $196$ & $756$ & $3976$ & $19160$ & $103236$ & $558404$ & $3140032$ \\
$8$ & $56$ & $336$ & $1512$ & $8820$ & $49176$ & $300096$ & $1860096$ & $11933292$ \\
$9$ & $84$ & $540$ & $2772$ & $17832$ & $112680$ & $769500$ & $5373540$ & $38747232$ \\ \hline
\end{tabular}
\end{center}
\caption{Table of $D_k(n)$}
\end{table}

In this case, similar to the case of $N_k(n)$, we have the following formula on generating function associated to $D_k(n)$.
\begin{lemma}
Notations being as above, we have 
\begin{equation}
	\prod_{k=1}^{\infty}\left(\frac{1}{1-z^k}\right)^{D_k(n)} = \frac{(1-z)^n}{(1-nz)^{n-1}}.
\end{equation}	
\end{lemma}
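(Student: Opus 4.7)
My plan is to prove the identity by the standard logarithmic technique for Euler product identities. The key observation is that after taking $\log$ of both sides and expanding in power series, the identity reduces to a divisor-sum identity which should follow from the cyclotomic identity \eqref{eq:A.1} together with the defining relation $D_k(n) = nN_k(n)-N_{k+1}(n)$.

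First I would apply $\log$ to both sides. Using the expansion $-\log(1-x)=\sum_{m\geq 1} x^m/m$ and collecting terms by the weight $j=km$, the left-hand side becomes
\[
-\sum_{k\geq 1}D_k(n)\log(1-z^k) \;=\; \sum_{j\geq 1}\frac{z^j}{j}\sum_{d\mid j}d\,D_d(n),
\]
while the right-hand side is $n\log(1-z)-(n-1)\log(1-nz) = \sum_{j\geq 1}\frac{z^j}{j}\bigl((n-1)n^j - n\bigr)$. Comparing coefficients of $z^j/j$, the lemma reduces to verifying the arithmetic identity
\[
\sum_{d\mid j}d\,D_d(n) \;=\; (n-1)n^j - n \qquad (j\geq 1). \quad (\ast)
\]

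Next I would substitute $D_d(n) = nN_d(n) - N_{d+1}(n)$ and invoke the logarithmic form of the cyclotomic identity \eqref{eq:A.1}, namely $\sum_{d\mid j}d\,N_d(n) = n^j$, which is obtained by exactly the same log-and-compare-coefficients procedure applied to \eqref{eq:A.1}. This reduces $(\ast)$ to the shifted divisor identity
\[
\sum_{d\mid j} d\,N_{d+1}(n) \;=\; n^j + n.
\]
I would attack this either by (i) plugging in Witt's formula $N_k(n)=\tfrac{1}{k}\sum_{e\mid k}\mu(e)n^{k/e}$ and carefully interchanging the order of summation, or (ii) manipulating the product identity directly by writing
\[
\prod_{k\geq 1}(1-z^k)^{-D_k(n)} \;=\; (1-nz)^{-n}\cdot\prod_{k\geq 1}(1-z^k)^{N_{k+1}(n)}
\]
(the first factor coming from raising \eqref{eq:A.1} to the $n$th power) and then identifying the second factor with $(1-z)^n(1-nz)^{n-1}$ via an index-shift argument.

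The main obstacle is precisely this off-by-one shift $N_{d+1}$ versus $N_d$: the divisors of $j$ are not naturally aligned with those of $d+1$, so neither Möbius inversion nor the cyclotomic identity applies verbatim to $\sum_{d\mid j}d\,N_{d+1}(n)$. The delicate point is to account for the ``missing'' degree-one contribution $N_1(n)=n$ that does not appear in the shifted family $\{N_{k+1}(n)\}_{k\geq 1}$; this is what should produce the polynomial factor $(1-z)^n$ on the right-hand side. I expect the cleanest route is the generating-function manipulation in (ii), isolating the shift contribution as a separate polynomial factor rather than attempting to evaluate the divisor sum in closed form.
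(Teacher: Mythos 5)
Your reduction is correct as far as it goes: taking logarithms and comparing coefficients does reduce the lemma to the divisor-sum identity $\sum_{d\mid j} d\,D_d(n) = (n-1)n^j - n$, and after inserting $D_d(n)=nN_d(n)-N_{d+1}(n)$ together with the logarithmic form $\sum_{d\mid j}d\,N_d(n)=n^j$ of the cyclotomic identity \eqref{eq:A.1}, this becomes $\sum_{d\mid j}d\,N_{d+1}(n)=n^j+n$. You rightly flag the off-by-one shift as the sticking point and stop there --- but that is exactly where the argument cannot be completed, because the shifted identity is false. At $j=1$ it reads $N_2(n)=2n$, whereas $N_2(n)=\binom{n}{2}$; at $j=2$ and $n=2$ it reads $N_2+2N_3=1+4=5$ versus $n^2+n=6$. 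Equivalently, comparing the coefficient of $z$ on the two sides of the stated identity gives $D_1(n)=nN_1(n)-N_2(n)=\binom{n+1}{2}$ on the left and $n^2-2n$ on the right, which disagree for every $n\neq 5$; for $n=2$ the two power series are $1+3z+6z^2+11z^3+\cdots$ and $1+0\cdot z+z^2+2z^3+\cdots$. Indeed, M\"obius inversion shows that $\tfrac{(1-z)^n}{(1-nz)^{n-1}}$ is the Euler product with exponents $(n-1)N_k(n)$ for $k\geq 2$, and $(n-1)N_2(2)=1\neq 0=D_2(2)$. So the statement as printed is not an identity, and no completion of your plan can succeed; the correct conclusion of your analysis is that the lemma (or the definition/range of the $D_k$) needs to be amended, not proved.

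For comparison, the paper's entire proof is the sentence that the identity ``follows from straightforward calculation using \eqref{eq:A.1} and $N_1(n)=n$,'' which is precisely your route (ii): split off $\prod_k(1-z^k)^{-nN_k(n)}=(1-nz)^{-n}$ and then treat the leftover factor $\prod_k(1-z^k)^{N_{k+1}(n)}$ as though it were $\prod_{j\geq 2}(1-z^j)^{N_j(n)}=(1-nz)(1-z)^{-n}$. That substitution is illegitimate for exactly the reason you identified: the exponent $N_{k+1}$ sits on $(1-z^k)$, not on $(1-z^{k+1})$, and the leftover factor is not a rational function. Your instinct to distrust the shift was the right one; the missing step in your proposal was simply to test the reduced divisor-sum identity at $j=1$, which would have turned your suspicion into a refutation.
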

\begin{proof}
It follows from straight forward calculation by using 	\eqref{eq:A.1} and $N_1(n)=n$.
\end{proof}

\subsection{Computation of the rank of the third homology group of a free nilpotent group}
The table of the ranks of $H_3(\Fn/\Gamma_k(\Fn);\mathbb{Z}_{\ell})$ for $2 \leq n \leq 9$ and $2 \leq k \leq 5$  is given as follows:
\begin{table}[hbt]
\begin{center}
\begin{tabular}{|l|l|l|l|l|} \hline
\diagbox[width=3em]{$n$}{$k$} & $2$ & $3$ & $4$ & $5$ \\ \hline
$2$ & $0$ & $1\oplus0$ & $0\oplus3\oplus0$ & $3\oplus0\oplus6\oplus4$ \\
$3$ & $1$ & $6\oplus6$ & $6\oplus28\oplus36$ & $28\oplus36\oplus126\oplus246$ \\
$4$ & $4$ & $20\oplus36$ & $36\oplus146\oplus340$ & $146\oplus340\oplus1200\oplus3520$ \\
$5$ & $10$ & $50\oplus126$ & $126\oplus540\oplus1740$ & $540\oplus1740\oplus7050\oplus26750$ \\
$6$ & $20$ & $105\oplus336$ & $336\oplus1589\oplus6420$ & $1589\oplus6420\oplus30150\oplus139020$ \\
$7$ & $35$ & $196\oplus756$ & $756\oplus3976\oplus19160$ & $3976\oplus19160\oplus103236\oplus558404$ \\
$8$ & $56$ & $336\oplus1512$ & $1512\oplus8820\oplus49176$ & $8820\oplus49176\oplus300096\oplus1860096$ \\
$9$ & $84$ & $540\oplus2772$ & $2772\oplus17832\oplus112680$ & $17832\oplus112680\oplus769500\oplus5373540$ \\ \hline
\end{tabular}
\caption{table of the rank of $H_3(\Fn/\Gamma_k(\Fn); \mathbb{Z}_{\ell})$}
\end{center}
\end{table}

\Addresses

\end{document}